\DeclareFontFamily{U}{mathx}{\hyphenchar\font45}
\DeclareFontShape{U}{mathx}{m}{n}{
      <5> <6> <7> <8> <9> <10>
      <10.95> <12> <14.4> <17.28> <20.74> <24.88>
      mathx10
      }{}
\DeclareSymbolFont{mathx}{U}{mathx}{m}{n}
\DeclareMathAccent{\widecheck}{0}{mathx}{"71}
\DeclareMathAccent{\wideparen}{0}{mathx}{"75}
\newcommand{\myfootnote}[1]{
    \renewcommand{\thefootnote}{}
    \footnotetext{\scriptsize#1}
    \renewcommand{\thefootnote}{\arabic{footnote}}
}
\theoremstyle{plain}
\def\beqn{\begin{eqnarray}}
\def\eeqn{\end{eqnarray}}
\newcommand{\olsi}[1]{\underline{{#1}}}
\def\beq{\begin{equation}}
\def\eeq{\end{equation}}
\DeclarePairedDelimiter\floor{\lfloor}{\rfloor}
\newtheorem{theorem}{Theorem}[section]
\newtheorem{corollary}[theorem]{Corollary} 
\newtheorem{lemma}[theorem]{Lemma} 
\newtheorem*{lemma*}{Lemma}
\newtheorem{proposition}[theorem]{Proposition} 
\theoremstyle{remark}
\newtheorem{remark}[theorem]{Remark}
\theoremstyle{definition}
\numberwithin{figure}{section}
\numberwithin{equation}{section}
\def\R{\mathbb R}
\def\Z{\mathbb Z}
\def\dlim[#1][#2]{\lim_{#1 \to #2, #1 \neq #2}}
\def\Var{\textup{$\mathbb{V}$ar}}
\def\Cov{\textup{$\mathbb{C}$ov}}
\def\Corr{\textup{$\mathbb{C}$orr}}
\newcommand{\be}{\begin{equation}}
\newcommand{\ee}{\end{equation}}
\newcommand{\nn}{\nonumber}
\def\ind{\mathbbm{1}}
\def\wt{\widetilde}    \def\wc{\widecheck}
\newcounter{cnstcnt}
\newcommandx{\addmath}[2][1=]{\todo[linecolor=red,backgroundcolor=red!25,bordercolor=red,#1]{#2}}
\newcommandx{\fixtext}[2][1=]{\todo[linecolor=blue,backgroundcolor=blue!25,bordercolor=blue,#1]{#2}}
\newcommandx{\note}[2][1=]{\todo[linecolor=yellow,backgroundcolor=yellow!25,bordercolor=yellow,#1]{#2}}
\def\stat{{\mu/2}}
\def\fg{\log \wt{Z}}
\def\fgs{\log Z^{W}}
\newcommand\bbullet{{{\scaleobj{0.6}{\bullet}}}} 
\title{Time correlations in KPZ models with diffusive initial conditions}
\author{
  Riddhipratim Basu\thanks{\scriptsize{International Centre for Theoretical Sciences, Tata Institute of Fundamental Research, Bengaluru, INDIA.
 \texttt{rbasu@icts.res.in}}}  \qquad  Xiao Shen\thanks{\scriptsize{Department of Mathematics, University of Utah, Utah, USA. \texttt{xiao.shen@utah.edu}}}
  }
\date{}
\begin{document}

\allowdisplaybreaks

\maketitle

\begin{abstract}
Temporal correlation for randomly growing interfaces in the KPZ universality class is a topic of recent interest. Most of the works so far have been concentrated on the zero temperature model of exponential last passage percolation, and three special initial conditions, namely droplet, flat and stationary. We focus on studying the time correlation problem for generic random initial conditions with diffusive growth. We formulate our results in terms of the positive temperature exactly solvable model of the inverse-gamma polymer and obtain up to constant upper and lower bounds for the correlation between the free energy of two polymers whose endpoints are close together or far apart. Our proofs apply almost verbatim to the zero temperature set-up of exponential LPP and are valid for a broad class of initial conditions.
Our work complements and completes the partial results obtained in \cite{ferocctimecorr}, following the conjectures of \cite{Ferr-Spoh-2016}. Moreover, our arguments rely on the one-point moderate deviation estimates which have recently been obtained using stationary polymer techniques and thus do not depend on complicated exact formulae. 
\end{abstract}

\myfootnote{Date: \today}
\myfootnote{2010 Mathematics Subject Classification. 60K35, 	60K37}
\myfootnote{Key words and phrases: correlation, coupling, directed polymer, Kardar-Parisi-Zhang, random growth model.}

\tableofcontents

\section{Introduction and main results}
A large number of planar random growth models are expected to share the universal features of the so-called Kardar-Parisi-Zhang (KPZ) universality class; these include planar last passage percolation (LPP) and polymer models on the plane under mild conditions on the underlying noise. Although the questions of universality remain out of reach, there are several exactly solvable lattice models that can be rigorously analyzed, and studying these models has been an important topic of research over the last twenty-five years. One of the problems that has attracted a lot of interest over the last few years is understanding the temporal decay of correlations in such models with different initial conditions. 

We undertake the study of time correlations in (1+1)-dimensional KPZ growth models started from a class of random initial conditions. For concreteness, we shall formulate and prove our main results in the framework of the exactly solvable positive temperature model of inverse-gamma polymer on $\Z^2$. Our arguments, nonetheless, carry over almost verbatim to the zero temperature model of exponential LPP on $\Z^2$ as well; a short discussion on this appears in Section \ref{s:lpp}.

Before proceeding further, let us explain the basic formulation of this problem. For simplicity, we describe this in the setup of the zero temperature model of exponential LPP. We consider an initial condition $f$, which is a function defined on the anti-diagonal line $x+y=0$. For ${\bf u}, {\bf v}\in \Z^2$, (${\bf u}\le {\bf v}$ coordinatewise) let $T_{{\bf u}, {\bf v}}$ denote the point-to-point last passage time (i.e., the maximum weight of a directed path joining $u$ and $v$ in an underlying field of i.i.d.\ exponential vertex weights). We then define the last-passage time to $(n,n)$  with the initial condition $f$, denoted $T_n^{f}$ by $\max_{{\bf v}} T_{{\bf v}, (n,n)}+f({\bf v})$ where the maximum is taken over all points $v$ on the line $x+y=0$. Then the two-time correlation is the correlation between $T^{f}_{r}$ and $T_n^{f}$ for $r,n\in \mathbb{Z}_{>0}$, and one usually studies the asymptotics of this quantity as $r,n \to \infty$ and $r/n\to 0$ or $1$. The role of the last passage time is played by the free energy (i.e., the log of the partition function) in positive temperature models; a precise formulation in the setup of the inverse-gamma polymer is given later.

Following the experimental and numerical studies by physicists \cite{Sing-2015, Take-2012, Take-Sano-2012}, and precise conjectures formulated by Ferrari and Spohn in \cite{Ferr-Spoh-2016}, a host of rigorous works have studied this problem in the context of the zero temperature exactly solvable models (particularly exponential LPP) \cite{timecorriid, timecorrflat, ferocctimecorr,fer-occ-2022, Ferr-Spoh-2016}. In the positive temperature setting, the problem has been studied for the KPZ equation in \cite{KPZcorr}. We shall not get into a detailed discussion of the existing literature; rather we refer the interested reader to the introduction of \cite{bas-sep-she-23} for a comprehensive discussion of available results and different methods employed to establish them. Three special initial conditions have been investigated in particular: (i) droplet initial condition, i.e., point-to-point passage time, (ii) flat initial condition, i.e., line-to-point passage time and (iii) stationary initial condition where $f$ is a certain two-sided random walk which is (increment)-stationary under the growth dynamics (ratio- stationary in the positive temperature setup; precise definitions later). More general (deterministic as well as random) initial conditions are also of interest but those are not very well understood even in the zero temperature case. Motivated by this, in the upcoming subsection, we discuss our contribution and provide a concise overview of the relevant literature.

\subsection{Our contribution and related literature}

As alluded to above, our focus in this paper is KPZ growth with general random initial conditions, which include the stationary ones. 
The only rigorous results for the time correlation study with the generic random initial condition so far 
appear in \cite{ferocctimecorr} in the zero-temperature setting. The generality of the random initial conditions studied there allows for the multiplication of the stationary initial condition by any non-negative constant, i.e.~``$f = C \times \text{the stationary initial condition}$" for any $C \geq 0$. 
The statistic studied in \cite{ferocctimecorr} is the limit $\rho(\tau) = \lim_{n \to \infty} \Corr(T^f_n, T^f_{\tau n})$ for $\tau\in (0,1)$. It was shown that for these random initial conditions $1-\rho(\tau)=\Theta((1-\tau)^{2/3})$ as $\tau\to 1$. Note that $\tau$ going to $1$ means the two times, $r$ and $n$,  are close relative to their sizes. In this scenario, the time correlation for the random, droplet and flat initial conditions all stem from similar reasons.

In contrast, in scenarios where 
$r$ is small compared to $n$, the underlying mechanisms governing time correlations differ fundamentally for random, droplet, and flat initial conditions. Here, 
the free energy at time
$r$ feels the effects from various initial conditions because it is much smaller compared to $n$. A part of our main contributions lies in establishing upper and lower bounds on the time correlation when two endpoints are far apart, for the random initial conditions. This aspect has not been previously known in any setting.

Expanding upon the random initial conditions studied in \cite{ferocctimecorr}, our work considers a very broad class of random initial conditions which satisfy certain ``diffusive growth" conditions. For this class of initial conditions, and for $r,n$ \textit{large but finite}, and for $r/n$ close to $0$ or close to $1$, we establish up to constant upper and lower bounds for the correlation establishing the same exponents as described above, see Theorem \ref{thm_r_large} and Theorem \ref{thm_r_small}. For the special case of the stationary initial condition, in the small $r$ regime, we also provide a different argument for the upper bound in Section \ref{s:duality} using duality which leads to a quantitatively better estimate.

Furthermore, the reliance of our arguments on the integrable nature of the models is rather weak, and this in turn, has enabled us to study a broad spectrum of diffusive initial conditions.  We rely only on the curvature of the limit shape and the one-point moderate deviation estimates for the point-to-point free energy (last passage time in the zero temperature setting). For the zero temperature set-up, the one-point estimates can be obtained either by analysis of Fredholm determinant formulae \cite{BFP12} (see also \cite{inc_up, inc_low, BXX01} for similar estimates in Poissonian and Geometric LPP) or using random matrix methods \cite{led-rid-2021}. More recently, these estimates have been obtained using softer stationary LPP/ coupling techniques \cite{cgm_low_up, rtail1}. For various positive temperature models, including the inverse-gamma polymer, these have recently been obtained in \cite{bas-sep-she-23, Emr-Jan-Xie-22-, OC_tail, Lan-Sos-22-a-} using the stationary polymer techniques; as far as we know these are the only moderate deviation results in the positive temperature discrete or semi-discrete models established so far; similar results are also known for the KPZ equation using Gibbsian line ensemble techniques (see, e.g.\ \cite{CG20}). Using these estimates, the time correlation in point-to-point inverse-gamma polymer (i.e., droplet initial condition) has been studied in \cite{bas-sep-she-23} by the current authors together with Sepp{\"a}l{\"a}inen and the same exponents as described above were obtained. Although the exponents are the same, and the ingredients are broadly similar, the arguments in this paper are largely different from \cite{bas-sep-she-23} for the case $r/n\ll 1$. As mentioned before, the contributing factors to the correlation in both cases are the same in the $r/n\to 1$ regime, and the arguments are rather similar. The main technical contribution of this paper is in the estimates in the  $r\ll n$ regime. When $r\ll n$, although the correlation in both the droplet and ``diffusive" initial conditions decay like $(r/n)^{1/3}$; the mechanisms are very different. In the former case, the correlation comes from the fluctuation in the bulk weights in the region between the lines $x+y=0$ and $x+y=2r$ while in the latter case, the primary contribution to the correlation is the fluctuation of the initial condition. Establishing this requires a different set of geometric arguments which is the heart of the new technical content of this paper. Notice that we work here in the annealed setting, that is we also average over the initial condition; one would expect a much smaller correlation (for $r\ll n$) for a fixed realization of the initial condition; see \cite{timecorrflat} for a discussion regarding this. 

We wrap up this section with some further comments relating our work to the relevant results of \cite{ferocctimecorr}. The argument in \cite{ferocctimecorr} is very different from the current article; the proof of the stationary correlation therein relies on the stationarity of the model as well as the convergence of the one point distribution to the Baik-Rains distribution in that case. The $\tau\to 1$ asymptotics for constant multiples of stationary initial condition considered there depended on the convergence of the passage time profiles with different initial conditions to Airy processes. Although one point convergence of the point-to-point free energy to the GUE Tracy-Widom distribution is known for the inverse-gamma polymer, finer results such as convergence to Airy processes appears to not have been established yet, therefore the approach from \cite{ferocctimecorr} does not seem applicable in the inverse-gamma case, for non-stationary initial conditions. For the stationary model, although we could not find the Baik-Rains convergence result in the literature in the inverse-gamma case; it is likely that one could obtain this result by arguing along the same lines as \cite{bk-oc} where the Baik-Rains limit was established for the O'Connell-Yor polymer. If so, one could prove an asymptotic version of our results, using the approach of \cite{ferocctimecorr} in the stationary case. Beyond the stationary case, even for the zero temperature setup, our results apply to a broader class of initial conditions, give finite size estimates, and also complete the picture by establishing the correlation for the $r\ll n$ case. For the case $\tau\to 1$ and for the subclass of initial conditions considered in \cite{ferocctimecorr}, however, our estimates are quantitatively somewhat weaker than the limiting estimates there. 

We now move towards precise definitions and formulation of the main results in this article.

\subsection{Polymer models on $\Z^2$ with a given initial condition}\label{def_poly_ad}

First, we define the bulk partition function without an initial condition.  Let $\mathbb{X}_{{\bf a}, {\bf b}}$ denote the collection of up-right paths between ${\bf a}$ and ${\bf b}$. Paths ${\bf x}_{\bbullet} \in\mathbb{X}_{{\bf a}, {\bf b}}$ are of the form ${\bf x}_0 = {\bf a}$ and ${\bf x}_{|{\bf a} - {\bf b}|_1} = {\bf b}$. We attach non-negative weights to the vertices $\{Y_{\bf z}\}_{{\bf z} \in \mathbb{Z}^2}$. If $\mathbb{X}_{{\bf a}, {\bf b}}$ is non-empty, define 
$$\wt Z_{{\bf a}, {\bf b}}= \sum_{{\bf x}_{\bbullet} \in \mathbb{X}_{{\bf a}, {\bf b}}} \prod_{i=1}^{|{\bf a} - {\bf b}|_1} Y_{{\bf x}_i}.$$
By our convention, we do not pick up any weights at the starting point of the paths ${\bf x}_0 = {\bf a}$. Thus, we will define $\wt Z_{{\bf a}, {\bf a}} = 1$. And if $\mathbb{X}_{{\bf a}, {\bf b}}$ is empty, define $\wt Z_{{\bf a}, {\bf b}} = 0$.

Next, we shall define a polymer model with an initial condition $W$ on the anti-diagonal line through the origin $\mathcal{L}_{\bf 0} = \{(j, -j): j \in \mathbb{Z}\}$. Let $\{W_{k}\}_{k\in \Z}$ denote a sequence of non-negative weights ($W_{k}$ should be interpreted as the boundary condition at $(k,-k)\in \mathcal{L}_{\bf 0}$) with $W_0=1$. We now define the partition function for the polymer to an endpoint ${\bf v}$ above $\mathcal{L}_{\bf 0}$, denoted $Z^{W}_{\bf v}$ by   
$$Z^{W}_{{\bf v}} = \sum_{k\in \mathbb{Z}} W_k \cdot \wt{Z}_{(k, -k), {\bf v}}.$$
The \textit{free energy} of the polymer between to  ${\bf v}$ with initial condition $W$ is then defined as $\log Z^W_{{\bf v}}$. Clearly, the above definition can also obviously be extended to initial conditions defined on $\mathcal{L}_{\bf a}$ for any ${\bf a}\in \Z^2$. \footnote{It is a common practice to define initial conditions in terms of a class of \textit{boundary weights} on the edges of  $\mathcal{S}_0$, the bi-infinite down-right staircase path $\{\dots, - {\bf e}_1 +\bf e_2,  - \bf e_1,{\bf 0} ,  \bf e_1, \bf e_1 - e_2, \dots \}$, but we shall not make much use of this formulation {in the first part of our paper. This definition will be delayed until Section \ref{stat_poly}.}}

Let $\mathbb{X}^\textup{line}_{{\bf v}}$ to denote the collection of up-right paths from $\mathcal{L}_{\bf 0}$ to ${\bf v}$.
Fix a directed path ${\bf x}_{\bbullet} \in \mathbb{X}^\textup{line}_{{\bf v}}$ and suppose $k\in \Z$ is such that ${\bf x}_{\bbullet} \in \mathbb{X}_{(k,-k), {\bf v}}$. The partition function over this particular path ${\bf x}_\bbullet$ is defined by 
$$Z^W_{{\bf v}}({\bf x}_\bbullet) = W_k \cdot \prod_{i=1}^{|{\bf v}|_1} Y_{{\bf x}_i}.$$
Similarly, for a subset $\mathfrak{A}\subset \mathbb{X}^\textup{line}_{{\bf v}}$, define
$Z^W_{{\bf v}}(\mathfrak{A}) = \sum_{{\bf x}_{\bbullet} \in \mathfrak{A}} Z^{W}_{{\bf v}}({\bf x}_\bbullet).$
Then, the quenched polymer measure is defined by $Q^W_{{\bf v}}\{\mathfrak{A}\} =  \frac{Z^W_{{\bf v}}(\mathfrak{A})}{Z^W_{{\bf v}}}$,
which is a probability measure on $\mathbb{X}^\textup{line}_{{\bf v}}$.

\subsection{Assumptions on bulk weights and initial condition} \label{as_weights}
Fix an anti-diagonal line $\mathcal{L}_{\bf a} = {\bf a} + \mathcal{L}_{\bf 0}$, and let us set $\mathcal{L}_{\bf a}^> = \bigcup_{{\bf b }\in \mathcal{L}_{\bf a}}({\bf b} + \mathbb{Z}^2_{>0})$. 
As already mentioned we shall work with the inverse-gamma polymer with an initial condition on $\mathcal{L}_{\bf 0}$, i.e., our boundary weights $\{Y_{\bf z}\}_{{\bf z } \in \mathcal{L}_{\bf 0}^{>}}$ will be distributed as 
 i.i.d.\ inverse-gamma random variables with shape parameter $\mu$. Recall the density function of the inverse-gamma distribution is defined by 
\begin{equation} \label{ig4}
f_\mu(x) = {\displaystyle {\frac {1}{\Gamma (\mu )}}x^{-\mu -1}e^{-{\frac {1}{x}}}}
\quad \text{for } x>0. 
\end{equation}
The shape parameter $\mu\in(0,\infty)$  plays the role of temperature in this polymer model, which will be fixed in this paper.

\noindent
\textbf{Assumptions on the initial condition:}
We shall assume that the initial condition $W = \{W_i\}_{i\in \mathbb{Z}}$ will be such that $\log W$ is a two-sided random walk with $\log W_0=0$.

\medskip
\noindent
\textbf{Assumption A:} 
The collection of random variables $\{\log W_{i}-\log W_{i-1}\}_{i\in \Z}$ are independent, have mean $0$ with the following properties:
\begin{enumerate}
    \item[(i)] $X_{i}:=\log W_{i}-\log W_{i-1}$ are uniformly sub-exponential 
    i.e., there exist parameters $\lambda_1,K_1>0$ such that $\log \mathbb E (\exp(\lambda |X_{i}-\mathbb E X_{i}|)) \le K_1\lambda^2$ for $\lambda\in [-\lambda_1,\lambda_1]$. 
    \item[(ii)]  $\inf_{i} \Var X_{i}>0$.
\end{enumerate}
In fact, we shall use weaker assumptions in our proofs, but we recorded \textbf{Assumption A} above since it is more straightforward to verify, and already contains many interesting cases. 
Now, let us introduce the specific assumptions employed in our paper, which shall be referred to as \textbf{Assumption B}. \textbf{Assumption B} comprises of three distinct components: \textbf{B1}, \textbf{B2}, and \textbf{B3}.

To simplify the notation, we will introduce a collection of random variables denoted as $\{X_i\}_{i \in \mathbb{Z}} = \{\log W_{i}-\log W_{i-1}\}_{i\in \Z}$.
\begin{itemize}
\item \textbf{Assumption B1:} 

\textit{Right tail decay:} There exist $C_1, N_0, t_0$ such that for each $N \geq N_0$ and $t\geq t_0$
\begin{equation}\label{up_bd}
\mathbb{P}\Big(\max_{-N \leq k \leq N} \log W_{k} \geq t\sqrt{N}\Big) \leq e^{-C_1t}.
\end{equation}

\item \textbf{Assumption B2:}
\begin{enumerate}[(i)]
\item  \textit{$\rho$-mixing:}
For each $A,B\subset \Z$ with $\textup{dist}(A, B) > \ell\geq 1$, we have 
\begin{equation}\label{mix}
 \Big|\Corr(f(X_i: i\in A), g(X_j : j\in B))\Big| \leq \frac{1}{\ell^{100}};
\end{equation}
for all square-integrable functions $f$ and $g$, i.e., the sequence $\{X_{i}\}_{i\in \Z}$ is fast $\rho$-mixing.\footnote{Note that the $\rho$-mixing is usually defined for stationary sequences, we consider the obvious extension of this definition in our case. For a precise definition of $\rho$-mixing coefficient and its relationship with other notions of mixing, see \cite{brad-05}.}

\item \textit{Left tail decay:} There exist $C_1, N_0, t_0$ such that for each $N \geq N_0$ and $t\geq t_0$
\begin{equation}\label{low_tail}
\mathbb{P}\Big(\log W_{N} \leq -t\sqrt{N}\Big) \leq e^{-C_1t}.
\end{equation}
\end{enumerate}

\item \textbf{Assumption B3:}

For $N\in \mathbb{Z}_{>0}$, let $\mathcal{F}_{r}=$ denote the $\sigma$-algebra generated by $\sigma(\{X_{i}\}_{i\notin [1, r]})$. 
\begin{enumerate}[(i)]

\item \textit{Conditional right tail decays:}
There exist $C_1, r_0, t_0$ such that for each $r \geq r_0$, $N\geq r$ and $t\geq t_0$
\begin{equation}\label{cond_up_bd}
\mathbb{P}\Big(\mathbb{P}\big(\max_{0\leq k \leq N} \log W_{k} \geq t\sqrt{N}\big|\mathcal{F}_r\big) \leq e^{-C_1t} \Big) \geq 1-\frac{1}{\log r}.
\end{equation}

\item \textit{Conditional left tail decay:} There exist $C_1, r_0, t_0$ such that for each $r \geq r_0$, $N\geq r$ and $t\geq t_0$
\begin{equation}\label{cond_low_tail}
\mathbb{P}\Big(\mathbb{P}\big(\log W_{N} \leq -t\sqrt{N} \big|\mathcal{F}_r\big) \leq e^{-C_1t} \Big) \geq 1-\frac{1}{\log r}.
\end{equation}

\item  \textit{Conditional Variance bound:} 
There exists $C_1, r_0 >0$, such that for all $r\geq r_0$, 
\begin{equation}\label{as_var}
\mathbb{P}\Big(\Var(\log W_r | \mathcal{F}_r) \geq C_1 r\Big) \geq 1-\frac{1}{\log r}.
\end{equation}
almost surely.

\item  \textit{Right tail lower bound:} For each fixed $t_0$ sufficiently large, there exist $N_0, \epsilon_0$ (depending on $t_0$) such that for each $i\in \Z$ and for each integer $N$ with $|N| \geq N_0$,
\begin{equation}\label{low_bd}
\mathbb{P}\Big( \log W_{N+i}-\log W_{i} \geq t_0\sqrt{|N|}\Big) \geq \epsilon_0 >0.
\end{equation}

\item \textit{FKG inequality:} The random sequence $\mathcal{X}:=\{\hat{X}_{i}\}_{i\in \Z}$ defined by $\hat{X}_{i}=X_{i}$ for $i\ge 1$ and $\hat{X}_{i}=-X_{i}$ otherwise, satisfies the FKG inequality, i.e., for any two square-integrable coordinate wise increasing real-valued functions $f$ and $g$,\footnote{{In our applications, we only use this hypothesis in the case when the arguments of $f$ is a subset of the arguments of $g$. Thus, it does not necessarily preclude the case of Busemann increment initial condition for general KPZ models where it is predicted that the disjoint Busemann increments along a down-right path are negatively correlated \cite{neg_buse}.}}
\begin{equation}\label{FKG}
\Corr(f(\mathcal{X}),g(\mathcal{X}))\geq 0.
\end{equation}
Further, we also require that for all $k\in \mathbb{Z}_{>0}$, the conditional measure of $\mathcal{X}$, restricted to the coordinates $\{1,2,\ldots, k\}$ conditional on the remaining coordinates also satisfies the FKG inequality. \footnote{It might appear that this assumption is rather strong and difficult to verify,
but observe that if $X_{i}$ are independent then this is immediate. We shall also provide examples beyond the independent case which satisfy this and the other hypotheses; see Remark \ref{example}.}
\end{enumerate}
\end{itemize}

Note that the assumptions \eqref{up_bd}, \eqref{low_tail} and \eqref{low_bd} give bounds on non-trivial diffusive growth of $\log W$ (and \eqref{cond_up_bd}, \eqref{cond_low_tail} and \eqref{as_var} gives conditional versions of the same). Many of the assumptions become trivial if the independence of $X_{i}$ is assumed and it is not hard to see that \textbf{Assumption A} implies \textbf{Assumption B}; see Proposition \ref{aimpliesa1}. Note also that, \textbf{Assumption A}  includes that case of stationary inverse-gamma polymer with parameter $\rho=\mu/2$; see Appendix \ref{ver_weights} for a proof of this and Section \ref{stat_poly} for more details on the stationary polymer model. See Remark \ref{example} for some examples of initial conditions that satisfy \textbf{Assumption B} but not \textbf{Assumption A}. We shall see later that not all parts of the hypotheses are used in all our arguments. Also, some of the hypotheses are not quantitatively optimal; see Remark \ref{ext} for a discussion regarding this.

\subsection{Main results on time correlation of the free energies}
We now state our main results for the temporal correlations in the inverse-gamma polymer with general initial conditions. Recall that the correlation coefficient  of two random variables $\zeta$ and $\eta$ is defined by 
\[  \textup{$\mathbb{C}$orr}(\zeta, \eta)=
\frac{\Cov(\zeta, \eta)}{\Var(\zeta)^{1/2}\,\Var(\eta)^{1/2}}
=\frac{\mathbb E[\zeta \eta] - \mathbb E[\zeta]  \cdot  \mathbb E[\eta]}
{\mathbb E[ \,|\zeta-\mathbb E\zeta|^2\,]^{1/2}\,\mathbb E[ \,|\eta-\mathbb E\eta|^2\,]^{1/2}}.
\]
Our main result establishes the time correlation exponents $1/3$ and $2/3$ for a pair of free energies in the inverse-gamma polymer with the above initial conditions, based on on the separation of their endpoints. 

\begin{theorem}\label{thm_r_large}
Consider the inverse-gamma polymer with boundary condition $W$ on $\mathcal{L}_{\bf 0}$ that satisfies \textbf{Assumption B1}.
There exist positive constants $C_1, C_2, c_0, N_0$ such that, whenever $N\geq N_0$ and $N/2 \le r \leq N-c_0$,
we have 
$$ 1-C_1\Big(\frac{N-r}{N}\Big)^{2/3} \leq \textup{$\mathbb{C}$orr}\Big(\fgs_{(r,r)}, \fgs_{(N,N)}\Big) \leq 1-C_2\Big(\frac{N-r}{N}\Big)^{2/3}.$$ 
\end{theorem}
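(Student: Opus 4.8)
The plan is to reduce the correlation estimate to a comparison of variances, exploiting the fact that in the regime $r \ge N/2$ the two free energies $\fgs_{(r,r)}$ and $\fgs_{(N,N)}$ are strongly coupled: the polymer to $(N,N)$ essentially passes through a neighborhood of $(r,r)$, so $\fgs_{(N,N)} \approx \fgs_{(r,r)} + (\text{increment from } (r,r) \text{ to } (N,N))$. Concretely, I would first write a telescoping/decomposition
\[
\fgs_{(N,N)} = \fgs_{(r,r)} + \Delta, \qquad \Delta := \fgs_{(N,N)} - \fgs_{(r,r)},
\]
and show that $\Delta$ behaves, up to lower-order corrections, like an \emph{independent} bulk free energy increment on the scale $(N-r)$. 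The heuristic is that $\Var(\fgs_{(r,r)}) \asymp r^{2/3} \asymp N^{2/3}$, $\Var(\Delta) \asymp (N-r)^{2/3}$, and the cross term is controlled by Cauchy--Schwarz, which with $\operatorname{Corr}(\zeta,\eta) = \tfrac{\Var(\zeta) + \Cov(\zeta,\eta) - \Var(\zeta - \eta)/1 \cdots}$— more precisely, using the identity
\[
\operatorname{Corr}(\zeta, \zeta + \Delta) = \frac{\Var(\zeta) + \Cov(\zeta, \Delta)}{\Var(\zeta)^{1/2} \Var(\zeta + \Delta)^{1/2}},
\]
one gets $1 - \operatorname{Corr} \asymp \Var(\Delta)/\Var(\zeta) + |\Cov(\zeta,\Delta)|/\Var(\zeta) \asymp ((N-r)/N)^{2/3}$, provided the covariance term is genuinely of this order and not larger. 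The one-point moderate deviation estimates quoted from \cite{bas-sep-she-23} (and the assumed $\Var \asymp N^{2/3}$ scaling, which follows from them together with \textbf{Assumption B1} controlling the boundary contribution) give the needed two-sided variance bounds.

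The upper bound on the correlation (i.e.\ the lower bound on $1 - \operatorname{Corr}$) is the easier direction: it suffices to lower bound $\Var(\Delta)$ by $c (N-r)^{2/3}$ and to control $\Cov(\fgs_{(r,r)}, \Delta)$ from above. For the covariance I would use a restriction/localization argument: on a high-probability event the polymer to $(N,N)$ stays within the $O(N^{2/3})$-wide KPZ cylinder, so the weights determining $\Delta$ live in a region that overlaps the region determining $\fgs_{(r,r)}$ only near the antidiagonal through $(r,r)$; a Cauchy--Schwarz bound $|\Cov| \le \Var(\fgs_{(r,r)})^{1/2}\Var(\Delta)^{1/2}$ is too weak, so instead I would decompose $\Delta$ by its entrance point on the line $x+y = 2r$ and use transversal fluctuation bounds to argue that the portion of $\Delta$ correlated with $\fgs_{(r,r)}$ contributes at most $O((N-r)^{2/3})$ — this is the standard "the overlap of the two geodesics/polymers has length $O(N-r)$" mechanism. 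The lower bound on the correlation (upper bound on $1 - \operatorname{Corr}$) is where one must work: here I would show $\Cov(\fgs_{(r,r)}, \fgs_{(N,N)}) \ge \Var(\fgs_{(r,r)}) - C(N-r)^{2/3}$, typically via a coupling that forces the two polymers to share a long common stretch, combined with the FKG inequality (\textbf{Assumption B3}(v)) to handle the sign of the boundary contributions, and the moderate deviation estimates to bound the discrepancy.

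The main obstacle I anticipate is the clean control of the cross term $\Cov(\fgs_{(r,r)}, \Delta)$ at the sharp order $(N-r)^{2/3}$ — Cauchy--Schwarz only gives $(N-r)^{1/3} N^{1/3}$, which is too lossy, so one genuinely needs a geometric argument showing that $\fgs_{(r,r)}$ and $\Delta$ decouple except through an $O(N-r)$-length overlap window. This requires transversal fluctuation estimates (polymer wandering on scale (length)$^{2/3}$) and likely a comparison with the stationary polymer to get the increment $\Delta$ to be approximately stationary on its scale; since the relevant moderate deviation and fluctuation inputs are all available from \cite{bas-sep-she-23} and the stationary polymer machinery, the argument should go through, but assembling the localization carefully while tracking the boundary weight $W$ (whose contribution must be shown to be negligible in this $r \ge N/2$ regime via \textbf{Assumption B1}) is the delicate part. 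The case $\tau \to 1$ here parallels the droplet case in \cite{bas-sep-she-23}, so I would largely follow that template, inserting \textbf{Assumption B1} wherever the boundary free energy needs to be bounded.
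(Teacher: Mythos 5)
Your plan is in the right spirit — the key object is the increment $\Delta = \fgs_{(N,N)} - \fgs_{(r,r)}$, and the right order of magnitude is $\Var(\Delta) \asymp (N-r)^{2/3}$ — but the paper takes a slicker route that you are missing, and your version has a genuine gap in the harder direction. The paper does not attempt to control $\Cov(\fgs_{(r,r)},\Delta)$ directly (the cross-term you correctly flag as the obstacle). Instead, it uses the exact identity $\inf_{\lambda\in\R}\Var(U-\lambda V)=(1-\Corr^2(U,V))\Var(U)$, which replaces the correlation estimate by two-sided bounds on a \emph{minimized} variance. The upper bound on $1-\Corr^2$ (your ``easy'' direction, which is indeed the easy one) then follows at once from $\Var(\Delta)\lesssim (N-r)^{2/3}$ by taking $\lambda=1$, together with $\Var(\fgs_{(N,N)})\asymp N^{2/3}$; the covariance never appears. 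The matching lower bound on $1-\Corr^2$ requires lower-bounding $\inf_\lambda\Var(\fgs_{(N,N)}-\lambda\fgs_{(r,r)})$, and here the paper conditions on the $\sigma$-algebra $\mathcal F$ of weights in $\mathcal L_r^{\le}$, uses the law of total variance, and produces a positive-probability $\mathcal F$-measurable event on which $\Var(\fgs_{(N,N)}\mid\mathcal F)\gtrsim(N-r)^{2/3}$ (exploiting that $\fgs_{(N,N)}-\fgs_{(r,r)}\ge \log\wt Z_{r,N}$ and the one-point right-tail lower bound for the bulk polymer). This eliminates the $\lambda$-dependence entirely.

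Your proposal has two concrete problems. First, lower-bounding $\Var(\Delta)$ by $c(N-r)^{2/3}$ does \emph{not} by itself give $\Corr\le 1-c'((N-r)/N)^{2/3}$: one has
\[
\sqrt{\Var_N\Var_r}-\Cov(\fgs_{(r,r)},\fgs_{(N,N)})=\tfrac12\bigl(\Var(\Delta)-(\sqrt{\Var_N}-\sqrt{\Var_r})^2\bigr),
\]
and $(\sqrt{\Var_N}-\sqrt{\Var_r})^2$ can in principle be as large as $\Var(\Delta)$ (this happens precisely when $\Corr(\fgs_{(r,r)},\Delta)=\pm1$). So you would additionally need to bound $\Cov(\fgs_{(r,r)},\Delta)$ away from $\pm\sqrt{\Var_r\Var(\Delta)}$, which is exactly the decoupling you say is ``delicate'' but never resolve. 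The conditional-variance argument bypasses this entirely. Second, you invoke the FKG hypothesis (\textbf{Assumption B3}(v)) for the lower bound on the correlation, but the theorem only assumes \textbf{Assumption B1} and the paper's proof uses no FKG in this regime — FKG enters only in the small-$r$ lower bound (Theorem \ref{thm_r_small}). So an argument that leans on FKG here would prove a weaker statement. The ingredient you should be reaching for is the nesting estimate Theorem \ref{nest} (which formalizes your ``$\Delta$ is approximately the bulk increment'' heuristic) combined with the law of total variance, not a covariance/FKG decoupling.
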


\begin{theorem}\label{thm_r_small}  
Consider the inverse-gamma polymer with boundary condition $W$ on $\mathcal{L}_{\bf 0}$ that satisfies \textbf{Assumptions B1} and \textbf{B2}. There exist positive constants $C_3, c_0,  N_0$ such that, whenever $N\geq N_0$ and $c_0 \leq r \leq N/2$,  we have 
$$  \textup{$\mathbb{C}$orr}\Big(\fgs_{(r,r)}, \fgs_{(N,N)}\Big) \leq C_3\Big(\frac{r}{N}\Big)^{1/3}.$$ 
Furthermore, if $W$ satisfies  \textbf{Assumptions B1}, \textbf{B2} and \textbf{B3}, we have a matching lower bound. There exist positive constants $C_4, c_0,  N_0$ such that, whenever $N\geq N_0$ and $c_0 \leq r \leq N/2$,  we have 
$$ C_4\Big(\frac{r}{N}\Big)^{1/3} \leq \textup{$\mathbb{C}$orr}\Big(\fgs_{(r,r)}, \fgs_{(N,N)}\Big).$$ 
\end{theorem}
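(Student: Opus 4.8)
The plan is to pass through two-sided bounds on the covariance. Granting the one-point variance estimates $\Var(\fgs_{(r,r)})\asymp r^{2/3}$ and $\Var(\fgs_{(N,N)})\asymp N^{2/3}$ --- whose upper bounds follow from \textbf{Assumption B1} (to control $\max_{|k|\le N}\log W_k$) together with the available one-point moderate-deviation estimates for the point-to-point free energy, and whose lower bounds come from the boundary-condition-independent lower bound on the $r^{1/3}$-scale bulk fluctuation --- the two assertions of Theorem~\ref{thm_r_small} are equivalent to $\Cov(\fgs_{(r,r)},\fgs_{(N,N)})\le Cr^{2/3}$ (for the upper bound on $\Corr$) and $\Cov(\fgs_{(r,r)},\fgs_{(N,N)})\ge cr^{2/3}$ (for the lower bound). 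I would obtain both from the orthogonal (martingale) decomposition of the covariance over the independent sources of randomness $Z_1,Z_2,\dots$ of the model --- the increments $X_i=\log W_i-\log W_{i-1}$ of the initial condition and the bulk weights $\{Y_{\bf z}\}$ --- namely $\Cov(\fgs_{(r,r)},\fgs_{(N,N)})=\sum_s\mathbb E[\Delta_s\fgs_{(r,r)}\,\Delta_s\fgs_{(N,N)}]$, where $\Delta_s$ is the martingale increment attached to source $s$. The one structural fact that drives everything is the polymer-measure interpretation of the relevant one-sided derivatives: $\partial_{X_i}\fgs_{\bf v}=\pm\,Q^W_{\bf v}(\text{the polymer's exit point on }\mathcal L_{\bf 0}\text{ lies on the far side of }i)$, and $\partial_{Y_{\bf z}}\fgs_{\bf v}$ equals the $Q^W_{\bf v}$-probability of passing through ${\bf z}$.

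\emph{Upper bound.} By curvature of the limit shape and the one-point estimates, $Q^W_{(r,r)}(\text{exit}\ge i)$ is super-polynomially small once $|i|\gg r^{2/3}$, and $Q^W_{(r,r)}$ puts super-polynomially small mass on the part of the strip $\{0<x+y\le 2r\}$ outside a diagonal tube of width $\asymp r^{2/3}$; hence $\Delta_s\fgs_{(r,r)}$ is negligible except for $s$ among the $O(r^{2/3})$ increments $X_i$ with $|i|\lesssim r^{2/3}$ and the weights $Y_{\bf z}$ in that tube (when the $X_i$ are only $\rho$-mixing there is a further, mixing-controlled, error, negligible by the fast decay in \textbf{Assumption B2}). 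Applying Cauchy--Schwarz termwise and summing,
\[
\Cov(\fgs_{(r,r)},\fgs_{(N,N)})\ \le\ \Var(\fgs_{(r,r)})^{1/2}\cdot\Big(\sum_{s\ \text{seen by}\ \fgs_{(r,r)}}\mathbb E\big[(\Delta_s\fgs_{(N,N)})^2\big]\Big)^{1/2}\ +\ (\text{negligible}),
\]
so it suffices to show the inner sum --- the part of $\Var(\fgs_{(N,N)})$ carried by the data that $\fgs_{(r,r)}$ sees --- is $\lesssim r^{2/3}$. For the increments this is immediate: there are only $O(r^{2/3})$ of them and $\mathbb E[(\Delta_i\fgs_{(N,N)})^2]=O(1)$, since $|\partial_{X_i}\fgs_{(N,N)}|\le1$ and the increments have bounded variance. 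For the bulk weights in the diagonal tube one invokes the (known-type) localization estimate that the free energy accumulated by the polymer to $(N,N)$ inside a strip of thickness $2r$ has variance $O(r^{2/3})$. As $\Var(\fgs_{(r,r)})^{1/2}\asymp r^{1/3}$, this yields $\Cov\lesssim r^{2/3}$; throughout one is careful to use only $N$-independent bounds so the estimate is uniform in $N\ge 2r$.

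\emph{Lower bound.} Every summand $\mathbb E[\Delta_s\fgs_{(r,r)}\,\Delta_s\fgs_{(N,N)}]$ is non-negative, because for each source $\partial_s\fgs_{(r,r)}$ and $\partial_s\fgs_{(N,N)}$ have the same sign --- both free energies are coordinatewise monotone in the correctly signed initial-condition sequence $\mathcal X$ and in $Y$ --- so the conditional covariances involved are non-negative; here the FKG hypothesis \textbf{B3(v)} (and its conditional version) is used to handle correlated $X_i$. It therefore suffices to exhibit one block $I$ of $\asymp r^{2/3}$ consecutive positive indices at transversal distance $\asymp r^{2/3}$ with $\sum_{i\in I}\mathbb E[\Delta_i\fgs_{(r,r)}\,\Delta_i\fgs_{(N,N)}]\gtrsim r^{2/3}$, i.e.\ each summand $\gtrsim1$. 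Writing the summand as $\mathbb E[\Cov(\Delta_i\fgs_{(r,r)},\Delta_i\fgs_{(N,N)}\mid\text{earlier data})]$, one needs a positive-probability event on which simultaneously $Q^W_{(r,r)}(\text{exit}\ge i)\gtrsim1$, $Q^W_{(N,N)}(\text{exit}\ge i)\gtrsim1$, and $\Var(X_i\mid\text{earlier})\gtrsim1$. For the polymer to $(N,N)$ the second condition is automatic, since its exit distribution lives on the much larger scale $N^{2/3}$, so $Q^W_{(N,N)}(\text{exit}\ge i)\approx Q^W_{(N,N)}(\text{exit}\ge0)$, which is bounded below on a constant-probability event. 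For the polymer to $(r,r)$ this is the geometric heart of the argument: by the right-tail lower bound \eqref{low_bd}, with positive probability the initial condition performs a diffusive up-crossing of size $\ge t_0\sqrt{|I|}\asymp t_0 r^{1/3}$ across $I$, and choosing $t_0$ large this overcomes the $O(r^{1/3})$ parabolic penalty from the curvature of the limit shape, forcing the polymer to route past $I$ with $Q^W_{(r,r)}$-probability $\gtrsim1$. The non-degeneracy $\Var(X_i\mid\text{earlier})\gtrsim1$ is exactly the conditional variance bound \eqref{as_var}, and the conditional tail bounds in \textbf{Assumption B3} ensure these estimates survive the conditioning. Summing over $I$ gives $\Cov(\fgs_{(r,r)},\fgs_{(N,N)})\gtrsim r^{2/3}$ and hence $\Corr\ge C_4(r/N)^{1/3}$.

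The step I expect to be the main obstacle is precisely this geometric argument in the lower bound: quantitatively routing the optimal path to \emph{both} endpoints through a common $\Theta(r^{2/3})$-window of the initial condition, which requires simultaneous control of the exit-point distribution of the polymer to $(r,r)$ (via curvature together with \eqref{low_bd}) and of its stability under the conditioning appearing in the decomposition. On the upper-bound side the analogous difficulty is carrying out the localization estimates with $N$-uniform constants and no logarithmic losses, so that the ``visible'' fluctuation of $\fgs_{(N,N)}$ is genuinely $O(r^{2/3})$ and not $O(r^{2/3}\,\mathrm{polylog})$.
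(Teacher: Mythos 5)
Your overall strategy -- reduce to $\Cov\asymp r^{2/3}$, decompose the covariance over independent (or weakly dependent) sources, localize the small-scale free energy to an $O(r^{2/3})$ window, and use FKG together with a diffusive up-crossing of the initial condition for the lower bound -- matches the paper's guiding intuition, but the actual implementation differs from the paper's and has a couple of genuine gaps.

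For the upper bound, the paper does not use an Efron--Stein/martingale decomposition over individual sources. Instead it telescopes over nested parallelograms: $U_j$ is the difference of free energies restricted to paths inside $R_{0,r}^{jr^{2/3}}$ and $R_{0,r}^{(j-1)r^{2/3}}$, while $V_k$ is a difference of free energies over paths avoiding $R_{0,r}^{kr^{2/3}}$, with the boundary condition inside the window replaced by an independent copy $W'$ (the interpolating $W^{*,k}$). This coupled-boundary device is not cosmetic: it is what makes $V_k$ depend only on $\{X_i\}$ outside $[-(k-1)r^{2/3},kr^{2/3}]$ and on bulk weights outside a parallelogram, so that $U_j$ and $V_k$ become functions of $\{X_i\}$ on disjoint, well-separated index sets and the $\rho$-mixing hypothesis applies with an explicit distance. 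Your step ``the visible part of $\Var(\fgs_{(N,N)})$ is $O(r^{2/3})$'' is precisely where this device is needed and is not a known off-the-shelf localization estimate: $\log Z^{W,\textup{out},\bullet}_N$ still depends on the boundary increments inside the window (the paths avoid the parallelogram in the bulk but still start on $\mathcal L_0$), so without the $W^{*,j}$-replacement the out-partition is not measurable with respect to the non-tube data, and the Efron--Stein identity $\sum_{s\text{ in tube}}\mathbb E[(\Delta_s\fgs_N)^2]=\mathbb E[\Var(\fgs_N\mid\text{non-tube data})]$ does not close. In the paper this is exactly the content of Propositions~\ref{p:difference}, \ref{N_proof} and \ref{p:3rd}, which pay a polynomial price $(k+1)^{40}$ later absorbed by the exponential decay of Proposition~\ref{r_proof}. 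In addition, for $\rho$-mixing (rather than independent) $X_i$ the martingale increments $\Delta_i\fgs_{\bf v}$ are genuine conditional expectations, not directional derivatives; the passage from $|\partial_{X_i}\fgs_{\bf v}|\le 1$ to $\mathbb E[(\Delta_i\fgs_{\bf v})^2]=O(1)$, and the termwise sign control you later rely on, need the conditional FKG structure made explicit, which is why the paper conditions on a coarse $\sigma$-algebra first and applies the mixing bound to conditional covariances of block-level quantities rather than to martingale increments over individual coordinates.

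For the lower bound, you are in essentially the same spirit as the paper, but the paper does not lower-bound a sum of termwise-nonnegative summands. It conditions on $\mathcal F=\sigma(\text{bulk},\ \{X_i\}_{i\notin[1,r^{2/3}]})$, builds a positive-probability event $\mathcal B'$ (via nine estimates $\mathcal B_1,\dots,\mathcal B_9$) on which (a) $\Var(\fgs_N-\fgs_N(\tau\ge r^{2/3})\mid\mathcal F)$ and $\Var(\fgs_r-\fgs_r(\tau\ge r^{2/3})\mid\mathcal F)$ are $e^{-cM}r^{2/3}$ and (b) $\Var(\log W_{r^{2/3}}\mid\mathcal F)\ge cr^{2/3}$, then observes that on $\mathcal B'$ both $\fgs_r(\tau\ge r^{2/3})$ and $\fgs_N(\tau\ge r^{2/3})$ differ from $\log W_{r^{2/3}}$ by $\mathcal F$-measurable quantities, so the conditional covariance is literally $\Var(\log W_{r^{2/3}}\mid\mathcal F)$ up to small corrections. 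FKG is then applied twice only at the very end, to pass from the conditional to the unconditional covariance. Your geometric heart (a $t_0\sqrt{|I|}$ up-crossing of $\log W$ beating the $O(r^{1/3})$ parabolic penalty) is the correct mechanism -- it is encoded in $\mathcal B_8,\mathcal B_9$ together with \eqref{low_bd} and the FKG hypothesis -- but the claim that \emph{every} summand $\mathbb E[\Delta_s\fgs_r\Delta_s\fgs_N]$ is nonnegative is stronger than what the paper uses and is not automatic in the $\rho$-mixing regime unless you formalize the conditional FKG step that makes the martingale-increment conditional covariance a one-dimensional FKG covariance; the paper's coarse conditioning sidesteps this entirely.

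In short: the structure of your argument is sound in outline and the mechanisms you identify (exit-point localization for $\fgs_r$, stability of the exit distribution of $\fgs_N$ at scale $r^{2/3}$, diffusive boost plus FKG) are the right ones, but the upper-bound ``visible variance'' estimate hides the paper's key coupled-boundary construction and the polynomial-versus-exponential bookkeeping, and the termwise positivity and individual-source martingale increments need the conditional-FKG/mixing machinery made precise rather than asserted.
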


We shall informally refer to the setups of Theorem \ref{thm_r_large} and Theorem \ref{thm_r_small} by the large $r$ regime and the small $r$ regime respectively. Notice that since \textbf{Assumption A} implies  \textbf{Assumption B} (Proposition \ref{aimpliesa1}), both Theorem \ref{thm_r_large} and Theorem \ref{thm_r_small} remain valid for initial conditions satisfying \textbf{Assumption A} as well. We record the following corollary. 

\begin{corollary}
Consider the inverse-gamma polymer with boundary condition $W$ on $\mathcal{L}_{\bf 0}$ that satisfies \textbf{Assumption A}.
There exist positive constants $C_1, C_2, c_0, N_0$ such that,
\begin{itemize}
\item for $N\geq N_0$ and $N/2 \le r \leq N-c_0$,
$$1-C_1\Big(\frac{N-r}{N}\Big)^{2/3} \leq \textup{$\mathbb{C}$orr}\Big(\fgs_{(r,r)}, \fgs_{(N,N)}\Big) \leq 1-C_2\Big(\frac{N-r}{N}\Big)^{2/3},$$ 
\item for $N\geq N_0$ and $c_0 \leq r \leq N/2$,  we have 
$$ C_2\Big(\frac{r}{N}\Big)^{1/3} \leq \textup{$\mathbb{C}$orr}\Big(\fgs_{(r,r)}, \fgs_{(N,N)}\Big) \leq C_1\Big(\frac{r}{N}\Big)^{1/3}.$$ 
\end{itemize}
\end{corollary}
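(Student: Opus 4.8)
The plan is to obtain the Corollary directly from Theorem~\ref{thm_r_large}, Theorem~\ref{thm_r_small}, and Proposition~\ref{aimpliesa1}, the only additional work being a bookkeeping step to merge the constants produced by the two theorems. First I would invoke Proposition~\ref{aimpliesa1} to upgrade \textbf{Assumption A} to \textbf{Assumption B}, so that in particular \textbf{B1}, \textbf{B2}, and \textbf{B3} all hold for $W$. This is the substantive input: its proof amounts to verifying the tail bounds \eqref{up_bd}, \eqref{low_tail}, \eqref{cond_up_bd}, \eqref{cond_low_tail}, the variance lower bound \eqref{as_var}, the $\rho$-mixing estimate \eqref{mix}, the right-tail lower bound \eqref{low_bd}, and the FKG inequality \eqref{FKG}. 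Under the independence in \textbf{Assumption A}, \eqref{mix} and \eqref{FKG} are immediate, since disjoint index sets index independent random variables; the unconditional tail and variance statements follow from the uniform sub-exponential control in~(i) via Bernstein-type concentration for sums of independent sub-exponential variables, together with the matching lower bound, namely that $\Var(\log W_N)=\sum_{i=1}^N\Var(X_i)$ is of order $N$, supplied by~(ii); and the conditional statements \eqref{cond_up_bd}, \eqref{cond_low_tail}, \eqref{as_var} are handled in the same way, using that conditioning on $\mathcal F_r$ fixes only the increments outside $[1,r]$ and leaves $X_1,\dots,X_r$ independent.

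Second, for $N/2\le r\le N-c_0$ I would apply Theorem~\ref{thm_r_large}, which now applies since \textbf{B1} holds, to get constants $C_1',C_2',c_0',N_0'$ such that
\[
1-C_1'\Big(\tfrac{N-r}{N}\Big)^{2/3}\ \le\ \Corr\big(\fgs_{(r,r)},\fgs_{(N,N)}\big)\ \le\ 1-C_2'\Big(\tfrac{N-r}{N}\Big)^{2/3},
\]
and for $c_0\le r\le N/2$ I would apply Theorem~\ref{thm_r_small}: its upper bound uses only \textbf{B1} and \textbf{B2}, and the matching lower bound uses \textbf{B1}, \textbf{B2}, \textbf{B3}, all now available, so that there are constants $C_3',C_4',c_0'',N_0''$ with $C_4'(r/N)^{1/3}\le \Corr\big(\fgs_{(r,r)},\fgs_{(N,N)}\big)\le C_3'(r/N)^{1/3}$.

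Finally I would set $C_1:=\max(C_1',C_3')$, $C_2:=\min(C_2',C_4')$, $c_0:=\max(c_0',c_0'')$, and $N_0:=\max(N_0',N_0'')$; since $C_2'\le C_1'$ and $C_4'\le C_3'$ by the respective theorems we have $C_2\le C_1$, so none of the displayed inequalities is vacuous, and the monotonicity of $t\mapsto 1-C_1t^{2/3}$ and $t\mapsto C_1t^{1/3}$ on $[0,1]$ lets both displays be stated with the single pair $(C_1,C_2)$. I do not expect a genuine obstacle here: all analytic content has been isolated in Proposition~\ref{aimpliesa1} (verification of the hypotheses from independence and uniform sub-exponentiality) and in Theorems~\ref{thm_r_large} and~\ref{thm_r_small}, and the Corollary is a purely formal consequence.
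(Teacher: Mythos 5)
Your proof is correct and takes the same route as the paper: the paper records this corollary as an immediate consequence of Proposition~\ref{aimpliesa1} (Assumption A implies Assumption B) combined with Theorems~\ref{thm_r_large} and~\ref{thm_r_small}, with the constant-merging bookkeeping you describe being entirely routine.
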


As mentioned before, the initial conditions satisfying \textbf{Assumption A} include the stationary initial condition with parameter $\rho=\mu/2$, (see Section \ref{stat_poly} for precise definitions). A different argument giving the upper bound in the small $r$ regime for the special case of this stationary initial condition using duality is given in Theorem \ref{stat_r_small}. We finish this subsection with a few remarks regarding examples and potential extensions of our results.

\begin{remark}
    As mentioned already, our arguments carry over verbatim to the case of exponential last passage percolation on $\Z^2$ with a class of ``diffusive" initial conditions upon changing the free energy to the last passage time and making other appropriate changes to the definitions. In fact, many of the arguments become simpler in this case, since one can consider geodesics instead of the quenched polymer measures. To avoid repetition, we shall not write down the details in this case, but we shall formulate a precise statement in this setting; see Theorem \ref{thm_r_large_lpp} and Theorem \ref{thm_r_small_lpp}. 
\end{remark}

\begin{remark}
    As mentioned above, \textbf{Assumption A} includes the case of stationary initial condition with parameter $\mu/2$. This is related to the fact that the corresponding characteristic direction, in this case, is the diagonal direction $(1,1)$. One should be able to extend our results to the stationary initial conditions with parameters $\rho\in (0,\mu), \rho\neq \mu/2$ (see Section \ref{stat_poly} for precise definitions) provided the endpoint of the polymers varies along the corresponding characteristic direction. In this case, our arguments should apply to initial conditions satisfying \textbf{Assumption A}, but with an appropriate drift (i.e., $\mathbb E X_{i}=- \Psi_0(\mu-\rho)+ \Psi_0(\rho)$). However, we have not attempted to verify all the details in this case. 
\end{remark}

\begin{remark}
    \label{example}
    \textbf{Assumption B} contains many more interesting cases beyond the stationary one; we discuss a few examples. 
    \begin{itemize}
        \item First consider the case when $X_i$'s are independent. It is clear that independent sequences satisfy parts assumptions \eqref{mix}, \eqref{FKG} from \textbf{Assumption B}, and the other parts follow under conditions on the tails of $\{X_i\}_{i\in \mathbb{Z}}$. In particular, let $\{\sigma_{i}\}_{i\in \Z}$ denote a positive sequence uniformly bounded away from $0$ and $\infty$. Denoting by $\hat{X}^{\mu/2}_{i}$ the increment sequence in the stationary case, it is easy to check that $X_{i}:=\sigma_{i}\hat{X}^{\mu/2}_{i}$ satisfies \textbf{Assumption B}. Compare this to \cite{ferocctimecorr}, where the case $\sigma_{i}\equiv \sigma$ was considered in the large $r$ regime.
\item Consider next the ``mixtures" between deterministic and stationary increments. We can consider (a) $X_{i}=\hat{X}^{\mu/2}_{i}$ if $i$ is even and 0 otherwise, or (b) $X_{i}=1,-1,\hat{X}^{\mu/2}_{i}$ if $i=0,1,2$ modulo $3$ respectively. It is easy to check that $X_i$'s defined as above satisfy all the hypotheses in \textbf{Assumption B}. 
\item Finally let us consider $X_{i}$'s which are not independent; the hardest assumption to verify in this case is \eqref{FKG}. Although there should be many examples satisfying these hypotheses, we shall restrict to Gaussian vectors $X_{i}$ since it is a well-known fact that such a vector satisfies the FKG inequality if and only if the pairwise correlations are non-negative. For a sequence $\{U_{i}\}_{i\in \Z}$ of i.i.d.\ standard Gaussians, define $X_{i}=U_{i}+U_{i+1}$ for all $i\ge 0$, and $X_{i}=U_{i}+U_{i-1}$ for $i<0$. Since this sequence is  $1$-dependent, it satisfies the mixing hypothesis \eqref{mix}. By calculating pairwise correlations (both unconditional and conditional) using standard formulas for Gaussian vectors, together with the fact described above, one can easily verify parts \eqref{as_var} and \eqref{FKG} of the hypotheses. Parts \eqref{up_bd}, \eqref{low_tail},  \eqref{cond_up_bd}, \eqref{cond_low_tail}, and \eqref{low_bd} are consequences of standard estimates and are easy to verify. Therefore, the example described above (and similar ones) satisfy \textbf{Assumption B}. 
\end{itemize}
\end{remark}

\begin{remark}
\label{ext}
    The quantitative estimates in several parts of \textbf{Assumption B} are not optimal. The reader can check that the exponent 100 in \eqref{mix} can be replaced by a smaller number and the exponential bound in \eqref{up_bd} can be replaced by a sufficiently high degree polynomial without making any major changes to the arguments. 
    It is possible that \eqref{up_bd}, \eqref{low_bd} and \eqref{as_var} can be removed by replacing the mixing condition in \eqref{mix} with a stronger hypothesis, but we have not managed to locate a result that fits our set-up and hence have not attempted to find a more concise replacement for these hypotheses.
\end{remark}

\subsection{Organization of the paper}
The rest of the paper is organized as follows. In Section \ref{s:prelim}, we introduce basic notations and recall certain basic estimates about the point-to-point polymer from the literature. Section \ref{est_poly_bdry} contains several important estimates for polymers with an initial condition whose proofs are provided later. Section \ref{sec_r_large} and Sec \ref{sec_r_small} prove Theorems \ref{thm_r_large} and \ref{thm_r_small} respectively. Section \ref{sec_r_small} is the heart of the new technical contributions of this paper; Sections \ref{r_small_up} and \ref{r_small_low} provide the proofs of the upper bound and lower bound in Theorem \ref{thm_r_small} respectively. Section \ref{stat_poly} introduces the stationary polymer and recalls some of the known estimates about it. Section \ref{s:duality} provides an alternative proof of the upper bound in Theorem \ref{thm_r_small} for the stationary polymer using duality; we believe this is an argument of independent interest. Section \ref{s:est_proof} provides the proof of the technical estimates from Section \ref{est_poly_bulk} and Section \ref{est_poly_bdry}. Finally, Section \ref{s:lpp} precisely formulates time correlation results for exponential LPP with diffusive initial conditions and describes how the arguments in the inverse-gamma polymer case carry over to the zero temperature set-up. 

\subsection*{Acknowledgements}  RB was partially supported by a MATRICS grant (MTR/2021/000093) from SERB, Govt.~of India, DAE project no.~RTI4001 via ICTS, and the Infosys Foundation via the Infosys-Chandrasekharan Virtual Centre for Random Geometry of TIFR. XS was partially supported by the Wylie research fund from the University of Utah.

\section{Preliminaries}
\label{s:prelim}
\subsection{Notation}
Fix ${\bf a}\in \mathbb{Z}^2$, let
$\mathcal{L}_{{\bf a}}=\{{\bf a}+(j,-j): j\in\Z\}$.
For $k\in \mathbb{R}_{\geq 0}$, define
$\mathcal{L}^k_{{\bf a}} = \{{\bf x}\in \mathcal{L}_{{\bf a}} : |{\bf x}-{\bf a}|_\infty \leq k  \}.$
For ${\bf a}, {\bf b}\in \mathbb{Z}^{2}$ and  $k\in \mathbb{R}_{\geq 0}$,  
$R_{{\bf a}, {\bf b}}^k$ denotes the parallelogram spanned by the four corners ${\bf a} \pm (k,-k)$ and ${\bf b} \pm (k,-k)$. For any subset $A\subset \mathbb{Z}^2$ and any $\triangle \in \{>, \geq, <,  \leq\}$, define
$A^\triangle = \bigcup_{x\in A} (x + \mathbb{Z}^2_{\triangle 0})$.

For a collection of directed paths $\mathfrak{A}$, let   $Z(\mathfrak{A})$   be the free energy obtained by summing over all the paths in $\mathfrak{A}$.
For $A, B \subset \mathbb{R}^2$, let 
$Z_{A, B}$ denote the partition function obtained by summing over all directed paths starting from integer points 
$$A^\circ = \{{\bf a}\in \mathbb{Z}^2 : {\bf a}+[0,1)^2 \cap A \neq \emptyset\}$$ and ending in 
$$B^\circ = \{{\bf b}\in \mathbb{Z}^2 : {\bf b}+[0,1)^2 \cap B \neq \emptyset\}.$$

Let $A, B \subset \mathbb{R}^2$, ${\bf c},{\bf d} \in \mathbb{Z}^2$ and $h> 0$. We define three specific partition functions: 
\begin{align*}
   Z_{A, B}^{\textup{in}, R^{h}_{{\bf c}, {\bf d}}} &= \text{sum over directed paths from $A$ to $B$  contained inside the parallelogram  $R^{h}_{{\bf c}, {\bf d}}$, } \\
   Z_{A, B}^{\textup{exit}, R^{h}_{{\bf c}, {\bf d}}}&= \text{sum over directed paths from $A$ to $B$  that exit at least one of } \\
   &\quad\; \text{the sides of  $R^{h}_{{\bf c}, {\bf d}}$ parallel to ${\bf d}-{\bf c}$. }\\
    Z_{A, B}^{\textup{out}, R^{h}_{{\bf c}, {\bf d}}}&= \text{sum over directed paths from $A$ to $B$  that avoid $R^{h}_{{\bf c}, {\bf d}}$.}
\end{align*}


Integer points on the diagonal or the anti-diagonal are abbreviated as $a=(a,a)$ and $\olsi{a} = (a,-a)$. Common occurrences of this include  $Z^W_{(r,r)} = Z^W_r$, $\wt Z_{\olsi{r},N}=\wt Z_{(r,-r), (N,N)}$,  $\wt Z_{{\bf p}, N+\olsi{k}}=\wt Z_{{\bf p}, (N+k,N-k)}$,  $\mathcal{L}_a^k=\mathcal{L}_{(a,a)}^k$ and $R_{a, b}^k = R^k(a, b)=R_{(a,a), (b,b)}^k$.

Finally, we point out two conventions about constants and  integer rounding. First, generic positive constants will be denoted by $C, C',$ in the calculations of the proofs. They may change from line to line without change in notation. Second, we drop the integer floor function   to simplify  notation. For example, if we divide the line segment from $(0,0)$ to $(N, N)$ in $5$  equal pieces, we denote the free energy of the first segment by  $\log Z_{0, N/5}$ even if  $N/5$ is not an integer.

\subsection{The shape function for the bulk polymer}

When the positive weights $\{Y_{\bf z}\}_{{\bf z}\in \mathbb{Z}^2}$ are chosen as a collection of i.i.d.~positive random variables on some probability space $(\Omega, \mathbb{P})$. 
Under a mild moment assumption such as
$$\mathbb{E}\big[|\log Y_{\bf z}|^p\big] < \infty \quad  \textup{ for some }p>2,$$
a law of large numbers type result called the \textit{shape theorem}  holds for the free energy   (Section 2.3 of \cite{Jan-Ras-20-aop}):  
there exists a concave, positively homogeneous and deterministic continuous function $\Lambda: \mathbb{R}^2_{\geq 0} \rightarrow \mathbb{R}$ that satisfies 
\begin{equation}\label{shape}
\lim_{n\rightarrow \infty} \sup_{ |{\bf z}|_1 \geq n} \frac{|\log \wt{Z}_{0, {\bf z}} - \Lambda({\bf z})|}{|{\bf z}|_1} = 0 \qquad \textup{$\mathbb{P}$-almost surely}.
\end{equation}

For general i.i.d.~weights, regularity properties of $\Lambda$ such as strict concavity or differentiability, are conjectured to hold at least for all continuous weights.
There is a special case, first observed in \cite{poly2}, that if the i.i.d.\ weights  have the inverse-gamma distribution \eqref{ig4}, then $\Lambda$ can be computed explicitly. Using this fact, several estimates were done (only using calculus) in Section 3.2 of \cite{bas-sep-she-23}, and we recall one of the results from there. 

To simplify the notation, let $\Lambda_N= \Lambda((N,N))$.

\begin{proposition}[{\cite[Proposition 3.5]{bas-sep-she-23}}]\label{reg_shape}
There exist positive constants $C_1, N_0, \epsilon_0$ such that for each $N\geq N_0$ , $h\leq \epsilon_0N^{1/3}$ and each ${\bf p}\in \mathcal{L}_{N}^{h N^{2/3}}$, we have 
$$\big|\Lambda({\bf p}) - \Lambda_N\big| \leq C_1 h^2 N^{1/3}.$$
\end{proposition}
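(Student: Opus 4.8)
Proposal for the proof of Proposition \ref{reg_shape}.

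The plan is to exploit the explicit formula for the inverse-gamma shape function together with the fact that, along the anti-diagonal direction, the diagonal point $(N,N)$ is the \emph{characteristic} (critical) direction, so that the first-order variation of $\Lambda$ vanishes and the leading behaviour is quadratic. Concretely, recall from \cite{poly2} (see Section 3.2 of \cite{bas-sep-she-23}) that for inverse-gamma weights with shape parameter $\mu$ one has a closed form $\Lambda((x,y)) = \inf_{s\in(0,\mu)}\bigl(-x\,\Psi_0(s) - y\,\Psi_0(\mu-s)\bigr)$ (up to sign conventions), where $\Psi_0$ is the digamma function, and that the optimizer for the diagonal direction $(1,1)$ is $s=\mu/2$. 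A point ${\bf p}\in\mathcal{L}_N^{hN^{2/3}}$ can be written as ${\bf p}=(N+k,N-k)$ with $|k|\le hN^{2/3}$. So it suffices to analyze $g(k):=\Lambda((N+k,N-k))$ as a function of the real variable $k$ near $k=0$.

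First I would write $g(k)$ using positive homogeneity as $g(k) = N\,\phi(k/N)$ where $\phi(\theta) := \Lambda((1+\theta, 1-\theta))$, for $|\theta|\le h N^{-1/3}\le \epsilon_0$. Then I would Taylor expand $\phi$ around $\theta=0$: since $(1,1)$ is the characteristic direction, $\phi'(0)=0$ (this is precisely the statement that the infimum in the variational formula is attained at the symmetric point $s=\mu/2$, combined with the envelope theorem), while $\phi''(0)$ is a finite negative constant determined by $\mu$ (strict concavity of $\Lambda$ in the transversal direction, again read off from the explicit formula — it will involve $\Psi_1$, the trigamma function). Hence by Taylor's theorem with remainder, $|\phi(\theta)-\phi(0)| = |\tfrac12\phi''(\xi)\theta^2| \le C\theta^2$ for $|\theta|\le\epsilon_0$, where $C$ depends only on $\mu$ through a bound on $\phi''$ on a fixed neighbourhood of $0$ (here one uses that $\phi$ is $C^2$ on $(-\epsilon_0,\epsilon_0)$, which follows from smoothness of the digamma function and the fact that the optimizer $s(\theta)$ stays in a compact subset of $(0,\mu)$). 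Substituting $\theta=k/N$ with $|k|\le hN^{2/3}$ gives
$$\bigl|\Lambda((N+k,N-k)) - \Lambda_N\bigr| = N\,|\phi(k/N)-\phi(0)| \le N\cdot C\,(k/N)^2 \le C\,h^2 N^{2/3}\cdot N^{-1}\cdot N = C\,h^2 N^{1/3},$$
which is the claimed bound (with the constraint $h\le\epsilon_0 N^{1/3}$ exactly ensuring $|k/N|\le\epsilon_0$ so the Taylor estimate applies on the fixed neighbourhood).

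The main obstacle — though it is largely bookkeeping rather than conceptual, and in fact is already done in \cite[Section 3.2]{bas-sep-she-23} which is what Proposition \ref{reg_shape} cites — is verifying cleanly that $\phi\in C^2$ near $0$ with a uniform second-derivative bound, i.e. controlling the optimizer $s(\theta)$ of the variational formula and differentiating through the infimum. One handles this by the standard envelope/implicit-function argument: at $\theta=0$ the optimizer is the nondegenerate interior critical point $s=\mu/2$ (nondegenerate because $\Psi_1>0$), so the implicit function theorem gives a smooth branch $\theta\mapsto s(\theta)$ on a fixed interval, and then $\phi(\theta)$ is an explicit smooth function of $\theta$ and $s(\theta)$, yielding the $C^2$ bound with constants depending only on $\mu$. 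Since all of this is quantitative calculus with the digamma/trigamma functions on a compact set, no probabilistic input is needed and the constants $C_1, N_0, \epsilon_0$ depend only on $\mu$, as asserted.
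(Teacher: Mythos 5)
Your proposal is correct and is essentially the approach of \cite[Section 3.2]{bas-sep-she-23}, which this paper cites for the proposition: reduce by homogeneity to the one-variable function $\phi(\theta)=\Lambda((1+\theta,1-\theta))$, use $\phi'(0)=0$ (the diagonal being the characteristic direction for $\rho=\mu/2$) together with a uniform bound on $\phi''$ near $0$, obtained from the explicit digamma variational formula and an implicit-function-theorem control of the optimizer. One small arithmetic slip in the displayed chain: $|k|\le hN^{2/3}$ gives $k^2\le h^2N^{4/3}$ (not $h^2N^{2/3}$), so $Ck^2/N\le Ch^2N^{1/3}$ -- the intermediate expression you wrote evaluates to $Ch^2N^{2/3}$, but the final bound is correct.
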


\subsection{Estimates for the bulk polymer}\label{est_poly_bulk}

We recall estimates from Section 3.3 of \cite{bas-sep-she-23}. Note that these estimates were originally proved for partition function $Z_{0, \bbullet}$ which included the weight at the starting point, i.e.  
$$\log Z_{0, \bbullet} =  \log Y_{{\bf 0}} + \log \wt{Z}_{0, \bbullet}.$$
However, because $Y_{{\bf 0}}\sim \textup{Ga}^{-1}(\mu)$, the exact same estimates also hold for $\wt{Z}_{0, \bbullet}$.

Recall the shape function for the bulk polymer  $\Lambda_N = \Lambda((N,N))$ from \eqref{shape}. 
Our first two propositions give the upper and lower bounds for the right tail.
\begin{proposition}\label{ptl_upper}
There exist positive constants $C_1, C_2, N_0$ such that for each $N \geq N_0$, $t\geq 1$ and $1\leq h \leq e^{C_1{\min\{t^{3/2}, tN^{1/3}\}}}$, we have 
$$\mathbb{P}\Big(\log \wt{Z}_{\mathcal{L}_0^{hN^{2/3}},\mathcal{L}_{N}}  - \Lambda_N \geq tN^{1/3}\Big) \leq 
e^{-C_2\min\{t^{3/2}, tN^{1/3}\}}.  
$$
\end{proposition}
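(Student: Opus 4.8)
The plan is to deduce the estimate from two ingredients only: the one-point right-tail moderate deviation bound for the \emph{point-to-point} free energy $\log\wt Z_{0,{\bf z}}$ (the estimate recalled in the introduction), and the curvature of the limit shape as quantified by Proposition~\ref{reg_shape} and its rescalings. Throughout, write $\ell:=\max\{1,\lceil\sqrt t\,\rceil\}$; this is the transversal scale (in units of $N^{2/3}$) at which the curvature penalty $\asymp\ell^2N^{1/3}$ already dominates a deviation of size $tN^{1/3}$. There are two regimes, separated by whether $\min\{t^{3/2},tN^{1/3}\}$ dominates $\log N$, and I would argue them differently.

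\textbf{The regime $\min\{t^{3/2},tN^{1/3}\}\ge \const\log N$ (equivalently $t\gtrsim(\log N)^{2/3}$ in the relevant range).} Here a union bound is enough. Expand $\wt Z_{\mathcal{L}_0^{hN^{2/3}},\mathcal{L}_N}=\sum_{\olsi a\in\mathcal{L}_0^{hN^{2/3}}}\sum_{{\bf q}\in\mathcal{L}_N}\wt Z_{\olsi a,{\bf q}}$; a summand whose starting and ending points differ transversally by $|c|N^{2/3}$ has displacement with $\Lambda$-value at most $\Lambda_N-\const\,c^2N^{1/3}$ by curvature, so by the point-to-point right tail the probability that it exceeds $e^{\Lambda_N+tN^{1/3}}$ is at most $\exp\bigl(-\const\min\{(t+c^2)^{3/2},(t+c^2)N^{1/3}\}\bigr)$. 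Distributing $e^{\Lambda_N+tN^{1/3}}$ among the summands by a summable family of weights and summing the resulting bounds, the contributions with $|c|\gtrsim\sqrt t$ are killed by the Gaussian-in-$c$ decay produced by curvature, while the $O(hN^{4/3})$ near-diagonal summands cost only the prefactor $O(hN^{4/3})$. Since $\log(hN^{4/3})\le C_1\min\{t^{3/2},tN^{1/3}\}+\tfrac43\log N$, this prefactor is absorbed once $C_1$ is small and $\min\{t^{3/2},tN^{1/3}\}\gtrsim\log N$; this is the step that uses the hypothesis $h\le e^{C_1\min\{t^{3/2},tN^{1/3}\}}$ with $C_1$ small.

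\textbf{The regime $1\le t\lesssim(\log N)^{2/3}$.} Now the crude union bound over $\asymp N^{2/3}$ endpoints is too lossy, because the $\tfrac43\log N$ above can no longer be absorbed into the constant quantity $\min\{t^{3/2},tN^{1/3}\}$, and one must exploit the geometry of the paths. With $m:=\const(h+\ell)$ I would split
\[
\wt Z_{\mathcal{L}_0^{hN^{2/3}},\mathcal{L}_N}\;=\;Z^{\,\textup{in},\,R^{mN^{2/3}}_{0,N}}_{\mathcal{L}_0^{hN^{2/3}},\mathcal{L}_N}\;+\;Z^{\,\textup{exit},\,R^{mN^{2/3}}_{0,N}}_{\mathcal{L}_0^{hN^{2/3}},\mathcal{L}_N}
\]
(there is no ``out'' term since every path starts inside $R^{mN^{2/3}}_{0,N}$, using $h\le m$). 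For the exit term, any path leaving the parallelogram of half-width $mN^{2/3}\ge\const\sqrt t\,N^{2/3}$ is forced off the diagonal at transversal scale $mN^{2/3}$ over a stretch of order $N$, so it incurs a curvature penalty of order $m^2N^{1/3}\gtrsim tN^{1/3}$; an exit-point estimate of the type standard in this literature — split over dyadic exit scales and the two long sides, bound each piece by the free energy below the exit point times the free energy above it, and combine the one-point estimates with this curvature gain — yields $\mathbb{P}\bigl(Z^{\,\textup{exit},\,R^{mN^{2/3}}_{0,N}}_{\mathcal{L}_0^{hN^{2/3}},\mathcal{L}_N}\ge e^{\Lambda_N+tN^{1/3}}\bigr)\le e^{-\const\min\{t^{3/2},tN^{1/3}\}}$. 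For the confined term, since such paths end in $\mathcal{L}_N^{mN^{2/3}}$, I would partition $\mathcal{L}_N^{mN^{2/3}}$ into $\asymp\ell$ sub-blocks of half-width $N^{2/3}$ and bound, for each block $I$ centered at transversal offset $uN^{2/3}$ with $|u|\le m$, the quantity $\wt Z_{\mathcal{L}_0^{hN^{2/3}},I}$ by comparison with a \emph{single} point-to-point free energy: choosing auxiliary points ${\bf s}$ and ${\bf e}$ placed far beyond
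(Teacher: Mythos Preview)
The paper does not give its own proof of this proposition: it is one of several estimates imported wholesale from \cite[Section~3.3]{bas-sep-she-23} (see the opening sentence of Section~\ref{est_poly_bulk}). So there is no in-paper argument to compare against; the relevant benchmark is whatever argument appears in that reference, which is built from the one-point moderate deviation bounds and curvature, exactly the two inputs you name.

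Your outline is along the expected lines and the large-$t$ regime (where $\min\{t^{3/2},tN^{1/3}\}\gtrsim\log N$) is essentially correct: a direct union bound over start/end pairs, with curvature supplying the extra $c^2$ in the exponent, does absorb the polynomial prefactor $hN^{4/3}$ once $C_1$ is taken small. The split into a confined part and an exit part for the small-$t$ regime is also the right move.

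That said, there are two genuine gaps. First, your write-up is literally truncated mid-sentence at the crucial step --- the auxiliary-point/sandwich argument for the confined term is announced but not carried out, so the small-$t$ case is not actually proved. Second, you should be careful about circularity: the ``exit-point estimate of the type standard in this literature'' you invoke for the exit term is, in this paper, Proposition~\ref{trans_fluc_loss3}, which is itself imported from the same source and may well rely on the very segment-to-line right-tail bound you are trying to prove. If you want a self-contained argument you need to derive the exit estimate directly from the one-point bound and curvature (splitting over the exit height and using superadditivity at the exit point), rather than quoting it. A smaller point: in the small-$t$ regime your choice $m=\text{const}(h+\ell)$ can be as large as $N^{C_1}$ when $t\sim(\log N)^{2/3}$, so the number of blocks in your endpoint partition is not $O(\ell)$ but potentially polynomial in $N$; you need to check that the per-block bound is strong enough to survive this.
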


\begin{proposition}\label{up_lb}
There exist positive constants $C_1, N_0, t_0, \epsilon_0$ such that for each $N\geq N_0$, $t_0 \leq t \leq \epsilon_0 N^{2/3}$, we have
$$\mathbb{P}(\fg_{0, N} - \Lambda_N \geq tN^{1/3}) \geq e^{-C_1 t^{3/2}}.$$
\end{proposition}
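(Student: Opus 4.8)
The plan is to establish the one-point lower bound on the right tail deviation of the point-to-point free energy by a second-moment / Paley--Zygmund argument applied to a suitably tilted (size-biased) partition function, combined with the available upper-tail control from Proposition \ref{ptl_upper}. The heuristic is standard in the KPZ/polymer literature: the event $\{\fg_{0,N}-\Lambda_N\ge tN^{1/3}\}$ should have probability of order $e^{-ct^{3/2}}$ because it corresponds to an $O(tN^{1/3})$ upward fluctuation of a quantity whose natural fluctuation scale is $N^{1/3}$ with a stretched-exponential (exponent $3/2$) right tail. Concretely, first I would fix the scale $t\in[t_0,\epsilon_0N^{2/3}]$ and consider a change of measure on the i.i.d.\ inverse-gamma weights in the relevant parallelogram $R^{hN^{2/3}}_{0,N}$: tilting the shape parameter of a positive fraction of the weights along a near-diagonal strip from $\mu$ to a slightly smaller value $\mu - \delta$ with $\delta \asymp t^{1/2}N^{-1/3}$. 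Under this tilted law the mean of $\fg_{0,N}$ shifts upward by $\Theta(tN^{1/3})$ (this is a calculus computation using the explicit formula for $\Lambda$ from \cite{poly2} and the estimates of Proposition \ref{reg_shape} and Section 3.2 of \cite{bas-sep-she-23}), so that the event in question has probability bounded below under the tilted measure; then the Radon--Nikodym cost of the tilt is $\exp(-\Theta(\delta^2 \cdot \#\{\text{tilted weights}\})) = \exp(-\Theta(t))$ per site times the number of sites, which one checks aggregates to $e^{-\Theta(t^{3/2})}$ for the optimal choice.

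Alternatively, and perhaps cleaner given what is already cited, I would run the argument directly via moments of $\wt Z$: the inverse-gamma polymer has exactly computable positive integer moments of the partition function (Fréchet-type moment formulas, cf.\ the moment method for inverse-gamma), and one can select an integer $m\asymp t^{1/2}N^{1/3}/(\text{const})$ so that $\mathbb{E}[(\wt Z_{0,N}/e^{\Lambda_N})^m]^{1/m}$ already exhibits the desired upward shift $\gtrsim tN^{1/3}$ in the exponent. Then a Paley--Zygmund / reverse-Markov step using the ratio of the $2m$-th moment to the square of the $m$-th moment (which stays bounded by $e^{O(t^{3/2})}$ by the same moment asymptotics) gives $\mathbb{P}(\log\wt Z_{0,N}\ge \Lambda_N + tN^{1/3})\ge c\,e^{-Ct^{3/2}}$. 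I would then translate back from $\wt Z_{\mathcal{L}_0^{hN^{2/3}},\mathcal{L}_N}$-type objects to the point-to-point $\wt Z_{0,N}$ as noted in the preamble to this subsection (the weight at the origin is itself inverse-gamma, so it only changes constants, and restricting from line-to-line to point-to-point only decreases the partition function, which is the favorable direction for a lower bound — care is needed here, so one actually works with $\wt Z_{0,N}$ directly since that is what the statement asks for).

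The constraint $t\le \epsilon_0 N^{2/3}$ is exactly what is needed for the tilt parameter $\delta$ (or equivalently $m/N$) to stay in the regime where the explicit $\Lambda$-asymptotics and the moment formulas are valid, and for the number of tilted sites not to exhaust the parallelogram; this is where Proposition \ref{reg_shape} enters to control $|\Lambda({\bf p})-\Lambda_N|$ for endpoints at transversal distance $O(N^{2/3})$. The main obstacle I anticipate is the second-moment control: showing that the ratio of the $2m$-th moment of the (centered, normalized) partition function to the square of its $m$-th moment does not blow up faster than $e^{O(t^{3/2})}$ — equivalently, that the tilted measure does not over-concentrate and the Paley--Zygmund denominator is not too large. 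In the moment-method version this reduces to sharp asymptotics of the inverse-gamma moment formulas uniformly in the relevant range of $m$ and $N$; in the change-of-measure version it reduces to a relative-entropy computation plus a lower bound on the tilted probability of the target event, which itself uses a crude one-point upper bound (Proposition \ref{ptl_upper}, applied under the tilted law) to rule out the partition function overshooting. I expect this step to be the technical heart, with everything else being bookkeeping with the explicit shape function and standard reverse-Markov manipulations.
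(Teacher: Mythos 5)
Proposition \ref{up_lb} is not proved in this paper: it is quoted from Section 3.3 of \cite{bas-sep-she-23}, where it is established by stationary-polymer coupling techniques (a drifted boundary $\rho=\mu/2+\Theta(t^{1/2}N^{-1/3})$, exit-point control, and random-walk comparison), precisely the ``softer'' route the introduction of this paper advertises as avoiding exact formulae. So your proposal is not an alternative rendering of the paper's proof; it is a genuinely different strategy, and one that, as written, has scaling errors in both branches.

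In the tilting branch, your choice $\delta\asymp t^{1/2}N^{-1/3}$ does not produce the advertised $e^{-\Theta(t^{3/2})}$ cost. For a point-to-point path of length $\asymp N$ passing through a strip of width $w$, the free-energy shift from tilting shape parameters by $\delta$ is $\Theta(N\delta)$, so to gain $tN^{1/3}$ you need $\delta\gtrsim tN^{-2/3}$, not $t^{1/2}N^{-1/3}$. The KL cost is $\Theta(\delta^2)$ per tilted site, so $\Theta(Nw\delta^2)\gtrsim t^2N^{-1/3}w$ in total. To avoid paying the restriction penalty of Proposition \ref{wide_similar} you need $w\gtrsim N^{2/3}$, and then the cost is $\gtrsim t^2 N^{1/3}$, which is $\gg t^{3/2}$ throughout the whole moderate-deviation window $t\le\epsilon_0N^{2/3}$. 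Shrinking $w$ to $t^{-1/2}N^{1/3}$ would fix the entropy cost, but then $\theta=w/N^{2/3}\asymp t^{-1/2}N^{-1/3}\to 0$, the polymer refuses to stay in the strip, and the restriction loss dwarfs the tilt gain, so the event no longer holds with constant probability under $\mathbb Q$. There is no value of $w$ that works.

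In the moment branch, the exponent is off by a factor of $N^{2/3}$. Writing $\log\wt Z_{0,N}=\Lambda_N+N^{1/3}\chi_N$ with $\chi_N$ approximately GUE Tracy--Widom, one has $\log\mathbb E[\wt Z^m]\approx m\Lambda_N+cm^3N$ from the Legendre dual of the $t^{3/2}$ tail, so the Paley--Zygmund ratio $(\mathbb E[\wt Z^m])^2/\mathbb E[\wt Z^{2m}]$ is $e^{-\Theta(m^3N)}$. Matching this to $e^{-\Theta(t^{3/2})}$ forces $m\asymp t^{1/2}N^{-1/3}$, not $t^{1/2}N^{1/3}$; and $t^{1/2}N^{-1/3}<1$ whenever $t<N^{2/3}$, which is the entire range of the proposition. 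Integer moments simply cannot resolve the moderate-deviation regime; with your choice $m\asymp t^{1/2}N^{1/3}$ the second-moment ratio collapses to $e^{-\Theta(t^{3/2}N^2)}$, which is vacuously weak. This is not a bookkeeping issue to be fixed later; it is a structural obstruction to this method for $t\ll N^{2/3}$, and it is one reason the literature (and \cite{bas-sep-she-23}) uses the stationary coupling instead of moments here.

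To repair the argument you would have to replace both branches with the stationary-boundary construction: introduce a boundary with $\rho=\mu/2+\Theta(t^{1/2}N^{-1/3})$, decompose $Z^\rho_{0,N}$ by exit time, show that on an event of probability $\gtrsim e^{-ct^{3/2}}$ the boundary random walk of length $\Theta(t^{1/2}N^{2/3})$ reaches level $\Theta(tN^{1/3})$ (this is precisely the Gaussian computation $\exp(-(tN^{1/3})^2/(t^{1/2}N^{2/3}))=\exp(-t^{3/2})$), and then pass from the stationary free energy to $\log\wt Z_{0,N}$ using the exit-time and local-fluctuation estimates (Propositions \ref{exit_est}, \ref{max_all_t1}, \ref{min1}). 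That is the calculation where the $t^{3/2}$ scaling actually lives.
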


The next proposition gives the upper bound for the left tail.
\begin{proposition}\label{low_ub}
There exist positive constants $C_1, N_0 $ such that for each $N\geq N_0$, $t\geq 1$ , we have 
$$\mathbb{P}(\fg_{0,  N} - \Lambda_N \leq -tN^{1/3}) \leq e^{-C_1 \min\{t^{3/2}, tN^{1/3}\}}.$$
\end{proposition}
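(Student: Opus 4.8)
The estimate asserts that the point-to-point free energy is exponentially unlikely to lie far below its deterministic rate $\Lambda_N$, so I would prove it through super-multiplicativity of the partition function under concatenation. Fix an integer $m$ and the equally spaced diagonal points $0 = p_0, p_1, \dots, p_m = N$, $p_i = (iN/m, iN/m)$. Restricting the sum defining $\wt Z_{0,N}$ to up-right paths that pass through every $p_i$ gives $\wt Z_{0,N} \ge \prod_{i=1}^m \wt Z_{p_{i-1},p_i}$. Crucially, $\wt Z_{p_{i-1},p_i}$ depends only on the vertex weights in the rectangle with corners $p_{i-1},p_i$ at positions $\neq p_{i-1}$ — by the paper's convention the endpoint weight $Y_{p_i}$ is collected but the starting-point weight is not — so adjacent blocks share no weight and the $m$ factors are genuinely independent, each distributed as $\wt Z_{0,N/m}$. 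Taking logarithms, using the exact homogeneity $\Lambda_N = m\,\Lambda_{N/m}$, and recentering each block by $\mu_n := \mathbb E[\log\wt Z_{0,n}]$, the event $\{\log\wt Z_{0,N} - \Lambda_N \le -tN^{1/3}\}$ is contained in $\{\sum_{i=1}^m \xi_i \le -tN^{1/3} + m(\Lambda_{N/m} - \mu_{N/m})\}$, where the $\xi_i := \log\wt Z_{p_{i-1},p_i} - \mu_{N/m}$ are i.i.d.\ and centered.

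\textbf{Concentration step.} I would then feed in two one-point facts about the block free energy at scale $n = N/m$: a sharp variance bound $\mathrm{Var}(\log\wt Z_{0,n}) \le C n^{2/3}$ and a sub-exponential left tail $\mathbb P(\log\wt Z_{0,n} - \mu_n \le -sn^{1/3}) \le e^{-cs}$ for $s \ge 1$ (the same estimates yield $0 \le \Lambda_n - \mu_n \le Cn^{1/3}$). A Bernstein-type inequality for the i.i.d.\ sum then gives $\mathbb P(\sum_i \xi_i \le -\theta) \le \exp\big(-c\min\{\theta^2 / (m\,(N/m)^{2/3}),\ \theta/(N/m)^{1/3}\}\big)$. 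For $t \lesssim N^{2/3}$ I choose $m \asymp t^{3/2}$ with a small implicit constant, so the mean shift $m(\Lambda_{N/m}-\mu_{N/m}) = O(m^{2/3}N^{1/3}) = O(tN^{1/3})$ is absorbed, $\theta \asymp tN^{1/3}$, and both terms of the $\min$ become $\asymp t^{3/2}$, giving $e^{-ct^{3/2}}$. For $t \gtrsim N^{2/3}$ I would instead either take the maximal macroscopic subdivision $m \asymp N$ (blocks of bounded size, with $\mathrm{Var}$ and tail scale $O(1)$), for which the same computation produces $e^{-ctN^{1/3}}$ and glues to the previous regime at $t \asymp N^{2/3}$, or — more cheaply — bound $\log\wt Z_{0,N}$ below by the weight of a single fixed up-right path, a sum of $2N$ i.i.d.\ $\log$-inverse-gamma variables, and invoke Cramér's theorem together with the strict gap between $\Lambda((1,1)) = -2\Psi_0(\mu/2)$ and the single-path rate $-2\Psi_0(\mu)$.

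\textbf{Main obstacle.} The concatenation-plus-Bernstein bookkeeping is routine; the real content is the one-point input above — a genuine exponential left tail and a sharp $n^{2/3}$ variance bound for the point-to-point free energy (of which this proposition is itself a sharpening), together with $\Lambda_n - \mu_n = O(n^{1/3})$. Establishing these without circularity is exactly where the integrable structure must enter: one passes to the stationary inverse-gamma polymer in its characteristic (diagonal) direction, where Burke's property expresses the stationary free energy as sums of i.i.d.\ $\log$-inverse-gamma boundary weights plus an exit-point correction, derives Seppäl\"ainen's variance identity and moderate-deviation bounds for it, and then transfers them to $\log\wt Z_{0,n}$ by sandwiching the point-to-point free energy between stationary free energies with nearby parameters, using the shape regularity of Proposition \ref{reg_shape} to discard atypically far exit points. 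This is the program carried out in Section 3 of \cite{bas-sep-she-23}, from which the proposition is quoted; here one only needs to assemble those ingredients. The delicate point in the transfer is to avoid losing a $\sqrt{\log n}$ factor from the $\asymp n^{2/3}$ candidate exit points near the corner, which is handled by a direct second-moment comparison rather than a union bound.
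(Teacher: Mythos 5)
The paper does not actually prove this proposition: the preamble to Section 2.3 states that the bulk estimates in that section, including Proposition~\ref{low_ub}, are quoted directly from Section~3.3 of \cite{bas-sep-she-23}, where they are established via the stationary inverse-gamma polymer. So there is nothing in the present paper to compare your argument against line by line; what you have supplied is a standalone reproof.

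Taken on its own terms, your concatenation-plus-Bernstein scheme is sound and is a recognizable ``Ledoux--Rider style'' bootstrap. The arithmetic checks out: with $m \asymp t^{3/2}$ (small implicit constant so the centering error $m(\Lambda_{N/m}-\mu_{N/m}) = O(m^{2/3}N^{1/3})$ is absorbed into $tN^{1/3}/2$), the variance term $m(N/m)^{2/3}$ and the linear term $(N/m)^{1/3}\theta$ are both $\asymp t^{1/2}N^{2/3}$, so Bernstein gives $\exp(-ct^{3/2})$; the $t\gtrsim N^{2/3}$ regime is handled as you say, either by blocks of bounded size or by Cram\'er on a fixed path, using that $\Lambda((1,1))=-2\Psi_0(\mu/2)$ strictly exceeds the single-path rate $-2\Psi_0(\mu)$. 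Two small points worth flagging. First, to run Bernstein you implicitly need the Bernstein moment condition on $(-\xi_i)^k$; this does follow from the variance bound plus the $e^{-cs}$ left tail, but only because the tail scale $n^{1/3}$ and the standard deviation $\asymp n^{1/3}$ coincide, so it is worth making that matching of scales explicit. Second, and more substantively, you rely on the crude left tail $\mathbb{P}(\log\wt Z_{0,n}-\mu_n\le -sn^{1/3})\le e^{-cs}$ and the variance bound $\mathrm{Var}(\log\wt Z_{0,n})\le Cn^{2/3}$ as ``simpler'' inputs, but in the polymer setting these are not free either: they require exactly the stationary coupling machinery of \cite{bas-sep-she-23}, and that machinery in fact delivers the full $e^{-cs^{3/2}}$ tail directly (e.g.\ via the exit-point deviation estimates there), with no need for the block bootstrap. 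So while your argument is correct and logically non-circular if one accepts the crude one-point inputs, it is more naturally read as an \emph{alternative route} built on top of the stationary analysis rather than as a cheaper substitute for it. You are right that this is the main obstacle, and you identify the source of the inputs correctly; I would just temper the suggestion that the $\sqrt{\log n}$ loss from a naive union bound over exit points is the only delicate issue — obtaining the crude $e^{-cs}$ left tail at the cube-root scale in the first place is where the integrability enters.
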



Then, we have a variance bound for the free energy in the bulk, which follows from the estimates for the left and right tails above. 
\begin{proposition}\label{var_i}There exists positive constants $C_1, C_2, N_0$ such that for each $N\geq N_0$, we have 
$$ C_1 N^{2/3} \leq \Var(\log \wt{Z}_{0,N}) \leq C_2N^{2/3}.$$
\end{proposition}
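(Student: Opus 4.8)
The plan is to read off both bounds from the one-point moderate deviation estimates already recorded, namely Propositions \ref{ptl_upper}, \ref{up_lb} and \ref{low_ub}, applied to the point-to-point partition function $\wt Z_{0,N}$. Propositions \ref{up_lb} and \ref{low_ub} are already stated for $\log\wt Z_{0,N}$; for the right tail I would invoke the monotonicity $\wt Z_{0,N}\le \wt Z_{\mathcal L_0^{N^{2/3}},\mathcal L_N}$ (positive weights, more paths), so that Proposition \ref{ptl_upper} with $h=1$ gives $\mathbb P\big(\log\wt Z_{0,N}-\Lambda_N\ge tN^{1/3}\big)\le e^{-C\min\{t^{3/2},tN^{1/3}\}}$ for $t\ge 1$. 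A preliminary consequence I would record first is that $\big|\mathbb E\log\wt Z_{0,N}-\Lambda_N\big|\le CN^{1/3}$: after the substitution $t=sN^{1/3}$, each of $\int_0^\infty \mathbb P\big(\pm(\log\wt Z_{0,N}-\Lambda_N)>sN^{1/3}\big)\,ds$ is at most $1+\int_1^\infty e^{-C\min\{s^{3/2},sN^{1/3}\}}\,ds\le C'$ uniformly in $N$, and $\mathbb E[\log\wt Z_{0,N}-\Lambda_N]$ is the difference of these two integrals times $N^{1/3}$.

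For the upper bound I would use that the variance is the smallest mean square deviation, so $\Var(\log\wt Z_{0,N})\le \mathbb E\big[(\log\wt Z_{0,N}-\Lambda_N)^2\big]=2N^{2/3}\int_0^\infty s\,\mathbb P\big(|\log\wt Z_{0,N}-\Lambda_N|>sN^{1/3}\big)\,ds$. Splitting the integral at $s=1$: the part over $[0,1]$ contributes at most $1/2$, and the part over $[1,\infty)$ is bounded by $\int_1^\infty s\,e^{-C\min\{s^{3/2},sN^{1/3}\}}\,ds\le \int_1^\infty s\,e^{-Cs^{3/2}}\,ds + o(1)$ (the region $s>N^{2/3}$, where the exponent becomes $sN^{1/3}$, contributes a negligible amount), hence a finite bound uniform in $N$. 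This yields $\Var(\log\wt Z_{0,N})\le C_2N^{2/3}$.

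For the lower bound, set $Y=\log\wt Z_{0,N}-\mathbb E\log\wt Z_{0,N}$, so $\mathbb E Y=0$, and fix the threshold $t_0$ of Proposition \ref{up_lb}. For $N$ large enough that $t_0+C\le\epsilon_0 N^{2/3}$, the bound $\big|\mathbb E\log\wt Z_{0,N}-\Lambda_N\big|\le CN^{1/3}$ combined with Proposition \ref{up_lb} gives $\mathbb P\big(Y\ge t_0 N^{1/3}\big)\ge \mathbb P\big(\log\wt Z_{0,N}-\Lambda_N\ge (t_0+C)N^{1/3}\big)\ge e^{-C(t_0+C)^{3/2}}=:p>0$. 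Hence $\mathbb E[Y^+]\ge p\,t_0 N^{1/3}$, and since $\mathbb E Y=0$ forces $\mathbb E[Y^-]=\mathbb E[Y^+]$, we get $\mathbb E[Y^-]\ge p\,t_0 N^{1/3}$. Then $\Var(\log\wt Z_{0,N})=\mathbb E[Y^2]\ge \mathbb E[(Y^-)^2]\ge (\mathbb E[Y^-])^2\ge p^2 t_0^2\,N^{2/3}$, which gives $\Var(\log\wt Z_{0,N})\ge C_1 N^{2/3}$.

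The argument is essentially routine once the recalled one-point estimates are available; the only points needing care are bookkeeping — passing from the line-to-point right-tail bound of Proposition \ref{ptl_upper} to the point-to-point quantity $\wt Z_{0,N}$ by monotonicity, and converting between the centering at $\Lambda_N$ used in Propositions \ref{ptl_upper}--\ref{low_ub} and the centering at the true mean required for the variance. The small trick $\mathbb E[Y^-]=\mathbb E[Y^+]$ in the last paragraph is precisely what lets us obtain the lower bound from the right-tail lower estimate alone, since a lower-tail \emph{lower} bound is not among the estimates quoted.
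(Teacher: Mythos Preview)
Your proof is correct and follows the same route the paper indicates: the paper does not write out a proof of Proposition~\ref{var_i} at all, merely stating that it ``follows from the estimates for the left and right tails above'' (Propositions~\ref{ptl_upper}, \ref{up_lb}, \ref{low_ub}), and your argument is exactly a careful execution of this. One small simplification for the lower bound: the $Y^+/Y^-$ detour is unnecessary, since $\Var(\log\wt Z_{0,N})=\mathbb E[Y^2]\ge (t_0N^{1/3})^2\,\mathbb P(Y\ge t_0N^{1/3})\ge p\,t_0^2 N^{2/3}$ already gives the bound directly from the right-tail lower estimate.
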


The next two propositions summarize the loss of free energy from paths having too much transversal fluctuation.

\begin{proposition}\label{trans_fluc_loss}
There exist positive constants $C_1, C_2, N_0$ such that for each $N\geq N_0$, $t\geq 0$ and $s\geq 0$ we have
$$\mathbb{P}\Big(\fg_{\mathcal{L}^{sN^{2/3}}_0, \mathcal{L}_{N}\setminus\mathcal{L}_{N}^{(s+t)N^{2/3}}}    - \Lambda_N \geq -C_1 t^2N^{1/3} \Big) \leq e^{-C_2t^3}.$$
\end{proposition}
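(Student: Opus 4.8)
The goal is to show that forcing a polymer path from $\mathcal{L}_0^{sN^{2/3}}$ to $\mathcal{L}_N$ to have transversal fluctuation at least $(s+t)N^{2/3}$ at the endpoint level costs at least $C_1 t^2 N^{1/3}$ in free energy, except on an event of probability at most $e^{-C_2 t^3}$. The plan is a union bound over the possible endpoints on $\mathcal{L}_N$ (or over dyadic scales of transversal displacement), combined with the curvature estimate Proposition \ref{reg_shape} and the bulk right-tail upper bound Proposition \ref{ptl_upper}.

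First I would reduce to a single far-away endpoint region. Write $\fg_{\mathcal{L}_0^{sN^{2/3}}, \mathcal{L}_N \setminus \mathcal{L}_N^{(s+t)N^{2/3}}}$ as the $\log$ of a sum of partition functions $\wt Z_{\mathcal{L}_0^{sN^{2/3}}, {\bf p}}$ over points ${\bf p} \in \mathcal{L}_N$ with $|{\bf p} - (N,N)|_\infty \ge (s+t)N^{2/3}$. It suffices to control, for each dyadic block of such ${\bf p}$ at transversal distance $\asymp 2^j (s+t) N^{2/3}$ from the diagonal, the maximal free energy $\max_{\bf p} \log \wt Z_{\mathcal{L}_0^{sN^{2/3}}, {\bf p}}$, and then sum the resulting tail bounds over $j$; the $j=0$ block dominates and the geometric tail over $j$ only costs a constant. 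For a fixed such block, translating so that the relevant endpoint plays the role of the diagonal point, Proposition \ref{reg_shape} (applied with the roles of the lines swapped, which is legitimate since the shape function is symmetric and the estimate is about $\mathcal{L}_N$-points in a window of width $hN^{2/3}$) gives that the centering constant $\Lambda({\bf p})$ — the shape value for the displacement vector from $\mathcal{L}_0$ to ${\bf p}$ — satisfies $\Lambda({\bf p}) \le \Lambda_N - c\, h^2 N^{1/3}$ where $h \asymp (s+t)$, using the strict concavity (curvature) of $\Lambda$ at the diagonal direction. The key point is that this curvature deficit is of order $(s+t)^2 N^{1/3} \ge t^2 N^{1/3}$, which is exactly the claimed loss; one must check the constant in Proposition \ref{reg_shape} only gives an upper bound of the form $C_1 h^2 N^{1/3}$, so a genuine curvature \emph{lower} bound $\Lambda_N - \Lambda({\bf p}) \ge c\, h^2 N^{1/3}$ is needed and should be extractable from the same explicit-computation source (Section 3.2 of \cite{bas-sep-she-23}); if only the upper bound is available verbatim, I would instead invoke the explicit formula for $\Lambda$ directly to get the two-sided estimate.

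Next, for the fluctuation part: on the event in question we would need $\log \wt Z_{\mathcal{L}_0^{sN^{2/3}}, {\bf p}} - \Lambda({\bf p}) \ge \Lambda_N - \Lambda({\bf p}) - C_1 t^2 N^{1/3} \ge c(s+t)^2 N^{1/3} - C_1 t^2 N^{1/3}$, which for $C_1$ chosen small enough (say $C_1 = c/2$) is $\ge \tfrac{c}{2}(s+t)^2 N^{1/3} \ge \tfrac{c}{2} t^2 N^{1/3}$ as well — i.e., the centered free energy to that far endpoint must exceed a quantity of order $(s+t)^2 N^{1/3}$. Apply Proposition \ref{ptl_upper} (in the form for a window of width $\asymp (s+t)N^{2/3}$ around ${\bf p}$, absorbing the starting window $\mathcal{L}_0^{sN^{2/3}}$ since $s \le s+t$) with deviation parameter $\tau \asymp (s+t)^2$: this yields a bound $e^{-C_2 \min\{\tau^{3/2}, \tau N^{1/3}\}} = e^{-C_2 \min\{(s+t)^3, (s+t)^2 N^{1/3}\}}$. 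For the regime where $(s+t) \le \epsilon_0 N^{2/3}$ (so the relevant geometry stays within the parallelogram where these estimates apply — and outside this regime the statement can be handled trivially or the parallelogram is essentially all of $\mathbb{Z}^2$) the first term in the minimum dominates, giving $e^{-C_2(s+t)^3} \le e^{-C_2 t^3}$, as desired. The polynomially-many ($\mathrm{poly}(N)$) choices of endpoint ${\bf p}$ in each block are absorbed because $(s+t)^3$ easily beats $\log N$ when $t$ is, say, at least a constant multiple of $(\log N)^{1/3}$; for small $t$ the statement is vacuous since the probability bound exceeds $1$, so there is nothing to prove there.

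The main obstacle I anticipate is bookkeeping the interface between the starting window $\mathcal{L}_0^{sN^{2/3}}$ and the far endpoints: Proposition \ref{ptl_upper} as stated is for paths from a window around $\mathcal{L}_0$ to the \emph{full} line $\mathcal{L}_N$, so to get the right-tail bound for the partition function to a \emph{specific far} endpoint (or narrow far window) I need to either re-derive the corresponding one-sided estimate or note that it follows from the stated one by translation plus the curvature shift (the far endpoint becomes "the diagonal point" of a shifted picture of size $\asymp N$, and the curvature deficit is what converts the threshold $t^2 N^{1/3}$ into an effective deviation of order $(s+t)^2$). Making the swap-of-roles in Proposition \ref{reg_shape} and the application of Proposition \ref{ptl_upper} fully rigorous — in particular checking that the parameter ranges ($h \le \epsilon_0 N^{1/3}$ there, which forces $s+t \lesssim N^{1/3}$, versus $t \le \epsilon_0 N^{2/3}$ elsewhere) are mutually consistent and cover the stated range — is the part that needs genuine care; everything else is a routine union bound.
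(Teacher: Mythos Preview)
The paper does not prove Proposition~\ref{trans_fluc_loss}; it is quoted as one of the bulk estimates recalled from Section~3.3 of \cite{bas-sep-she-23}. Your outline---curvature of the shape function plus the point-to-point right-tail bound plus a union bound over far endpoints---is exactly the standard argument, and is almost certainly what is done in that reference.

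That said, there is one place where your bookkeeping is off. You center $\log\wt Z_{\mathcal{L}_0^{sN^{2/3}},{\bf p}}$ at $\Lambda({\bf p})$, the shape value from the \emph{origin} to ${\bf p}$, and then claim a curvature deficit of order $(s+t)^2N^{1/3}$. But the path starts from some ${\bf a}\in\mathcal{L}_0^{sN^{2/3}}$, not from the origin, so the correct centering is $\Lambda({\bf p}-{\bf a})$. The anti-diagonal component of ${\bf p}-{\bf a}$ is only guaranteed to be at least $tN^{2/3}$ (take ${\bf a}$ at the edge of the starting segment nearest ${\bf p}$), so the curvature lower bound gives $\Lambda_N-\Lambda({\bf p}-{\bf a})\ge ct^2N^{1/3}$, not $c(s+t)^2N^{1/3}$. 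Consequently the deviation you feed into Proposition~\ref{ptl_upper} is $\asymp t^2$, not $(s+t)^2$, and the tail bound you extract is $e^{-Ct^3}$ directly---which is what you want. Your two overestimates (of the curvature deficit and of the required deviation) happen to cancel, so the final answer is right, but the intermediate reasoning should be tightened: do the union bound over pairs $({\bf a},{\bf p})$, center each $\log\wt Z_{{\bf a},{\bf p}}$ at $\Lambda({\bf p}-{\bf a})$, and use the point-to-point right-tail bound there. You correctly flag that a curvature \emph{lower} bound (not just the upper bound stated in Proposition~\ref{reg_shape}) is needed; this is indeed available from the explicit shape function for the inverse-gamma polymer.
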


\begin{proposition}\label{trans_fluc_loss3}
There exist positive constants $C_1, C_2,   N_0$ such that for each $N\geq N_0$, $1 \leq t \leq  N^{1/3}$ and $0< s < e^{t}$,  we have
$$\mathbb{P}\Big(\log \wt{Z}^{\textup{exit}, R^{(s+t)N^{2/3}}_{0, N}}_{\mathcal{L}_{0}^{sN^{2/3}}, \mathcal{L}_N^{sN^{2/3}}} - \Lambda_N \geq -C_1 t^2N^{1/3}\Big) \leq e^{-C_2t^{3}}.$$
\end{proposition}

The proposition below shows when constrain our paths to a rectangle that obeys the KPZ scale, the free energy will not be too small. 

\begin{proposition}\label{wide_similar}
For each positive $a_0$, there exist positive constants $C, t_0$ such that for each $0< \theta \leq  100$, there exists a positive constant $ N_0$ such that for each $N\geq N_0$, $t\geq t_0$ and  ${\bf p} \in \mathcal{L}_N^{a_0\theta N^{2/3}}$, we have
$$\mathbb{P}\Big(\log \wt{Z}^{\textup{in}, {\theta N^{2/3}}}_{0, {\bf p}} - \Lambda_N \leq -tN^{1/3}\Big) \leq  \tfrac{\sqrt{t}}{\theta}e^{-C\theta t}.$$
\end{proposition}

The next two propositions in this section describe the local  fluctuations of the free energy profile along an anti-diagonal line. The left tail estimate in Proposition \ref{min} is new, and its proof will appear in Section \ref{a:min}.  
\begin{proposition}\label{max_all_t}
There exist positive constants $t_0, N_0$ such that for each $N\geq N_0$, $t\geq t_0$, and each $a \in \mathbb{Z}_{\geq 0}$, we have
$$\mathbb{P}\Big(\log \wt{Z}_{0,\mathcal{L}^a_{N}}\ - \log \wt{Z}_{0, N} \geq t\sqrt{a}\Big) \leq 
e^{-t^{1/10}}.
$$ 
\end{proposition}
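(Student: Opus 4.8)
The plan is to split into three regimes according to the size of $t\sqrt a$ relative to $N^{1/3}$ and $N$. First a reduction: we may assume $a\ge 1$, and since $\wt Z_{0,\mathcal L_N^a}=\sum_{|j|\le a}\wt Z_{0,(N+j,N-j)}\le (2a+1)\max_{|j|\le a}\wt Z_{0,(N+j,N-j)}$ with $\log(2a+1)\le 2\sqrt a$, it suffices (after enlarging $t_0$) to bound $\mathbb P\big(\max_{|j|\le a}\Delta_j\ge \tfrac t2\sqrt a\big)$, where $\Delta_j:=\log\wt Z_{0,(N+j,N-j)}-\log\wt Z_{0,N}$. In the \emph{coarse regime}, where $t\sqrt a\gtrsim N^{1/3}$ (precise threshold $a\gtrsim N^{2/3}/t^{28/15}$) and $t\sqrt a\le 2\epsilon_0 N$, I would drop the segment structure and use monotonicity: $\wt Z_{0,\mathcal L_N^a}\le \wt Z_{\mathcal L_0^{N^{2/3}},\mathcal L_N}$ (enlarging source and sink only increases the partition function), so $\log\wt Z_{0,\mathcal L_N^a}-\log\wt Z_{0,N}\le [\log\wt Z_{\mathcal L_0^{N^{2/3}},\mathcal L_N}-\Lambda_N]+[\Lambda_N-\log\wt Z_{0,N}]$. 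Bounding the first bracket at scale $\tfrac12 t\sqrt a$ by Proposition \ref{ptl_upper} (with $h=1$) and the second by Proposition \ref{low_ub} gives probability $\le e^{-c\min\{(t\sqrt a/N^{1/3})^{3/2},\,t\sqrt a\}}$; one checks $t\sqrt a\ge t\ge t^{1/10}$ always and $(t\sqrt a/N^{1/3})^{3/2}\ge t^{1/10}$ exactly on the stated range, so this is $\le e^{-t^{1/10}}$ here. It is convenient to \emph{define} the fine regime as the complement, $a< cN^{2/3}/t^{28/15}$; note this forces $a\ll N^{2/3}$ and $t\sqrt a\ll N$.

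In the \emph{far regime}, $t\sqrt a\ge 2\epsilon_0 N$, the event is essentially impossible: $\log\wt Z_{0,\mathcal L_N^a}\le CN+\max_{{\bf x}}\sum_{i=1}^{2N}\log Y_{{\bf x}_i}$, the maximum over the at most $2^{2N}$ up-right paths ${\bf x}$ to $\mathcal L_N^a$, while $\log\wt Z_{0,N}\ge\sum_{i=1}^{2N}\log Y_{{\bf x}_i}$ along the staircase path. Since $Y^{-1}\sim\mathrm{Ga}(\mu)$, $\log Y$ has an exponential right tail (rate $\mu$) and finite exponential moments of every order; Cramér bounds on $\sum_{i=1}^{2N}\log Y$ and a union bound over paths give $\mathbb P(\log\wt Z_{0,\mathcal L_N^a}-\log\wt Z_{0,N}\ge t\sqrt a)\le e^{-ct\sqrt a}\le e^{-t^{1/10}}$ once $t\sqrt a\ge 2\epsilon_0 N$. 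This dispenses with arbitrarily large $t$ and $a$ (for $|j|>N$ the terms vanish anyway).

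The \emph{fine regime} ($a\ge 1$, $a\ll N^{2/3}$) is the substantive one. Here Proposition \ref{reg_shape} makes the shape contribution $\Lambda((N+j,N-j))-\Lambda_N=O(a^2/N)$ negligible next to $\sqrt a$, so the problem concerns the fluctuations of the profile $j\mapsto\log\wt Z_{0,(N+j,N-j)}$ about $j=0$. I would run a dyadic chaining over $[-a,a]$: at scale $2^k$ partition into $O(a/2^k)$ intervals, bound the maximal increment of $\log\wt Z_{0,\cdot}$ across one such interval, and take the scale-$2^k$ thresholds growing geometrically with $\log_2 a-k$ so the union-bound losses sum to a convergent geometric series; this yields $\mathbb P(\max_{|j|\le a}|\Delta_j|\ge C\lambda\sqrt a)\le Ce^{-c\lambda}$, far stronger than $e^{-t^{1/10}}$. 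The single input this needs — and the main obstacle — is a \emph{local increment estimate}: for ${\bf p},{\bf q}\in\mathcal L_N$ with $|{\bf p}-{\bf q}|_\infty=d\le \epsilon_0 N^{2/3}$ and $s\ge s_0$,
\[
\mathbb P\big(|\log\wt Z_{0,{\bf p}}-\log\wt Z_{0,{\bf q}}|\ge s\sqrt d\big)\le e^{-cs},
\]
the bound being vacuous (hence trivial) once $s\gtrsim N/\sqrt d$. This operates \emph{below} the KPZ scale $N^{1/3}$ — the profile increment fluctuates on scale $\sqrt d$, not $N^{1/3}$ — so it does not follow from the moderate-deviation Propositions \ref{ptl_upper}--\ref{low_ub}; these coarse tools, used near the coarse/fine boundary, are precisely what forces the lossy exponent $t^{1/10}$ in the statement. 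The estimate reflects the near-Brownian behaviour of the profile on windows of width $\ll N^{2/3}$, and I would obtain it either by a multi-scale ``restart'' argument — split the polymer at a level where $d$ has become comparable to the remaining KPZ transversal scale $M^{2/3}$, control the last $M$ steps by the length-$M$ moderate-deviation bounds, and treat the bottom contribution by induction on the scale — or simply import it from \cite{bas-sep-she-23}, where estimates of this type are established.
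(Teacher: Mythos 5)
Your reduction to a maximum, your coarse regime via Propositions \ref{ptl_upper} and \ref{low_ub}, your far regime via a union bound over paths, and your observations about where the lossy exponent $t^{1/10}$ comes from are all sound. But the fine regime ($a\ll N^{2/3}$) is the entire substance of the proposition, and there you have a genuine gap: your argument runs on a local increment estimate $\mathbb P(|\log\wt Z_{0,{\bf p}}-\log\wt Z_{0,{\bf q}}|\ge s\sqrt d)\le e^{-cs}$ that you do not prove. Importing it from \cite{bas-sep-she-23} is circular, since the proposition you are asked to prove is itself attributed to that source, and the one-line ``restart'' sketch is too vague to count as an argument --- splitting a polymer at an intermediate antidiagonal produces a sum over split points (not a product), so the logarithm does not decompose, and the ``induction on the scale'' is never set up with a base case. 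You have correctly diagnosed \emph{what} is needed, but not supplied it.

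The technique the paper actually uses, visible in its proofs of the mirror statement Proposition \ref{min} (Section \ref{a:min}) and of the boundary analogue Proposition \ref{local_fluc}, is a stationary-polymer random walk comparison, and it bypasses chaining entirely. One introduces a $\lambda$-stationary polymer with $\lambda$ tuned so that its characteristic direction $\boldsymbol\xi[\lambda]$ passes near $(N,N)$; by Theorem \ref{stat} the nearest-neighbour log-ratios of that stationary partition function along $\mathcal L_N$ are i.i.d.\ sub-exponential, and by the ratio monotonicity of Propositions \ref{ratio_mono1}--\ref{ratio_mono2} together with an exit-time estimate (Proposition \ref{stat_exit_est}, giving an event of probability $\geq 1-e^{-cs^3}$), the \emph{entire} increment process $k\mapsto\log\wt Z_{0,N+\olsi{k}}-\log\wt Z_{0,N}$ is dominated on that event by the corresponding i.i.d.\ random walk, up to a constant. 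The running maximum is then controlled in one shot by Theorem \ref{max_sub_exp}. This is both the missing input for your chaining and, more to the point, a direct route that makes the chaining unnecessary; its absence from your proposal is the key omission.
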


\begin{proposition}\label{min}
There exist positive constants $t_0, r_0$ such that for each $r\geq r_0$, $t\geq t_0$, and each $N \geq r$, we have
$$\mathbb{P}\Big(\min_{|k|\leq t^{1/20}r^{2/3}}\log \wt{Z}_{0,{N}+\olsi{k}}\ - \log \wt{Z}_{0, N} \leq -tr^{1/3}\Big) \leq 
e^{-t^{1/10}}.
$$ 
\end{proposition}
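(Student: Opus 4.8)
The plan is to obtain this lower-tail estimate for the minimum over an anti-diagonal segment by combining a single-point left-tail bound with a chaining/union argument over a net of points on the line, and then use the modulus-of-continuity control of Proposition \ref{max_all_t} (applied in both directions) to fill in the gaps between net points. First I would fix $r$ and $N\ge r$ and set $a = t^{1/20}r^{2/3}$, so that we are looking at $\min_{|k|\le a}\log\wt Z_{0,N+\olsi k}$. The first observation is that by Proposition \ref{low_ub} applied at the point $N+\olsi k$ (whose $\ell^1$-distance to the origin is $2N$, and whose transversal displacement $k$ is of order $N^{2/3}$ times a small constant, so that the relevant shape function value $\Lambda(N+\olsi k)$ differs from $\Lambda_N$ by at most $O(k^2 N^{-1/3}) = O(t^{1/10} r^{1/3})$ via Proposition \ref{reg_shape}), each individual free energy $\log\wt Z_{0,N+\olsi k}$ has probability at most $e^{-C\min\{t^{3/2},tN^{1/3}\}}$ of being below $\log\wt Z_{0,N} - tr^{1/3}$; here one uses $r\le N$ to absorb the shape-function discrepancy into a fraction of $tr^{1/3}$ once $t_0$ is large.

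Next I would discretize the segment $\{N+\olsi k : |k|\le a\}$ into roughly $a/b$ sub-blocks of width $b := r^{2/3}$ (or some convenient power), with centers $N+\olsi{k_j}$. A union bound over these $O(a/b) = O(t^{1/20})$ centers, using the single-point estimate above at level, say, $\tfrac{t}{2}r^{1/3}$, costs a factor $O(t^{1/20})$ times $e^{-C\min\{t^{3/2}, tN^{1/3}\}}$, which is still at most $e^{-t^{1/10}}$ for $t$ large (note $t^{1/20}e^{-Ct^{3/2}} \le e^{-t^{1/10}}$ and similarly $t^{1/20}e^{-CtN^{1/3}}\le e^{-t^{1/10}}$ since $N^{1/3}\ge r^{1/3}\ge r_0^{1/3}$ is bounded below, giving at least $e^{-Ct}$ decay per point — one should double-check that $t^{1/10}$ is genuinely dominated, adjusting the block width or the exponent split if needed). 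To pass from the centers to the full minimum, I would use Proposition \ref{max_all_t}: within each block, $\log\wt Z_{0,N+\olsi k}$ cannot drop much below $\log\wt Z_{0,N+\olsi{k_j}}$, because the event that $\log\wt Z_{0,\mathcal L^{b}_{N+\olsi{k_j}}}$ exceeds $\log\wt Z_{0,N+\olsi{k_j}}$ by more than $s\sqrt b$ has probability $\le e^{-s^{1/10}}$, and the minimum over the block of width $b$ of $\log\wt Z_{0,\cdot}$ differs from the value at the center by at most a corresponding amount; choosing $s$ of order a small multiple of $t$ makes $s\sqrt b = s\, r^{1/3}$ a small fraction of $t r^{1/3}$, while $e^{-s^{1/10}}$ is again at most $e^{-t^{1/10}}$ up to constants. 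Combining: on the complement of all these events, the minimum over $|k|\le a$ of $\log\wt Z_{0,N+\olsi k}$ is at least $\log\wt Z_{0,N} - t r^{1/3}$, as desired.

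One technical subtlety is that Proposition \ref{max_all_t} controls the deviation of $\log\wt Z_{0,\mathcal L^a_N}$ above $\log\wt Z_{0,N}$, i.e.\ it is naturally a one-sided (maximum) statement, whereas here I need to bound how far below the center value a nearby free energy can be. This is handled by re-centering: within a block around $N+\olsi{k_j}$, apply Proposition \ref{max_all_t} with base point $N+\olsi{k}$ (the point where the block-minimum is attained) and target set the block, so that $\log\wt Z_{0,N+\olsi{k_j}}$ itself is at most $\log\wt Z_{0,\mathcal L^b_{N+\olsi k}} \le \log\wt Z_{0,N+\olsi k} + s\sqrt b$ on the good event; this inverts the inequality into the form I need. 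The main obstacle I anticipate is bookkeeping the interplay of the three scales — the segment half-width $a = t^{1/20}r^{2/3}$, the block width $b$, and the target deviation $tr^{1/3}$ — so that (i) the number of blocks is only polynomial in $t$, (ii) the within-block fluctuation $s\sqrt b$ stays well below $tr^{1/3}$ with $s$ still large enough that $e^{-s^{1/10}}$ beats $e^{-t^{1/10}}$, and (iii) the single-point bound survives the union over blocks; a clean choice such as $b \asymp r^{2/3}$ with the deviation split as $tr^{1/3} = \tfrac t2 r^{1/3} + \tfrac t2 r^{1/3}$ should work, but verifying the exponent arithmetic (in particular that the weak $e^{-t^{1/10}}$ rate in the conclusion leaves enough room) is where the care is needed. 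The regime $N^{1/3}$ small versus large in Proposition \ref{low_ub} also needs to be tracked, but since $N\ge r\ge r_0$, the $tN^{1/3}$ branch always gives at least linear-in-$t$ decay, which suffices.
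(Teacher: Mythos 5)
Your proposal has a fundamental gap that exponent bookkeeping cannot repair: you bound the event $\{\log\wt{Z}_{0,N+\olsi{k}} \le \log\wt{Z}_{0,N} - tr^{1/3}\}$ by comparing each side separately to the shape function via Proposition \ref{low_ub} and Proposition \ref{reg_shape}. This throws away the very strong positive correlation between $\log\wt{Z}_{0,N+\olsi{k}}$ and $\log\wt{Z}_{0,N}$ when $|k|\lesssim r^{2/3}$, and that correlation is precisely what makes the statement true. The fluctuations of each quantity about $\Lambda_N$ are of order $N^{1/3}$, while the target deviation is $tr^{1/3}$; once $N\gg rt^3$ the event $\{\log\wt{Z}_{0,N} - \Lambda_N \ge \tfrac{t}{2}r^{1/3}\}$ has probability bounded away from zero, and your union bound gives nothing. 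Reading Proposition \ref{low_ub} at deviation level $tr^{1/3}$ rather than $tN^{1/3}$ produces the exponent $\min\{t^{3/2}(r/N)^{1/2},\, tr^{1/3}\}$, whose first branch vanishes as $N/r\to\infty$. The estimate you actually need concerns the \emph{increment} process $k\mapsto\log\wt{Z}_{0,N+\olsi{k}}-\log\wt{Z}_{0,N}$, not each term separately.

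The block-filling step has a distinct flaw: Proposition \ref{max_all_t} controls only the maximum above a \emph{deterministic} anchor, whereas filling the gaps requires the minimum not to drop too far below — which is exactly Proposition \ref{min}, the statement being proved. Applying Proposition \ref{max_all_t} ``with base point $N+\olsi{k}$ where $k$ is the block minimizer'' invokes it at a random base; making this rigorous requires a union bound over the $\sim r^{2/3}$ candidate base points in the block, and the resulting factor $r^{2/3}$ cannot be absorbed into the weak $e^{-s^{1/10}}$ tail. The paper supplies the ingredient your proposal is missing: a random-walk comparison (Theorem 3.28 of \cite{bas-sep-she-23}) which, on an event of probability $\ge 1-e^{-Cs^3}$, dominates the increments $\log\wt{Z}_{0,N+\olsi{k}} - \log\wt{Z}_{0,N}$ for $0\le k\le sr^{2/3}$ from below by $\log\tfrac{9}{10}$ plus a random walk with i.i.d.\ sub-exponential steps and small drift; the running-minimum tail bound for sub-exponential walks (Theorem \ref{max_sub_exp}) then finishes, and a short direct union bound disposes of the degenerate regime $t^{1/20}\gtrsim N^{1/3}$. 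That coupling retains the covariance structure your pointwise approach discards, and is indispensable here.
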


The last two propositions look at the local fluctuation around the starting point instead of the endpoint. Recall $\wt{Z}$ is defined to include the weight at the end point but not the starting point and we basically show this does not affect the local fluctuation. The proof of Proposition \ref{max_all_t1} below can be found in Section \ref{max_all_t1_proof}. Moreover, we omit the proof Proposition \ref{min1}, as it is similar to how we obtained Proposition \ref{max_all_t1} from \ref{max_all_t}.
\begin{proposition}\label{max_all_t1}
There exist positive constants $t_0, N_0$ such that for each $N\geq N_0$, $t\geq t_0$, and each $a \in \mathbb{Z}_{\geq 0}$, we have
$$\mathbb{P}\Big(\log \wt{Z}_{\mathcal{L}^a_{0}, N}\ - \log \wt{Z}_{0, N} \geq t\sqrt{a}\Big) \leq 
e^{-t^{1/10}}.
$$ 
\end{proposition}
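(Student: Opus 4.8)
The plan is to deduce Proposition~\ref{max_all_t1} from Proposition~\ref{max_all_t} using the reflection symmetry of the model. Let $\sigma\colon\mathbb{Z}^2\to\mathbb{Z}^2$ be the involution $\sigma({\bf z})=(N,N)-{\bf z}$. It reverses every up-right path, interchanges the two marked points $0=(0,0)$ and $N=(N,N)$, and maps $\mathcal{L}_0$ (resp.\ $\mathcal{L}_0^a$) onto $\mathcal{L}_N$ (resp.\ $\mathcal{L}_N^a$). The pushed-forward weights $Y'_{\bf z}:=Y_{\sigma({\bf z})}$ form an i.i.d.\ inverse-gamma field, and all the partition functions that appear below live in the region where $Y'$ is defined, so every bulk estimate recalled above — in particular Proposition~\ref{max_all_t} — applies verbatim to partition functions built from $Y'$.

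The only point requiring care is that $\wt Z$ collects the weight at the endpoint but not at the starting point, whereas $\sigma$ swaps these two roles. To handle this I would track the weights through $\sigma$ and then peel off the terminal vertex of each reflected path, which in the reflected picture carries no weight; a short computation then gives, for each $|k|\le a$,
$$\wt Z_{\olsi k, N}=Y'_0\Big(\wt Z'_{0,\,(N-1-k,\,N+k)}+\wt Z'_{0,\,(N-k,\,N-1+k)}\Big),$$
where $\wt Z'$ denotes the ordinary (start-excluded, end-included) partition function in the environment $Y'$. Summing over $|k|\le a$ and identifying the two index sets with the anti-diagonal segments $\mathcal{L}^a_{(N-1,N)}$ and $\mathcal{L}^a_{(N,N-1)}$ lying on the line $\{x+y=2N-1\}$, the factor $Y'_0$ cancels in the ratio and
$$\frac{\wt Z_{\mathcal{L}_0^a, N}}{\wt Z_{0, N}}=\frac{\wt Z'_{0,\,\mathcal{L}^a_{(N-1,N)}}+\wt Z'_{0,\,\mathcal{L}^a_{(N,N-1)}}}{\wt Z'_{0,\,(N-1,N)}+\wt Z'_{0,\,(N,N-1)}}\le\max\!\left(\frac{\wt Z'_{0,\,\mathcal{L}^a_{(N-1,N)}}}{\wt Z'_{0,\,(N-1,N)}},\ \frac{\wt Z'_{0,\,\mathcal{L}^a_{(N,N-1)}}}{\wt Z'_{0,\,(N,N-1)}}\right),$$
the last inequality being the mediant inequality $\frac{A+B}{C+D}\le\max(\tfrac AC,\tfrac BD)$ for positive reals.

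Each of the two ratios on the right is exactly the quantity bounded in Proposition~\ref{max_all_t}, except that the target point is $(N-1,N)$ or $(N,N-1)$ rather than $(N,N)$. Since the proof of Proposition~\ref{max_all_t} uses only the curvature of the limit shape near the diagonal and the one-point moderate deviation estimates — all stable under an $O(1)$ (indeed $O(N^{2/3})$) displacement of the endpoint in the near-diagonal direction — it applies unchanged to these endpoints and yields $\mathbb{P}\big(\log(\cdot)\ge t\sqrt a\big)\le e^{-t^{1/10}}$ for $N\ge N_0$ and $t\ge t_0$. A union bound over the two ratios — the extra factor $2$ being absorbed into $t_0$ using the considerable slack in the $e^{-t^{1/10}}$ bound — then gives the claim. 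All the substance is in the reflection; the only genuinely technical work is the weight-convention bookkeeping under $\sigma$ and the routine check that Proposition~\ref{max_all_t} extends to a unit-shifted endpoint, which I expect to be the mildly delicate step.
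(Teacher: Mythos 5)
Your reflection is the same core device the paper uses (the paper's proof explicitly says ``after a $180^\circ$ rotation of the picture''), but the way you carry the $\wt Z$-convention through it is genuinely different and, I think, slightly more burdensome than the paper's. Your algebra is correct: the identity $\wt Z_{\olsi k, N}=Y'_0\big(\wt Z'_{0,(N-1-k,N+k)}+\wt Z'_{0,(N-k,N-1+k)}\big)$ checks out, and so does the mediant inequality. But you then land on two ratios whose targets $(N-1,N)$ and $(N,N-1)$ sit one step off the diagonal, and must appeal to an off-diagonal variant of Proposition~\ref{max_all_t}. The paper instead rewrites $\log\wt Z_{{\bf x},N}=\log Z_{{\bf x},N}-\log Y_{\bf x}$ \emph{before} reflecting; the vertex-weight mismatch is then isolated into a separate union-bound event $\{\log Y_{(0,0)}-\min_{{\bf x}\in\mathcal L_0^a}\log Y_{\bf x}\ge \tfrac12 t\sqrt a\}$, which is cheap to kill since $\pm\log Y$ are sub-exponential. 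After that, the reflection of $\log Z_{{\bf x},N}-\log Z_{0,N}$ becomes $\log\wt Z_{0,\sigma({\bf x})}-\log\wt Z_{0,N}$ (the $Y'_0$ factor cancels in the ratio) with $\sigma({\bf x})\in\mathcal L_N^a$, which is Proposition~\ref{max_all_t} \emph{exactly as stated}, with nothing left to extend.

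So the one place your argument has a real gap is the step you yourself flag: Proposition~\ref{max_all_t} centred at $(N\mp 1,N\pm 1)$ rather than at $(N,N)$. This is not a corollary of the stated result, since $\mathcal L^a_{(N-1,N)}$ and $\mathcal L_N^a$ lie on different anti-diagonals, and deducing one from the other by, say, comparing $\wt Z'_{0,(N-1+j,N-j)}$ with $\wt Z'_{0,(N+j,N-j)}$ costs another vertex weight per path --- which reintroduces exactly the bookkeeping you were trying to avoid and effectively reproduces the paper's union bound by a detour. You are probably right that re-running the random-walk-comparison proof of Proposition~\ref{max_all_t} for a target displaced by $O(1)$ from the diagonal would go through, but the paper's weight-peeling trick makes that unnecessary: one elementary tail bound on log-gamma weights buys you the stated proposition on the nose.
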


\begin{proposition}\label{min1}
There exist positive constants $t_0, r_0$ such that for each $r\geq r_0$, $t\geq t_0$, and each $N\geq r$, we have
$$\mathbb{P}\Big(\min_{|k|\leq t^{1/20}r^{2/3}}\log \wt{Z}_{\olsi{k}, N}\ - \log \wt{Z}_{0, N} \leq -tr^{1/3}\Big) \leq 
e^{-t^{1/10}}.
$$ 
\end{proposition}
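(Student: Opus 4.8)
The plan is to reduce Proposition \ref{min1} to the endpoint version, Proposition \ref{min}, exactly as in the proof of Proposition \ref{max_all_t1}, via the path-reversal symmetry of the i.i.d.\ bulk field. Since $\wt Z_{\olsi k, N}$ and $\wt Z_{0,N}$ depend only on the weights $\{Y_{\bf z}\}_{{\bf z}\in\mathcal L_{\bf 0}^>}$, we may and do extend the field to all of $\Z^2$, taking $\{Y_{\bf z}\}_{{\bf z}\in\Z^2}$ i.i.d.\ inverse-gamma of shape $\mu$; this costs nothing. Let $\phi(i,j)=(N-i,N-j)$ be the point reflection through $(N/2,N/2)$ and set $\tilde Y_{\bf z}:=Y_{\phi({\bf z})}$, which is again an i.i.d.\ inverse-gamma field. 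Reversing an up-right path step by step and collecting the weights, one obtains, for each $|k|\le t^{1/20}r^{2/3}$, the exact identity
\[
\log\wt Z_{\olsi k, N} - \log\wt Z_{0,N} \;=\; \big(\log Y_{(0,0)}-\log Y_{(k,-k)}\big) \;+\; \big(\log\wt Z^{\,\tilde Y}_{0,\,N+\olsi{-k}} - \log\wt Z^{\,\tilde Y}_{0,N}\big),
\]
where $\wt Z^{\,\tilde Y}$ denotes the bulk partition function built from the field $\tilde Y$ and $N+\olsi{-k}=(N-k,N+k)=\phi(\olsi k)\in\mathcal L_N$. The two stray single-vertex weights $Y_{(0,0)}$ and $Y_{(k,-k)}$ appear precisely because $\wt Z$ counts the endpoint weight but not the startpoint weight, a convention that is swapped under $\phi$.

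Minimizing over $|k|\le t^{1/20}r^{2/3}$ and using $\min_k(a_k+b_k)\ge\min_k a_k+\min_k b_k$, it suffices to show that each of the three resulting terms is $\ge -\tfrac t3 r^{1/3}$ off an event of probability $\le\tfrac13 e^{-t^{1/10}}$, for $t\ge t_0$, $r\ge r_0$, $N\ge r$. For the last term, observe that as $k$ ranges over $|k|\le t^{1/20}r^{2/3}$ so does $-k$, so $\min_k\big(\log\wt Z^{\,\tilde Y}_{0,N+\olsi{-k}}-\log\wt Z^{\,\tilde Y}_{0,N}\big)$ is exactly the quantity controlled by Proposition \ref{min} applied to the i.i.d.\ field $\tilde Y$ (whose restriction to the region the reversed polymers explore has the same law as the original field), which gives the bound after replacing $t$ by a suitable constant multiple of $t$ and using the slack in the exponent, cf.\ Remark \ref{ext}. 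For $\log Y_{(0,0)}$: the inverse-gamma law has a super-exponentially light left tail, $\mathbb{P}(Y_{(0,0)}<e^{-s})=O(e^{(\mu-1)s-e^{s}})$, so $\mathbb{P}(\log Y_{(0,0)}<-\tfrac t3 r^{1/3})$ is utterly negligible next to $e^{-t^{1/10}}$. Finally $\min_k(-\log Y_{(k,-k)})=-\max_{|k|\le t^{1/20}r^{2/3}}\log Y_{(k,-k)}$, and since the inverse-gamma law has polynomial right tail $\mathbb{P}(Y>e^{s})=O(e^{-\mu s})$, a union bound over the $O(t^{1/20}r^{2/3})$ relevant vertices gives $\mathbb{P}\big(\max_k\log Y_{(k,-k)}>\tfrac t3 r^{1/3}\big)\le C\, t^{1/20}r^{2/3}\, e^{-\mu t r^{1/3}/3}$, which for $r\ge r_0(\mu)$, $t\ge t_0(\mu)$ lies far below $e^{-t^{1/10}}$ because $r^{1/3}/\log r\to\infty$ and $tr^{1/3}\gg t^{1/10}$. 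Combining the three estimates with the identity yields the claim.

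The entire content of the argument is the reversal identity and the observation that a single inverse-gamma boundary weight is harmless at scale $r^{1/3}$; the only genuine probabilistic input is Proposition \ref{min}. The main (minor) obstacle is the careful bookkeeping of the endpoint/startpoint weight convention under the reflection $\phi$ — getting the two stray $\log Y$ terms exactly right — after which everything reduces to the routine tail estimates and constant-chasing indicated above. As noted in the text, this is the same scheme used for Proposition \ref{max_all_t1}, with Proposition \ref{max_all_t} there playing the role that Proposition \ref{min} plays here.
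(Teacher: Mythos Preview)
Your proof is correct and follows essentially the same approach as the paper, which omits the proof of Proposition~\ref{min1} and simply says it is similar to that of Proposition~\ref{max_all_t1}: reduce to the endpoint version (Proposition~\ref{min}) via the $180^\circ$ reflection $\phi$, and control the stray single-vertex weights coming from the startpoint/endpoint convention. Your explicit identity and your care in extending the field to all of $\Z^2$ are in fact slightly more precise than the paper's own treatment in the proof of Proposition~\ref{max_all_t1}; the only cosmetic point is that your citation of Remark~\ref{ext} for the ``slack in the exponent'' is not quite on target (that remark concerns the hypotheses in \textbf{Assumption B}), though the underlying observation---that the bound $e^{-t^{1/10}}$ in Proposition~\ref{min} is a deliberate weakening of what its proof actually yields, so a constant factor in $t$ is harmless---is exactly right.
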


\subsection{Estimates for the polymer with boundary}\label{est_poly_bdry}
This section summarizes the results for the polymer with general initial conditions. The proofs of these results use the coupling with stationary polymer, thus they are postponed to Section \ref{s:est_proof}.

Recall the shape function for the bulk polymer $\Lambda((N,N)) = \Lambda_N$. 
For $k \in \mathbb{Z}$, let us define $\{\tau_{\mathcal{L}_N^s} = k\} = \bigcup_{{\bf v} \in \mathcal{L}_N^s}\mathbb{X}_{(k, -k), {\bf v}}$, which will often be referred to as  the collection of paths with \textit{exit time} $k$. The partition function over $\{\tau_{\mathcal{L}_N^s} = k\}$ is given by 
$$Z^W_{\mathcal{L}_N^s} (\tau_{\mathcal{L}_N^s} = k)= Z^W_{\mathcal{L}_N^s} (\tau = k)=  W_k \cdot \wt{Z}_{(k, -k), {\mathcal{L}_N^s}},$$
where we omit the subscript under $\tau$ when it is clear from the partition function. The quenched path measure $Q^W_{\mathcal{L}_N^s}\{\tau = k\} = \frac{Z^W_{\mathcal{L}_N^s} (\tau = k)}{Z^W_{\mathcal{L}_N^s}}.$

\begin{proposition}\label{exit_est}
Fix $\epsilon > 0$ and suppose boundary condition $W$ satisfies assumption \eqref{up_bd}. There exist positive constants $C_1, C_2 ,  N_0, t_0$ such that for each $N\geq N_0$ and $t\geq t_0$, we have
$$\mathbb{P}\Big(\fgs_{\mathcal{L}_N^{tN^{2/3}}}(|\tau| > (1+\epsilon) tN^{2/3}) - \Lambda_N \geq -C_1t^2 N^{1/3}\Big) \leq e^{-C_2t^{3/2}}.$$
\end{proposition}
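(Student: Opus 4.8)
The plan is to control the exit-time tail of the boundary polymer by a union bound over dyadic blocks of exit times, where on each block we compare the contribution to the full bulk partition function via the tail estimates in Section~\ref{est_poly_bulk}. Fix $N$ large and $t\ge t_0$, and write $s=tN^{2/3}$. The event in question concerns paths from $\mathcal{L}_{\bf 0}$ to $\mathcal{L}_N^{s}$ whose exit point $\olsi k$ on $\mathcal{L}_{\bf 0}$ satisfies $|k|>(1+\epsilon)s$; the partition function over such paths is $\sum_{|k|>(1+\epsilon)s} W_k \wt Z_{\olsi k,\mathcal{L}_N^{s}}$. First I would dyadically decompose the range of $|k|$: for $j\ge 0$ set $I_j=\{k : 2^j(1+\epsilon)s \le |k| < 2^{j+1}(1+\epsilon)s\}$, so that the target probability is at most $\sum_{j\ge 0}\mathbb P(\sum_{k\in I_j}W_k\wt Z_{\olsi k,\mathcal{L}_N^{s}} \ge \frac{1}{C}2^{-2j}\cdot t^2N^{1/3}\text{-type bound})$, choosing the per-block thresholds so that their (logarithms summed as a geometric-type series) stay below $-C_1 t^2N^{1/3}$; here I am using that a path exiting at distance $\asymp 2^j s$ and returning to within $s$ of the diagonal endpoint has transversal fluctuation forced to be $\asymp 2^j s = 2^j t N^{2/3}$, and Proposition~\ref{trans_fluc_loss} (applied with the parameter playing the role of $2^j t$) says the bulk free energy over such paths is below $\Lambda_N - c\,(2^j t)^2 N^{1/3}$ except with probability $e^{-c(2^jt)^3}$.

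The two ingredients to combine on each block are: (1) the bulk estimate that the free energy collected from the starting line to $\mathcal{L}_N^{s}$ by paths with transversal fluctuation of order $2^j t N^{2/3}$ is at most $\Lambda_N - \oldc{trans_fluc_loss3_c1}(2^jt)^2 N^{1/3}$ with the stated super-exponential probability (Proposition~\ref{trans_fluc_loss}, or Proposition~\ref{trans_fluc_loss3} if one needs to confine inside a parallelogram), and (2) the boundary growth estimate \eqref{up_bd}, which controls $\max_{|k|\le 2^{j+1}(1+\epsilon)s}\log W_k$ by $\asymp \sqrt{2^j s}\cdot(\text{tail parameter})$. Since $\sqrt{2^j s}=\sqrt{2^j t}\,N^{1/3}$ grows only like $(2^jt)^{1/2}N^{1/3}$, it is dominated by the bulk loss $(2^jt)^2N^{1/3}$ for $t$ large, so on the event that both (1) and (2) hold with a moderate parameter, the block contribution's log is at most $\Lambda_N - \tfrac12\oldc{trans_fluc_loss3_c1}(2^jt)^2N^{1/3}$, which for $j=0$ is the desired $\Lambda_N - C_1 t^2 N^{1/3}$ and for $j\ge 1$ is even smaller (so summing $e^{-c(2^jt)^{3/2}}$ or $e^{-c(2^jt)^3}$ over $j$ and over the $O(2^js)$ boundary points via a further union bound against \eqref{up_bd} costs only a constant factor, absorbed by enlarging $t_0$). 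One also needs a crude deterministic/high-probability upper bound on the number of relevant $k$ and on $\wt Z_{\olsi k,\mathcal{L}_N^{s}}$ itself (e.g. via Proposition~\ref{ptl_upper} applied at the shifted line) to turn the "maximum over $k$" control into control of the sum; the polynomially many terms are swallowed by the exponential gains.

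The main obstacle I anticipate is bookkeeping the interaction between the two scales: the boundary weight $W_k$ lives at distance $|k|\asymp 2^jtN^{2/3}$ from the origin, so $\wt Z_{\olsi k,\mathcal{L}_N^{s}}$ is a bulk partition function between two anti-diagonal lines whose endpoints are displaced by $\asymp 2^jtN^{2/3}$ transversally over a "time" $\asymp N$ — this is off the characteristic direction, and one must invoke the transversal-fluctuation loss estimates (Propositions~\ref{trans_fluc_loss}/\ref{trans_fluc_loss3}) with the correct scaled parameter $u\asymp 2^jt$ rather than treat it as a near-diagonal quantity; care is needed because those propositions are stated for paths from $\mathcal{L}_0^{sN^{2/3}}$ to $\mathcal{L}_N\setminus\mathcal{L}_N^{(s+t)N^{2/3}}$, i.e. with the transversal displacement at the \emph{endpoint}, and here the displacement is at the \emph{startpoint} on $\mathcal{L}_{\bf 0}$, so one should either use a reflection/symmetry of the model or re-center so that the large displacement sits at $\mathcal{L}_N$. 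The rest — the geometric summation over dyadic blocks $j$ and a union bound over the $O(2^jtN^{2/3})$ candidate exit points, all against the \eqref{up_bd} bound and the bulk tail bounds — is routine once the scales are aligned, and the final constants $C_1,C_2$ come out of the $j=0$ block with $t_0$ chosen large enough that the $\sqrt{\cdot}$ boundary term is beaten by the $(\cdot)^2$ bulk term.
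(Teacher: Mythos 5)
Your proposal is correct and follows essentially the same route as the paper's own proof: decompose the forbidden exit-time range into blocks, bound the bulk contribution from each block by the transversal-fluctuation-loss estimate (Proposition~\ref{trans_fluc_loss}, after re-centering so the displacement sits at the endpoint) and the boundary contribution by \eqref{up_bd}, then union-bound and sum. The only cosmetic difference is that the paper uses unit-scale blocks of width $N^{2/3}$ indexed by $s$ ranging from $(1+\epsilon)t+1$ to $t+N^{1/3}/\epsilon$ (and cuts off at $t\le N^{1/3}/\epsilon$ since the event is empty otherwise), whereas you use a dyadic decomposition; the resulting geometric sums are bounded in the same way, and you correctly anticipate both the re-centering subtlety and the need to absorb the polynomially many terms when passing from a maximum to the sum.
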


\begin{proposition}\label{b_up_tail}
Fix $\epsilon >0$ and suppose boundary condition $W$ satisfies assumption \eqref{up_bd}. There exist positive constants $C_1, N_0, t_0$ such that for each $N\geq N_0$, $t\geq t_0$, we have
\begin{enumerate}[(i)]
\item $\mathbb{P}\Big(\fgs_{ \mathcal{L}_{ N }^{tN^{2/3}}} - \Lambda_N \geq \epsilon t^2 N^{1/3}\Big) \leq e^{-C_1t^{3/2}}.$
\item$\mathbb{P}\Big(\fgs_{ \mathcal{L}_{ N }^{tN^{2/3}}} - \Lambda_N \geq \epsilon t N^{1/3}\Big) \leq e^{-C_1t^{1/2}}.$
\end{enumerate}
\end{proposition}

\begin{proposition}\label{b_low_tail}
Fix $\epsilon >0$ and suppose boundary condition $W$ satisfies assumption \eqref{up_bd}. There exist positive constants $C_1, N_0, t_0$ such that for each $N\geq N_0$, $t\geq t_0$, we have
$$\mathbb{P}\Big(\fgs_{ N} - \Lambda_N \leq -\epsilon t N^{1/3}\Big) \leq e^{-C_1\min\{t^{3/2}, t N^{1/3}\}}.$$
\end{proposition}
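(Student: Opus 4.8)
The plan is to show that the free energy $\fgs_{N} = \log Z^W_{(N,N)}$ has a left tail that is no worse than that of the bulk polymer $\log \wt Z_{0,N}$ (Proposition \ref{low_ub}), after accounting for the diffusive size of the initial condition. Since $Z^W_{N} = \sum_{k} W_k \wt Z_{\olsi k, N}$ is a sum of non-negative terms, it suffices to exhibit a single index $k$ (or a small block of them) on which $W_k \wt Z_{\olsi k, N}$ is not too small, and this should happen with the claimed probability. The natural choice is $k=0$: then $W_0 = 1$ and $Z^W_N \geq \wt Z_{0, N}$, so on the event $\{\log \wt Z_{0,N} - \Lambda_N \geq -\tfrac{\epsilon}{2} t N^{1/3}\}$ we would already be done with probability $1 - e^{-C\min\{t^{3/2}, tN^{1/3}\}}$ by Proposition \ref{low_ub}. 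Wait — this by itself gives the result with no dependence on $W$ at all, as long as $W_0 = 1$, which is exactly our normalization. So the proof is essentially immediate: $\fgs_N = \log Z^W_N \geq \log(W_0 \wt Z_{0,N}) = \log \wt Z_{0,N}$, hence $\mathbb{P}(\fgs_N - \Lambda_N \leq -\epsilon t N^{1/3}) \leq \mathbb{P}(\log \wt Z_{0,N} - \Lambda_N \leq -\epsilon t N^{1/3}) \leq e^{-C_1\min\{t^{3/2}, tN^{1/3}\}}$ by Proposition \ref{low_ub} (adjusting constants to absorb the factor $\epsilon$, and using $t \geq t_0$).

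The only subtlety is whether the right-hand side of Proposition \ref{low_ub} is stated for all $t \geq 1$ uniformly, which it is, so rescaling $t \mapsto \epsilon t$ only changes the constant $C_1$ and requires $\epsilon t \geq 1$, guaranteed by taking $t_0$ large depending on $\epsilon$. One should double-check the convention: $\wt Z$ excludes the weight at the starting point $\olsi 0 = (0,0)$ but includes the endpoint weight, and the estimates in Section \ref{est_poly_bulk} (in particular Proposition \ref{low_ub}, phrased for $\fg_{0,N} = \log \wt Z_{0,N}$ after the remark that the starting-point weight is harmless) apply verbatim to $\log \wt Z_{0,N}$. So no genuine obstacle arises here.

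I do not expect any serious difficulty; the assumption \eqref{up_bd} on $W$ is in fact not needed for this particular bound — it is listed for uniformity with Propositions \ref{exit_est} and \ref{b_up_tail} — since the lower bound on $Z^W_N$ only uses $W_0 = 1$ and monotonicity. If one instead wanted the sharper statement that the left tail is genuinely governed by the initial condition when $r \ll N$ is relevant, one would need the matching estimate from the other side, but that is not what is being asserted. The cleanest writeup is: restrict the sum to the single path-family through $\olsi 0$, invoke $W_0 = 1$, and apply Proposition \ref{low_ub} with $t$ replaced by $\epsilon t$.
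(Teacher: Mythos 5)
Your proof is correct and is exactly the paper's argument: the paper's one-line proof of Proposition \ref{b_low_tail} uses the same monotonicity $\fgs_N = \log Z^W_N \geq \log \wt{Z}_{0,N}$ (from $W_0 = 1$) and then invokes the left-tail bound Proposition \ref{low_ub} (the paper's citation of ``Proposition \ref{low_tail}'' there is a typo). Your observation that assumption \eqref{up_bd} is not actually used in this particular proof is also correct.
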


\begin{theorem}\label{var_s} Suppose boundary condition $W$ satisfies assumption \eqref{up_bd}. There exists positive constants $C_1, C_2, N_0$ such that for each $N\geq N_0$, we have 
$$C_1 N^{2/3} \leq \Var(\log {Z}^W_{N}) \leq C_2N^{2/3}.$$
\end{theorem}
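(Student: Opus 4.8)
The plan is to read off the two-sided bound from the one-point tail estimates for the boundary polymer, exactly as in the bulk case (Proposition~\ref{var_i}); throughout I use $\Var(\fgs_N)\le\mathbb E[(\fgs_N-\Lambda_N)^2]$ and $\Var(\fgs_N)=\inf_{m}\mathbb E[(\fgs_N-m)^2]$, both finite by the tail bounds below.

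\emph{Upper bound.} Write $\mathbb E[(\fgs_N-\Lambda_N)^2]=2\int_0^\infty s\,\mathbb P(|\fgs_N-\Lambda_N|\ge s)\,ds$ and control the two tails on scale $N^{1/3}$. The left tail is precisely Proposition~\ref{b_low_tail}: $\mathbb P(\fgs_N-\Lambda_N\le -tN^{1/3})\le e^{-C\min\{t^{3/2},\,tN^{1/3}\}}$. For the right tail, enlarging the set of admissible endpoints only increases the partition function, so $\fgs_N\le\fgs_{\mathcal{L}_N^{tN^{2/3}}}$ for every $t\ge0$; feeding this into Proposition~\ref{b_up_tail}(ii) (with, say, $\epsilon=1$) gives $\mathbb P(\fgs_N-\Lambda_N\ge tN^{1/3})\le e^{-C\sqrt t}$ for $t\ge t_0$. (If that proposition is only available for $t$ up to a small multiple of $N^{1/3}$, the remaining very large deviations are handled by splitting the polymer according to its exit location: the part with exit inside $\mathcal{L}_0^{O(tN^{2/3})}$ is bounded using $\max_{|k|\le O(tN^{2/3})}\log W_k$ via \eqref{up_bd} together with the line-to-point estimate Proposition~\ref{ptl_upper}, while the far part loses free energy by Proposition~\ref{exit_est}; any decay beating $t^{-3}$ suffices.) Substituting $s=tN^{1/3}$, the range $t\le t_0$ contributes $O(N^{2/3})$ trivially and $\int_{t_0}^\infty t\,\bigl(e^{-C\sqrt t}+e^{-C\min\{t^{3/2},tN^{1/3}\}}\bigr)\,dt$ is bounded uniformly in $N$, so $\Var(\fgs_N)\le C_2N^{2/3}$.

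\emph{Lower bound.} I will use the elementary two-level criterion: if $\mathbb P(X\le aN^{1/3})\ge c$ and $\mathbb P(X\ge bN^{1/3})\ge c$ with fixed $a<b$, then for every constant $m$ the quantity $\mathbb E[(X-m)^2]$ is at least $\tfrac c4(b-a)^2N^{2/3}$ (estimate by the contribution of $\{X\ge bN^{1/3}\}$ if $m\le\tfrac{a+b}{2}N^{1/3}$ and of $\{X\le aN^{1/3}\}$ otherwise), hence $\Var(X)\ge\tfrac c4(b-a)^2N^{2/3}$. Apply this with $X=\fgs_N-\Lambda_N$. Since $W_0=1$, retaining only the exit location $0$ gives $\fgs_N\ge\fg_{0,N}$, so Proposition~\ref{up_lb} yields $\mathbb P(X\ge bN^{1/3})\ge e^{-Cb^{3/2}}=:c_1>0$ for every fixed $b$ with $t_0\le b\le\epsilon_0N^{2/3}$. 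For the other level, fix a constant $t_1\ge t_0$ with $e^{-C\sqrt{t_1}}\le\tfrac12$; since $\fgs_N\le\fgs_{\mathcal{L}_N^{t_1N^{2/3}}}$, Proposition~\ref{b_up_tail}(ii) gives $\mathbb P(X\ge t_1N^{1/3})\le\tfrac12$, i.e.\ $\mathbb P(X\le t_1N^{1/3})\ge\tfrac12$. Taking $a=t_1$ and $b=\max\{2t_1,t_0\}$ (fixed constants with $a<b$, so $b\le\epsilon_0N^{2/3}$ once $N$ is large), the criterion gives $\Var(\fgs_N)\ge C_1N^{2/3}$.

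\emph{Main difficulty.} Conceptually everything is downstream of the cited one-point estimates; the only points needing care are transferring the right-tail control from the $\mathcal{L}_N^{tN^{2/3}}$-endpoint propositions to the single endpoint $(N,N)$ — done by monotonicity of the partition function in the endpoint set, with the exit-location split covering the largest scales — and verifying in the lower bound that the two levels $a<b$ can be chosen independently of $N$ while both probability bounds remain bounded away from $0$.
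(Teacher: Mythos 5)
Your proof is correct and follows essentially the same route as the paper: the upper bound is read off from the tail estimates of Propositions~\ref{b_up_tail} and \ref{b_low_tail} (together with the monotonicity $\fgs_N\le\fgs_{\mathcal{L}_N^{tN^{2/3}}}$ that is also implicit in the paper's invocation of Proposition~\ref{b_up_tail}), and the lower bound comes from the right-tail lower bound of Proposition~\ref{up_lb} via $\fgs_N\ge\log\wt Z_{0,N}$. The only packaging difference is in the lower bound: the paper first records the mean bound $|\mathbb E[\log Z^W_N]-\Lambda_N|\le CN^{1/3}$ and then pushes the free energy a fixed multiple of $N^{1/3}$ above the mean, whereas you use a symmetric two-level criterion ($\mathbb P(X\le aN^{1/3})\ge c$, $\mathbb P(X\ge bN^{1/3})\ge c$) that sidesteps computing the mean; both are equivalent in substance. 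Your parenthetical contingency about $t$ being restricted in Proposition~\ref{b_up_tail} is unnecessary, since that proposition is stated for all $t\ge t_0$ with no upper cutoff.
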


\begin{theorem}\label{exit_q}
Fix $\epsilon > 0$ and suppose boundary condition $W$ satisfies assumption \eqref{up_bd}. There exist positive constants $C_1, C_2 ,  N_0, t_0$ such that for each $N\geq N_0$, $t\geq t_0$ and $0\leq s\leq tN^{2/3}$, we have
$$\mathbb{P}\Big(Q^W_{\mathcal{L}_N^{s}}(|\tau| > (1+\epsilon) tN^{2/3}) \geq e^{-C_1t^2 N^{1/3}}\Big) \leq e^{-C_2t^{3/2}}.$$
\end{theorem}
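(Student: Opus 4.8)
The plan is to deduce the quenched exit estimate from the annealed partition-function estimates already collected in Section \ref{est_poly_bulk} and Section \ref{est_poly_bdry}, exactly as one passes from a bound on $Z(\mathfrak{A})$ to a bound on $Q(\mathfrak{A}) = Z(\mathfrak{A})/Z$. The left side of the claimed inequality is the event
$$Z^W_{\mathcal{L}_N^{s}}(|\tau| > (1+\epsilon)tN^{2/3}) \geq e^{-C_1 t^2 N^{1/3}}\cdot Z^W_{\mathcal{L}_N^{s}},$$
so it suffices to show that, with probability at least $1-e^{-C_2 t^{3/2}}$, the numerator is small (at most $e^{-C' t^2 N^{1/3}}$ times $e^{\Lambda_N}$) while simultaneously the denominator is not too small (at least $e^{-t^{1/3}} e^{\Lambda_N}$, say). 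A union bound over these two events then produces the desired probability bound after renaming constants, using $t \geq t_0$ large so that $e^{-C_2 t^{3/2}}$ absorbs the lower-order terms.

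First I would bound the numerator. The paths counted there have exit time $|k| > (1+\epsilon)tN^{2/3}$; splitting according to whether $s \le tN^{2/3}$ or not, and noting $0 \le s \le tN^{2/3}$ is in our hypothesis, the paths start from $\mathcal{L}_N^{s} \subset \mathcal{L}_N^{tN^{2/3}}$ and have $|\tau| > (1+\epsilon)tN^{2/3}$. This is precisely the situation controlled by Proposition \ref{exit_est}, which gives
$$\mathbb{P}\Big(\fgs_{\mathcal{L}_N^{tN^{2/3}}}(|\tau| > (1+\epsilon)tN^{2/3}) - \Lambda_N \geq -\widetilde{C}_1 t^2 N^{1/3}\Big) \leq e^{-\widetilde{C}_2 t^{3/2}},$$
and since $Z^W_{\mathcal{L}_N^{s}}(|\tau| > (1+\epsilon)tN^{2/3}) \le Z^W_{\mathcal{L}_N^{tN^{2/3}}}(|\tau| > (1+\epsilon)tN^{2/3})$, the same bound applies to our numerator. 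Next I would bound the denominator from below: $Z^W_{\mathcal{L}_N^{s}} \ge Z^W_N = Z^W_{(N,N)}$ (restricting to the single endpoint $(N,N)$), and Proposition \ref{b_low_tail} with, say, the choice $tN^{1/3} \leftrightarrow t^{1/3}$ — more precisely applying \ref{b_low_tail} with its parameter equal to a small absolute constant — gives $\mathbb{P}(\fgs_N - \Lambda_N \le -\epsilon' ) \le e^{-c}$; to get a probability of order $e^{-C_2 t^{3/2}}$ one instead applies \ref{b_low_tail} with parameter comparable to $t$, yielding $\mathbb{P}(\fgs_N - \Lambda_N \le -\epsilon' t N^{1/3}) \le e^{-C\min\{t^{3/2}, tN^{1/3}\}}$. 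Combining, on the complement of a set of probability $\le e^{-\widetilde C_2 t^{3/2}} + e^{-C\min\{t^{3/2},tN^{1/3}\}}$ we have
$$Q^W_{\mathcal{L}_N^{s}}(|\tau| > (1+\epsilon)tN^{2/3}) \le \frac{e^{\Lambda_N - \widetilde C_1 t^2 N^{1/3}}}{e^{\Lambda_N - \epsilon' t N^{1/3}}} = e^{-(\widetilde C_1 t^2 - \epsilon' t)N^{1/3}} \le e^{-C_1 t^2 N^{1/3}}$$
for $t \ge t_0$ large enough and $C_1$ chosen suitably, which is what we want.

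The one genuinely delicate point is matching the exponents: the numerator loses $t^2 N^{1/3}$ while the denominator is only guaranteed down to $-\epsilon' t N^{1/3}$ on the relevant good event, so the quenched probability decays like $e^{-(C_1 t^2 - \epsilon' t)N^{1/3}}$, which is $\le e^{-C_1' t^2 N^{1/3}}$ once $t$ is large — hence the restriction $t \ge t_0$ and the slightly weakened constant in the statement. There is also the subtlety that $N^{1/3}$ could be smaller than $t^{3/2}$ in the regime $t \gg N^{2/9}$; there Proposition \ref{exit_est}'s bound is $e^{-C_2 t^{3/2}}$ regardless, and $t^2 N^{1/3}$ versus $tN^{1/3}$ still leaves a gap growing in $t$, so the argument goes through without needing to track the $\min$ carefully. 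I expect no real obstacle beyond this bookkeeping; the substance is entirely in Propositions \ref{exit_est} and \ref{b_low_tail}, both of which are available.
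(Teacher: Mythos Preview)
Your proposal is correct and follows essentially the same approach as the paper: the paper's proof is also a union bound that controls the numerator via Proposition~\ref{exit_est} and the denominator via Proposition~\ref{b_low_tail}, and then reads off the quenched bound from the ratio. The only cosmetic difference is that the paper applies Proposition~\ref{b_low_tail} with parameter $C't^2$ rather than your $\epsilon' t$, so the two centerings match exactly and no additional bookkeeping on $t^2$ versus $t$ is needed; either choice works.
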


\begin{theorem}\label{nest}
Suppose boundary condition $W$ satisfies assumption \eqref{up_bd}. There exist positive constants $c_0, t_0, N_0, $ such that for each $N\geq N_0$, $N/2 \leq r \leq N-c_0$,  $t\geq t_0$, we have
\begin{align*}
&\mathbb{P} \Big( \log Z^{W}_{N}  - [\log  Z^{W}_{ r}  + \log \wt{Z}_{r,N}  ]\geq t(N-r)^{1/3}\Big) \leq e^{-t^{1/10}}.
\end{align*}
\end{theorem}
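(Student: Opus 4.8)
The statement asserts an upper bound on the probability that the free energy $\log Z^W_N$ exceeds the ``decoupled'' quantity $\log Z^W_r + \log \wt Z_{r,N}$ by more than $t(N-r)^{1/3}$, in the regime $N/2 \le r \le N - c_0$. The natural strategy is to decompose $Z^W_N$ according to which point of the anti-diagonal line $\mathcal{L}_r$ the polymer passes through, since every up-right path from $\mathcal{L}_{\bf 0}$ to $(N,N)$ must cross $\mathcal{L}_r$. Writing ${\bf p} = r + \olsi{k}$ for the crossing point, one gets (up to the usual convention about picking up the weight at ${\bf p}$) a bound of the shape
\[
Z^W_N \;\le\; \sum_{k \in \Z} Z^W_{r + \olsi k}\cdot \wt Z_{r + \olsi k,\, N}.
\]
The plan is to control this sum by splitting the range of $k$ into a ``central'' part $|k| \le M (N-r)^{2/3}$ for a suitable large constant or slowly growing $M$, and a ``tail'' part $|k| > M(N-r)^{2/3}$. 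On the central part, I would compare $\log Z^W_{r+\olsi k}$ to $\log Z^W_r$ using the local fluctuation estimate for the boundary free energy profile (the analogue, for the polymer with initial condition, of Proposition \ref{max_all_t}/Proposition \ref{min}; more precisely one wants that $\log Z^W_{r+\olsi k} - \log Z^W_r \le t_1 \sqrt{|k|} \lesssim t_1 M^{1/2}(N-r)^{1/3}$ off an event of probability $\le e^{-t_1^{1/10}}$), and compare $\log \wt Z_{r+\olsi k, N}$ to $\log \wt Z_{r,N}$ using the analogous bulk estimate at scale $N - r$ near the starting point (Proposition \ref{max_all_t1}), which is legitimate because $N - r \le N/2$ so $(N-r)$ is the relevant KPZ scale for the segment from $\mathcal{L}_r$ to $(N,N)$. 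Combining, the central part contributes at most $(\text{number of terms}) \cdot Z^W_r \cdot \wt Z_{r,N} \cdot e^{C t_1 M^{1/2}(N-r)^{1/3}}$, and the polynomial count of terms is absorbed into the exponential; choosing $M$ a large constant and $t_1$ a small multiple of $t$ makes this at most $e^{t (N-r)^{1/3}/2} Z^W_r \wt Z_{r,N}$, as desired.

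For the tail part $|k| > M(N-r)^{2/3}$, the contribution should be negligible because forcing the polymer to exit $\mathcal{L}_r$ far from the diagonal costs a lot of free energy: one wants something like $\log\big(\sum_{|k|>M(N-r)^{2/3}} Z^W_{r+\olsi k}\wt Z_{r+\olsi k,N}\big) - \Lambda_N \le -\Lambda_N$-comparison giving a gain of order $-M^2 (N-r)^{1/3}$ relative to $\log Z^W_r + \log\wt Z_{r,N}$ with high probability. This is exactly the flavor of Proposition \ref{trans_fluc_loss} (transversal fluctuation loss in the bulk) combined with the boundary exit estimate Proposition \ref{exit_est} and the right-tail bound Proposition \ref{b_up_tail}; one also needs a lower bound on $\log Z^W_r$ itself (from Proposition \ref{b_low_tail}) and on $\log\wt Z_{r,N}$ (from Proposition \ref{low_ub}) so that the comparison ``$\text{tail sum} \ll Z^W_r \wt Z_{r,N}$'' is meaningful. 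Here the key point is that the loss $M^2(N-r)^{1/3}$ grows in $M$ while the relevant bulk fluctuations are only $O((N-r)^{1/3})$ and the boundary fluctuations on scale $M(N-r)^{2/3}$ are only $O(\sqrt{M}(N-r)^{1/3})$ by diffusivity, so for $M$ large the tail is beaten.

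The main obstacle I anticipate is bookkeeping the interaction between the randomness of the initial condition $W$ and the bulk weights, and in particular making sure that the local-fluctuation comparison for $\log Z^W_{r+\olsi k}$ as $k$ ranges over a window is uniform — i.e.\ that one can pay a single $e^{-t^{1/10}}$-type price for controlling $\max_{|k|\le M(N-r)^{2/3}}(\log Z^W_{r+\olsi k} - \log Z^W_r)$ rather than union-bounding over $k$. This is presumably why the propositions in Section \ref{est_poly_bdry} are stated with a maximum/minimum already built in (cf. Propositions \ref{min}, \ref{min1}, \ref{max_all_t1}); the proof should invoke the ``with boundary'' analogues of these, and the final probability $e^{-t^{1/10}}$ in the statement strongly suggests that these max-type estimates, proved via the stationary-polymer coupling in Section \ref{s:est_proof}, are the workhorses. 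A secondary technical nuisance is the off-by-one in whether $\wt Z$ picks up the weight at the crossing point ${\bf p}$, but as remarked in Section \ref{est_poly_bulk} this only costs an extra inverse-gamma factor, harmless after a further union bound or a crude tail estimate.
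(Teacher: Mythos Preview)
Your proposal is correct and follows essentially the same route as the paper: decompose according to the crossing point on $\mathcal{L}_r$, split into a central window and a tail, control the central window via local fluctuation estimates for both the boundary free energy profile $\log Z^W_{r+\olsi k}-\log Z^W_r$ and the bulk profile $\log\wt Z_{r+\olsi k,N}-\log\wt Z_{r,N}$, and kill the tail by a transversal-fluctuation argument. The only substantive difference is the choice of cutoff: you take a window of width $M(N-r)^{2/3}$ with $M$ a large constant (or slowly growing), whereas the paper takes the window of width $t(N-r)^{2/3}$; the latter makes the union bound cleaner and is what produces the advertised $e^{-t^{1/10}}$, while a fixed $M$ would require the adjustment you already flagged. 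Your anticipated ``main obstacle'' is exactly right and is resolved by the paper via Propositions \ref{local_fluc} and \ref{all_t} (the boundary analogues of Propositions \ref{max_all_t}/\ref{max_all_t1}, proved by the stationary coupling you suspected), together with Proposition \ref{fluc_bound_1} for the location of the maximizer ${\bf p}_*$.
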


\section{Time correlations for large $r$: Proof of Theorem \ref{thm_r_large}}\label{sec_r_large}

The argument proving Theorem \ref{thm_r_large} is similar to the corresponding argument in \cite{bas-sep-she-23}, which proved an analogous estimate for the point-to-point polymer; however, some of the estimates for point-to-point polymer needs to be replaced by corresponding estimates for polymers with an initial condition stated in Section \ref{est_poly_bdry}.

Let us assume  $c_0 \leq N-r \leq N/2$. Recall the following identity
\begin{equation}\label{var_id}
\begin{aligned}
    \Var(U-V) &\geq  \inf_{\lambda\in \mathbb R}\Var(U-\lambda V) = (1-\textup{$\mathbb{C}$orr}^2(U,V))\Var(U).
\end{aligned}\end{equation}
We will upper and lower bound  the quantity $1-\textup{$\mathbb{C}$orr}^2(U,V)$ for $U = \log Z^{W}_{N} $ and $V= \log Z^{W}_{r}$ using the next two lemmas.



\begin{lemma}
   \label{l:vardiffub}
   There exists $C>0$ such that in the above setup we have 
   \begin{equation}\label{nest_var}
\Var( \log Z^W_{N}  - \log Z^W_{r} ) \leq C(N-r)^{2/3}.
\end{equation}
\end{lemma}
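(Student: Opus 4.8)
The plan is to bound the variance of the difference $\log Z^W_N - \log Z^W_r$ by showing that, up to a small-probability error event, this difference is comparable to the bulk free energy increment $\log \wt Z_{r,N}$, which lives on a scale of order $(N-r)^{1/3}$ in standard deviation since $N - r$ is itself of order $N$ (recall $N/2 \le r \le N - c_0$).

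Concretely, I would proceed as follows. First, write $\log Z^W_N - \log Z^W_r = \bigl(\log Z^W_N - [\log Z^W_r + \log \wt Z_{r,N}]\bigr) + \log \wt Z_{r,N}$. The bulk term $\log \wt Z_{r,N}$ has variance of order $(N-r)^{2/3}$ by Proposition~\ref{var_i} (applied with $N$ replaced by $N-r$, after translating the lattice so that $\olsi r$ becomes the origin). For the first term, Theorem~\ref{nest} gives a one-sided bound: $\log Z^W_N - [\log Z^W_r + \log \wt Z_{r,N}] \le t(N-r)^{1/3}$ except on an event of probability at most $e^{-t^{1/10}}$. The matching lower bound on this quantity is immediate and deterministic, since $Z^W_N \ge \sum_k W_k \wt Z_{\olsi k, \olsi r^{>} \cap \mathcal{L}_r} \cdots$ — more precisely, restricting the sum defining $Z^W_N$ to paths that pass through the point $(r,r)$ shows $Z^W_N \ge Z^W_r \cdot \wt Z_{r,N}$, hence the difference is nonnegative. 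So the first term is a nonnegative random variable with a stretched-exponential right tail on scale $(N-r)^{1/3}$, which gives $\mathbb E[(\text{first term})^2] \le C(N-r)^{2/3}$ by integrating the tail.

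Combining, $\Var(\log Z^W_N - \log Z^W_r) \le 2\Var(\log \wt Z_{r,N}) + 2\,\mathbb E[(\text{first term})^2] \le C(N-r)^{2/3}$, using $(a+b)^2 \le 2a^2 + 2b^2$ and that the variance of the first term is bounded by its second moment. One technical point to be careful about: $\log \wt Z_{r,N}$ is independent of $\mathcal{F}_r$ and of the weights on $\mathcal{L}_0$, but it is \emph{not} independent of $\log Z^W_r$ in a way that would let us simply add variances — that is why we keep the decomposition as a sum and use the crude $(a+b)^2$ bound rather than trying to exploit independence. Also one should check that the constants in Proposition~\ref{var_i} and Theorem~\ref{nest} are uniform over the allowed range of $r$; this is fine because $N - r \ge c_0$ ensures we are always in the asymptotic regime for the bulk polymer of size $N - r$, and $N - r \le N/2$ keeps $r$ comparable to $N$.

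The main obstacle is really just the right-tail integration for the first term combined with confirming its nonnegativity; neither is deep, but one must phrase the restriction-to-$(r,r)$ argument correctly (a path from $\mathcal{L}_0$ to $(N,N)$ passing through $(r,r)$ decomposes as a path from $\mathcal{L}_0$ to $(r,r)$ followed by a path from $(r,r)$ to $(N,N)$, and the weights multiply, with no double-counting of the weight at $(r,r)$ given the convention that $\wt Z$ includes the endpoint weight but not the starting weight). Once that is set up, the bound $\mathbb E[Y^2] = \int_0^\infty 2s\,\mathbb P(Y > s)\,ds \le C(N-r)^{2/3}$ for $Y$ the nonnegative first term follows by splitting the integral at $s = t_0 (N-r)^{1/3}$ and using $\mathbb P(Y > t(N-r)^{1/3}) \le e^{-t^{1/10}}$ for $t \ge t_0$.
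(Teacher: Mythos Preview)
Your proof is correct and matches the paper's argument exactly: write $A = \log Z^W_N - \log Z^W_r = (A - \log\wt Z_{r,N}) + \log\wt Z_{r,N}$, bound $\Var(\log\wt Z_{r,N})$ via Proposition~\ref{var_i}, and bound $\mathbb{E}[(A - \log\wt Z_{r,N})^2]$ via Theorem~\ref{nest} combined with the superadditivity $A \ge \log\wt Z_{r,N}$ that you spell out. (One small correction: $\log\wt Z_{r,N}$ is in fact independent of $\log Z^W_r$, since under the paper's convention the former uses only bulk weights strictly above $\mathcal{L}_r$ while the latter uses only the boundary and bulk weights on or below $\mathcal{L}_r$; your $(a+b)^2$ bound is of course still valid regardless.)
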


\begin{proof}
 We apply the inequality $\Var(A) \leq 2(\Var(B) + \mathbb{E}[(A-B)^2])$ to $A = \log Z^W_{N}  - \log Z^W_{r}$ and $
B = \log \wt{Z}_{r, N} $.  We have
$\Var(B) \leq C(N-r)^{2/3}$ by Proposition \ref{var_i}, and $\mathbb{E}[(A-B)^2]\leq C(N-r)^{2/3}$ follows from Theorem \ref{nest}. This completes the proof of the lemma. 
\end{proof}

\begin{lemma}
    \label{l:vardifflb}
    There exists $C>0$ such that in the above set-up we have \begin{equation} \label{lam_var_lb}
\Var( \log Z^W_{N}  - \lambda \log Z^W_{ r}  ) \geq C(N-r)^{2/3}
\end{equation}
for all $\lambda\in \R$.
\end{lemma}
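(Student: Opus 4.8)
The plan is to reduce the lower bound on $\Var(\log Z^W_N - \lambda \log Z^W_r)$ to a statement about the conditional variance of $\log Z^W_N$ given the bulk weights below the line $\mathcal{L}_r$ (more precisely, given $\log Z^W_r$ and the relevant free energies up to $\mathcal{L}_r$). The key point is that, because $r \geq N/2$, the portion of the polymer from $\mathcal{L}_r$ to $(N,N)$ sees a transversal window of order $(N-r)^{2/3}$ along $\mathcal{L}_r$ and traverses a ``length'' of order $N-r$, so its free energy fluctuates on scale $(N-r)^{1/3}$, and this fluctuation is essentially independent of everything used to build $\log Z^W_r$ (which depends only on $W$ and on bulk weights strictly below $\mathcal{L}_r$). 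Since $\lambda \log Z^W_r$ is measurable with respect to that lower sigma-algebra, subtracting it cannot kill this independent fluctuation.

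Concretely, first I would condition on the sigma-algebra $\mathcal{G}$ generated by the boundary weights $W$ together with all bulk weights $Y_{\bf z}$ with ${\bf z}$ weakly below $\mathcal{L}_r$, so that $\log Z^W_r$ — and hence $\lambda \log Z^W_r$ — is $\mathcal{G}$-measurable, and use $\Var(\log Z^W_N - \lambda\log Z^W_r) \geq \mathbb{E}\big[\Var(\log Z^W_N \mid \mathcal{G})\big]$. Next I would lower bound the conditional variance: on the event (of probability bounded below, by Proposition \ref{exit_q} with $\epsilon$ small) that the quenched measure $Q^W_{(N,N)}$ puts, say, at least half its mass on paths whose crossing point of $\mathcal{L}_r$ lies in a window of size $O((N-r)^{2/3})$, one can write $\log Z^W_N$ as (a $\mathcal{G}$-measurable quantity) plus $\log$ of a sum $\sum_{{\bf p}\in \mathcal{L}_r^{O((N-r)^{2/3})}} \wt Z_{{\bf p},(N,N)}$ weighted by $\mathcal{G}$-measurable weights; the latter involves only bulk weights strictly above $\mathcal{L}_r$, which are independent of $\mathcal{G}$. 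A standard argument — comparing the full partition function $\wt Z_{\mathcal{L}_r^{h(N-r)^{2/3}}, (N,N)}$ from above with a single point-to-point term and invoking the lower bound on the variance of the bulk free energy (Proposition \ref{var_i}) and the upper/lower tail bounds (Propositions \ref{ptl_upper}, \ref{low_ub}, \ref{wide_similar}) — then shows this conditional free-energy contribution has variance at least $c(N-r)^{2/3}$. Averaging over $\mathcal{G}$ on the good event yields \eqref{lam_var_lb}.

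The main obstacle I expect is making the lower bound on the conditional variance uniform: one must rule out that the $\mathcal{G}$-measurable weights $W_k \wt Z_{(k,-k),\mathcal{L}_r}$ are so concentrated on a single crossing point that the ``upstairs'' partition function reduces to one essentially deterministic point-to-point term whose own variance we control — this is fine — but one must also ensure they are not spread over a window so wide that the transversal fluctuation estimates (Proposition \ref{trans_fluc_loss}) degrade the bound. The resolution is that the event in Proposition \ref{exit_q} already localizes the crossing point to the correct $O((N-r)^{2/3})$ scale with overwhelming probability, and on that window the bulk estimates of Section \ref{est_poly_bulk} apply with $N$ replaced by $N-r$. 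A secondary technical point is bookkeeping the endpoint-versus-startpoint weight conventions for $\wt Z$, but Propositions \ref{max_all_t1}, \ref{min1} handle exactly that discrepancy.
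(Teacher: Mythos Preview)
Your broad framework is the same as the paper's: condition on the sigma-algebra $\mathcal{F}$ of boundary and bulk weights in $\mathcal{L}_r^{\leq}$, note that $\log Z^W_r$ is $\mathcal{F}$-measurable, and use the law of total variance to reduce to lower-bounding $\mathbb{E}[\Var(\log Z^W_N \mid \mathcal{F})]$. The difference is in how that conditional variance is bounded below, and the paper's route is considerably simpler than yours.

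The paper avoids any localization of the crossing point on $\mathcal{L}_r$. Instead it uses the one-line superadditivity $\log Z^W_N - \log Z^W_r \geq \log \wt Z_{r,N}$, then (i) the right-tail lower bound Proposition~\ref{up_lb} to produce a positive-probability event $A_{r,N}$, independent of $\mathcal{F}$, on which $\log \wt Z_{r,N} - \mathbb{E}[\log Z^W_N - \log Z^W_r]$ exceeds $s_0(N-r)^{1/3}$, and (ii) Chebyshev together with the already-proved Lemma~\ref{l:vardiffub} to produce an $\mathcal{F}$-measurable event $B_{r,N}$ of positive probability on which $\mathbb{E}[\log Z^W_N - \log Z^W_r \mid \mathcal{F}]$ is within $\tfrac{s_0}{10}(N-r)^{1/3}$ of its unconditional mean. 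On $A_{r,N}\cap B_{r,N}$ the conditional deviation is at least $\tfrac{9s_0}{10}(N-r)^{1/3}$; squaring and inserting $\mathbb{1}_{A_{r,N}}\mathbb{1}_{B_{r,N}}$ into the conditional expectation gives $\Var(\log Z^W_N\mid\mathcal{F}) \geq C(N-r)^{2/3}\mathbb{1}_{B_{r,N}}$, and taking expectation finishes.

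Your plan has two soft spots you should be aware of. First, Proposition~\ref{exit_q} is about the exit point $\tau$ on $\mathcal{L}_0$, not the crossing point on $\mathcal{L}_r$; localizing at $\mathcal{L}_r$ to scale $(N-r)^{2/3}$ needs something closer to Proposition~\ref{fluc_bound_1} or the ingredients of Theorem~\ref{nest}. Second, any such localization event depends on weights above $\mathcal{L}_r$ and is therefore not $\mathcal{G}$-measurable, so you cannot simply restrict to it when computing $\Var(\log Z^W_N \mid \mathcal{G})$ without an extra argument. Both issues are avoidable, but the paper's superadditivity shortcut sidesteps them entirely and makes the variance lower bound a direct consequence of the one-point right-tail estimate plus the preceding upper-bound lemma.
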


\begin{proof}
    Let $\mathcal{F}$ be the $\sigma$-algebra of the weights in $\mathcal{L}_r^\leq$. Note that $\log Z^W_{r}$ is $\mathcal F$-measurable. 
    
    Then,  
\begin{align} 
\Var(\log Z^W_{N}  | \mathcal F)  
&= \Var(\log Z^W_{N}  - \log Z^W_{r}  | \mathcal F) \nn \\
\label{lb_here}  &
=  \mathbb{E}\Big[\Big(\log Z^W_{N}  - \log Z^W_{r} - \mathbb{E}[\log Z^W_{N}  - \log Z^W_{r} |\mathcal{F}]\Big)^2\Big| \mathcal{F}\Big].
\end{align}
We develop a lower bound for \eqref{lb_here} on a positive probability $\mathcal{F}$ measurable set. By Theorem \ref{nest}, 
\begin{equation} \label{expct_bd}
\Big|\mathbb{E}[\log Z^W_{N}  - \log Z^W_{r} ] - \mathbb{E}[\log \wt{Z}_{r,N}]\Big| \leq C(N-r)^{1/3}.
\end{equation}
In Proposition \ref{up_lb} the centering  $\Lambda_N$ can be replaced with $\mathbb E[\log \wt{Z}_{0, N}]$ because $\mathbb E[\log \wt{Z}_{0, N}]\le \Lambda_N$ by superadditivity. 
Then, Proposition \ref{up_lb} and \eqref{expct_bd} give
\begin{align*}
 e^{-ct^{3/2}}  &\le   \mathbb{P}\bigl(\log \wt{Z}_{r, N} - \mathbb E[\log \wt{Z}_{r, N}] \geq t(N-r)^{1/3}\bigr) \\
 &\le \mathbb{P}\bigl(\log \wt{Z}_{r, N}  - \mathbb{E}[\log Z^W_{N}  - \log Z^W_{r} ] \geq (t-C)(N-r)^{1/3}\bigr). 
\end{align*}
Let $s_0$ be a large constant and define the event 
\be\label{ArN}  A_{r,N}=\bigl\{ \log \wt{Z}_{r,N}   - \mathbb{E}[\log Z^W_{N}  - \log Z^W_{r} ] \geq s_0(N-r)^{1/3}\bigr\},
\ee
Then, $A_{r,N}$ is independent of $\mathcal{F}$ and  $\mathbb P(A_{r,N})$ is bounded below independently of $r$ and $N$ for $N-r$ sufficiently large.   
Next, using Chebyshev's inequality
 we get
\begin{align*}
&\mathbb{P}\Big(\Big|\mathbb{E}[\log Z^W_{N}  - \log Z^W_{r} |\mathcal{F}]-\mathbb{E}[\log Z^W_{N}  - \log Z^W_{r} ]\Big|> t(N-r)^{1/3}\Big) \\
&\leq  \frac{\Var(\mathbb{E}[\log Z^W_{N}  - \log Z^W_{r} |\mathcal{F}])}{t^2 (N-r)^{2/3}}\\ 
\quad &\leq\frac{\Var(\log Z^W_{N}  - \log Z^W_{r} )}{t^2 (N-r)^{2/3}} 
\leq C/t^2 \qquad \textup{by \eqref{nest_var}}.
\end{align*}
By choosing $t$ and $s_0$ large enough, 
  there is an event $B_{r,N}\in \mathcal{F}$, with positive probability  bounded below  independently of $N$ and $r$, on which  
\be\label{BrN} 
\Big|\mathbb{E}[\log Z^W_{N}  - \log Z^W_{r} |\mathcal{F}]-\mathbb{E}[\log Z^W_{N}  - \log Z^W_{r} ]\Big|\leq \frac{s_0}{10}(N-r)^{1/3}.
\ee
 
On $A_{r,N}\cap B_{r,N}$ we have the following bound, using first superadditivity  $\log Z^W_{N}  - \log Z^W_{r}  \geq \log \wt{Z}_{r,N} $, then \eqref{BrN} and finally \eqref{ArN}: 
\begin{align*}
& \log Z^W_{N}  - \log Z^W_{r} - \mathbb{E}[\log Z^W_{N}  - \log Z^W_{r} |\mathcal{F}] \\
& \geq   \log \wt{Z}_{r,N} - \mathbb{E}[\log Z^W_{N}  - \log Z^W_{r} ] - \frac{s_0}{10}(N-r)^{1/3}  
\geq \frac{9s_0}{10}(N-r)^{1/3}.
\end{align*}
Square this bound and insert  it inside   the conditional expectation on line  \eqref{lb_here}. 
Continuing from that line, we then have 
\begin{align*} 
\Var(\log Z^W_{N}  | \mathcal F) 
\ge C (N-r)^{2/3}  \,
\mathbb{E}[ \ind_{A_{r,N}}\ind_{B_{r,N}} |  \mathcal F  ]
\ge  C (N-r)^{2/3} \, 
 \ind_{B_{r,N}}. 
\end{align*} 
By the law of total variance,   for all $\lambda\in\R$,  
\begin{align*}
&\Var( \log Z^W_{N}  - \lambda \log Z^W_{ r}  )\\
&= \mathbb{E}\big[\Var( \log Z^W_{N}  - \lambda \log Z^W_{r} | \mathcal F ) \big] + \Var\big[ \mathbb{E}(\log Z^W_{N}  - \lambda \log Z^W_{ r} |\mathcal F ) \big]\\
& \geq \mathbb{E}\big[\Var( \log Z^W_{N}  - \lambda \log Z^W_{r} | \mathcal F) \big] \\
&= \mathbb{E}\big[\Var( \log Z^W_{N} | \mathcal F) \big]
\geq C(N-r)^{2/3} \, \mathbb P(B_{r,N}) 
\geq C(N-r)^{2/3} . 
\end{align*}
This completes the proof of the lemma. 
\end{proof}


We can now complete the proof of Theorem \ref{thm_r_large}.

\begin{proof}[Proof of Theorem \ref{thm_r_large}]
    Using \eqref{var_id} together with the bounds on $\Var(\log Z^{W}_{N})$ from Theorem \ref{var_s} it follows from Lemma \ref{l:vardiffub} and Lemma \ref{l:vardifflb} that there exists $C'_1,C'_2>0$ such that in the set-up of Theorem \ref{thm_r_large} 
    \begin{equation}
        \label{e:sq}
         C'_1\left(\frac{N-r}{N}\right)^{2/3}\le 1-\textup{$\mathbb{C}$orr}^2(\log Z^{W}_{N},\log Z^{W}_{r}) \le C_2'\left(\frac{N-r}{N}\right)^{2/3}.
    \end{equation}
    Since $\textup{$\mathbb{C}$orr}(\log Z^{W}_{N},\log Z^{W}_{r})\le 1$, The lower bound in \eqref{e:sq} immediately implies the upper bound in Theorem \ref{thm_r_large}. 
    To prove the lower bound in Theorem \ref{thm_r_large}, notice that it suffices to prove it only in the case where $(N-r)/N$ is sufficiently small; the remaining case can be proved by adjusting the constants as necessary. If $(N-r)/N$ is suffficiently small, it follows from Lemma \ref{l:vardiffub} together with Theorem \ref{var_s} that $\textup{$\mathbb{C}$orr}(\log Z^{W}_{N},\log Z^{W}_{r})\ge 0$; and the lower bound in Theorem \ref{thm_r_large} is immediate from the upper bound in \eqref{e:sq}. 
\end{proof}

\begin{remark}
    Note that Theorem \ref{thm_r_large} is valid for any such initial condition satisfying \textbf{Assumption B1}, random as well as deterministic. In particular, this remains true for the flat initial condition, i.e., $W\equiv 1$. A similar result, for a narrower class of initial conditions, was obtained in \cite{ferocctimecorr} in the zero temperature setup; see Section \ref{s:lpp} for more details. 
\end{remark}

\section{Time correlations for small $r$: Proof of Theorem \ref{thm_r_small}}\label{sec_r_small}
Notice that by Theorem \ref{var_s}, Theorem \ref{thm_r_small} is equivalent to showing, for some $C,C'>0$
$$C'r^{2/3}\le \Cov(\fgs_{r}, \fgs_{N})\le Cr^{2/3}.$$
Separate proofs of the upper and the lower bound in the display above are given in Sections \ref{r_small_up} and \ref{r_small_low} respectively. 

\subsection{Upper bound in Theorem \ref{thm_r_small}} \label{r_small_up}

First, note that if $r\geq N/1000$, our estimate follows directly from Cauchy-Schwarz inequality and Theorem \ref{var_s} as
$$\Cov(\fgs_{r}, \fgs_{N}) \leq \Var(\fgs_{ r})^{1/2} \Var(\fgs_{N})^{1/2} \leq Cr^{2/3}.
$$ 
Thus for the rest of this section, we will always assume that  $0 \leq r \leq  N/1000$. 

We start with defining some notation. Let
$Z^{W, \textup{in}, jr^{2/3}}_{r}$ denote the partition function where the paths are contained inside $R_{0,r}^{jr^{2/3}}$. By convention, we set $\log Z^{W, \textup{in}, 0}_{r} = 0$. Notice also that $Z^{W, \textup{in}, r}_{r}=Z^{W}_{r}$ by definition. 
Let $W'$ be an independent copy of $W$, and define another initial condition $W^{*, j}$ (which interpolates between $W$ and $W'$) such that  
\begin{align*}
\log W^{*, j}_{i+1} -\log W^{*, j}_{i}&= \log W'_{i+1}-\log W'_{i} \text{ for $-jr^{2/3} \leq i \leq jr^{2/3}-1$}\\
\log W^{*, j}_{i+1} -\log W^{*, j}_{i}&= \log W_{i+1} -\log W_{i}\text{ otherwise.}
\end{align*}
Let $Z^{W^{*,j}, \textup{out},k}_{N}$ denote the partition function using initial condition $W^{*, j}$ and the paths inside $\mathbb{Z}^2 \setminus R_{0,r}^{k}$. We set $Z^{W^{*,0}, \textup{out}, 0}_{N}=Z^{W}_{N}$ by convention.

Without loss of generality, we shall assume that both $r^{2/3}$ and $r^{1/3}$ are integers. To simplify notation, for $1< j \le r^{1/3}, 1 < k \leq r^{1/3}$, let 
\begin{align*}
U_j&=\log Z^{W, \textup{in}, jr^{2/3}}_{r}- \log Z^{W, \textup{in}, (j-1)r^{2/3}}_{r}\\
V_k &= \log Z^{W^{*,(k-1)r^{2/3}},\textup{out}, (k-1)r^{2/3}}_{N} - \log Z^{W^{*,kr^{2/3}},\textup{out}, kr^{2/3}}_{N}.
\end{align*}
Then we can rewrite the covariance as follows
\begin{align}
\Cov(\log Z^{W}_{r}, \log Z^{W}_{ N})
= \Big(\sum_{j=1}^{r^{1/3}} \sum_{k=1}^{r^{1/3}} \Cov(U_j, V_k)\Big)+\Cov(Z^{W}_{r},Z^{W^{*,r},\textup{out},r}_{N}) .\label{cov_decomp}
\end{align}
A similar but different decomposition of covariance was also used in a related problem in \cite{BBF22}. The proof shall proceed by bounding the two terms separately. To bound the first term we shall use bounds on the variances of the random variables appearing in the above decomposition followed by an application of the Cauchy-Schwarz inequality and the mixing hypothesis \eqref{mix} in \textbf{Assumption B2}. The second term shall be bounded by a different decomposition. We shall need the following propositions which are proved in the subsequent subsections. 

\begin{proposition}\label{r_proof}
Suppose the boundary condition $W$ satisfies assumption \eqref{up_bd}. There exist positive constants $C_1, C_2, r_0$ such that for each $r\geq r_0$ and $0\le j \leq r^{1/3}-1$
$$\Var\Big(\log Z^{W, \textup{in}, (j+1)r^{2/3}}_{r}- \log Z^{W, \textup{in}, jr^{2/3}}_{r}\Big) \leq C_2e^{-C_1 j}r^{2/3}.$$
\end{proposition}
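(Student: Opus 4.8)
The plan is to bound the variance of $U_{j+1}=\log Z^{W,\textup{in},(j+1)r^{2/3}}_{r}-\log Z^{W,\textup{in},jr^{2/3}}_{r}$ by exhibiting the event that the $r$-polymer puts essentially all its mass on paths staying inside the thinner parallelogram $R^{jr^{2/3}}_{0,r}$ (so that the increment $U_{j+1}$ is negligible), and controlling the complementary rare event. Write $U_{j+1}=\log\bigl(1+ Z^{W}_{r}(\mathcal{A}_{j})/Z^{W,\textup{in},jr^{2/3}}_{r}\bigr)$ where $\mathcal{A}_{j}$ is the set of paths to $\mathcal{L}^{r^{2/3}}_{r}$-endpoints constrained to $R^{(j+1)r^{2/3}}_{0,r}$ but exiting $R^{jr^{2/3}}_{0,r}$ through one of its long sides. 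The point is that $0\le U_{j+1}\le Z^{W}_{r}(\mathcal{A}_{j})/Z^{W,\textup{in},jr^{2/3}}_{r}$, so it suffices to show this ratio has $L^{2}$-norm bounded by $Ce^{-C_{1}j}r^{1/3}$, which then squares to the claimed $e^{-C_1 j}r^{2/3}$ bound on the variance (indeed, on the second moment).

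The main estimates are: (i) a lower bound on the denominator $\log Z^{W,\textup{in},jr^{2/3}}_{r}\ge \Lambda_{r}-tr^{1/3}$ with exponentially good probability in $t$ (and in $j$), for which I would invoke Proposition \ref{wide_similar} with $\theta\sim j$ applied at scale $N=r$, together with the right-tail lower control on the boundary term supplied by \textbf{Assumption B1}; and (ii) an upper bound on the numerator $\log Z^{W}_{r}(\mathcal{A}_{j})\le \Lambda_{r}-c j^{2}r^{1/3}$ with probability at least $1-e^{-cj^{3}}$, which is exactly the content of the transversal-fluctuation-loss estimate Proposition \ref{trans_fluc_loss3} (exit through the long sides of $R^{(j+1)r^{2/3}}_{0,r}$ costs $\sim j^{2}r^{1/3}$ in the exponent), again combined with \textbf{Assumption B1} to dominate the contribution of $W_{k}$ for $k$ in the relevant range $|k|\le (j+1)r^{2/3}$. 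Combining (i) and (ii), on an event of probability $\ge 1-e^{-cj^{3}}-e^{-ct^{3/2}}$ we get $Z^{W}_{r}(\mathcal{A}_{j})/Z^{W,\textup{in},jr^{2/3}}_{r}\le e^{-cj^{2}r^{1/3}+tr^{1/3}}$; choosing the threshold $t=t(j)$ appropriately (e.g.\ $t\sim j^{2}/2$ for the bulk of the mass, and integrating the tail in $t$) yields the bound $\mathbb{E}[U_{j+1}^{2}]\le Ce^{-C_{1}j}r^{2/3}$ after the standard integration $\int_{0}^{\infty} e^{2tr^{1/3}} \, d(\text{tail})$, where the Gaussian-type $e^{-ct^{3/2}}$ tail beats the $e^{2tr^{1/3}}$ growth once $t\gg r^{2/3}$, and for the moderate range of $t$ one uses the $j^{2}r^{1/3}$ gain from (ii). One should also handle the trivial small-$j$ cases ($j=0$ gives $\log Z^{W,\textup{in},0}_{r}=0$ by convention, and $U_{1}$ is controlled directly by Proposition \ref{b_up_tail} and Theorem \ref{var_s}).

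The step I expect to be the main obstacle is reconciling the two scales in the exponent: (ii) gives a gain $e^{-cj^{2}r^{1/3}}$ only on an event of probability $1-e^{-cj^{3}}$, whereas on the bad event the ratio could a priori be as large as $e^{O(r^{1/3})}$ times polynomial factors, so one must carefully bound the contribution of the bad event to $\mathbb{E}[U_{j+1}^{2}]$ — here one uses the crude deterministic bound $U_{j+1}\le \log Z^{W}_{r}-\log Z^{W,\textup{in},jr^{2/3}}_{r}$ together with the upper tail of $\log Z^{W}_{r}$ from Proposition \ref{b_up_tail}(i) and a matching lower tail for the wide-constrained partition function from Proposition \ref{wide_similar}, and checks that the product $e^{Cr^{1/3}}\cdot e^{-cj^{3}}$ is still $\le e^{-C_{1}j}$ (true once $j\le r^{1/3}$, which is our range, provided the constants are chosen correctly). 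Getting these constants to line up — i.e.\ that $j^{3}$ beats $r^{1/3}$ uniformly over $j\le r^{1/3}$ only marginally, so the argument is tight — is the delicate bookkeeping; everything else is an assembly of the quoted bulk and boundary estimates.
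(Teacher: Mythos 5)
Your overall strategy mirrors the paper's: reduce the increment $U_{j+1}$ to the small relative weight of exiting paths, show that this ratio is exponentially small with high probability using transversal-fluctuation loss and Assumption B1, and control the rare bad event separately. (The paper bounds the slightly larger quantity $\log Z^W_r - \log Z^{W,\textup{in},jr^{2/3}}_r$, which dominates $U_{j+1}$ by monotonicity, splits exiting paths by starting point, and uses Theorem \ref{exit_q} for the far ones; these are minor variations on the same idea.)

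The substantive gap is in the bad-event bookkeeping. You claim the bad-event contribution is $e^{Cr^{1/3}}\cdot e^{-cj^{3}}$ and that this is $\le e^{-C_1 j}$ for $j\le r^{1/3}$. For $j=1$ the product is $e^{Cr^{1/3}-c}$, which diverges as $r\to\infty$, so the claim fails. The related plan to bound $\mathbb{E}[(\text{ratio})^2]$ by integrating the tail in $t$ also does not close: the $e^{-cj^3}$ piece of your failure probability is constant in $t$, so $\int_0^\infty e^{-cj^3}\, r^{1/3}e^{2tr^{1/3}}\,dt$ diverges, and the $e^{-ct^{3/2}}$ piece only overcomes $e^{2tr^{1/3}}$ once $t\gtrsim r^{2/3}$, leaving a divergent middle range. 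The underlying issue is that $U_{j+1}\le\text{ratio}$ (from $\log(1+x)\le x$) is only useful when the ratio is small; the ratio itself does not have a finite second moment under your tail bound. What is needed — and what the paper does — is to square the \emph{logarithm}: with $A$ the rare event from \eqref{event_A}, on $A^c$ the difference is at most the fixed (not $t$-parametrized) tiny number $e^{-C'j^2r^{1/3}}$, while on $A$ one applies Cauchy--Schwarz,
$$
\mathbb{E}\Big[\big(\log Z^W_r-\log Z^{W,\textup{in},jr^{2/3}}_r\big)^2\mathbbm{1}_A\Big]\le \mathbb{E}\Big[\big(\log Z^W_r-\log Z^{W,\textup{in},jr^{2/3}}_r\big)^4\Big]^{1/2}\mathbb{P}(A)^{1/2}\le Cr^{2/3}e^{-cj^3/2},
$$
with the fourth-moment bound $O(r^{4/3})$ coming from Propositions \ref{wide_similar} and \ref{b_up_tail}, and $e^{-cj^3/2}\le e^{-C_1 j}$ for $j\ge1$. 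With the bad event handled this way, the rest of your outline (the use of Proposition \ref{trans_fluc_loss3} and Assumption B1 to show the exit ratio is small) is sound and matches the paper's proof of \eqref{show_A}.
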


\begin{proposition}\label{N_proof}
Suppose the boundary condition $W$ satisfies assumptions \eqref{up_bd} and \eqref{low_tail}.
There exist positive constants $C, r_0, N_0$ such that for each $r\geq r_0$, $N \geq N_0\vee 1000r$ and $0\le k \leq r^{1/3}-1$,
$$\Var\Big( \log Z^{W^{*, kr^{2/3}},\textup{out}, kr^{2/3}}_{N} - \log Z^{W^{*, (k+1)r^{2/3}}, \textup{out},  (k+1)r^{2/3}}_{N}\Big) \leq C {(k+1)}^{40}r^{2/3}.$$
\end{proposition}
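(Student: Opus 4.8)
\textbf{Proof plan for Proposition \ref{N_proof}.}

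The plan is to show that the difference $D_k := \log Z^{W^{*, kr^{2/3}},\textup{out}, kr^{2/3}}_{N} - \log Z^{W^{*, (k+1)r^{2/3}}, \textup{out},  (k+1)r^{2/3}}_{N}$ is, with overwhelming probability, controlled by a random variable with variance $O((k+1)^{40} r^{2/3})$, and then convert that tail control into the desired variance bound. The two partition functions differ in two respects: (a) the initial condition is resampled on the segment $|i| \in (kr^{2/3}, (k+1)r^{2/3}]$, and (b) the paths are forced to avoid the slightly larger parallelogram $R_{0,r}^{(k+1)r^{2/3}}$ instead of $R_{0,r}^{kr^{2/3}}$. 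The first observation is that $D_k \geq 0$ deterministically: any path avoiding $R_{0,r}^{(k+1)r^{2/3}}$ also avoids $R_{0,r}^{kr^{2/3}}$ and, crucially, never touches the resampled segment of the boundary line (since the resampled indices lie strictly inside $\mathcal{L}_0^{(k+1)r^{2/3}}$, while paths avoiding $R_{0,r}^{(k+1)r^{2/3}}$ must start outside that window), so every such path carries identical weight under $W^{*, kr^{2/3}}$ and $W^{*, (k+1)r^{2/3}}$; hence the second partition function is a sub-sum of the first. So it suffices to bound $\mathbb{E}[D_k^2]$.

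Next I would produce matching upper and lower bounds for each of the two log-partition functions in terms of a common reference quantity. The endpoint is $(N,N)$ with $N \geq 1000 r$, so the relevant KPZ scale at the endpoint is $N^{1/3} \gg r^{1/3}$, and a path starting at $\overline{m}$ with $|m| \leq (k+1) r^{2/3} \leq r \ll N$ incurs, by Proposition \ref{reg_shape}, only a shape-function change of order $(|m|/N^{2/3})^2 N^{1/3} = |m|^2 N^{-1}$, which is $o(r^{1/3})$ and hence negligible. The strategy is: (i) lower bound $\log Z^{W^{*, (k+1)r^{2/3}}, \textup{out}, (k+1)r^{2/3}}_{N}$ by restricting to paths that leave $\mathcal{L}_0$ at, say, index $\pm 2(k+1)r^{2/3}$ and proceed to $(N,N)$ staying well clear of the forbidden parallelogram — using Proposition \ref{wide_similar} (or Proposition \ref{trans_fluc_loss3}) to guarantee this contributes at least $\Lambda_N - O((k+1)^2 r^{1/3})$ with a stretched-exponential error, together with the boundary weight $\log W^{*}_{\pm 2(k+1)r^{2/3}}$ which by \eqref{up_bd} and \eqref{low_tail} is $O((k+1) r^{1/3})$ up to stretched-exponential tails; (ii) upper bound $\log Z^{W^{*, kr^{2/3}},\textup{out}, kr^{2/3}}_{N}$ by splitting over the exit index $m$ with $|m| > kr^{2/3}$: for $|m| \leq C (k+1)^{20} r^{2/3}$ use $\log W^{*}_{m} = O((k+1)^{10} r^{1/3})$ from \eqref{up_bd} plus the bulk upper tail from Proposition \ref{ptl_upper}; for $|m| > C(k+1)^{20} r^{2/3}$ use the transversal-fluctuation loss (Proposition \ref{trans_fluc_loss}) against the diffusive growth \eqref{up_bd} of $\log W$ to see these far exit points contribute negligibly. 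Combining, on an event $\mathcal{G}_k$ of probability at least $1 - e^{-(k+1)^{c}}$ (for some small $c>0$, comfortably summable and dominated), we get $0 \leq D_k \leq C (k+1)^{20} r^{1/3} + (\text{a mean-zero fluctuation term of variance } \leq C r^{2/3})$. Squaring and taking expectations, the bounded-with-high-probability part gives $O((k+1)^{40} r^{2/3})$, and on the complement $\mathcal{G}_k^c$ we use Cauchy–Schwarz together with a crude $L^4$ (or higher) bound on $D_k$ — obtainable from the one-point right/left tail estimates, Propositions \ref{b_up_tail}, \ref{b_low_tail}, applied to each of the two boundary partition functions — so that $\mathbb{E}[D_k^2 \ind_{\mathcal{G}_k^c}] \leq \mathbb{E}[D_k^4]^{1/2} \mathbb{P}(\mathcal{G}_k^c)^{1/2}$ is negligible.

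The main obstacle I anticipate is getting the polynomial dependence on $k$ right in the high-probability upper bound for $\log Z^{W^{*, kr^{2/3}},\textup{out}, kr^{2/3}}_{N}$: because the forbidden region is only of width $O(k r^{2/3})$ at the $r$-scale, many exit points are available, and one must rule out the possibility that a moderately distant exit point $\overline{m}$ with $|m|$ of order $(k+1)^{1+\delta} r^{2/3}$ combines an atypically large boundary weight $\log W^{*}_m \sim (k+1) r^{1/3}$ with an atypically large bulk contribution. This is exactly where the diffusive tail \eqref{up_bd}, the bulk upper tail Proposition \ref{ptl_upper}, and the transversal-fluctuation-loss estimate Proposition \ref{trans_fluc_loss} (or \ref{trans_fluc_loss3}) must be balanced via a union bound over dyadic scales of $|m|$; the exponent $40 = 2 \times 20$ in the statement suggests the authors are content to be lossy here, so the arithmetic should close with room to spare, but it requires care to organize the union bound so the error probabilities stay of the form $e^{-(k+1)^{c}}$ uniformly in $N$. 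A secondary, more routine point is verifying that the resampled boundary segment genuinely plays no role, i.e. that $W^{*, kr^{2/3}}$ and $W^{*, (k+1)r^{2/3}}$ agree off the window $(kr^{2/3}, (k+1)r^{2/3}]$ and that paths avoiding the respective parallelograms see only that common part — this is immediate from the definitions but should be stated explicitly to justify $D_k \geq 0$ and the fact that all the tail inputs above, which are phrased for a single initial condition satisfying \eqref{up_bd}/\eqref{low_tail}, apply verbatim since $W^{*,\cdot}$ is a coupling of two copies of $W$ and hence each of its one-sided marginals satisfies those assumptions.
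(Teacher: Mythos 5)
There are two genuine gaps in your plan, and the second is fatal to the strategy.

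First, the claim that $D_k \geq 0$ deterministically rests on the assertion that a path avoiding $R_{0,r}^{(k+1)r^{2/3}}$, and hence starting at $\overline{m}$ with $|m| > (k+1)r^{2/3}$, ``never touches the resampled segment'' and so carries the same boundary weight under $W^{*, kr^{2/3}}$ and $W^{*, (k+1)r^{2/3}}$. This is false: $\log W$ is a cumulative sum of increments starting from index $0$, so for $m > jr^{2/3}$ one has $\log W^{*,j}_m = \log W'_{jr^{2/3}} + \log W_m - \log W_{jr^{2/3}}$, which depends on $W'$ through its value at index $jr^{2/3}$. Thus $W^{*,kr^{2/3}}_m \neq W^{*,(k+1)r^{2/3}}_m$ for $|m| > (k+1)r^{2/3}$ in general, the second partition function is not a sub-sum of the first, and $D_k$ has no deterministic sign. (This is not fatal on its own, since you correctly fall back to $\Var(D_k) \le \mathbb{E}[D_k^2]$, but the misconception indicates the coupling has not been tracked carefully.)

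Second, and more seriously, the strategy of bounding each of $\log Z^{W^{*,kr^{2/3}},\textup{out},kr^{2/3}}_N$ and $\log Z^{W^{*,(k+1)r^{2/3}},\textup{out},(k+1)r^{2/3}}_N$ \emph{individually} around $\Lambda_N$ at slack $O((k+1)^{\text{poly}} r^{1/3})$ cannot work. Each of these quantities fluctuates around $\Lambda_N$ at the scale $N^{1/3}$, which under the hypothesis $N \ge 1000 r$ is much larger than any power of $(k+1)$ times $r^{1/3}$. Concretely, Proposition \ref{wide_similar} (or \ref{ptl_upper}) controls deviations at slack $t N^{1/3}$; choosing $t$ so that $tN^{1/3} \sim (k+1)^2 r^{1/3}$ forces $t \to 0$ as $N/r \to \infty$, and the stated tail $\tfrac{\sqrt t}{\theta} e^{-C\theta t}$ then gives no useful bound. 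The smallness of $D_k$ must instead come from \emph{cancellation} of the common $N^{1/3}$ fluctuation shared by the two partition functions — which is exactly what the paper extracts by comparing each term to a common reference ($\log Z^W_N$) and, inside Proposition \ref{p:difference}, by factoring the partition function through an intermediate line $\mathcal{L}_{8r}$ so that $\log \wt Z_{8r,N}$ appears in both terms and drops out of the difference. Your plan never exhibits this cancellation. Relatedly, your Cauchy–Schwarz step on $\mathcal{G}_k^c$ uses an $L^4$ bound on $D_k$ obtained from one-point tails of the individual terms, giving $\mathbb{E}[D_k^4]^{1/2} = O(N^{2/3})$; multiplied by $\mathbb{P}(\mathcal{G}_k^c)^{1/2}$, which has no $N$-dependence, this grows without bound as $N/r \to \infty$ and does not produce a bound of order $(k+1)^{40} r^{2/3}$. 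The $L^4$ (or moment) bound must itself be on $D_k$ at the $r^{1/3}$ scale, which again requires the cancellation structure.
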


\begin{proposition}
    \label{p:3rd}
Suppose the boundary condition $W$ satisfies assumptions \eqref{up_bd} and \eqref{low_tail}.
    There exist positive constants $C$ and $r_0$ such that for each $r\geq r_0$ 
    $$\Cov(Z^{W}_{r},Z^{W^{*,r},\textup{out}, r}_{N})\le Cr^{2/3}.$$
\end{proposition}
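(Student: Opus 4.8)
\textbf{Proof proposal for Proposition \ref{p:3rd}.}

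The plan is to decompose the covariance $\Cov(Z^W_r, Z^{W^{*,r},\textup{out},r}_N)$ according to the exit location of the polymer to $(N,N)$ and exploit the fact that, on the event that this polymer exits $\mathcal{L}_0$ outside the window $[-r^{2/3}, r^{2/3}]$ (measured in lattice units along the anti-diagonal, i.e., $|\tau|>r^{2/3}$), the relevant increments of the initial condition $W^{*,r}$ are governed by the independent copy $W'$ and hence are independent of everything determining $Z^W_r$. To be precise, write
$$Z^{W^{*,r},\textup{out},r}_N = Z^{W^{*,r},\textup{out},r}_N\big(|\tau|\le r^{2/3}\big) + Z^{W^{*,r},\textup{out},r}_N\big(|\tau|> r^{2/3}\big).$$
The key observation is that $Z^W_r$ is measurable with respect to the bulk weights in $R^r_{0,r}$ together with $\{W_k : |k|\le r^{2/3}\}$, while the second term $Z^{W^{*,r},\textup{out},r}_N(|\tau|> r^{2/3})$ depends, among the initial-condition weights, only on the increments $\log W^{*,r}_{i+1}-\log W^{*,r}_i$ for $|i|> r^{2/3}$, which by construction coincide with those of $W$, together with the independent increments of $W'$ inside the window (which are used merely to \emph{normalize} relative to $W^*_0=1$) and the bulk weights outside $R^r_{0,r}$. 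Thus I would separate the normalization issue: fixing the anchor $W^{*,r}_{r^{2/3}}$ (or conditioning on the single random variable $\log W'_{r^{2/3}}$), the remaining dependence of $Z^{W^{*,r},\textup{out},r}_N(|\tau|>r^{2/3})$ is through $W$-increments and bulk weights disjoint from those determining $Z^W_r$, so a conditioning argument on $\sigma(\log W'_{r^{2/3}}, \log W'_{-r^{2/3}})$ makes the two terms conditionally independent, and the conditional covariance vanishes; the residual contribution from the randomness of the anchors is controlled by the left/right tail bounds \eqref{up_bd}, \eqref{low_tail} on $\log W_{\pm r^{2/3}}$, which are $O(\sqrt{r})$ in magnitude with exponential tails.

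For the first term, $Z^{W^{*,r},\textup{out},r}_N(|\tau|\le r^{2/3})$, I would bound the covariance by Cauchy--Schwarz after a truncation. On a high-probability event, the quenched weight $Q^W_{\mathcal{L}_N^{?}}(|\tau|\le r^{2/3})$ of paths exiting in the small window $[-r^{2/3},r^{2/3}]$ is exponentially small: this follows from Theorem \ref{exit_q} (with an appropriate choice of parameters, noting that paths avoiding $R^r_{0,r}$ and having $|\tau|\le r^{2/3}$ are forced to have large transversal fluctuation), so that $\log Z^{W^{*,r},\textup{out},r}_N(|\tau|\le r^{2/3})$ is, with overwhelming probability, much smaller than $\log Z^W_N - \Lambda_N$ by a gap of order $r^{1/3}$ times a large constant. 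Combining this with the variance bound $\Var(\log Z^W_r)\le Cr^{2/3}$ from Theorem \ref{var_s} (applied at scale $r$) and the right-tail estimate in Proposition \ref{b_up_tail} to handle the low-probability complement, one gets that the contribution of this term to the covariance is $O(r^{2/3})$ (in fact much smaller). The elementary inequality $\Cov(A, B+C)=\Cov(A,B)+\Cov(A,C)$ then assembles the two pieces.

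The main obstacle I anticipate is the bookkeeping around the normalization $W^{*,r}_0=1$: because $W^{*,r}$ is built by splicing $W'$-increments inside the window and $W$-increments outside, the \emph{values} $W^{*,r}_k$ for $|k|>r^{2/3}$ are not functions of the $W$-increments alone — they also involve the cumulative $W'$-increments across the window, i.e. the single random variables $\log W'_{\pm r^{2/3}}$. Getting a genuinely zero conditional covariance therefore requires conditioning on exactly these anchor variables and then arguing that, given them, $Z^W_r$ and $Z^{W^{*,r},\textup{out},r}_N(|\tau|>r^{2/3})$ are functions of disjoint independent collections of random variables; one must check carefully that the bulk weights used by the two partition functions are indeed disjoint (paths inside $R^r_{0,r}$ versus paths avoiding $R^r_{0,r}$), which is true by construction, and that conditioning on the anchors does not destroy independence of the two increment families, which holds because $W$ and $W'$ are independent and the anchors are measurable with respect to $W'$ alone. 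Once this is set up cleanly, the tail inputs from \textbf{Assumptions B1, B2} and Theorems \ref{var_s}, \ref{exit_q} do the rest.
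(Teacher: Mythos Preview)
There is a genuine gap rooted in a misreading of the definitions. In $Z^{W^{*,r},\text{out},r}_N$, the superscript $r$ denotes a window of width $r$, not $r^{2/3}$: the initial condition $W^{*,r}$ uses $W'$-increments on the full interval $[-r,r]$ (it is the $j=r^{1/3}$ case in the definition of $W^{*,j}$, with the paper's shorthand $W^{*,r}=W^{*,r^{1/3}\cdot r^{2/3}}$), and ``out,$r$'' means the paths avoid the parallelogram $R^r_{0,r}$ of anti-diagonal half-width $r$. Consequently every contributing path starts at $(k,-k)$ with $|k|>r$, so your decomposition at $|\tau|=r^{2/3}$ is vacuous --- the piece $|\tau|\le r^{2/3}$ is identically empty. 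Likewise $Z^W_r$ depends on $W_k$ for all $|k|\le r$, not just $|k|\le r^{2/3}$, so your measurability claim is incorrect as stated.

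With the correct reading, your independence intuition is in fact valid in the special case of independent increments (Assumption~A): the bulk weights used by $Z^W_r$ lie inside $R^r_{0,r}$, those used by $Z^{W^{*,r},\text{out},r}_N$ lie outside, and the $W$-increment index sets $\{|i|\le r\}$ and $\{|i|>r\}$ are disjoint, so the covariance is exactly zero --- this is precisely what the paper remarks immediately after stating the proposition. But under the general mixing hypothesis \eqref{mix} (which the proof does use, even though the proposition's hypothesis list omits it), these two index sets are merely adjacent, and a single application of mixing to $\Cov(\log Z^W_r,\,\cdot\,)$ would give a bound of order $\sqrt{\Var(\log Z^W_r)}\cdot\sqrt{\Var(\,\cdot\,)}$, which is much larger than $r^{2/3}$. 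The paper instead telescopes $\log Z^W_r=\sum_j U_j$ and subtracts the independent quantity $Z^{W',\text{out},r}_N$ from the second factor: conditional on bulk weights and $W'$, each $U_j$ is a function of $X_i$ with $|i|\le jr^{2/3}$, while $Z^{W^{*,r},\text{out},r}_N-Z^{W',\text{out},r}_N$ is a function of $X_i$ with $|i|>r$, so the index sets are separated by $r-jr^{2/3}$. Mixing then handles the small-$j$ terms, and the exponential decay $\Var(U_j)\le Ce^{-Cj}r^{2/3}$ from Proposition~\ref{r_proof} handles the large-$j$ terms; together these give the $O(r^{2/3})$ bound. Your proposal is missing this telescoping step, without which the mixing bound is too weak.
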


Notice that Proposition \ref{p:3rd} is trivial in the case where $X_{i}:=\log W_{i}-\log W_{i-1}$ is an independent sequence; so the proof becomes substantially simpler. Assuming Propositions \ref{r_proof}, \ref{N_proof} and \ref{p:3rd} we can now complete the proof of the upper bound. 

\begin{proof}[Proof of Theorem \ref{thm_r_small}, upper bound] 
Recall the decomposition \eqref{cov_decomp}. Observe that by Proposition \ref{p:3rd}, it suffices to upper bound the first sum thereby $Cr^{2/3}$. For this, we further decompose the first sum in \eqref{cov_decomp} as 
\begin{align}
\sum_{j=1}^{r^{1/3}} \sum_{k=1}^{r^{1/3}} \Cov(U_j, V_k)
= \sum_{j=1}^{r^{1/3}} \sum_{k=1}^{j} \Cov(U_j, V_k)+
\sum_{j=1}^{r^{1/3}} \sum_{k=j+1}^{r^{1/3}} \Cov(U_j, V_k).\label{cov_decomp2}
\end{align}
Notice that, by the Cauchy-Schwarz inequality together with Propositions \ref{r_proof} and \ref{N_proof}, the first term on the right of \eqref{cov_decomp2} is upper bounded by 
$$\sum_{j=1}^{r^{1/3}}\sum_{k=1}^{j}\sqrt{\Var(U_j)}\sqrt{ \Var(V_k)} \leq Cr^{2/3}\sum_{j=1}^{r^{1/3}} e^{-Cj} \sum_{k=1}^{j} k^{20}  \leq Cr^{2/3}.$$
Therefore it suffices to prove a similar bound for the second sum. For this, we shall show that for $r^{1/3}\ge k\ge j+1$, 
\begin{equation}
    \label{e:2bd}
    \Cov(U_j,V_{k})\le \frac{1}{((k-j-1)r^{2/3}+1)^{100}} \sqrt{\Var(U_j)}\sqrt{\Var(V_{k})}.
\end{equation}
Indeed, observe that $(k-j-1)r^{2/3}+1\ge (k-j)$ and also that 
$$\sum_{k=j+1}^{r^{1/3}} \frac{k^{20}}{(k-j)^{100}}\le C(j+1)^{21}$$ 
for some $C>0$. Therefore, by \eqref{e:2bd} and Propositions \ref{r_proof} and \ref{N_proof}, it follows that the second term in \eqref{cov_decomp2} is upper bounded by $Cr^{2/3}$ for some $C>0$. 

It remains to show \eqref{e:2bd}. Consider the $\sigma$-algebras,
\begin{align}\label{def_f12}
\begin{split}
\mathcal{F}^{j}_1 &= \sigma\Big(\{Y_{\bf z}\}_{{\bf z}\in R_{0, r}^{jr^{2/3}}}\Big),\\
\mathcal{F}^{k}_2 &= \sigma \Big(\{Y_{\bf z}\}_{{\bf z}\in \mathbb{Z}^2 \setminus R_{0,r}^{kr^{2/3}}} \bigcup \{\log W'_{i+1}-\log W'_{i}\}_{i \in [-kr^{2/3}, kr^{2/3}-1]}\Big).
\end{split}
\end{align}
Notice that if $k\ge j+2$, then $\mathcal{F}^{j+1}_1$ and $\mathcal{F}^{k-1}_2$ are independent, further $U_{j}$ is $\mathcal{F}^{j+1}_1$ measurable and $V_{k}$ is $\mathcal{F}^{k-1}_2$ measurable. 

Fix $1\le j,k \le r^{1/3}$ with $k\ge j+2$, and set $\mathcal{F} = \sigma(\mathcal{F}^{j+1}_1, \mathcal{F}^{k-1}_2)$. By the above observation, 
$\mathbb{E}[U_{j}|\mathcal{F}]=\mathbb{E}[U_{j}\mid \mathcal{F}^{j+1}_1]$ and $\mathbb{E}[V_{k}|\mathcal{F}]=\mathbb{E}[V_{k}\mid \mathcal{F}^{k-1}_2]$. Hence, these two random variables are independent. Observe also that conditional on a fixed realisation of $\mathcal{F}$, $U_{j}$ and $V_{k}$ are functions of $X_{i}=\log W_{i}-\log W_{i-1}$ restricted to disjoint index sets $[-(j+1)r^{2/3}+1,(j+1)r^{2/3}]$ and $\mathbb{Z} \setminus [-(k-1)r^{2/3}+1,kr^{2/3}]$ respectively, and these sets have mutual distance $(k-j-2)r^{2/3}+1$. 
Therefore, using assumption \eqref{mix} 
\begin{align}
\Cov(U_j, V_k) 
&=\mathbb{E}[\Cov(U_j, V_k |\mathcal{F})] +\Cov(\mathbb{E}[U_j|\mathcal{F}],  \mathbb{E}[V_k|\mathcal{F}]) \nonumber\\
&= \mathbb{E}[\Cov(U_j, V_k|\mathcal{F})] \\
\textup{by \eqref{mix}} \qquad & \leq \mathbb{E}\Big[\frac{1}{((k-j-2)r^{2/3}+1)^{100}}\sqrt{\Var(U_j|\mathcal{F})}\sqrt{\Var(V_k|\mathcal{F})}\Big]\nonumber\\
&\leq \frac{1}{((k-j-2)r^{2/3}+1)^{100}}\sqrt{\Var(U_j)}\sqrt{\Var(V_k)}\label{corr_bound}
\end{align}
where in the last step we have used the Cauchy-Schwarz inequality together with the fact the expectation of conditional variance is bounded by the unconditional variance. This establishes \eqref{e:2bd}. With this, we have finished the proof of the upper bound. 

\end{proof}

\subsubsection{Proof of Proposition \ref{r_proof}}

Since $\log Z^{W}_{r} \geq \log Z^{W, \textup{in}, (j+1)r^{2/3}}_{r} \geq \log Z^{W, \textup{in}, jr^{2/3}}_{r}$, it suffices for us to show for $0\leq j \leq r^{1/3}$,
$$\mathbb{E}\Big[\Big(\log Z^{W}_{r}  - \log Z^{W, \textup{in}, jr^{2/3}}_{r}\Big)^2\Big] \leq e^{-Cj} r^{2/3}.$$
To show this, we will first prove show that 
\begin{equation}\label{event_A}
\mathbb{P}\Big(\log Z^{W}_{r}  - \log Z^{W, \textup{in}, jr^{2/3}}_{r} \geq e^{-C'j^2 r^{1/3}}\Big) \leq e^{-Cj^3}.
\end{equation}
Let us rewrite the event inside  \eqref{event_A}. Let $Z^{W, \textup{exit}, jr^{2/3}}_{r}$ denote the partition function over paths from $\mathcal{L}_0$ to $(r,r)$ that exit the two diagonal sides of $R_{0,r}^{jr^{2/3}}$. In our calculation below, the third equality holds because $Z^W_r = Z^{W, \textup{in}, jr^{2/3}}_{r}+Z^{W, \textup{exit}, jr^{2/3}}_{r}$. Also note Taylor's theorem states that $1-e^{z} \approx -z$ when $|z|$ is small, so the last set inclusion below holds provided that that $r \geq r_0$ is sufficiently large. We have  
\begin{align*}
 \{\text{the event in \eqref{event_A}}\}
 &= \Big\{\frac{Z^{W}_{r}}{Z^{W, \textup{in}, jr^{2/3}}_{r}} \geq \exp(e^{-C'j^2 r^{1/3})}\Big\}\\
& = \Big\{\frac{Z^{W, \textup{in}, jr^{2/3}}_{r}}{Z^{W}_{r}} \leq \exp(-e^{-C'j^2 r^{1/3})}\Big\}\\
& = \Big\{\frac{Z^{W, \textup{exit}, jr^{2/3}}_{r}}{Z^{W}_{r}} \geq 1-\exp(-e^{-C'j^2 r^{1/3})}\Big\}
\subset \Big\{\frac{Z^{W, \textup{exit}, jr^{2/3}}_{r}}{Z^{W}_{r}} \geq \tfrac{1}{2}e^{-C'j^2 r^{1/3}}\Big\}.
\end{align*}
Thus, to show \eqref{event_A}, it suffices to show 
\begin{equation}\label{show_A}
\mathbb{P}\Big(\frac{Z^{W, \textup{exit}, jr^{2/3}}_{r}}{Z^{W}_{r}} \geq e^{-C'j^2 r^{1/3}}\Big) \leq e^{-Cj^3}.
\end{equation}
Note here we drop the constant $\tfrac{1}{2}$ by modifying $C'$.

To get \eqref{show_A}, let $\mathfrak{A}$ be the collection of polymer paths from $\mathcal{L}_0$ to $(r,r)$ which exit $R_{0,r}^{jr^{2/3}}$. We break $\mathfrak{A}$ into two groups: 
$$\mathfrak{A}_1 =  \bigcup_{|k|\geq \tfrac{j}{2}r^{2/3}}\mathfrak{A} \cap \mathbb{X}_{\olsi{k}, N} \qquad \text{and} \qquad \mathfrak{A}_2 =  \bigcup_{|k|< \tfrac{j}{2}r^{2/3}}\mathfrak{A} \cap \mathbb{X}_{\olsi{k}, N}.$$
We will show that for $i = 1,2$ and $C'>0$ sufficiently small,
\begin{equation}\label{est_i12}
\mathbb{P}\big(\log Z^{W}_{r}(\mathfrak{A}_i) - \log Z^{W}_{r} \geq -C' j^2 r^{1/3} \big) \leq e^{-Cj^3}.
\end{equation}
Note for $i=1$, this estimate follows directly from Theorem \ref{exit_q}. For $i=2$, by a union bound,
\begin{align*}
\eqref{est_i12} &\leq \mathbb{P}\Big(\log Z^{W}_{r}(\mathfrak{A}_2) - \Lambda_N \geq -2C' j^2 r^{1/3} \Big) + \mathbb{P}\Big(\log Z^{W}_{r} - \Lambda_N \leq - C' j^2 r^{1/3} \Big) \\
&\leq \mathbb{P}\Big(\log \wt{Z}^{\textup{exit}, jr^{2/3}}_{\mathcal{L}_{0}^{\frac{j}{2}r^{2/3}}, r} - \Lambda_N \geq -3C' j^2 r^{1/3} \Big) + \mathbb{P}\Big(\max_{|k| \leq \frac{j}{2}r^{2/3}} \log W_k \geq \tfrac{1}{3}C' j^2 r^{1/3} \Big)\\
& \qquad \qquad \qquad \qquad + \mathbb{P}\Big(\log Z^{W}_{r} - \Lambda_N \leq -C' j^2 r^{1/3} \Big),
\end{align*}
and all three probabilities are bounded by $e^{-Cj^3}$ using Proposition \ref{trans_fluc_loss3}, assumption \eqref{up_bd} and Proposition \ref{b_low_tail}. Then, because $Z^{W, \textup{exit}, jr^{2/3}}_{r} =  Z^{W}_{r}(\mathfrak{A}_1) +  Z^{W}_{r}(\mathfrak{A}_2)$, \eqref{est_i12} implies \eqref{show_A}. And we have finished the proof for \eqref{event_A}. 

Next, let us denote the event inside the probability of \eqref{event_A} as $A$. Then, 
\begin{align*}
&\mathbb{E}\Big[\big(\log Z^{W}_{r}  - \log Z^{W, \textup{in}, jr^{2/3}}_{r}\big)^2\Big]\\
& \leq  \mathbb{E}\Big[\big(\log Z^{W}_{r}  - \log Z^{W, \textup{in},jr^{2/3}}_{r}\big)^2 \mathbbm{1}_{A^c}\Big] + \mathbb{E}\Big[\big(\log Z^{W}_{r}  - \log Z^{W, \textup{in}, jr^{2/3}}_{r}\big)^2\mathbbm{1}_A\Big]\\
&\leq e^{-Cj}+ \mathbb{E}\Big[\big(\log Z^{W}_{r}  - \log Z^{W, \textup{in}, jr^{2/3}}_{r}\big)^4\Big]^{1/2}\mathbb{P}(A)^{1/2}.
\end{align*}
To finish the proof, it remains to show that
$$\mathbb{E}\Big[\big(\log Z^{W}_{r}  - \log Z^{W, \textup{in}, jr^{2/3}}_{r}\big)^4\Big] \leq C r^{4/3}.$$
And this follows directly from Proposition \ref{wide_similar} and Proposition \ref{b_up_tail} which essentially provide an exponential upper bound for the right tails of $\log Z^{W}_{r}-\log Z^{W, \textup{in}, jr^{2/3}}_{r}$.

\subsubsection{Proof of Proposition \ref{N_proof}}
We shall first prove the following proposition. 

\begin{proposition}
    \label{p:difference}
  Suppose the boundary condition $W$ satisfies assumption \eqref{up_bd} and \eqref{low_tail}. Then there exist positive constants $C$ and  $r_0$ such that for each $r\geq r_0$ and $0\le k\le r^{1/3}$, we have 
\begin{equation}
     \mathbb{E}[(\log Z^{W}_{N} - \log Z^{W, \textup{out},kr^{2/3}}_{N})^2]\leq C(k+1)^{40} r^{2/3}.\label{hard_est}
\end{equation}
\end{proposition}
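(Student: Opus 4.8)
The plan is to bound $\mathbb{E}[(\log Z^{W}_{N} - \log Z^{W, \textup{out},kr^{2/3}}_{N})^2]$ by first establishing a tail estimate for the difference and then upgrading to a second-moment bound via a crude fourth-moment control, exactly in the spirit of the proof of Proposition \ref{r_proof}. Note that $Z^W_N = Z^{W,\textup{in},kr^{2/3}}_N + Z^{W,\textup{out},kr^{2/3}}_N$, where $Z^{W,\textup{in},kr^{2/3}}_N$ collects the paths from $\mathcal{L}_0$ to $(N,N)$ that pass through the segment $\mathcal{L}_0^{kr^{2/3}}$ (equivalently, stay inside $R_{0,r}^{kr^{2/3}}$ up to the anti-diagonal $\mathcal{L}_r$), so $\log Z^W_N - \log Z^{W,\textup{out},kr^{2/3}}_N = \log(1 + Z^{W,\textup{in},kr^{2/3}}_N / Z^{W,\textup{out},kr^{2/3}}_N)$. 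The quantity we must control is therefore governed by the ratio $Z^{W,\textup{in},kr^{2/3}}_N / Z^{W}_N$, i.e.\ the quenched probability $Q^W_{(N,N)}$ that the polymer passes through $\mathcal{L}_0^{kr^{2/3}}$. Since $k\le r^{1/3}$ and $N\ge 1000 r$, the point $(N,N)$ lies far beyond the KPZ window $\mathcal{L}_N^{(kr^{2/3})}$ is associated with (the relevant scale for entering $\mathcal{L}_0^{kr^{2/3}}$ is $N^{2/3}$, and $kr^{2/3}\le r \ll N^{2/3}$ is tiny compared to that), so this quenched probability is \emph{not} exponentially small — it is polynomially small in $k+1$ at best — which explains the $(k+1)^{40}$ loss rather than exponential decay.

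Concretely, the first step is to show a one-sided tail bound of the form
\begin{equation}\label{plan:tail}
\mathbb{P}\Big(\log Z^{W}_{N} - \log Z^{W, \textup{out},kr^{2/3}}_{N} \ge C(k+1)^{20} r^{1/3}\Big)\le e^{-c(\log k)^{2}}
\end{equation}
or some comparably strong decay (a stretched-exponential or high-power-polynomial bound in $k$ suffices). To get this, write the difference as $\log(1+ Z^{W,\textup{in},kr^{2/3}}_N/Z^{W,\textup{out},kr^{2/3}}_N)$ and bound $Z^{W,\textup{in},kr^{2/3}}_N \le Z^{W}_{\mathcal{L}_N^{s}}$-type quantities after splitting the path at the anti-diagonal $\mathcal{L}_r$: a path contributing to the numerator passes through some $\overline{m}$ with $|m|\le kr^{2/3}$ on $\mathcal{L}_0$ and some point ${\bf q}$ on $\mathcal{L}_r$, and we use $\log W_m \le \max_{|m|\le kr^{2/3}}\log W_m$ controlled by \eqref{up_bd}, the bulk free energy $\log\wt Z_{\overline m,{\bf q}}$ controlled near its shape value by Propositions \ref{b_up_tail} / \ref{trans_fluc_loss3}, the continuation $\log\wt Z_{{\bf q},(N,N)}$, and in the denominator a lower bound on $Z^{W,\textup{out}}_N \ge Z^{W}_N - (\text{small})$, using Proposition \ref{b_low_tail} and Theorem \ref{var_s} to keep $\log Z^W_N$ near $\Lambda_N$. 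The key mechanism is that routing through the narrow window $\mathcal{L}_0^{kr^{2/3}}$ near the origin forces the path to deviate by order $kr^{2/3}$ from the straight line from $\mathcal{L}_r$'s typical entry point to $(N,N)$, but on scale $N$ this is a deviation of size $(kr^{2/3})/N^{2/3}\ll 1$ in KPZ units, which costs only a constant-order amount of free energy; the polynomial factor $(k+1)^{20}$ comes from combining the $\sqrt{a}$-type local fluctuation bounds (Propositions \ref{max_all_t}, \ref{max_all_t1}, \ref{min}, \ref{min1} with $a \asymp (kr^{2/3})^2/r^{?}$) with the right-tail estimate \eqref{up_bd} evaluated at the relevant scale.

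The second step converts \eqref{plan:tail} into the $L^2$ bound \eqref{hard_est}: split $\mathbb{E}[(\cdot)^2] = \mathbb{E}[(\cdot)^2\mathbbm 1_{A^c}] + \mathbb{E}[(\cdot)^2\mathbbm 1_{A}]$ where $A$ is the event in \eqref{plan:tail}; on $A^c$ the bound is at most $C(k+1)^{40}r^{2/3}$ directly, and on $A$ we apply Cauchy–Schwarz, $\mathbb{E}[(\cdot)^2\mathbbm 1_A] \le \mathbb{E}[(\cdot)^4]^{1/2}\,\mathbb{P}(A)^{1/2}$, using a crude fourth-moment bound $\mathbb{E}[(\log Z^W_N - \log Z^{W,\textup{out},kr^{2/3}}_N)^4]\le C N^{?}$ (coming from Propositions \ref{b_up_tail}, \ref{b_low_tail}, \ref{var_s}, and the fact that $0\le \log Z^W_N - \log Z^{W,\textup{out}}_N$) against the very small $\mathbb{P}(A)^{1/2}$, so that this term is negligible. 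Finally, Proposition \ref{N_proof} follows: writing $D_k := \log Z^W_N - \log Z^{W^{*,kr^{2/3}},\textup{out},kr^{2/3}}_N$ and noting (by the construction of $W^{*,j}$, which agrees with $W$ outside $[-jr^{2/3},jr^{2/3}-1]$ and hence gives the \emph{same} "out" partition function as $W$ itself, since "out" paths do not see those boundary weights) that the quantity in Proposition \ref{N_proof} is $D_{k} - D_{k+1}$ plus harmless lower-order pieces, we get $\Var(D_k - D_{k+1}) \le 2(\mathbb{E}[D_k^2] + \mathbb{E}[D_{k+1}^2])\le C(k+1)^{40}r^{2/3}$ using Proposition \ref{p:difference} twice.

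I expect the main obstacle to be the first step — proving the tail bound \eqref{plan:tail} with the correct power of $k$. The delicate point is correctly accounting for the entropic/energetic cost of constraining the polymer to pass through the narrow interval $\mathcal{L}_0^{kr^{2/3}}$ near the origin \emph{while the endpoint is at distance $N \gg r$}: one must compare against a freely-optimized path, carefully choose the intermediate anti-diagonal at which to split (the natural choice is $\mathcal{L}_r$, where the two initial conditions $W$ and $W'$ in the definition of $W^{*}$ begin to matter, but one may need $\mathcal{L}_{Ckr^{2/3}}$ or similar), and patch together the bulk estimates (Propositions \ref{trans_fluc_loss}, \ref{trans_fluc_loss3}, \ref{wide_similar}) with the boundary estimates (Propositions \ref{b_up_tail}, \ref{b_low_tail}, \ref{exit_q}) and the boundary-growth hypothesis \eqref{up_bd} and \eqref{low_tail} so that all error terms combine to give a genuine polynomial (not exponential, not constant) loss in $k$. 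Getting the bookkeeping of exponents right here, and ensuring the fourth-moment bound in the second step is strong enough to be killed by $\mathbb{P}(A)^{1/2}$, is where the real work lies.
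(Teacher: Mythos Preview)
Your plan has a genuine gap in how the tail bound is structured. You propose a single-threshold estimate $\mathbb{P}\bigl(X\ge C(k+1)^{20}r^{1/3}\bigr)\le e^{-c(\log k)^2}$ and then want to absorb the bad event via Cauchy--Schwarz with a ``crude fourth-moment bound $\le CN^{?}$''. This cannot close: for small $k$ the probability bound $e^{-c(\log k)^2}$ is just a constant, so it kills nothing, while any crude fourth-moment bound on $X=\log Z^{W}_{N}-\log Z^{W,\textup{out}}_{N}$ will pick up $N$-scale fluctuations (both $\log Z^W_N-\Lambda_N$ and the long bulk piece from near $\mathcal L_r$ to $(N,N)$ fluctuate on scale $N^{1/3}$), giving $\mathbb{E}[X^4]$ of order $N^{4/3}$, not $r^{4/3}$. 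Your proposed lower bound ``$Z^{W,\textup{out}}_N\ge Z^W_N-(\text{small})$'' is also circular: that is precisely the conclusion to be proved. The paper instead proves a tail bound indexed by a free parameter $t$, namely $\mathbb{P}\bigl(\log Z^{W,\textup{touch}}_N-\log Z^{W,\textup{out}}_N\ge tk^{20}r^{1/3}\bigr)\le e^{-Ct^{1/100}}$, which integrates directly to the second moment.

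The structural idea you are missing is a cancellation of the $N$-scale fluctuations. The paper first restricts to the event $B=\{\log Z^{W,\textup{out}}_N<\log Z^{W,\textup{touch}}_N\}$ (on $B^c$ the difference is bounded by $2$), then rewrites $\log Z^{W,\textup{touch}}_N$ as a maximum over splitting points ${\bf p}_1\in\mathcal L_0$ and ${\bf p}_2\in\mathcal L_{8r}$, and lower-bounds $\log Z^{W,\textup{out}}_N$ by an explicit out path that also passes through $(8r,8r)$ before continuing to $(N,N)$. After localising the maximisers to $|{\bf p}_i|\le tkr^{2/3}$ (this is Lemma~\ref{lem_H}), the comparison on the segment from $\mathcal L_{8r}$ to $(N,N)$ reduces to $\max_{{\bf p}_2\in\mathcal L_{8r}^{tkr^{2/3}}}\log\wt Z_{{\bf p}_2,N}-\log\wt Z_{8r,N}$, a \emph{local} fluctuation on scale $r^{1/3}$ handled by Proposition~\ref{max_all_t1}; all $N^{1/3}$-scale terms cancel. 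This cancellation is what forces the answer to be $(k+1)^{40}r^{2/3}$ rather than $N^{2/3}$. A minor correction: the complement of ``out'' is ``touch'' (paths that intersect the rectangle $R_{0,r}^{kr^{2/3}}$), not merely paths starting in $\mathcal L_0^{kr^{2/3}}$; paths can start outside that segment and still swing into the rectangle.
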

\begin{proof}
First, note that when $k=0$, the expectation is zero by our convention. We may now assume that $k\geq 1$.

Let $Z^{W, \textup{touch},kr^{2/3}}_{N}$ denote the partition function that sums over all paths from $\mathcal{L}_0$ to $(N,N)$ which intersect the rectangle $R^{kr^{2/3}}_{0, r}$, then 
\begin{equation}\label{bet_max}
\max\Big\{\log Z^{W, \text{out},kr^{2/3}}_{N}, \log  Z^{W, \textup{touch},kr^{2/3}}_{N}\Big\} \leq \log Z^{W}_{N} \leq\max \Big\{\log Z^{W, \text{out},kr^{2/3}}_{N}, \log Z^{W, \textup{touch},kr^{2/3}}_{N}\Big\} + 2.
\end{equation}
Let us define the event 
$$B = \Big\{\log Z^{W, \text{out},kr^{2/3}}_{N}< \log  Z^{W, \textup{touch},kr^{2/3}}_{N}\Big\}.$$
And we split the expectation in \eqref{hard_est} according to $B$, 
$$\eqref{hard_est} = \mathbb{E}[(\log Z^{W}_{ N} - \log Z^{W, \textup{out},kr^{2/3}}_{N})^2\mathbbm{1}_{B}] + \mathbb{E}[(\log Z^{W}_{N} - \log Z^{W, \textup{out},kr^{2/3}}_{ N})^2\mathbbm{1}_{B^c}].$$
Now, using \eqref{bet_max},  the expectation term above with  $B^c$ is bounded by $2^2$. Since $r$ is sufficiently large, to get \eqref{hard_est}, it remains to prove that 
\begin{equation}\label{on_B}
\mathbb{E}[(\log Z^{W, \textup{touch}, kr^{2/3}}_{N} - \log Z^{W, \textup{out},kr^{2/3}}_{ N})^2\mathbbm{1}_{B}]\leq C(k+1)^{40}r^{2/3}.
\end{equation}
Because we are on the event $B$, it suffices for us to show that for each $t\geq t_0$ sufficiently large  and $1\leq k \leq r^{1/3}$
\begin{equation}\label{B_tail}
\mathbb{P}\Big(\log Z^{W, \textup{touch}, kr^{2/3}}_{N} - \log Z^{W, \textup{out},kr^{2/3}}_{N} \geq {t}k^{20}r^{1/3} \Big) \leq e^{-Ct^{1/100}}.
\end{equation}

First, let us focus on the term $\log Z^{W, \textup{touch}, kr^{2/3}}_{N}$ inside the probability above. Because of the paths from   $\log Z^{W, \textup{touch}, kr^{2/3}}_{N}$ must touch the rectangle $R_{0,r}^{kr^{2/3}}$ by definition, we may replace $\log Z^{W, \textup{touch}, kr^{2/3}}_{N}$ by 
\begin{equation}\label{max_form}\max_{{\bf p}_1\in \mathcal{L}_0^{100r}}\max_{{\bf p}_2\in \mathcal{L}_{8r}^{100r}} \log Z^{W}_{{\bf p}_1} + \log \wt{Z}^{ \textup{touch},kr^{2/3}}_{{\bf p}_1, {\bf p}_2}+ \log \wt{Z}_{{\bf p}_2, N},
\end{equation}
because their absolute difference is at most $100\log r$. Note here that the last free energy $\log \wt{Z}_{{\bf p}_2, N}$ is well defined since $8r \leq N/10$, and this inequality holds because we are considering the case $r\leq N/1000$. 

Next, let us denote the two maximizers in \eqref{max_form} as ${\bf p}_1^*$ and ${\bf p}_2^*$ and define 
$$H=\Big\{\max_{i\in \{1, 2\} }|{\bf p}_i^* \cdot ({\bf e}_1 - {\bf e}_2)| \geq tkr^{2/3}\Big\}.$$
Lemma \ref{lem_H} at the end of the subsection states that 
$\mathbb{P}(H) \leq e^{-C t^{1/100}}.$ With this, let us start the estimate for \eqref{B_tail}, we have
\begin{align*}
\text{right side of \eqref{B_tail}}
& \leq \mathbb{P}\Big(\Big\{\log Z^{W, \textup{touch}, kr^{2/3}}_{N} - \log Z^{W, \textup{out},kr^{2/3}}_{N} \geq  {t} kr^{1/3} \Big\}\cap H^c\Big) + e^{-Ct^{1/100}}.
\end{align*}
Looking at the probability term above, 
\begin{align}
&\mathbb{P}\Big(\Big\{\log Z^{W, \textup{touch}, kr^{2/3}}_{N} - \log Z^{W, \textup{out},kr^{2/3}}_{ N} \geq {t} kr^{1/3} \Big\}\cap H^c\Big)\nonumber\\
& \leq \mathbb{P}\Big( \Big[\max_{{\bf p_1}\in \mathcal{L}_0^{tkr^{2/3}}}\max_{{\bf p_2}\in \mathcal{L}_{8r}^{tkr^{2/3}}} \log Z^{W}_{{\bf p}_1} + \log \wt{Z}_{{\bf p}_1, {\bf p}_2}+ \log \wt{Z}_{{\bf p}_2, N} \Big]\nonumber\\
& - \Big[\log Z^{W}_{ \olsi{2kr^{2/3}}} + \log \wt{Z}^{ \textup{in},R_{\olsi{2kr^{2/3}}, r+\olsi{2kr^{2/3}}}^{kr^{2/3}}}_{\olsi{2kr^{2/3}}, r+\olsi{2kr^{2/3}}}+ \log \wt{Z}_{r+\olsi{2kr^{2/3}}, 8r} + \log \wt{Z}_{8r, N}\Big] \geq \tfrac{1}{2}{t} k^{20}  r^{1/3} \Big)\nonumber\\
& \leq \mathbb{P}\Big( \max_{{\bf p_1}\in \mathcal{L}_0^{tkr^{2/3}}} \log Z^{W}_{{\bf p}_1} - \log Z^{W}_{\olsi{2kr^{2/3}}}  \geq \tfrac{1}{6}t k^{20} r^{1/3} \Big)\label{item111}\\
& + \mathbb{P}\Big( \max_{{\bf p_1}\in \mathcal{L}_0^{tkr^{2/3}}}\max_{{\bf p_2}\in \mathcal{L}_{{t}}^{tkr^{2/3}}}  \log \wt{Z}_{{\bf p}_1, {\bf p}_2} - \log \wt{Z}^{ \textup{in},R_{\olsi{2kr^{2/3}}, r+\olsi{2kr^{2/3}}}^{kr^{2/3}}}_{\olsi{2kr^{2/3}}, r+\olsi{2kr^{2/3}}}- \log \wt{Z}_{r+\olsi{2kr^{2/3}}, 8r}  \geq \tfrac{1}{6}t k^{20} r^{1/3} \Big)\label{item222}\\
& \qquad + \mathbb{P}\Big( \max_{{\bf p_2}\in \mathcal{L}_{{t}}^{tkr^{2/3}}} \log \wt{Z}_{{\bf p}_2, N} - \log \wt{Z}_{8r, N} \geq \tfrac{1}{6}k^{20} {t} r^{1/3} \Big)\label{item333}
\end{align}

Note that by  assumptions \eqref{up_bd} and \eqref{low_tail},
$$\eqref{item111} \leq \mathbb{P}\Big( \max_{{\bf p_1}\in \mathcal{L}_0^{tkr^{2/3}}} \log Z^{W}_{{\bf p}_1} \geq \tfrac{1}{12}tk^{20} r^{1/3} \Big)+\mathbb{P}\Big(\log Z^{W}_{\olsi{2kr^{2/3}}} \leq -\tfrac{1}{12}tk^{20} r^{1/3} \Big) \leq e^{-Ct^{1/10}}.$$ 
And \eqref{item333} $\leq e^{-Ct^{1/10}}$ following from Proposition \ref{max_all_t1}. 
Finally, we estimate \eqref{item222}, 
\begin{align}
\eqref{item222} &\leq 
\mathbb{P}\Big(\Big[\max_{{\bf p_1}\in \mathcal{L}_0^{tkr^{2/3}}}\max_{{\bf p_2}\in \mathcal{L}_{8r}^{tkr^{2/3}}}  \log \wt{Z}_{{\bf p}_1, {\bf p}_2} - \Lambda_{8r} \geq \tfrac{1}{12}tk^{20} r^{1/3} \Big)\label{it1}\\
& \qquad \qquad + \mathbb{P}\Big(\log \wt{Z}^{ \textup{in},R_{\olsi{2kr^{2/3}}, r+\olsi{2rk^{2/3}}}^{kr^{2/3}}}_{\olsi{2kr^{2/3}}, r+\olsi{2kr^{2/3}}} - \Lambda_r \leq -\tfrac{1}{24}t k^{20} r^{1/3}\Big)\label{it2}\\
& \qquad \qquad + \mathbb{P}\Big(\log \wt{Z}_{r+\olsi{2kr^{2/3}}, 8r}  - \Lambda_{7r} \leq -\tfrac{1}{24}tk^{20} r^{1/3}\Big)\label{it3}
\end{align}

The terms \eqref{it1} and \eqref{it2} are upper bounded by $e^{-t^{1/10}}$ following from Proposition \ref{ptl_upper} and Proposition \ref{wide_similar}. For \eqref{it3}, we split the estimate into two cases, depending on the value of $k$. 

Note that if $k \geq r^{1/4}$, then 
$$\eqref{it3} \leq \mathbb{P}\Big(\log \wt{Z}_{r+\olsi{2kr^{2/3}}, r}   \leq - {t} k^{4}r^4\Big).$$
We can chose an up-right path $\gamma$ between $(r+{2kr^{2/3}}, r-{2kr^{2/3}})$ and $(8r, 8r)$ in some deterministic ordering, and then 
$$\mathbb{P}\Big(\log \wt{Z}_{r+\olsi{2kr^{2/3}}, 8r}  \leq - {t} k^{4}r^{4}\Big)\leq \mathbb{P}\Big(\log \wt{Z}_{r+\olsi{2kr^{2/3}}, 8r}(\{\gamma\})  \leq - {t} k^4 r^{4}\Big) \leq e^{-t^{1/10}},$$
where the last inequality holds by  Theorem \ref{max_sub_exp} since $\log \wt{Z}_{r+\olsi{2kr^{2/3}}, 8r}(\{\gamma\})$ is just a sum of i.i.d.~inverse-gamma random variables.

On the other hand if $k \leq r^{1/4}$, then for $r\geq r_0$ sufficiently large, $kr^{2/3} \leq \epsilon_0 r$ where $\epsilon_0$ is the constant from Proposition \ref{reg_shape}. And from the same proposition, we obtain 
$$\Big|\Lambda((7r-{2kr^{2/3}}, 7r+{2kr^{2/3}})) - \Lambda_{7r} \Big| \leq Ck^2 r^{1/3}.$$ Then, we obtain
$$\eqref{it3} \leq \mathbb{P}\Big(\log \wt{Z}_{r+\olsi{2kr^{2/3}}, 8r}  - \Lambda((7r-{2kr^{2/3}}, 7r+{2kr^{2/3}})) \leq -\tfrac{1}{48}t k^{20}  r^{1/3} \Big) \leq e^{-t^{1/10}}$$
where the last inequality comes from Proposition \ref{low_ub}. With this, we have finished the proof of Proposition \ref{p:difference}.

\end{proof}

We can now give a proof of Proposition \ref{N_proof}. 

\begin{proof}[Proof of Proposition \ref{N_proof}]

First, we note that by a union bound $W^{*, j}$ also satisfies assumptions \eqref{up_bd} and \eqref{low_tail}. Let $K\in \mathbb{Z}_{>0}$ be given,  if $jr^{2/3} \geq K$, then $W^{*, j}_K = W'_K$ and the claim  is clear. Suppose $jr^{2/3} < K$, then 
\begin{align*}\log W^{*, j}_K &= \log W_K - \log W_{jr^{2/3}}  + \log W'_{jr^{2/3}}\\
\max_{0\leq k \leq K} \log W^{*, j}_k &\leq \max_{0\leq k \leq jr^{2/3}} \log W'_k + \max_{0\leq k \leq K} \log W_k - \log W_{jr^{2/3}},
\end{align*}
and the claim follows from a union bound. 

Then, using Proposition \ref{p:difference} and a triangle inequality we get  \begin{align}\mathbb{E}\Big[\Big(\log &Z^{W^{*, kr^{2/3}},\textup{out}, kr^{2/3}}_{N} - \log Z^{W, \textup{out},kr^{2/3}}_{N}\Big)^2\Big]^{1/2}\nonumber\\
&\leq
\mathbb{E}\Big[\Big(\log Z^{W^{*, kr^{2/3}},\textup{out}, kr^{2/3}}_{N} - \log Z^{W^{*, kr^{2/3}}}_{N}\Big)^2\Big]^{1/2}+\mathbb{E}\Big[\Big(\log Z^{W}_{N} - \log Z^{W, \textup{out},kr^{2/3}}_{N}\Big)^2\Big]^{1/2}\nonumber\\
&\qquad \qquad +
\mathbb{E}\Big[\Big(\log Z^{W^{*, kr^{2/3}}}_{N} - \Lambda_N\Big)^2\Big]^{1/2}+\mathbb{E}\Big[\Big(\log Z^{W}_{N}- \Lambda_N\Big)^2\Big]^{1/2}.\label{l2_terms}
\end{align}
Since $W^{*,j}$ and $W$ both satisfy the hypothesis of Proposition \ref{p:difference}, it follows that the first two terms above are bounded by $C(k+1)^{20}r^{1/3}$
while the last two terms are bounded by $Cr^{1/3}$ using Proposition \ref{b_up_tail} and Proposition \ref{b_low_tail}. It therefore follows that 
\begin{equation}
    \mathbb{E}[(\log Z^{W^{*, kr^{2/3}},\textup{out}, kr^{2/3}}_{N} - \log Z^{W,\textup{out},kr^{2/3}}_{N})^2] \leq C(k+1)^{40} r^{2/3}\label{easy_est}
\end{equation}
for some $C>0$. Combining \eqref{easy_est} with Proposition \ref{p:difference} gives 
$$\Var\Big( \log Z^{W^{*, kr^{2/3}}, \textup{out}, kr^{2/3}}_{N} - \log Z^{W}_{N}\Big) \leq C_1 (k+1)^{40}r^{2/3}$$ for each $k\le r^{1/3}$. 
 Using the fact 
 $$\Var(A+B) \leq \Var(A) + \Var(B) + 2\sqrt{\Var(A)}\sqrt{\Var(B)}\le 4\max\{\Var(A), \Var(B)\}$$ 
 where $A =  \log Z^{W^{*, kr^{2/3}}, \textup{out}, kr^{2/3}}_{N} - \log Z^{W}_{N}$ and $B=   \log Z^{W}_{N} - \log Z^{W^{*, (k+1)r^{2/3}}, \textup{out}, (k+1)r^{2/3}}_{N}$, this immediately completes the proof of Proposition \ref{N_proof}. 
\end{proof}

Recall ${\bf p}_1^*$ and ${\bf p}_2^*$ are the two maximizers in \eqref{max_form}. The following lemma was used in the proof of Proposition \ref{p:difference}.

\begin{lemma}\label{lem_H}
Let $k \in \mathbb{Z}_{>0}$ and  
$H=\{\max_{i\in \{1, 2\} }|{\bf p}_i^* \cdot ({\bf e}_1 - {\bf e}_2)| \geq tkr^{2/3}\}.$ There exist positive constants $C_1, t_0$ such that for each $t\geq t_0$, it holds that  
\begin{equation}\label{def_H}
\mathbb{P}(H) \leq e^{-C_1t^{1/100}}.
\end{equation}
\end{lemma}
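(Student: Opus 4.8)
The plan is to show that the two maximizers $\mathbf{p}_1^*$ and $\mathbf{p}_2^*$ in \eqref{max_form} cannot be too far from the diagonal, by a standard "if the optimal endpoint were far out, the free energy would be too small to be the maximum" argument. Concretely, I would first record that the value of the maximum in \eqref{max_form} is lower bounded, with overwhelming probability, by something like $\log Z^W_{\olsi{2kr^{2/3}}} + \log\wt Z^{\textup{in},\dots}_{\olsi{2kr^{2/3}},\, r+\olsi{2kr^{2/3}}} + \log\wt Z_{r+\olsi{2kr^{2/3}},\, 8r} + \log\wt Z_{8r,N}$ — exactly the "typical" decomposition already used inside the proof of Proposition \ref{p:difference} — which by Proposition \ref{b_low_tail}, Proposition \ref{wide_similar} and Proposition \ref{low_ub} is at least $\Lambda_{8r} + \Lambda_N - \Lambda_{8r} - Ck^2 r^{1/3} - $ (fluctuations of order $r^{1/3}$) except on an event of probability at most $e^{-Ct^{1/10}}$ (for $t$ large, $k\le r^{1/3}$). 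So $\max (\ref{max_form}) \ge \Lambda_N - C t r^{1/3}$ off a small-probability event, and in fact we can absorb everything into $\Lambda_N - \tfrac14 t k r^{1/3}$ say.

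Next I would produce the matching upper bound for the contribution of endpoints that are \emph{far} from the diagonal, and show it is strictly smaller, forcing the maximizers to be close. Split into the two levels. For $\mathbf{p}_1^*$: on the event $\{|\mathbf{p}_1^*\cdot(\mathbf{e}_1-\mathbf{e}_2)| \ge tkr^{2/3}\}$, the quantity $\log Z^W_{\mathbf{p}_1^*}$ is at most $\log Z^W_{\mathcal{L}_0^{100r}\setminus \mathcal{L}_0^{tkr^{2/3}}}$, and then $\log\wt Z_{\mathbf{p}_1^*,\mathbf{p}_2^*}$ plus $\log\wt Z_{\mathbf{p}_2^*,N}$ is bounded above by the point-to-point bulk free energy along a long path; combining Proposition \ref{exit_est} (or Proposition \ref{trans_fluc_loss}) for the loss incurred by forcing the path out past $\mathcal{L}_0^{tkr^{2/3}}$ with Proposition \ref{b_up_tail}/Proposition \ref{ptl_upper} for the bulk part, the total is at most $\Lambda_N - c\,t^2 k^2 r^{1/3}$ off an event of probability $\le e^{-Ct^{3/2}}$, hence $< \Lambda_N - \tfrac14 tkr^{1/3}$ for $t$ large. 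A symmetric argument handles $\{|\mathbf{p}_2^*\cdot(\mathbf{e}_1-\mathbf{e}_2)|\ge tkr^{2/3}\}$ using the analogue of Proposition \ref{trans_fluc_loss} applied to the bulk segment between $\mathcal{L}_0$-ish and $\mathcal{L}_{8r}$, together with Proposition \ref{max_all_t1} for the $\log\wt Z_{\mathbf{p}_2^*,N}$ piece (whose endpoint fluctuation is controlled at scale $(8r)^{2/3}$). Taking a union bound over the two levels and comparing with the lower bound from the previous paragraph yields $\mathbb{P}(H)\le e^{-Ct^{1/100}}$ after a dyadic/union-bound sum over the possible transversal scales (the slow $t^{1/100}$ rate, rather than $t^{3/2}$, comes precisely from summing the geometric-in-scale tail bounds and from the weakest ingredient, Proposition \ref{max_all_t1}, which only gives $e^{-t^{1/10}}$-type control).

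The main obstacle I expect is bookkeeping the transversal-fluctuation losses cleanly across the three segments of the concatenated path (from $\mathcal{L}_0$ to $\mathbf{p}_1$, from $\mathbf{p}_1$ to $\mathbf{p}_2$ through $R^{kr^{2/3}}_{0,r}$, and from $\mathbf{p}_2$ to $N$) while keeping the loss genuinely superlinear in the displacement so that it beats the $tkr^{1/3}$ threshold — in particular, one must be careful that the displacement of $\mathbf{p}_1^*$ (respectively $\mathbf{p}_2^*$) forces a transversal excursion on a segment of length comparable to $r$ (respectively $8r$), not length $N$, since a displacement of $tkr^{2/3}$ over a segment of length $N$ is \emph{sub}-KPZ-scale and would only cost $O(t^2k^2 r^{4/3}/N)$, which is not enough. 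This is why the decomposition in \eqref{max_form} deliberately routes the path through the intermediate line $\mathcal{L}_{8r}$: the relevant transversal excursions happen on the $O(r)$-length pieces, where Propositions \ref{trans_fluc_loss}, \ref{trans_fluc_loss3} and \ref{wide_similar} give the needed $t^2$-type loss. A secondary nuisance is that the initial-condition weights enter through $\log Z^W_{\mathbf{p}_1^*}$, so one needs the conditional/unconditional right-tail bound \eqref{up_bd} (Assumption B1) to control $\max_{\mathbf{p}_1\in \mathcal{L}_0^{100r}} \log W$-type contributions, exactly as in the estimate of \eqref{item111}.
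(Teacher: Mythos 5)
Your proposal takes essentially the same approach as the paper: localize the two maximizers by comparing the value at $(\mathbf{p}_1^*,\mathbf{p}_2^*)$ against a baseline, quantify the free-energy loss from the forced transversal excursion (using the touch constraint via Propositions \ref{trans_fluc_loss}, \ref{trans_fluc_loss3}, \ref{low_ub}) together with boundary and endpoint controls from assumption \eqref{up_bd} and Proposition \ref{max_all_t1}, and sum over transversal scales, with the weakest ($e^{-t^{1/10}}$) ingredient driving the final $e^{-t^{1/100}}$ rate. The paper's implementation is cleaner in that it uses the simpler baseline $\log\wt Z_{0,8r}+\log\wt Z_{8r,N}$ and indexes the two maximizers' positions jointly by $(u,v)\in\Z^2$ with an explicit case analysis on the signs and relative magnitudes of $u,v$ (rather than union-bounding $\{|\mathbf{p}_1^*|\text{ far}\}$ and $\{|\mathbf{p}_2^*|\text{ far}\}$ separately, which then still needs that joint decomposition to make the middle-segment loss uniform over the other maximizer), but the strategy and ingredients are the same.
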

\begin{proof}
Recall $\olsi{a} = (a, -a)$, and for integers $|u|, |v| \geq t$, let us define 
\begin{align*}
I_u = \mathcal{L}_{\olsi{ukr^{2/3}}}^{kr^{2/3}} \quad \text{ and }\quad 
J_v = \mathcal{L}_{8r + \olsi{vkr^{2/3}}}^{kr^{2/3}}.
\end{align*}
And we will show that 
\begin{equation}\label{loc_p}
\mathbb{P} ({\bf p}_1^* \in I_u, {\bf p}_2^* \in J_v)  \leq e^{-C(k(|u|+|v|))^{1/10}}.
\end{equation}
Once we have \eqref{loc_p}, by a union bound, we obtain \eqref{def_H} since
$$\mathbb{P}(H) \leq \sum_{|u|, |v| \geq t} \mathbb{P} ({\bf p}_1^* \in I_u, {\bf p}_2^* \in J_v) \leq \sum_{|u|, |v| \geq t} e^{-C(k(|u|+|v|))^{1/10}} \leq e^{-Ct^{1/100}}.$$

To show \eqref{loc_p}, let $c_0$ be some positive constant which we fix later when estimating \eqref{item_2} , and it holds that 
\begin{align}
&\mathbb{P} ({\bf p}_1 \in I_u, {\bf p}_2 \in J_v)\nonumber\\
& \leq \mathbb{P}\Big(\max_{{\bf p}_1 \in I_u} \{\log Z^{W}_{{\bf p}_1}\} + \max_{{\bf p}_1 \in I_u} \max_{{\bf p}_2 \in J_v}  \{\log \wt{Z}^{\textup{touch}, kr^{2/3}}_{{\bf p}_1, {\bf p}_2}\} + \max_{{\bf p}_2 \in J_v} \{\log \wt{Z}_{{\bf p}_2, N}\}\nonumber\\
& \qquad \qquad \qquad \qquad\qquad \qquad \qquad \qquad \geq \log \wt{Z}_{0, 8r} + \log \wt{Z}_{8r, N}\Big)\nonumber\\
& \leq \mathbb{P}\Big(\max_{{\bf p}_1 \in I_u} \log Z^{W}_{{\bf p}_1} \geq c_0(|u|+|v|)kr^{1/3}\Big) \label{item_1}\\
& \qquad + \mathbb{P}\Big(\max_{{\bf p}_1 \in I_u} \max_{{\bf p}_2 \in J_v}  \wt{Z}^{\textup{touch}, kr^{2/3}}_{{\bf p}_1, {\bf p}_2} - \log \wt{Z}_{0,  8r} \geq - 2c_0(|u|+|v|)kr^{1/3}\Big) \label{item_2}\\
& \qquad +  \mathbb{P}\Big(\max_{{\bf p}_2 \in J_v} \wt{Z}_{{\bf p}_2, N} - \log \wt{Z}_{8r, N} \geq c_0(|u|+|v|)kr^{1/3} \Big). \label{item_3}
\end{align}
First, we see that  \eqref{item_1} $\leq e^{-C(k(|u|+|v|))^{1/10}}$ from assumption \eqref{up_bd}. And \eqref{item_3} $\leq e^{-C(k(|u|+|v|))^{1/10}}$ following from Proposition \ref{max_all_t1}. 
It remains to show \eqref{item_2}. First, using Proposition \ref{low_ub} and a union bound, it suffices for us to upper bound 
\begin{equation}\label{item_22}
\mathbb{P}\Big(\max_{{\bf p}_1 \in I_u} \max_{{\bf p}_2 \in J_v}  \wt{Z}^{\textup{touch}, kr^{2/3}}_{{\bf p}_1, {\bf p}_2} -\Lambda_{8r} \geq - 3c_0k(|u|+|v|)r^{1/3}\Big).
\end{equation}

If $u$ and $v$ from \eqref{item_22} have opposite signs, then the paths between $I_u$ and $J_v$ must have high transversal fluctuation of order at least $\tfrac{1}{10}k(|u|+|v|)r^{2/3}$. Thus provided $c_0$ is fixed sufficiently small, by Proposition \ref{trans_fluc_loss}, we have 
\eqref{item_22} $\leq  e^{-C(k(|u|+|v|))^{1/10}}$.
Without the loss of generality, we will assume that $u$ and $v$ are both positive. We will split the remaining estimate into two cases, whether $|u-v| \geq \frac{u+v}{10}$ or $|u-v| \leq \frac{u+v}{10}$. In the first case, if $|u-v| \geq \frac{u+v}{10}$, again the paths between $I_u$ and $J_v$ must have a high transversal fluctuation of order at least $\tfrac{1}{10}k(|u|+|v|)r^{2/3}$.  And by Proposition \ref{trans_fluc_loss},
\eqref{item_22} $\leq  e^{-C(k(|u|+|v|))^{1/10}}$. Lastly, if $|u-v| \leq \frac{u+v}{10}$, because the paths between $I_u$ and $J_v$ must touch $R^{kr^{2/3}}_{0, r}$, they must also have a high transversal fluctuation of order at least $\tfrac{1}{10}k(|u|+|v|)r^{2/3}$. In this case, Proposition \ref{trans_fluc_loss3} gives \eqref{item_22} $\leq  e^{-C(k(|u|+|v|))^{1/10}}$. With this, we have finished the proof of \eqref{loc_p}.
\end{proof}

\subsubsection{Proof of Proposition \ref{p:3rd}}

Recall the boundary condition $W'$ which was an independent copy of $W$. Notice that $Z_{N}^{W',\textup{out}, r}$ is independent of $U_{j}$ for all $j\le r^{1/3}$, 
\begin{equation}
    \label{e:3rd}
   \Cov(Z^{W}_{r},Z^{W^{*,r},\textup{out},r}_{N})  = \sum_{j=1}^{r^{1/3}}\Cov(U_{j},Z^{W^{*, r},\textup{out}, r}_{N}-Z^{W',\textup{out},r}_{N}).
\end{equation}
We shall show that for $j\le r^{1/3}$
\begin{equation}
    \label{e:3bd}
    \Cov(U_j,Z^{W^{*,r},\textup{out}, r}_{N}-Z^{W',\textup{out},r}_{N})\le \frac{1}{(r-jr^{2/3}+1)^{100}} \sqrt{\Var(U_j)}\sqrt{\Var(Z^{W^{*,r},\textup{out}, r}_{N}-Z^{W',\textup{out},r}_{N})}.
\end{equation}
This indeed suffices. Note that by an argument identical to the one used to obtain \eqref{l2_terms}, we get $\Var(Z^{W^{*, r},\textup{out}, r}_{N}-Z^{W',\textup{out},r}_{N})\le C(r^{1/3}+1)^{40} r^{2/3}$, and we get that \eqref{e:3rd} is upper bounded by $Cr^{2/3}$ for some $C>0$ upon using Proposition \ref{r_proof} and \eqref{e:3bd}. 

It remains to show \eqref{e:3bd}, and the argument for this is similar to \eqref{e:2bd}. Recall the $\sigma$-algebras $\mathcal{F}_1^j$ and $\mathcal{F}_2^k$ defined in  \eqref{def_f12}.
To establish \eqref{e:3bd}, we repeat the argument for \eqref{e:2bd} but with $\mathcal{F}_2^{k-1}$ replaced by $\mathcal{F}_2$ which is generated by 
$\mathcal{F}^{r^{1/3}}_{2}$ together with $\log W'_{i+1}-\log W'_{i}$ for all $i\in \Z$. Observe that conditional on $\mathcal{F}':=\sigma(\mathcal{F}^{j+1}_1,\mathcal{F}_2)$, now, $Z^{W^{*,r},\textup{out}, r}_{N}-Z^{W',\textup{out},r}_{N}$ is a function of $X_{i}$ restricted to the index set $\Z\setminus [-r+1,r]$ which has distance $r-jr^{2/3}+1$ from the set of indices that $U_{j}$ is a function of. By employing an argument identical to the one presented for \eqref{e:2bd}, we establish \eqref{e:3bd}, albeit without providing specific details. This completes the proof of the upper bound of Theorem \ref{thm_r_small}.

\subsection{Lower bound in Theorem \ref{thm_r_small}}\label{r_small_low}

Given a polymer with boundary $\mathcal{L}_0$, let us define a $\sigma$-algebra $\mathcal{F}$, generated by the weights $\{Y_{\bf z}\}_{{\bf z} \in \mathcal{L}_{0}^>}$ and $\{X_{k}=\log W_{k} - \log W_{k-1} \}_{k \not \in [1, r^{2/3}]}$.
Next, we will define the following events which are $\mathcal{F}$-measurable.

In the definition below,  $M$ is a large positive constant, and the parameters $r\geq r_0$ and $N \geq r$. The events $\mathcal{B}_1$ and $\mathcal{B}_2$ say that the bulk free energy from $(0,0)$ to $(r,r)$ and $(N, N)$ can not be too large or too small
\begin{align*}
\mathcal{B}_1 &= \{|\log \wt{Z}_{0, r} - \Lambda_r| \leq M^{20} r^{1/3}\};\\
\mathcal{B}_2 &= \{|\log \wt{Z}_{0, N} - \Lambda_N| \leq M^{20} N^{1/3}\}.
\end{align*}
The events $\mathcal{B}_3$ and $\mathcal{B}_4$ say that the bulk free energy starting anywhere in a segment around the origin can not be too different from starting from the origin,
\begin{align*}
\mathcal{B}_3 &= \Big\{\max_{0\leq k \leq M r^{2/3}} \log \wt{Z}_{\olsi{k},r} - \log \wt{Z}_{0, r} \leq M^{20} r^{1/3} \Big\};\\
\mathcal{B}_4 &= \Big\{\max_{0\leq k \leq M N^{2/3}}\log \wt{Z}_{\olsi{k},N} - \log \wt{Z}_{0, N} \leq M^{20} N^{1/3}\Big\}.
\end{align*}
We also have control on the local fluctuation of $\log \wt{Z}_{\bbullet, N}$ on the scale $r$,
\begin{align*}
\mathcal{B}_5 &= \Big\{\max_{0 \leq k \leq {r^{2/3}}}\log \wt{Z}_{\olsi{k}. N} - \min_{0\leq j \leq Mr^{2/3}}\log \wt{Z}_{\olsi{j}, N} \leq M^{20} r^{1/3}\Big\}.
\end{align*}
Recall the definition of $\tau$ at the beginning of Section \ref{est_poly_bdry}. The next two events $\mathcal{B}_6$ and $\mathcal{B}_7$ say that the restricted free energy for the paths with non-positive exit times (with boundary) can not be too large.
\begin{align*}
\mathcal{B}_6 &=  \{\fgs_{ r}(\tau\leq 0) - \Lambda_r \leq M^{20}r^{1/3}\};\\
\mathcal{B}_7 &=  \{\fgs_{N}(\tau\leq 0) - \Lambda_N \leq M^{20}N^{1/3}\}.
\end{align*}
Finally, we define two rare events $\mathcal{B}_8$ and $\mathcal{B}_9$ which say the boundary weights along parts of the boundary are large:
\begin{align*}
\mathcal{B}_8 &= \Big\{\log W_{Mr^{2/3}} - \log W_{r^{2/3}}  \geq M^{30} r^{1/3} \Big\};\\
\mathcal{B}_{9} &= \Big\{\log W_{MN^{2/3}}  - \log W_{r^{2/3}} \geq M^{30} N^{1/3} \Big\}.
\end{align*}

Let us denote the intersection of the nine events defined above as 
\begin{equation}\label{def_B}
\mathcal{B} = \bigcap_{i=1}^{9} \mathcal{B}_i,
\end{equation}
and the following lemma lower bounds the probability of the intersection.
\begin{lemma}\label{lbB}
There exists a positive constant $M_0$ such that for each $M\geq M_0$, there exist positive constants $r_0$ and $\epsilon$ such that for each $r\geq r_0$ and $N \geq r$, we have 
$$\mathbb{P}(\mathcal{B}) \geq \epsilon.$$
\end{lemma}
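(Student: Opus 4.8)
The plan is to exploit the fact that the nine events fall into two groups of very different natures. The parameter $M$ is fixed throughout (the constants $r_0,\epsilon$ in the statement are allowed to depend on $M$), and once $M$ is large the events $\mathcal{B}_1,\dots,\mathcal{B}_7$ are high-probability events whose complements have probability tending to $0$ as $M\to\infty$ (uniformly in $r,N$ once $r$ is large), whereas $\mathcal{B}_8$ and $\mathcal{B}_9$ have probability bounded below only by an $M$-dependent constant $\epsilon_0=\epsilon_0(M)>0$ which may be extremely small. Because of this mismatch I would \emph{not} try to absorb the complements of all nine events into an error term; instead I would estimate $\mathbb{P}\big(\bigcap_{i=1}^{7}\mathcal{B}_i\big)$ and $\mathbb{P}(\mathcal{B}_8\cap\mathcal{B}_9)$ separately and then combine them using the near-independence of the two groups provided by the $\rho$-mixing hypothesis \eqref{mix}.

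First I would bound $\mathbb{P}(\mathcal{B}_i^{c})$ for $i\le 7$, showing each is at most $e^{-cM^{\alpha}}$ for constants $c,\alpha>0$ not depending on $M$, provided $r\ge r_0(M)$. For $\mathcal{B}_1,\mathcal{B}_2$ the right tail comes from Proposition \ref{ptl_upper} (after dominating $\wt Z_{0,m}$ by the line-to-line partition function $\wt Z_{\mathcal{L}_0^{m^{2/3}},\mathcal{L}_m}$) and the left tail from Proposition \ref{low_ub}; for $\mathcal{B}_3,\mathcal{B}_4$ from Proposition \ref{max_all_t1}; for $\mathcal{B}_5$ by writing the displayed difference as a sum of $\max_{0\le k\le r^{2/3}}\log\wt Z_{\olsi k,N}-\log\wt Z_{0,N}$ and $\log\wt Z_{0,N}-\min_{0\le j\le Mr^{2/3}}\log\wt Z_{\olsi j,N}$ and applying Propositions \ref{max_all_t1} and \ref{min1} to the two terms; and for $\mathcal{B}_6,\mathcal{B}_7$, since $\fgs_m(\tau\le 0)\le\fgs_m\le\fgs_{\mathcal{L}_m^{s}}$ for every $s\ge 0$, from Proposition \ref{b_up_tail}(i). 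The powers $M^{20},M^{20.1}$ in the definitions of the $\mathcal{B}_i$ carry enough slack for all of these applications to go through with room to spare in the exponents. Taking $M\ge M_0$ so that $7e^{-cM^{\alpha}}\le\tfrac12$ yields $\mathbb{P}\big(\bigcap_{i=1}^{7}\mathcal{B}_i\big)\ge\tfrac12$.

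Next I would lower bound $\mathbb{P}(\mathcal{B}_8\cap\mathcal{B}_9)$. Choose $t_0=t_0(M)$ large enough that $t_0\sqrt{M-1}\ge M^{20.1}$ and apply the right-tail lower bound \eqref{low_bd} with this $t_0$ to the increment of $\log W$ over the gaps $[r^{2/3}+1,Mr^{2/3}]$ and $[r^{2/3}+1,MN^{2/3}]$, whose lengths are at least $(M-1)r^{2/3}$ and $(M-1)N^{2/3}$ respectively since $N\ge r$; this gives $\mathbb{P}(\mathcal{B}_8),\mathbb{P}(\mathcal{B}_9)\ge\epsilon_0=\epsilon_0(M)>0$ as soon as $r\ge r_0(M)$ is large enough that these gaps exceed the threshold $N_0(t_0)$. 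Both $\mathcal{B}_8$ and $\mathcal{B}_9$ are increasing functions of $\{\hat X_k\}_{k\ge 1}$, hence of $\mathcal{X}$, so the FKG inequality \eqref{FKG} gives $\mathbb{P}(\mathcal{B}_8\cap\mathcal{B}_9)\ge\mathbb{P}(\mathcal{B}_8)\mathbb{P}(\mathcal{B}_9)\ge\epsilon_0^{2}$.

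Finally I would decouple the two groups. The events $\mathcal{B}_1,\dots,\mathcal{B}_5$ are functions of the i.i.d.\ bulk weights $\{Y_{\bf z}\}$ alone and $\mathcal{B}_6,\mathcal{B}_7$ are functions of $\{Y_{\bf z}\}$ together with $\{X_j\}_{j\le 0}$, while $\mathcal{B}_8\cap\mathcal{B}_9$ is a function of $\{X_j\}_{j\ge r^{2/3}+1}$. Conditioning on the bulk weights, which are independent of all the $X_j$, the event $\bigcap_{i=1}^{7}\mathcal{B}_i$ becomes measurable with respect to $\sigma(X_j:j\le 0)$ and $\mathcal{B}_8\cap\mathcal{B}_9$ with respect to $\sigma(X_j:j\ge r^{2/3}+1)$; these index sets are at distance $r^{2/3}+1$, so \eqref{mix} with $\ell=r^{2/3}$ bounds the corresponding conditional covariance of the indicators by $r^{-200/3}$. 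Averaging over the bulk weights then yields $\mathbb{P}(\mathcal{B})\ge\mathbb{P}\big(\bigcap_{i=1}^{7}\mathcal{B}_i\big)\,\mathbb{P}(\mathcal{B}_8\cap\mathcal{B}_9)-r^{-200/3}\ge\tfrac12\epsilon_0^{2}-r^{-200/3}\ge\tfrac14\epsilon_0^{2}$ for all $r\ge r_0(M)$ large enough (here $\epsilon_0=\epsilon_0(M)$ is a fixed positive constant, so the term $r^{-200/3}$ can be made negligible against it), and one sets $\epsilon:=\tfrac14\epsilon_0(M)^{2}$. The main obstacle is precisely this scale mismatch: since $\epsilon_0(M)$ is, in typical examples, doubly exponentially small in $M$, it is essential to factor $\mathbb{P}(\mathcal{B})$ as an approximate product rather than to treat the $\mathcal{B}_i^{c}$ as errors, and this forces the careful bookkeeping of which bulk weights and which boundary increments each $\mathcal{B}_i$ depends on so that $\rho$-mixing can be applied across the block $[1,r^{2/3}]$; the verification of the high-probability bounds themselves is routine given the estimates in Sections \ref{est_poly_bulk} and \ref{est_poly_bdry}.
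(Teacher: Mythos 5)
Your proposal follows essentially the same route as the paper: bound $\mathbb{P}\big(\bigcap_{i=1}^{7}\mathcal{B}_i\big)$ using the bulk and boundary estimates so that it is, say, $\geq 0.9$; lower-bound $\mathbb{P}(\mathcal{B}_{8}\cap\mathcal{B}_{9})$ by $\epsilon_0(M)^2$ via \eqref{low_bd} and FKG applied to the increasing events $\mathcal{B}_8,\mathcal{B}_9$ of $\mathcal{X}$; and then combine the two blocks by conditioning on the bulk-weight $\sigma$-algebra and invoking $\rho$-mixing across the gap $[1,r^{2/3}]$ of boundary increments, which is exactly the paper's decomposition $\mathbb{P}(\mathcal{B})\ge\mathbb{P}(\mathcal{B}_{1,\ldots,7})\mathbb{P}(\mathcal{B}_{8,9})-\text{(mixing error)}$. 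Your bookkeeping of which increments each event depends on is in fact cleaner than the paper's (the paper's ``$\{X_i\}_{i\ge r+1}$'' should read $\{X_i\}_{i\ge r^{2/3}+1}$, and the mixing error is $r^{-200/3}$ rather than the paper's generous $r^{-50}$), so the proposal is correct and matches the paper's argument.
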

\begin{proof}
Note that $\mathcal{B}_{i}$, for $i= 1, \dots, 8$ are all high probability events if $M$ is large. To see this fact, $\mathbb{P}(\mathcal{B}_1^c \cup \mathcal{B}_2^c)$ can be upper bounded using Proposition \ref{ptl_upper} and Proposition \ref{low_ub}.
The probability $\mathbb{P}(\mathcal{B}_3^c \cup \mathcal{B}_4^c) $ can be upper bounded using Proposition \ref{max_all_t1}.
The probability $\mathbb{P}(\mathcal{B}_5^c)$ can be upper bounded using the triangle inequality, Proposition \ref{max_all_t1} and Proposition \ref{min1}. The probability $\mathbb{P}(\mathcal{B}_6^c\cup \mathcal{B}_7^c)$ can be upper bounded using Proposition \ref{b_up_tail}. 
Thus, 
\begin{equation}\label{bd17}\mathbb{P}\Big(\bigcap_{i=1}^7 \mathcal{B}_i\Big) \geq 0.9
\end{equation}
provided that $M$ is fixed large enough.

Next, we are going to use the assumptions \eqref{mix}, \eqref{FKG} and \eqref{low_bd} to finish the proof of the lower bound. Let $\mathcal{F}'$ denote the $\sigma$-algebra generated by all bulk weights $\{Y_{\bf z}\}_{z\in \mathcal{L}_0^{>}}$. For simplicity of the notation, let us denote by $\mathcal{B}_{i,j}=\mathcal{B}_{i}\cap \mathcal{B}_{j}$, $\mathcal{B}_{i,j,k}=\mathcal{B}_{i}\cap \mathcal{B}_{j}\cap \mathcal{B}_{k}$ and so on. Then we have 
\begin{align}
\mathbb{P}(\mathcal{B}_{1, \dots, 5} \cap \mathcal{B}_{6,7} \cap \mathcal{B}_{8,9})
&=\mathbb{E}[\mathbb{P}(\mathcal{B}_{1, \dots, 5} \cap \mathcal{B}_{6,7} \cap \mathcal{B}_{8,9}|\mathcal{F}')]\nonumber\\
&=\mathbb{E}[\mathbbm{1}_{\mathcal{B}_{1, \dots, 5} }\mathbb{P}(\mathcal{B}_{6,7} \cap \mathcal{B}_{8,9}|\mathcal{F}')].\label{cond_F'}
\end{align}
Now, notice that when conditioned on $\mathcal{F}'$, $\mathbbm{1}_{\mathcal{B}_{6,7}}$ and $\mathbbm{1}_{\mathcal{B}_{8,9}}$ are functions of the increments $\{X_i\}_{i \leq 0}$ and $\{X_i\}_{i \geq r+1}$, respectively. Furthermore, the conditional variances $\Var(\mathbbm{1}_{\mathcal{B}_{6,7}}|\mathcal{F}')$ and $\Var(\mathbbm{1}_{\mathcal{B}_{8,9}}|\mathcal{F}')$ are bounded above by $1$. Then, by assumption \eqref{mix}, we have 
\begin{align*}
\eqref{cond_F'}&\leq \mathbb{E}[\mathbbm{1}_{\mathcal{B}_{1, \dots, 5} }(\mathbb{P}(\mathcal{B}_{6,7}| \mathcal{F}')\mathbb{P}(\mathcal{B}_{8,9}|\mathcal{F}') - 1/r^{50})] \qquad \text{ }\\
& \leq \mathbb{E}[\mathbbm{1}_{\mathcal{B}_{1, \dots, 5} }(\mathbb{P}(\mathcal{B}_{6,7}| \mathcal{F}')\mathbb{P}(\mathcal{B}_{8,9})]- 1/r^{50} \\
&\leq \mathbb{P}(\mathcal{B}_{8,9}) \mathbb{P}(\mathcal{B}_{1, \dots, 7})- 1/r^{50}\\
&\leq \epsilon_0^2\cdot 0.9- 1/r^{50} \end{align*}
where the last inequality follows from \eqref{bd17} and the fact that $\mathbb{P}(\mathcal{B}_{8,9}) \geq \epsilon_0^2$ by assumptions \eqref{low_bd} and  \eqref{FKG}.
Finally, by fixing $r_0$ sufficiently large, we have completed the proof.
\end{proof}
Next, we have the following variance bound for the difference of two free energies when we condition on $\mathcal{F}$.

\begin{lemma}\label{diff_small}
There exist positive constants $C_1, C_2, M_0$  such that for each $M\geq M_0$, there exists a positive constant $r_0$ and $\mathcal{B}'\subset \mathcal{B}$ with $\mathbb{P}(\mathcal{B}') \geq \tfrac{1}{2} \mathbb{P}(\mathcal{B})$ such that for each $r\geq r_0$, $N\geq r$ and each $\omega \in \mathcal{B}'$, we have 
\begin{align*}\Var\big( \fgs_{N} - \fgs_{N} (\tau \geq r^{2/3})\big|\mathcal{F}\big)(\omega) &\leq e^{-C_1M} r^{2/3};\\ \Var\big(\log W_{r^{2/3}}\big|\mathcal{F}\big)(\omega) &\geq C_2 r^{2/3}.
\end{align*}

\end{lemma}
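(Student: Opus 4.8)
The plan is to handle the two variance bounds separately, exploiting the event $\mathcal{B}$ from Lemma \ref{lbB}. For the \emph{lower} bound on $\Var(\log W_{r^{2/3}}\mid \mathcal{F})$, recall that $\mathcal{F}$ is generated by all bulk weights together with $\{X_k\}_{k\notin[1,r^{2/3}]}$, so that conditioning on $\mathcal{F}$ leaves exactly the increments $X_1,\dots,X_{r^{2/3}}$ free. Thus $\log W_{r^{2/3}}=\log W_0+\sum_{k=1}^{r^{2/3}}X_k$ and its conditional variance equals $\Var\big(\sum_{k=1}^{r^{2/3}}X_k\mid\mathcal F\big)=\Var\big(\sum_{k=1}^{r^{2/3}}X_k\mid \{X_j\}_{j\notin[1,r^{2/3}]}\big)$ (the bulk weights are independent of everything here). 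This is precisely the conditional variance bound \eqref{as_var} applied with the role of $r$ played by $r^{2/3}$ (up to reindexing; since the statement of \eqref{as_var} in \textbf{Assumption B3} is scale-free in the index set, one takes $\mathcal{F}_{r^{2/3}}$), which gives $\Var(\log W_{r^{2/3}}\mid\mathcal F)\ge C r^{2/3}$ off an event of probability at most $1/\log(r^{2/3})$. Shrinking $\mathcal{B}$ to $\mathcal{B}'$ by removing this event costs only $o(1)$, so for $r_0$ large we retain $\mathbb P(\mathcal B')\ge \tfrac12\mathbb P(\mathcal B)$.

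For the \emph{upper} bound on $\Var\big(\fgs_{N}-\fgs_{N}(\tau\ge r^{2/3})\mid\mathcal F\big)$, the strategy is the usual one: control the $L^2$-norm of the non-negative difference $D:=\log Z^W_N-\log Z^W_N(\tau\ge r^{2/3})\ge 0$ by first establishing a conditional right-tail bound for $D$ on $\mathcal B$ (or $\mathcal B'$), then integrating. Write $Z^W_N=Z^W_N(\tau\ge r^{2/3})+Z^W_N(\tau<r^{2/3})$, so $D=\log\big(1+Z^W_N(\tau<r^{2/3})/Z^W_N(\tau\ge r^{2/3})\big)$, and as in the proof of Proposition \ref{r_proof} the event $\{D\ge s\}$ for small $s$ translates into $\{Z^W_N(\tau<r^{2/3})/Z^W_N\ge \tfrac12 s\}$. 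So it suffices to bound, conditionally on $\mathcal F$, the quenched probability $Q^W_N(\tau<r^{2/3})$ — i.e.\ the contribution of paths whose exit point on $\mathcal L_0$ lies at $(k,-k)$ with $k<r^{2/3}$. The numerator $Z^W_N(\tau<r^{2/3})=\sum_{k<r^{2/3}}W_k\,\wt Z_{\olsi k,N}$; split the sum at $k\le 0$ and $0<k<r^{2/3}$. For $k\le 0$: on $\mathcal B_7$ one has $\fgs_N(\tau\le 0)-\Lambda_N\le M^{20}N^{1/3}$, directly bounding that block. For $0<k<r^{2/3}$: on $\mathcal B_4,\mathcal B_9$ (and using that these $W_k$ grow at most like the values controlled by $\mathcal B_8$/$\mathcal B_9$ — note $\log W_k-\log W_{r^{2/3}}$ is $\mathcal F$-measurable only up to the free increments, so one pairs it with $\mathcal B_3$-type control of $\wt Z_{\olsi k,N}-\wt Z_{0,N}$), the sum of $W_k\wt Z_{\olsi k,N}$ over this range is at most $r^{2/3}\cdot e^{C M^{20}r^{1/3}}W_{r^{2/3}}\wt Z_{0,N}$ type quantities. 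Against this, the denominator is at least $Z^W_N(\tau= k_0)$ for a good $k_0\in[r^{2/3},MN^{2/3}]$; on $\mathcal B_2,\mathcal B_4$ and using the lower-tail input (Proposition \ref{low_ub}) together with $\mathcal B$, one gets $Z^W_N\ge e^{-CM^{20}N^{1/3}}$ times a dominating boundary term, making the ratio at most $e^{-c M\,N^{1/3}}$-ish on $\mathcal B$ — i.e.\ exponentially small in $M$. Iterating this with a scale parameter $t$ in place of $M$ (the events $\mathcal B_i$ are really a family indexed by a scale; here one re-runs the estimates at scale $tr^{2/3}$ up to scale $MN^{2/3}$) yields the conditional tail bound $\mathbb P(D\ge e^{-C't^2 r^{1/3}}\mid\mathcal F)\le e^{-ct^{3}}$ on $\mathcal B'$, as in \eqref{event_A}. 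Finally, $\Var(D\mid\mathcal F)\le \mathbb E[D^2\mid\mathcal F]$, and splitting the expectation according to whether $D$ is below $e^{-C'M^2 r^{1/3}}$, using the conditional tail bound and a crude conditional fourth-moment bound (from Propositions \ref{b_up_tail}, \ref{wide_similar} applied conditionally, together with \eqref{cond_up_bd}) gives $\Var(D\mid\mathcal F)\le e^{-C_1 M}r^{2/3}$ on $\mathcal B'$.

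The main obstacle I anticipate is the bookkeeping in the conditional right-tail estimate for $D$: unlike in Proposition \ref{r_proof}, we are conditioning on $\mathcal F$, so the boundary increments $X_1,\dots,X_{r^{2/3}}$ are random under the conditional law and the partition function $Z^W_N(\tau\ge r^{2/3})$ in the denominator still feels the free increments $X_1,\dots,X_{r^{2/3}}$ through paths that exit near $k=r^{2/3}$. One must therefore be careful to express all the comparisons in terms of quantities that are either $\mathcal F$-measurable or controlled uniformly on $\mathcal B'$ (this is exactly why the events $\mathcal B_3,\mathcal B_4,\mathcal B_5,\mathcal B_8,\mathcal B_9$ were built in), and to invoke the \emph{conditional} tail bounds \eqref{cond_up_bd}, \eqref{cond_low_tail} from \textbf{Assumption B3} rather than their unconditional counterparts when the free increments are involved. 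A secondary technical point is matching scales: the block $0<k<r^{2/3}$ of boundary points lives at transversal scale $r^{2/3}=(r)^{2/3}$ while the endpoint $(N,N)$ has its own KPZ scale $N^{2/3}$, so the relevant "good" exit for the denominator is at scale $\asymp N^{2/3}$, and one should double-check the exponent arithmetic so that the competing $e^{CM^{20}r^{1/3}}$ factor from the numerator is dominated by the $e^{-cM N^{1/3}}$ gain from the denominator — which holds since $N\ge r$ and in the regime of interest $r\le N/1000$.
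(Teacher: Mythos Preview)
Your treatment of the lower bound on $\Var(\log W_{r^{2/3}}\mid\mathcal F)$ is correct and matches the paper: one applies assumption \eqref{as_var} (with $r^{2/3}$ in the role of the index), using that the bulk weights in $\mathcal F$ are independent of the boundary increments, and removes a set of probability $O(1/\log r)$ from $\mathcal B$.

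Your plan for the upper bound, however, has a genuine gap and misses the mechanism the paper uses. The difficulty you correctly flag---that $W_k$ for $1\le k<r^{2/3}$ are random under the conditional law---is never actually resolved in your sketch. The assertion that the ratio is ``at most $e^{-cMN^{1/3}}$-ish on $\mathcal B$'' is unjustified: nothing on the $\mathcal F$-measurable event $\mathcal B$ controls $\max_{1\le k<r^{2/3}}\log W_k-\log W_{r^{2/3}}$, so the numerator $\sum_{0<k<r^{2/3}}W_k\wt Z_{\olsi k,N}$ cannot be bounded as you suggest. The proposed conditional tail bound $\mathbb P(D\ge e^{-C't^2r^{1/3}}\mid\mathcal F)\le e^{-ct^3}$ is never derived, and ``iterating with a scale parameter $t$ in place of $M$'' does not make sense: $M$ is fixed once $\mathcal B$ is built, and the events $\mathcal B_i$ are not a family indexed by a running scale.

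The paper's argument is more direct and exploits the rare events $\mathcal B_8,\mathcal B_9$ in a way your plan does not. First, $\mathcal B_7$ together with $\mathcal B_9$ (and $\mathcal B_2,\mathcal B_4$) forces $\log Z^W_N(\tau\le 0)<\log Z^W_N(\tau\ge r^{2/3})$ on $\mathcal B'$, reducing $D$ to $\max\{0,\log Z^W_N(1\le\tau<r^{2/3})-\log Z^W_N(\tau\ge r^{2/3})\}$ up to constants. Second, using $\mathcal B_5$ one bounds this max by $\max_{1\le k<r^{2/3}}\log W_k-\log W_{r^{2/3}}+O(M^{20}r^{1/3})$, and then applies Cauchy--Schwarz:
\[
\mathbb E[D^2\mid\mathcal F]\;\le\;\sqrt{\mathbb E[(\cdot)^4\mid\mathcal F]}\,\sqrt{\mathbb P(\mathcal A\mid\mathcal F)},
\]
where $\mathcal A=\{\log Z^W_N(1\le\tau<r^{2/3})>\log Z^W_N(\tau\ge r^{2/3})\}$. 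The fourth moment is $O(M^4r^{2/3})$ by \eqref{cond_up_bd},\eqref{cond_low_tail}. The key point is that $\mathcal B_8$ makes the $\mathcal F$-measurable quantity $\log W_{Mr^{2/3}}-\log W_{r^{2/3}}$ atypically large, so on $\mathcal B'$ the event $\mathcal A$ forces $\max_{1\le k<r^{2/3}}\log W_k-\log W_{r^{2/3}}\gtrsim M^{20.1}r^{1/3}$, which by \eqref{cond_up_bd} has conditional probability $\le e^{-cM^{20}}$. The product yields $e^{-C_1M}r^{2/3}$. In short, $\mathcal B_8,\mathcal B_9$ are not bookkeeping---they are what makes the denominator dominate; your plan should be reorganized around this Cauchy--Schwarz-with-$\mathcal A$ step rather than a direct ratio bound.
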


\begin{proof}
First, let us define $\mathcal{B}'$. By assumptions \eqref{cond_up_bd}, \eqref{cond_low_tail} and \eqref{as_var}, there exists a subset $\mathcal{B}' \subset \mathcal{B}$ with probability $\mathbb{P}(\mathcal B') \geq \mathbb{P}(\mathcal B)- 100/\log r$ such that for each $\omega \in \mathcal{B}'$, we have 
\begin{align*}
\mathbb{P}\big(\max_{0\leq k \leq N^{2/3}} \log W_{k} \geq tN^{1/3}\big|\mathcal{F}\big)(\omega) &\leq e^{-C't}\\
\mathbb{P}\big(\log W_{N^{2/3}} \leq -tN^{1/3}\big|\mathcal{F}\big)(\omega) &\leq e^{-C't}\\
\Var\big(\log W_{r^{2/3}} | \mathcal{F}\big)(\omega) &\geq Cr^{2/3}.
\end{align*}
By fixing $r_0$ sufficiently large, we have that $\mathbb{P}(\mathcal B') \geq \tfrac{1}{2}\mathbb{P}(\mathcal B)$, and the second conditional variance lower bound from the lemma follows directly. 

Next, we show the first conditional variance upper bound in the lemma.
Note that because of the event  $\mathcal{B}'$, it holds that
\begin{align*}
\fgs_{N} (\tau \leq 0 ) &\leq \Lambda_N +  M^{20} N^{1/3}\\
\fgs_{N} (\tau \geq r^{2/3}) &\geq \log W_{MN^{2/3}} +\fg_{\olsi{MN^{2/3}}, N}   \geq \Lambda_N- 2M^{20} N^{1/3} + M^{30} N^{1/3}.
\end{align*}
Thus, we have 
\begin{equation}\label{left_small}
\mathbb{P}\Big(\fgs_{N} (\tau \leq 0 ) < \fgs_{N} (\tau \geq r^{2/3})\Big|\mathcal{F}\Big)(\omega) = 1 \qquad \text{ for }\omega \in \mathcal{B}'.
\end{equation}

Let us define $$\mathcal{L}_0^{r^{2/3}, +} =\Big\{{\bf a } \in \mathcal{L}_0^{r^{2/3}} : {\bf a} \cdot {\bf e}_1 \geq 0\Big\}.$$
We have the following bound on the conditional variance. For the calculation below, the fact 
$$
\fgs_{N} \leq \max\Big\{\fgs_{N}(\tau \geq 1), \fgs_{N}(\tau \leq 0)\Big\} + \log 2
$$ 
and \eqref{left_small} gives us the second inequality. For each $\omega \in \mathcal{B}$, it holds that
\begin{align}
&\Var\Big( \fgs_{N} - \fgs_{N} (\tau \geq r^{2/3})\Big|\mathcal{F}\Big)(\omega)\nonumber\\
&\leq \mathbb{E}\Big(\Big( \fgs_{N} - \fgs_{N} (\tau \geq r^{2/3})\Big)^2\Big|\mathcal{F}\Big)(\omega)\nonumber\\
&\leq \mathbb{E}\Big( \Big(\fgs_{N}(\tau \geq 1)  - \fgs_{N} (\tau \geq r^{2/3}) + \log 2\Big)^2\Big|\mathcal{F}\Big)(\omega)\nonumber\\
&\leq \mathbb{E}\Big( \Big(\max\Big\{\fgs_{N}(1\leq \tau <r^{2/3}), \fgs_{N}(\tau \geq r^{2/3})  \Big\} - \fgs_{N} (\tau \geq r^{2/3}) +2\log 2 \Big)^2\Big|\mathcal{F}\Big)(\omega)\nonumber\\
&= \mathbb{E}\Big( \Big(\max\Big\{\fgs_{N}(1\leq \tau <r^{2/3})- \fgs_{N}(\tau \geq r^{2/3}), 0   \Big\} +4 \Big)^2\Big|\mathcal{F}\Big)(\omega)\label{max_zero}
\end{align}

Let us define the event $$\mathcal{A} = \Big\{\fgs_{N} (1\leq \tau < r^{2/3} ) > \fgs_{N} (\tau \geq r^{2/3}) \Big\}.$$
then \eqref{max_zero} can be upper bounded as  as
\begin{align}
\eqref{max_zero} & \leq \mathbb{E}\Big( \Big(\fgs_{N} (1\leq \tau < r^{2/3} ) - \fgs_{N} (\tau \geq r^{2/3}) + 4\Big)^2 \mathbbm{1}_{\mathcal{A}} \Big|\mathcal{F}\Big)(\omega) + 16.\label{has_const}
\end{align}
Next, using the general fact 
$$\mathbb{E}[(X+4)^2] \leq \mathbb{E}[X^2] + 8\mathbb{E}[X^2]^{1/2} + 16,$$
it suffices for us to ignore the constants $4$ and $16$ in \eqref{has_const} and directly bound the expression \eqref{no_const} below. 
In the calculation for \eqref{no_const}, the second inequality follows because on our event $\mathcal{B}$,  
$$\max_{1 \leq k \leq r^{2/3}} \log \wt{Z}_{\olsi{k}, N}  -\log \wt{Z}_{\olsi{r^{2/3}}, N} \leq M^{20} r^{1/3},$$
and for each $\omega \in \mathcal{B}'$,
\begin{align}
&\mathbb{E}\Big( \Big(\fgs_{N} (1\leq \tau < r^{2/3} ) - \fgs_{N} (\tau \geq r^{2/3}) \Big)^2 \mathbbm{1}_{\mathcal{A}} \Big|\mathcal{F}\Big)(\omega) \label{no_const}\\
&\leq \mathbb{E}\Big( \Big(\max_{1 \leq k \leq r^{2/3}} \log W_{k} + \max_{1 \leq k \leq r^{2/3}} \log \wt{Z}_{\olsi{k}, N} - \log Z^{W}_{N}(\tau = r^{2/3}) \Big )^2\mathbbm{1}_{\mathcal{A}} \Big|\mathcal{F}\Big)(\omega)\nonumber\\
& \leq  \mathbb{E}\Big( \max_{1 \leq k \leq r^{2/3}} \log W_{k} - \log W_{r^{2/3}} + 2M^{2}r^{1/3} \Big)^2 \mathbbm{1}_{\mathcal{A}} \Big|\mathcal{F}\Big)(\omega)\nonumber\\
& \leq \sqrt{ \mathbb{E}\Big( \Big(\max_{1 \leq k \leq r^{2/3}} \log W_{k} - \log W_{r^{2/3}}+ 2M^{2}r^{1/3}\Big)^4\Big|\mathcal{F}\Big)(\omega)}\sqrt{\mathbb{P}(\mathcal{A}|\mathcal{F})(\omega)}. \label{cs_eq}
\end{align}

Next, we will bound the conditional expectation and the conditional probability in \eqref{cs_eq}. 
For the conditional expectation, by the definition of $\mathcal{B}'$ (defined at the beginning of the proof) and the fact that the term inside the fourth power is non-negative,
the square root of the conditional expectation above is bounded by  $CM^{4} r^{2/3}$.
For the conditional probability, let us define another set $\mathcal{A}'$ which contains $\mathcal{A}$,
$$\mathcal{A}' = \Big\{\max_{1 \leq k \leq r^{2/3}} \log W_{k} + \max_{1 \leq k \leq r^{2/3}} \log \wt{Z}_{\olsi{k}, N}   + \log r > \log W_{Mr^{2/3}}  + \log \wt{Z}_{\olsi{Mr^{2/3}}, N}\Big\}.$$
On the event $\mathcal{B}'$, it holds that 
\begin{align*}
\log W_{Mr^{2/3}} - \log W_{r^{2/3}} & \geq M^{30} r^{1/3},
\\
\max_{1 \leq k \leq r^{2/3}} \log \wt{Z}_{\olsi{k}, N}  - \log \wt{Z}_{\olsi{Mr^{2/3}}, N} &\leq M^{20} r^{1/3}.
\end{align*}
Again by the definition of $\mathcal{B}'$, for $\omega \in \mathcal{B}'$, we have
\begin{align*}
\mathbb{P}(\mathcal{A}|\mathcal{F})(\omega)
&\leq \mathbb{P}(\mathcal{A}'|\mathcal{F})(\omega) 
 \leq \mathbb{P}\Big(\max_{1 \leq k \leq r^{2/3}} \log W_{k} - \log W_{r^{2/3}}\geq \tfrac{1}{2}M^{30} r^{1/3}|\mathcal{F}\Big)(\omega) \leq e^{-CM^{30}}.
\end{align*}
With this, we have finished the proof of our lemma. 
\end{proof}

Recall the $\sigma$-algebra $\mathcal{F}$ which was defined at the beginning of this section and the $\mathcal{F}$-measurable event $\mathcal{B}'$ defined in Lemma \ref{diff_small}. We shall show the following proposition.
\begin{proposition}\label{cond_lb}
There exists positive constants $C, M_0$ such that for each $M\geq M_0$, there exists a $r_0$ such that for each $r_0 \leq r \leq N$, 
$$\Cov(\fgs_{r}, \fgs_{N}| \mathcal{F})(\omega)  \geq Cr^{2/3} \qquad \text{ for each $\omega \in \mathcal{B}'$}.$$ 
\end{proposition}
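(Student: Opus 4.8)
The plan is to identify the single degree of freedom that produces the conditional correlation. Conditionally on $\mathcal{F}$ the only randomness left is the finite block $X_1,\dots,X_{r^{2/3}}$, and I would show that this block enters both $\fgs_r$ and $\fgs_N$ essentially only through the scalar $\log W_{r^{2/3}}=\sum_{i=1}^{r^{2/3}}X_i$, via the paths whose exit time satisfies $\tau\ge r^{2/3}$.

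\textbf{Factorization of the restricted free energies.} For every $k\ge r^{2/3}$ one has $W_k=W_{r^{2/3}}\exp\!\big(\sum_{i=r^{2/3}+1}^{k}X_i\big)$, and the increments $X_i$ with $i\ge r^{2/3}+1$, as well as all bulk weights $Y_{\bf z}$ entering the various $\wt Z_{\olsi k,\,\bullet}$, are $\mathcal{F}$-measurable. Hence $Z^W_{\bullet}(\tau\ge r^{2/3})=W_{r^{2/3}}\,\Xi_\bullet$ with
$$\Xi_\bullet\;:=\;\sum_{k\ge r^{2/3}}\exp\!\Big(\sum_{i=r^{2/3}+1}^{k}X_i\Big)\,\wt Z_{\olsi k,\,\bullet}\;>\;0$$
$\mathcal{F}$-measurable, for $\bullet\in\{(r,r),(N,N)\}$. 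Writing $\sigma^2:=\Var\big(\log W_{r^{2/3}}\,\big|\,\mathcal{F}\big)$, each $\fgs_\bullet(\tau\ge r^{2/3})-\log W_{r^{2/3}}=\log\Xi_\bullet$ is $\mathcal{F}$-measurable, so conditionally on $\mathcal{F}$,
$$\Cov\big(\fgs_r(\tau\ge r^{2/3}),\,\fgs_N(\tau\ge r^{2/3})\,\big|\,\mathcal{F}\big)=\sigma^2=\Var\big(\fgs_r(\tau\ge r^{2/3})\,\big|\,\mathcal{F}\big)=\Var\big(\fgs_N(\tau\ge r^{2/3})\,\big|\,\mathcal{F}\big).$$
By the second conclusion of Lemma \ref{diff_small}, $\sigma^2(\omega)\ge C_2 r^{2/3}$ for all $\omega\in\mathcal{B}'$.

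\textbf{Passing to the full free energies.} Set $D_r:=\fgs_r-\fgs_r(\tau\ge r^{2/3})$ and $D_N:=\fgs_N-\fgs_N(\tau\ge r^{2/3})$. Lemma \ref{diff_small} gives $\Var(D_N\mid\mathcal{F})(\omega)\le e^{-C_1M}r^{2/3}$ on $\mathcal{B}'$, and I would prove the endpoint-$(r,r)$ analogue $\Var(D_r\mid\mathcal{F})(\omega)\le e^{-C_1M}r^{2/3}$ on $\mathcal{B}'$ by the same argument, using the $r$-scale events among $\mathcal{B}_1,\dots,\mathcal{B}_9$ (namely $\mathcal{B}_1,\mathcal{B}_3,\mathcal{B}_5,\mathcal{B}_6,\mathcal{B}_8$) together with the conditional tail bounds for $\log W$ near the origin at scale $r$ coming from \eqref{cond_up_bd}--\eqref{cond_low_tail} applied with horizon $r^{2/3}$. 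Expanding bilinearly,
$$\Cov(\fgs_r,\fgs_N\mid\mathcal{F})=\sigma^2+\Cov\big(\fgs_r(\tau\ge r^{2/3}),D_N\mid\mathcal{F}\big)+\Cov\big(D_r,\fgs_N(\tau\ge r^{2/3})\mid\mathcal{F}\big)+\Cov\big(D_r,D_N\mid\mathcal{F}\big),$$
and the conditional Cauchy--Schwarz inequality applied to the last three terms yields, for $\omega\in\mathcal{B}'$,
$$\Cov(\fgs_r,\fgs_N\mid\mathcal{F})(\omega)\;\ge\;\sigma^2-2\sigma\,e^{-C_1M/2}r^{1/3}-e^{-C_1M}r^{2/3}.$$
Since $\sigma\ge\sqrt{C_2}\,r^{1/3}$ on $\mathcal{B}'$, I may substitute $r^{1/3}\le\sigma/\sqrt{C_2}$ and $r^{2/3}\le\sigma^2/C_2$ into the subtracted terms, so the right-hand side is at least $\sigma^2\big(1-2e^{-C_1M/2}/\sqrt{C_2}-e^{-C_1M}/C_2\big)$. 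Choosing $M_0$ large enough that this factor exceeds $\tfrac12$ for all $M\ge M_0$ gives $\Cov(\fgs_r,\fgs_N\mid\mathcal{F})(\omega)\ge\tfrac12\sigma^2\ge\tfrac{C_2}{2}r^{2/3}$, which is the claim with $C=C_2/2$. Note that no \emph{upper} bound on $\sigma^2$ is required, because the leading term is quadratic in $\sigma$ whereas the error terms are linear, so the quadratic term automatically dominates.

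\textbf{Where the difficulty lies.} The factorization and the bilinear expansion with Cauchy--Schwarz are routine. The one step that genuinely needs care is establishing the endpoint-$(r,r)$ version of the conditional difference bound $\Var(D_r\mid\mathcal{F})\le e^{-C_1M}r^{2/3}$ on $\mathcal{B}'$: one must verify that the event $\mathcal{B}=\bigcap_i\mathcal{B}_i$ and the shrinkage $\mathcal{B}'$ already encode, at scale $r$, exactly the control on the bulk profile $\log\wt Z_{\olsi k,r}$, on its maxima and minima over short anti-diagonal segments near the origin, and on the boundary increments just to the right of the origin, that the proof of Lemma \ref{diff_small} invokes at scale $N$. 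These are precisely the roles of $\mathcal{B}_1,\mathcal{B}_3,\mathcal{B}_5,\mathcal{B}_6,\mathcal{B}_8$ and of \eqref{cond_up_bd}--\eqref{as_var} at parameter $r^{2/3}$, so the argument transfers essentially verbatim; but this transfer is the point where one should be careful about scales.
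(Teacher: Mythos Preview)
Your proposal is correct and follows the paper's approach: the same factorization $\fgs_\bullet(\tau\ge r^{2/3})=\log W_{r^{2/3}}+(\text{$\mathcal{F}$-measurable})$, the same bilinear expansion into a main term $\sigma^2$ plus three cross terms, and the same Cauchy--Schwarz control of the cross terms via Lemma \ref{diff_small}. One small correction to your list of events for the $D_r$ bound: $\mathcal{B}_5$ has endpoint $(N,N)$, so for the $r$-endpoint argument you need its analogue $\{\max_{0\le k\le r^{2/3}}\log\wt Z_{\olsi k,r}-\min_{0\le j\le Mr^{2/3}}\log\wt Z_{\olsi j,r}\le M^{20}r^{1/3}\}$, which is not among $\mathcal{B}_1,\dots,\mathcal{B}_9$ but is a high-probability event by Propositions \ref{max_all_t1} and \ref{min1} and can simply be intersected into $\mathcal{B}$; the paper itself glosses over this by citing Lemma \ref{diff_small} for all three cross terms, and your remark that no upper bound on $\sigma$ is needed (quadratic dominates linear) is in fact more careful than the paper's phrasing.
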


\begin{proof}
We bound the conditional covariance using the Cauchy-Schwarz inequality as follows.
\begin{align}
&\Cov(\fgs_{r}, \fgs_{N}|\mathcal{F})\nonumber\\
& = \Cov\Big(\fgs_{r}(\tau\geq r^{2/3}), \fgs_{N}(\tau\geq r^{2/3})\Big|\mathcal{F}\Big) \nonumber\\
&\qquad \qquad  + \Cov\Big(\fgs_{r}- \fgs_{r}(\tau\geq r^{2/3}), \fgs_{N} - \fgs_{N}(\tau\geq r^{2/3})\Big|\mathcal{F}\Big)\nonumber\\
& \qquad \qquad \qquad \qquad  + \Cov\Big(\fgs_{r} - \fgs_{r}(\tau\geq r^{2/3}), \fgs_{N}(\tau\geq r^{2/3})\Big|\mathcal{F}\Big) \nonumber\\
&\qquad \qquad \qquad \qquad  + \Cov\Big(\fgs_{r}(\tau\geq r^{2/3}), \fgs_{N} - \fgs_{N}(\tau\geq r^{2/3})\Big|\mathcal{F}\Big)\nonumber\\
& \geq \Cov\Big(\fgs_{r}(\tau\geq r^{2/3}), \fgs_{N}(\tau\geq r^{2/3})\Big|\mathcal{F}\Big) \label{term1}\\
&\qquad  - \sqrt{\Var\Big( \fgs_{r} - \fgs_{r} (\tau \geq r^{2/3})\Big|\mathcal{F}\Big)}\sqrt{\Var\Big(\fgs_{N} - \fgs_{N}(\tau\geq r^{2/3})\Big| \mathcal{F}\Big)} \label{term2}\\
& \qquad \qquad  - \sqrt{\Var\Big(\fgs_{r} -\fgs_{r}(\tau\geq r^{2/3})\Big|\mathcal{F}\Big)}\sqrt{\Var\Big( \fgs_{N}(\tau\geq r^{2/3})\Big|\mathcal{F}\Big)}\label{term3}\\
&\qquad \qquad  - \sqrt{\Var\Big(\fgs_{r}(\tau\geq r^{2/3})\Big|\mathcal{F}\Big)}\sqrt{\Var\Big(\fgs_{N} - \fgs_{N}(\tau\geq r^{2/3})\Big|\mathcal{F}\Big)}\label{term4}
\end{align}
We note that \eqref{term1} is lower bounded by $Cr^{2/3}$, since both free energies can be rewritten as 
$$\log W_{r^{2/3}} + [\text{some $\mathcal{F}$-measurable r.v.}],$$ 
and hence the covariance is equal to $\Var(\log W_{r^{2/3}}|\mathcal{F})$ which is lower bounded by $Cr^{2/3}$ on $\mathcal{B}'$ by Lemma \ref{diff_small}.

Again by Lemma \ref{diff_small}, on the event $\mathcal{B}'$, \eqref{term2}, \eqref{term3} and \eqref{term4} (without the negative signs in the front) are all upper bounded by $Ce^{-C'M}r^{2/3}$. Hence, $$\Cov(\fgs_{r}, \fgs_{N}| \mathcal{F})(\omega)  \geq Cr^{2/3} - C'e^{-C''M}r^{2/3} \qquad \text{ for each $\omega \in \mathcal{B}'$},$$with this, we have finished the proof of this proposition.
\end{proof}

We can finally complete the proof of the lower bound in Theorem \ref{thm_r_small}.

\begin{proof}[Proof of Theorem \ref{thm_r_small}, lower bound]
Using Lemma \ref{lbB}, Proposition \ref{cond_lb} and applying the FKG inequality assumption in \textbf{Assumption B3}  twice, we have 
\begin{align*}
&\mathbb{E}[\log Z^{W}_{ N}\log Z^{W}_{r}] \\& =\mathbb{E}\Big[\mathbb{E}[\log Z^{W}_{ N}\log Z^{W}_{r}|\mathcal{F}]\Big]\\
& = \int_{\mathcal{B}'} \mathbb{E}[\log Z^{W}_{ N}\log Z^W_{r}|\mathcal{F}] \,d\mathbb P + \int_{\mathcal{B}'^c} \mathbb{E}[\log Z^{W}_{ N}\log Z^{W}_{r}|\mathcal{F}] \,d\mathbb P\\
&\geq \int_{\mathcal{B}'} \mathbb{E}[\log Z^W_{N}|\mathcal{F}] \mathbb{E}[\log Z^{W}_{r}|\mathcal{F}]\,d\mathbb P + C\epsilon r^{2/3} +  \int_{\mathcal{B}'^c} \mathbb{E}[\log Z^{W}_{ N}|\mathcal{F}] \mathbb{E}[\log Z^{W}_{r}|\mathcal{F}]\,d\mathbb P\\
&=\mathbb{E}\Big[\mathbb{E}[\log Z^{W}_{ N}|\mathcal{F}]\mathbb{E}[\log Z^{W}_{r}|\mathcal{F}]\Big] + C\epsilon r^{2/3}\\
&\geq \mathbb{E}[ \log Z^{W}_{ N} ] \mathbb{E}[\log Z^{W}_{r} ] + C\epsilon r^{2/3}.
\end{align*}
Note that in the first inequality above we use the fact that conditional on $\mathcal{F}$, $\log Z_{N}^{W}$ and $\log Z^{W}_{r}$ are both increasing in $\{X_{i}=\log W_{i}-\log W_{i-1}\}_{i=1}^{r^{2/3}}$. By assumption \eqref{FKG}, the conditional law of $\{X_{i}\}_{i=1}^{r^{2/3}}$ satisfied the FKG inequality and therefore
$\mathbb{E}[\log Z^{W}_{ N}\log Z^{W}_{r}|\mathcal{F}]\ge \mathbb{E}[\log Z^W_{N}|\mathcal{F}] \mathbb{E}[\log Z^{W}_{r}|\mathcal{F}]$.
For the second inequality above, recall the sequence $\mathcal{X}$ as defined in \eqref{FKG}. Note that $\mathbb{E}[\log Z_{N}^{W}|\mathcal{F}]$ and $\mathbb{E}[\log Z_{r}^{W}|\mathcal{F}]$ are both increasing functions of $\{Y_{\mathbf{v}}\}_{{\bf v}\in \mathcal{L}_0^{>}}$ as well as $\mathcal{X}$. Note $\{Y_{\mathbf{v}}\}_{{\bf v}\in \mathcal{L}_0^{>}}$ are independent, so they satisfy the FKG inequality. By assumption \eqref{FKG}, the marginal on $\mathcal{X}$ also satisfies the FKG inequality. Because $\{Y_{\mathbf{v}}\}_{{\bf v}\in \mathcal{L}_0^{>}}$ and $\mathcal{X}$ are independent, using the well-known fact that product of two measures each of which satisfy the FKG inequality also satisfies the FKG inequality (see e.g.\ \cite{Kem77}), it follows that $$\mathbb{E}\big[\mathbb{E}[\log Z^{W}_{ N}|\mathcal{F}]\mathbb{E}[\log Z^{W}_{r}|\mathcal{F}]\big]\ge \mathbb{E}[\mathbb{E}[\log Z^{W}_{ N}|\mathcal{F}]]\cdot \mathbb{E}[\mathbb{E}[\log Z^{W}_{r}|\mathcal{F}]]=\mathbb{E}[ \log Z^{W}_{N} ] \mathbb{E}[\log Z^{W}_{r}].$$ 
This shows $\Cov(\log Z^{W}_{N} , \log Z^{W}_{r}) \geq Cr^{2/3}$, hence the lower bound in our theorem holds.
\end{proof}

\section{Stationary inverse-gamma polymer}
\label{stat_poly}

We start by defining the (ratio) stationary inverse-gamma polymer with a general down-right boundary. 
Let $\mathcal{Y}_{\bf 0} = \{{\bf y}_i\}_{i\in \mathbb{Z}}$ be a bi-infinite down-right path going through $(0,0)$, which means the increment ${\bf y}_{i} - {\bf y}_{i-1} \in \{{\bf e}_1, -{\bf e}_2\}$. And without the loss of generality, let ${\bf y}_0 = (0,0)$.

Next, we define the weights for the stationary polymer. Let $\{Y_{\bf z}\}_{{\bf z}\in \mathcal{Y}_{\bf 0}^>}$ be a collection of i.i.d.~random variables with distribution $\textup{Ga}^{-1}(\mu)$, and they will be the bulk weights of the polymer. Independent of the bulk weights, the boundary edge weights $G = \{G_{{\bf y}_{i}, {\bf y}_{i-1}}\}_{i\in \mathbb{Z}}$ are attached to the collection of unit edges $\{[\![{\bf y}_{i}, {\bf y}_{i-1}]\!]\}_{i\in \mathbb{Z}}$. The distribution of $G$ is given by  
\begin{alignat*}{2}
G_{{\bf y}_{i}, {\bf y}_{i-1}} &\sim \textup{Ga}^{-1}(\mu-\rho) &&\qquad \text{if }{\bf y}_{i} - {\bf y}_{i-1} = {\bf e}_1\\
G_{{\bf y}_{i}, {\bf y}_{i-1}} &\sim \textup{Ga}^{-1}(\rho) &&\qquad \text{if }{\bf y}_{i} - {\bf y}_{i-1} = -{\bf e}_2,
\end{alignat*}
and all $G_{{\bf y}_i, {\bf y}_{i-1}}$ are independent. This special choice of weights is referred to as \textit{$\rho$-boundary weights}.

Next, we will define the partition function. Set $H^{\mathcal{Y}_{\bf 0}, \rho}_{\bf 0} = 1$,.
For each ${\bf y}_m$ with $m > 0$, define
$$H^{\mathcal{Y}_{\bf 0}, \rho}_{0, {\bf y}_m} = \prod_{n=1}^m \wt{G}_{{\bf y}_{n-1}, {\bf y}_n} \qquad \text{where } \wt{G}_{{\bf y}_{n-1}, {\bf y}_n}  = \begin{cases}
G_{{\bf y}_{n-1}, {\bf y}_n} \quad & \text{if ${\bf y}_n-{\bf y}_{n-1} = {\bf e}_1$,}\\
1/G_{{\bf y}_{n-1}, {\bf y}_n} \quad & \text{if ${\bf y}_n-{\bf y}_{n-1} = - \bf e_2$.}
\end{cases}$$ 
For each ${\bf y}_{m}$ with $m < 0$, define
$$H^{\mathcal{Y}_{\bf 0}, \rho}_{0,{\bf y}_{m}} = \prod_{n=0}^{m+1} \wt{G}_{{\bf y}_n, {\bf y}_{n-1}} \qquad \text{where } \wt{G}_{{\bf y}_n, {\bf y}_{n-1}}  = \begin{cases}
1/G_{{\bf y}_n, {\bf y}_{n-1}} \quad & \text{if ${\bf y}_n-{\bf y}_{n-1} = \bf e_1$,}\\
G_{{\bf y}_n, {\bf y}_{n-1}} \quad & \text{if ${\bf y}_n-{\bf y}_{n-1} = -\bf e_2$.}
\end{cases}$$ 

Recall the bulk partition function $\wt Z$ defined in Section \ref{def_poly_ad},  the partition function with the $\rho$-boundary weights is then defined by 
$$Z^{\mathcal{Y}_{\bf 0}, \rho}_{0, {\bf v}} = \sum_{k\in \mathbb{Z}} H^{\mathcal{Y}_{\bf 0}, \rho}_{{\bf y}_k} \cdot \wt{Z}_{{\bf y}_k, {\bf v}} \qquad \text{ for ${\bf v}\in \mathcal{Y}_{\bf 0}^\geq$}.$$
The above definition can be generalized to arbitrary starting point ${\bf a}$ and used to define the partition function $Z^{ \mathcal{Y}_{\bf a}, \rho}_{{\bf a}, {\bf v}}$.

The name (ratio) stationary inverse-gamma polymer is justified by the next theorem, which was stated originally for s southwest boundary (where $\mathcal{Y}_{\bf 0}$ is formed by the boundary of the first quadrant). The proof uses a now well-known ``corner flipping" induction, and the exact same proof also applies to the general down-right boundary.
\begin{theorem}[{\cite[Thm.\ 3.3]{poly2} and \cite[Eqn.\ (3.6)]{Geo-etal-15}}]
\label{stat} 
Fix $\rho \in (0,\mu)$. For each ${\bf v}\in \mathcal{\mathcal{Y}_{\bf a}}^{\geq}  $,
 we have 
$$\frac{Z^{\mathcal{Y}_{\bf a}, \rho}_{{\bf a}, {\bf v}+{\bf e}_1}}{Z^{\mathcal{Y}_{\bf a}, \rho}_{{\bf a}, {\bf v}}} \sim \textup{Ga}^{-1}(\mu-\rho), \quad \frac{Z^{\mathcal{Y}_{\bf a}, \rho}_{{\bf a}, {\bf v}+{\bf e}_2}}{Z^{\mathcal{Y}_{\bf a}, \rho}_{{\bf a}, {\bf v}}}\sim \textup{Ga}^{-1}(\rho),$$
and the \textit{dual weights} $\{\wc{Y}^\rho_{\bf z}\}_{{\bf z}\in \mathcal{Y}_{\bf a}^\geq}$ is defined as
$$\wc{Y}^\rho_{\bf v} = \frac1{Z^{\mathcal{Y}_{\bf a}, \rho}_{{\bf a},{\bf v} +{\bf e}_1}/Z^{\mathcal{Y}_{\bf a}, \rho}_{{\bf a},{\bf v}}+Z^{\mathcal{Y}_{\bf a}, \rho}_{{\bf a},{\bf v}+{\bf e}_2}/Z^{\mathcal{Y}_{\bf a}, \rho}_{{\bf a},{\bf v}}}\sim\textup{Ga}^{-1}(\mu).$$
Let $\mathcal{W}=\{{\bf w}_i\}_{i\in \mathcal I}$ be any finite or infinite down-right path in $\mathcal{Y}_{\bf a}^>$, indexed by an interval $\mathcal I\subset\Z$. Then, the nearest-neighbor  ratios $\{Z^{\mathcal{Y}_{\bf a}, \rho}_{{\bf a}, {\bf w}_{i+1}}/Z^{\mathcal{Y}_{\bf a}, \rho}_{{\bf a}, {\bf w}_{i}}\}$ along the path and the dual weights below $\mathcal{W}$ $\bigl\{\wc{Y}^{\rho}_{\bf v}:{\bf v}\in \mathcal{Y}_{\bf a}^{\geq}\cap\mathcal{W}^<\bigr\}$ are mutually independent.
\end{theorem}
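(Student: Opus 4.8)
The plan is to run the now-standard ``corner-flipping'' induction of \cite{poly2, Geo-etal-15} and to note that it uses nothing about the southwest shape of the boundary in that reference beyond the fact that the edge weights along it are $\rho$-boundary weights, so the argument transfers verbatim to an arbitrary down-right $\mathcal Y_{\bf 0}$. The engine is a one-site identity. Consider any down-right path having a vertex ${\bf p}$ at which the path visits both ${\bf p}+{\bf e}_1$ and ${\bf p}+{\bf e}_2$, and write $a=Z_{{\bf p}+{\bf e}_1}/Z_{\bf p}$, $b=Z_{{\bf p}+{\bf e}_2}/Z_{\bf p}$ for the two edge ratios at ${\bf p}$. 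Flipping the corner replaces ${\bf p}$ by ${\bf q}={\bf p}+{\bf e}_1+{\bf e}_2$, whose bulk weight $Y_{\bf q}$ enters through the recursion $Z_{\bf q}=Y_{\bf q}(Z_{{\bf q}-{\bf e}_1}+Z_{{\bf q}-{\bf e}_2})=Y_{\bf q}(Z_{{\bf p}+{\bf e}_2}+Z_{{\bf p}+{\bf e}_1})$ available from Section \ref{def_poly_ad}; the two edge ratios of the new path at ${\bf q}$ are then $Y_{\bf q}(a+b)/b$ and $Y_{\bf q}(a+b)/a$, and the dual weight $\wc Y^\rho_{\bf p}$ is, as in the statement, a function of $a$ and $b$ alone. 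The identity to isolate is that if $a\sim\textup{Ga}^{-1}(\mu-\rho)$, $b\sim\textup{Ga}^{-1}(\rho)$ and $Y_{\bf q}\sim\textup{Ga}^{-1}(\mu)$ are independent, then these three outputs are again mutually independent, with marginals $\textup{Ga}^{-1}(\mu-\rho)$, $\textup{Ga}^{-1}(\rho)$ and $\textup{Ga}^{-1}(\mu)$. Passing to reciprocals turns this into the classical beta--gamma algebra: if $\Gamma\sim\textup{Gamma}(\alpha+\beta)$ and $B\sim\textup{Beta}(\alpha,\beta)$ are independent, then $\Gamma B$ and $\Gamma(1-B)$ are independent $\textup{Gamma}(\alpha)$ and $\textup{Gamma}(\beta)$, and conversely $\Gamma_1+\Gamma_2\sim\textup{Gamma}(\alpha+\beta)$ is independent of $\Gamma_1/(\Gamma_1+\Gamma_2)\sim\textup{Beta}(\alpha,\beta)$ for independent $\Gamma_1\sim\textup{Gamma}(\alpha)$, $\Gamma_2\sim\textup{Gamma}(\beta)$; a couple of lines of rearrangement then yield the identity.

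With the one-site identity in hand I would set up an induction over finite sequences of corner flips. Fix a down-right path $\mathcal W\subset\mathcal Y_{\bf 0}^>$; since $\mathcal W$ lies above $\mathcal Y_{\bf 0}$, one can pass from $\mathcal Y_{\bf 0}$ to $\mathcal W$ by a sequence of such flips, each pushing one vertex northeast by $(1,1)$, consuming the (as yet untouched) bulk weight at the newly created path vertex, and depositing a dual weight at the vacated vertex; after all flips the vacated vertices are exactly $\mathcal Y_{\bf 0}^\geq\cap\mathcal W^<$. The base case is the path $\mathcal Y_{\bf 0}$ itself: there the edge ratios are by construction the $\rho$-boundary weights, hence mutually independent with the stated marginals and independent of all bulk weights $\{Y_{\bf z}\}_{{\bf z}\in\mathcal Y_{\bf 0}^>}$, and no dual weight has been deposited. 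This is the only point at which the proofs in \cite{poly2, Geo-etal-15} used the first-quadrant boundary, and it is manifestly shape-independent.

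For the inductive step, assume the current path $\mathcal W_k$ satisfies the claim, that is, the edge ratios along $\mathcal W_k$ are mutually independent with the right marginals, the already-deposited dual weights are i.i.d.\ $\textup{Ga}^{-1}(\mu)$, and all of these are jointly independent of the bulk weights not yet consumed. Flip one more corner toward $\mathcal W$: its inputs are the two edge ratios at that corner, which belong to the inductive ensemble, together with one fresh bulk weight, which is independent of that ensemble; since the flip acts as a bijection of $(0,\infty)^3$ altering only those three coordinates and leaving every other edge ratio, every earlier dual weight and every other bulk weight untouched, the one-site identity upgrades the ensemble for $\mathcal W_{k+1}$ to mutual independence with the correct marginals. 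After finitely many flips $\mathcal W_k=\mathcal W$ and the theorem follows; a corner adjacent to $\mathcal Y_{\bf 0}$ needs no separate treatment, since there the edge ratios are simply ratios of the boundary products $H^{\mathcal Y_{\bf 0},\rho}$. To remove the finiteness assumption, recall that mutual independence of the (possibly infinite) full family means independence of every finite subfamily, and any finite subfamily of ratios of $\mathcal W$ together with dual weights below it is determined after finitely many flips, so the finite-flip version suffices after a routine exhaustion.

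I expect the main obstacle to be the bookkeeping rather than any analytic estimate: one must check that a flippable corner always exists on the way to $\mathcal W$, that the intermediate paths stay inside $\mathcal Y_{\bf 0}^\geq$, that no bulk weight is consumed twice, and---most delicately---that each flip genuinely involves only its three local coordinates, so that independence of the new outputs from the untouched weights is preserved. Once this is arranged, the transfer of the argument of \cite{poly2, Geo-etal-15} from the southwest boundary to an arbitrary down-right boundary is complete.
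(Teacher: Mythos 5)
Your proposal is correct and takes exactly the route the paper indicates: the paper does not write out a proof but cites \cite{poly2,Geo-etal-15} and notes in the paragraph preceding the theorem that the ``corner flipping'' induction there transfers verbatim to a general down-right boundary, which is precisely the one-site beta--gamma identity plus corner-flipping induction (with the finite-to-infinite exhaustion) that you describe.
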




\subsection{Two special boundaries $\mathcal{S}$ and $\mathcal{S}'$} \label{special_S}

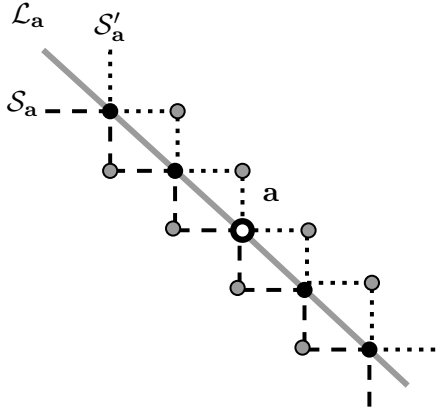
\begin{figure}
\begin{center}

\tikzset{every picture/.style={line width=0.75pt}} 

\begin{tikzpicture}[x=0.75pt,y=0.75pt,yscale=-1,xscale=1]

\draw  [dash pattern={on 5.63pt off 4.5pt}][line width=1.5]  (82.73,103.1) -- (115.47,103.1) -- (115.47,133.2) ;
\draw  [dash pattern={on 5.63pt off 4.5pt}][line width=1.5]  (115.47,133.2) -- (148.2,133.2) -- (148.2,163.3) ;
\draw  [dash pattern={on 5.63pt off 4.5pt}][line width=1.5]  (148,163.2) -- (180.73,163.2) -- (180.73,193.3) ;
\draw  [dash pattern={on 5.63pt off 4.5pt}][line width=1.5]  (180.73,193.3) -- (213.47,193.3) -- (213.47,223.4) ;
\draw  [dash pattern={on 5.63pt off 4.5pt}][line width=1.5]  (213.47,223.4) -- (246.2,223.4) -- (246.2,253.5) ;
\draw [color={rgb, 255:red, 155; green, 155; blue, 155 }  ,draw opacity=1 ][line width=2.25]    (81.67,72.17) -- (181.51,163.25) -- (265.5,241.55) ;
\draw  [fill={rgb, 255:red, 0; green, 0; blue, 0 }  ,fill opacity=1 ] (176.45,163.3) .. controls (176.45,160.13) and (179.03,157.55) .. (182.2,157.55) .. controls (185.37,157.55) and (187.95,160.13) .. (187.95,163.3) .. controls (187.95,166.47) and (185.37,169.05) .. (182.2,169.05) .. controls (179.03,169.05) and (176.45,166.47) .. (176.45,163.3) -- cycle ;
\draw  [fill={rgb, 255:red, 0; green, 0; blue, 0 }  ,fill opacity=1 ] (209.89,193.3) .. controls (209.89,191.32) and (211.49,189.72) .. (213.47,189.72) .. controls (215.44,189.72) and (217.05,191.32) .. (217.05,193.3) .. controls (217.05,195.28) and (215.44,196.88) .. (213.47,196.88) .. controls (211.49,196.88) and (209.89,195.28) .. (209.89,193.3) -- cycle ;
\draw  [fill={rgb, 255:red, 0; green, 0; blue, 0 }  ,fill opacity=1 ] (242.62,223.4) .. controls (242.62,221.42) and (244.22,219.82) .. (246.2,219.82) .. controls (248.18,219.82) and (249.78,221.42) .. (249.78,223.4) .. controls (249.78,225.38) and (248.18,226.98) .. (246.2,226.98) .. controls (244.22,226.98) and (242.62,225.38) .. (242.62,223.4) -- cycle ;
\draw  [fill={rgb, 255:red, 0; green, 0; blue, 0 }  ,fill opacity=1 ] (144.62,133.2) .. controls (144.62,131.22) and (146.22,129.62) .. (148.2,129.62) .. controls (150.18,129.62) and (151.78,131.22) .. (151.78,133.2) .. controls (151.78,135.18) and (150.18,136.78) .. (148.2,136.78) .. controls (146.22,136.78) and (144.62,135.18) .. (144.62,133.2) -- cycle ;
\draw  [fill={rgb, 255:red, 0; green, 0; blue, 0 }  ,fill opacity=1 ] (111.89,103.1) .. controls (111.89,101.12) and (113.49,99.52) .. (115.47,99.52) .. controls (117.44,99.52) and (119.05,101.12) .. (119.05,103.1) .. controls (119.05,105.08) and (117.44,106.68) .. (115.47,106.68) .. controls (113.49,106.68) and (111.89,105.08) .. (111.89,103.1) -- cycle ;
\draw  [fill={rgb, 255:red, 155; green, 155; blue, 155 }  ,fill opacity=1 ] (111.89,133.2) .. controls (111.89,131.22) and (113.49,129.62) .. (115.47,129.62) .. controls (117.44,129.62) and (119.05,131.22) .. (119.05,133.2) .. controls (119.05,135.18) and (117.44,136.78) .. (115.47,136.78) .. controls (113.49,136.78) and (111.89,135.18) .. (111.89,133.2) -- cycle ;
\draw  [fill={rgb, 255:red, 155; green, 155; blue, 155 }  ,fill opacity=1 ] (143.62,162.3) .. controls (143.62,160.32) and (145.22,158.72) .. (147.2,158.72) .. controls (149.18,158.72) and (150.78,160.32) .. (150.78,162.3) .. controls (150.78,164.28) and (149.18,165.88) .. (147.2,165.88) .. controls (145.22,165.88) and (143.62,164.28) .. (143.62,162.3) -- cycle ;
\draw  [fill={rgb, 255:red, 155; green, 155; blue, 155 }  ,fill opacity=1 ] (176.15,192.3) .. controls (176.15,190.32) and (177.76,188.72) .. (179.73,188.72) .. controls (181.71,188.72) and (183.31,190.32) .. (183.31,192.3) .. controls (183.31,194.28) and (181.71,195.88) .. (179.73,195.88) .. controls (177.76,195.88) and (176.15,194.28) .. (176.15,192.3) -- cycle ;
\draw  [fill={rgb, 255:red, 155; green, 155; blue, 155 }  ,fill opacity=1 ] (208.89,222.4) .. controls (208.89,220.42) and (210.49,218.82) .. (212.47,218.82) .. controls (214.44,218.82) and (216.05,220.42) .. (216.05,222.4) .. controls (216.05,224.38) and (214.44,225.98) .. (212.47,225.98) .. controls (210.49,225.98) and (208.89,224.38) .. (208.89,222.4) -- cycle ;
\draw  [dash pattern={on 1.69pt off 2.76pt}][line width=1.5]  (116.73,103.1) -- (149.47,103.1) -- (149.47,133.2) ;
\draw  [dash pattern={on 1.69pt off 2.76pt}][line width=1.5]  (149.47,133.2) -- (182.2,133.2) -- (182.2,163.3) ;
\draw  [dash pattern={on 1.69pt off 2.76pt}][line width=1.5]  (182,163.2) -- (214.73,163.2) -- (214.73,193.3) ;
\draw  [dash pattern={on 1.69pt off 2.76pt}][line width=1.5]  (214.73,189.72) -- (247.47,189.72) -- (247.47,219.82) ;
\draw  [dash pattern={on 1.69pt off 2.76pt}][line width=1.5]  (247.47,223.4) -- (280.2,223.4) -- (280.2,224.4) ;
\draw  [fill={rgb, 255:red, 155; green, 155; blue, 155 }  ,fill opacity=1 ] (211.93,163.25) .. controls (211.93,161.27) and (213.53,159.67) .. (215.51,159.67) .. controls (217.49,159.67) and (219.09,161.27) .. (219.09,163.25) .. controls (219.09,165.23) and (217.49,166.83) .. (215.51,166.83) .. controls (213.53,166.83) and (211.93,165.23) .. (211.93,163.25) -- cycle ;
\draw  [fill={rgb, 255:red, 155; green, 155; blue, 155 }  ,fill opacity=1 ] (243.89,189.72) .. controls (243.89,187.74) and (245.49,186.14) .. (247.47,186.14) .. controls (249.44,186.14) and (251.05,187.74) .. (251.05,189.72) .. controls (251.05,191.7) and (249.44,193.3) .. (247.47,193.3) .. controls (245.49,193.3) and (243.89,191.7) .. (243.89,189.72) -- cycle ;
\draw  [fill={rgb, 255:red, 155; green, 155; blue, 155 }  ,fill opacity=1 ] (178.62,133.2) .. controls (178.62,131.22) and (180.22,129.62) .. (182.2,129.62) .. controls (184.18,129.62) and (185.78,131.22) .. (185.78,133.2) .. controls (185.78,135.18) and (184.18,136.78) .. (182.2,136.78) .. controls (180.22,136.78) and (178.62,135.18) .. (178.62,133.2) -- cycle ;
\draw  [fill={rgb, 255:red, 155; green, 155; blue, 155 }  ,fill opacity=1 ] (145.89,103.1) .. controls (145.89,101.12) and (147.49,99.52) .. (149.47,99.52) .. controls (151.44,99.52) and (153.05,101.12) .. (153.05,103.1) .. controls (153.05,105.08) and (151.44,106.68) .. (149.47,106.68) .. controls (147.49,106.68) and (145.89,105.08) .. (145.89,103.1) -- cycle ;
\draw  [fill={rgb, 255:red, 255; green, 255; blue, 255 }  ,fill opacity=1 ] (178.62,163.3) .. controls (178.62,161.32) and (180.22,159.72) .. (182.2,159.72) .. controls (184.18,159.72) and (185.78,161.32) .. (185.78,163.3) .. controls (185.78,165.28) and (184.18,166.88) .. (182.2,166.88) .. controls (180.22,166.88) and (178.62,165.28) .. (178.62,163.3) -- cycle ;
\draw  [dash pattern={on 1.69pt off 2.76pt}][line width=1.5]  (114.67,73) -- (115.47,73) -- (115.47,103.1) ;

\draw (191,140.56) node [anchor=north west][inner sep=0.75pt]    {$\bf a$};
\draw (61.9,91.56) node [anchor=north west][inner sep=0.75pt]    {$\mathcal{S}_{\bf a}$};
\draw (64,46.96) node [anchor=north west][inner sep=0.75pt]    {$\mathcal{L}_{\bf a}$};
\draw (105.9,51.56) node [anchor=north west][inner sep=0.75pt]    {$\mathcal{S} '_{\bf a}$};

\end{tikzpicture}

\captionsetup{width=0.8\textwidth}
\caption{An illustration of  $\mathcal{L}_{\bf a }$, $\mathcal{S}_{\bf a }$ and $\mathcal{S}'_{\bf a}$.} \label{S and L}
\end{center}
\end{figure}

Let $\mathcal{S}_{\bf a}$ (and $\mathcal{S}'_{\bf a}$) denote the bi-infinite down-right staircase path that has ${\bf a}$ as one of its upper right (or lower left) corners (illustrated in Figure \ref{S and L}),
\begin{align*}\label{stair_path}
\mathcal{S}_{\bf a} &= \{\dots, {\bf a} - {\bf e}_1 +\bf e_2, {\bf a} - \bf e_1,{\bf a} , {\bf a}+ \bf e_1, {\bf a}+ \bf e_1 - e_2, \dots \},\\
\mathcal{S}_{\bf a}' &= \{\dots, {\bf a} - {\bf e}_1 +\bf e_2, {\bf a} + \bf e_2,{\bf a} + \bf e_1 , {\bf a}+ \bf e_1 - {\bf e}_2, \dots \}.
\end{align*}
For the rest of the paper, the general down-right boundary $\mathcal{Y}_{\bf a}$ will be chosen to be $\mathcal{S}_{\bf a}$. The other boundary  $\mathcal{S}_{\bf a}'$ will only appear in the proof of Theorem \ref{stat_r_small}. With this in mind, let us simplify our notation by omitting the boundary $\mathcal{S}_{\bf a}$ in the superscript of the partition function,
$$Z^\rho_{\bf a, \bbullet} = Z^{\mathcal{S}_{\bf a},\rho}_{\bf a, \bbullet}.$$ 
And the corresponding quenched measure will be denoted as $Q^\rho_{\bf a, \bbullet}.$

Lastly, we note that $Z^\rho_{\bf a, \bbullet}$ matches the  definition of a polymer with a given initial condition defined in Section \ref{def_poly_ad}. Let us define $\mathcal{S}_{\bf 0} = \{{\bf s_i}\}_{i \in \mathbb{Z}}$ with $\bf s_0 = 0$, then the initial condition $\{W_k\}_{k\in \mathbb{Z}}$ define  in Section \ref{def_poly_ad} is given by 
$$W_k = H^{\mathcal{S}_{\bf 0}, \rho}_{{\bf s}_{2k}} \qquad \text{ for } k \in \mathbb{Z}.$$
Because of this, the stationary boundary is also called the stationary initial condition. And we will also often refer to the staircase boundary  $\mathcal{S}_{\bf 0}$  as the anti-diagonal boundary $\mathcal{L}_0$, which is different from the $\mathcal{S}_{\bf 0}'$ staircase boundary.

\subsection{Exit time estimate}

Define the \textit{characteristic direction} as a function of $\rho \in (0, \mu)$
\begin{equation}\label{char_dir}
{\boldsymbol\xi}[\rho] = \Big(\frac{\Psi_1(\rho)}{\Psi_1(\rho) + \Psi_1(\mu -\rho)}\,,\, \frac{\Psi_1(\mu -\rho)}{\Psi_1(\rho) + \Psi_1(\mu -\rho)}\Big)
\end{equation}
where $\Psi_1$ is the trigamma function defined by 
$\Psi_1(z) = \frac{d^2}{dz^2}\Gamma(z)$. 
The sampled paths between $(0,0)$ and $N\boldsymbol{\xi}[\rho]$ tend to stay on the anti-diagonal boundary for  at most order $N^{2/3}$ number of steps. This is the statement of the next proposition obtained from \cite{ras-sep-she-}. Note the stationary polymer defined in \cite{ras-sep-she-} has a southwest boundary, but the following result is actually proved for the anti-diagonal boundary and then showed that it implies the same result for the southwest boundary. 
\begin{proposition}[{\cite[Lemma 4.6]{ras-sep-she-}}] \label{stat_exit_est}
Fix $\varepsilon\in(0,\mu/2)$. There exist positive constants $C_1, C_2$, $r_0$, $N_0$
depending only on $\varepsilon$ such that for all ${{\rho}} \in [\varepsilon, \mu-\varepsilon]$, $N\geq N_0$ and $r\ge r_0$, we have  
\begin{align*}
&\mathbb{P}\Big(Q^\rho_{0, N\boldsymbol\xi[\rho]}\{|\tau| > rN^{2/3}\} \geq e^{-C_1 r^2 N^{1/3}}\Big) \leq e^{-C_2r^{3}}.
\end{align*}
\end{proposition}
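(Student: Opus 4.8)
The plan is to turn the statement into a deterministic quadratic penalty for exiting far along the boundary and to control the fluctuations around it by moderate deviation estimates. By the reflection symmetry across the diagonal, which exchanges $\rho\leftrightarrow\mu-\rho$, fixes the staircase $\mathcal{S}_{\bf 0}$, maps $\boldsymbol\xi[\rho]$ to $\boldsymbol\xi[\mu-\rho]$ and $\tau$ to $-\tau$, and because the asserted constants are uniform over $\rho\in[\varepsilon,\mu-\varepsilon]$, it suffices to bound $Q^\rho_{0,N\boldsymbol\xi}\{\tau\ge rN^{2/3}\}$, where $\boldsymbol\xi=\boldsymbol\xi[\rho]$ is bounded away from the coordinate axes and we abbreviate $\Lambda_N=\Lambda(N\boldsymbol\xi)$. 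Writing $Z^\rho_{0,N\boldsymbol\xi}=\sum_k W_k\wt Z_{{\bf s}_{2k},N\boldsymbol\xi}$ with $W_k=H^{\mathcal{S}_{\bf 0},\rho}_{{\bf s}_{2k}}$ and bounding the denominator below by its $k=0$ term $\wt Z_{0,N\boldsymbol\xi}$, one gets
$$Q^\rho_{0,N\boldsymbol\xi}\{\tau\ge rN^{2/3}\}\ \le\ \sum_{k\ge rN^{2/3}}\frac{W_k\,\wt Z_{{\bf s}_{2k},N\boldsymbol\xi}}{\wt Z_{0,N\boldsymbol\xi}},$$
so the goal reduces to showing that each summand is at most $\exp(-c_0 k^2/N)$ off a single event of probability $e^{-C_2 r^3}$.

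For the deterministic skeleton, note that $\log W_k$ is a sum of $|k|$ i.i.d.\ $\pm\log$-inverse-gamma variables, so $\mathbb{E}\log W_k=k(\Psi_0(\rho)-\Psi_0(\mu-\rho))$ exactly, while $\log\wt Z_{{\bf s}_{2k},N\boldsymbol\xi}$ concentrates near $\Lambda(N\boldsymbol\xi-{\bf s}_{2k})$. Using the explicit inverse-gamma shape function, the function $\psi(k):=\mathbb{E}\log W_k+\Lambda(N\boldsymbol\xi-{\bf s}_{2k})$ is concave in $k$, satisfies $\psi(0)=\Lambda_N$, and — this is exactly the defining property of the characteristic direction \eqref{char_dir} — has $\psi'(0)=0$ with $\psi''(0)\asymp-1/N$ uniformly over $\rho\in[\varepsilon,\mu-\varepsilon]$. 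A second-order expansion (the directional analogue of Proposition \ref{reg_shape}) then produces a constant $c_0>0$ with $\psi(k)\le\Lambda_N-c_0k^2/N$ for $|k|\le\delta N$; for $|k|>\delta N$ the gap is $\Omega(N)$, or the summand vanishes because $N\boldsymbol\xi-{\bf s}_{2k}$ has left the positive cone. In particular for $k\ge rN^{2/3}$ the penalty $c_0k^2/N$ is at least $c_0r^2N^{1/3}$.

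For the fluctuations I would split $\{k\ge rN^{2/3}\}$ into dyadic blocks $B_j=[2^{j-1}rN^{2/3},2^jrN^{2/3})$ and on each block use maximal moderate deviation estimates, not a union bound over individual $k$; this is what keeps the exceptional probability free of polynomial-in-$N$ factors. On $B_j$: the directional analogue of Proposition \ref{ptl_upper}, applied (suitably recentered) to paths from the length-$\asymp 2^jrN^{2/3}$ segment of $\mathcal{L}_{\bf 0}$ indexed by $B_j$ to the point $N\boldsymbol\xi$, controls $\max_{k\in B_j}\big(\log\wt Z_{{\bf s}_{2k},N\boldsymbol\xi}-\Lambda(N\boldsymbol\xi-{\bf s}_{2k})\big)$ at scale $4^jr^2N^{1/3}$ off an event of probability $\le e^{-c8^jr^3}$ (in the admissible range $2^jrN^{2/3}\lesssim N$ the cubic term in that proposition dominates); and a maximal inequality together with Bernstein's inequality for the i.i.d.\ sub-exponential increments of $(\log W_k)_k$ controls $\max_{k\in B_j}\big(\log W_k-\mathbb{E}\log W_k\big)$ at the same scale off an event of probability $\le e^{-c8^jr^3}$. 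Summing over $j$, together with the single event $\{\log\wt Z_{0,N\boldsymbol\xi}\ge\Lambda_N-\tfrac{c_0}{4}r^2N^{1/3}\}$, which has probability $\ge1-e^{-cr^3}$ by the directional analogue of Proposition \ref{low_ub}, yields an event of probability $\ge1-e^{-C_2r^3}$ on which every summand in the display above is at most $\exp(-\tfrac{c_0}{2}k^2/N)$; then $\sum_{k\ge rN^{2/3}}\exp(-\tfrac{c_0}{2}k^2/N)\le e^{-(c_0/4)r^2N^{1/3}}$ for $N\ge N_0$, which is the claim. (Alternatively one can repackage the ratio via the Burke property, Theorem \ref{stat}, writing $Z^\rho_{0,N\boldsymbol\xi}(\tau\ge m)=W_m\,Z^{\mathcal{S}_{{\bf s}_{2m}},\rho}_{{\bf s}_{2m},N\boldsymbol\xi}(\tau\ge0)$ and invoking a one-point left-tail bound for the stationary free energy; the shape-function input is the same.)

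The main obstacle is the uniform-in-$\rho$ second-order analysis of the inverse-gamma limit shape along the characteristic direction: one must check that the first-order term of $\psi$ cancels exactly for the direction $\boldsymbol\xi[\rho]$ of \eqref{char_dir}, that the curvature constant $c_0$ is bounded below uniformly over $\rho\in[\varepsilon,\mu-\varepsilon]$, and handle the crossover from the quadratic to a linear penalty near the boundary of the admissible range of $k$. The dyadic bookkeeping in the last step is routine once one uses the line-to-point (maximal) estimates block by block; the only pitfall there is the naive per-$k$ union bound, which would cost an unwanted factor of $N$.
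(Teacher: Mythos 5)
The paper quotes this proposition from \cite[Lemma 4.6]{ras-sep-she-} rather than proving it (the remark preceding it only notes that the cited proof is given for the anti-diagonal boundary), so there is no in-paper proof to benchmark against. Your argument is correct in outline. The second-order calculus is right: by the envelope theorem for $\Lambda(m,n)=\min_{z\in(0,\mu)}\bigl(-m\Psi_0(\mu-z)-n\Psi_0(z)\bigr)$ one has $\partial_1\Lambda(N\boldsymbol\xi[\rho])=-\Psi_0(\mu-\rho)$ and $\partial_2\Lambda(N\boldsymbol\xi[\rho])=-\Psi_0(\rho)$, so the boundary drift $\Psi_0(\rho)-\Psi_0(\mu-\rho)$ cancels the linear term of $k\mapsto\Lambda(N\boldsymbol\xi-(k,-k))$ exactly, and $\psi''=-(\partial_1-\partial_2)^2\Lambda$ is $\asymp-1/N$ with a constant bounded below on $[\varepsilon,\mu-\varepsilon]$. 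The dyadic bookkeeping also closes: on $B_j$ the curvature penalty is $\gtrsim 4^jr^2N^{1/3}$, Theorem~\ref{max_sub_exp} at $t\asymp 2^{3j/2}r^{3/2}$ and a directional analogue of Proposition~\ref{ptl_upper} (whose exponent $\min\{(4^jr^2)^{3/2},\,4^jr^2N^{1/3}\}\asymp 8^jr^3$ on the range $2^jrN^{2/3}\lesssim N$) each cost $e^{-c8^jr^3}$, the summand vanishes once $|k|>N\min(\xi_1,\xi_2)$, and the union over $j$ together with the single left-tail event for $\wt Z_{0,N\boldsymbol\xi}$ gives $e^{-C_2r^3}$. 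Beyond the items you already flag (directional versions of the moderate deviation inputs, and the quadratic-to-linear crossover near $|k|\approx\delta N$), one more point to track is that the centering $\Lambda(N\boldsymbol\xi-{\bf s}_{2k})$ drifts by $O(4^jr^2N^{1/3})$ across a block, the same scale as the fluctuation allowance, so the anchoring of the line-to-point estimate needs some care in the constants. That said, your main route is likely not the one in \cite{ras-sep-she-}: the alternative you sketch at the end --- passing $Z^\rho(\tau\ge m)/Z^\rho$ through the Burke property (Theorem~\ref{stat}) and invoking a one-point left tail for the stationary free energy --- is the standard and much shorter argument, since Burke packages drift and curvature into a single random-walk comparison, replacing the term-by-term bound, the line-to-point maximal estimate, and the explicit Hessian analysis; that shortcut is what produces the clean $\rho$-uniform constants of the cited lemma. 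Your main argument delivers the same bound by more elementary, if longer, means.
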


\section{Temporal correlation for the stationary polymer using duality}
\label{s:duality}

In this section, we give a stronger upper bound for the time correlation for the stationary polymer when $r$ is smaller than $N/2$.
The availability of this result (with simpler proof) can be attributed to a special duality arising from the stationary boundary. Essentially, the dual weights $\wc{Y}^{\mu/2}$ defined in Theorem \ref{stat} follow the i.i.d.~inverse-gamma distribution. 
\begin{theorem}\label{stat_r_small}  
For any $r\leq N/2$, we have 
$$ \textup{$\mathbb{C}$ov}\Big(\log Z^{\mu/2}_{0, r}, \log Z^{\mu/2}_{0, N}\Big) \leq \Var\Big(\log Z^{\mu/2}_{0, r}\Big).$$ 
\end{theorem}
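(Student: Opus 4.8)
The plan is to exploit the special self-duality of the stationary polymer at $\rho=\mu/2$ — namely that the dual weights of Theorem~\ref{stat} are i.i.d.\ $\textup{Ga}^{-1}(\mu)$ — in order to split $\log Z^{\mu/2}_{0,N}$ across a down-right path through $(r,r)$, and then to reduce the claim to a negative-correlation (Harris/FKG) statement for the i.i.d.\ boundary increments produced by this splitting.

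\emph{Restart across a staircase through $(r,r)$.} First I would fix the staircase $\mathcal{S}_r$ with $(r,r)$ as its reference corner, so that $(r,r)\in\mathcal{S}_r$ and $\mathcal{S}_r$ separates $\mathcal{S}_0$ from $(N,N)$; here $r\le N/2<N$ suffices. Because $\rho=\mu-\rho=\mu/2$, the nearest-neighbour free-energy ratios $e^{R_i}:=Z^{\mu/2}_{0,{\bf w}_{i+1}}/Z^{\mu/2}_{0,{\bf w}_i}$ along $\mathcal{S}_r$ form a genuine $\mu/2$-boundary (each is $\textup{Ga}^{-1}(\mu/2)$ by Theorem~\ref{stat}), so the stationary polymer ``restarts'' on the region above $\mathcal{S}_r$: decomposing a path by its last point on $\mathcal{S}_r$ and using the convention $H^{\mathcal{S}_r,\mu/2}_{\bf w}=Z^{\mu/2}_{0,{\bf w}}/Z^{\mu/2}_{0,r}$, one obtains the exact identity $\log Z^{\mu/2}_{0,N}=\log Z^{\mu/2}_{0,r}+A$, where $A$ is a measurable function of the increment sequence $\mathcal{R}=(R_i)_i$ together with the bulk weights lying strictly above $\mathcal{S}_r$. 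Since $\Corr\le1$ always gives $\Cov\le\Var(\log Z^{\mu/2}_{0,r})^{1/2}\Var(\log Z^{\mu/2}_{0,N})^{1/2}$, the theorem is equivalent to $\Cov(\log Z^{\mu/2}_{0,r},A)\le0$.

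\emph{Reduction to two functions of $\mathcal{R}$.} The weights above $\mathcal{S}_r$ are independent of the weights and boundary edges below $\mathcal{S}_r$, hence independent — even after conditioning on the $\sigma$-algebra generated by $\mathcal{R}$ — of everything entering $\log Z^{\mu/2}_{0,r}$. Conditioning on $\mathcal{R}$ then kills the conditional covariance and leaves
\[
\Cov\big(\log Z^{\mu/2}_{0,r},A\big)=\Cov\big(f(\mathcal{R}),g(\mathcal{R})\big),\qquad f(\mathcal{R})=\mathbb{E}[\log Z^{\mu/2}_{0,r}\mid\mathcal{R}],\quad g(\mathcal{R})=\mathbb{E}[A\mid\mathcal{R}].
\]
By Theorem~\ref{stat} the entries of $\mathcal{R}$ are independent, so the goal becomes to show that $f$ and $g$ are coordinatewise monotone in the $R_i$ in \emph{opposite} directions, whence $\Cov(f,g)\le0$ by the Harris/FKG inequality for product measures.

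\emph{Monotonicity.} The monotonicity of $g$ is explicit: $A=\log\sum_{{\bf w}\in\mathcal{S}_r}\big(Z^{\mu/2}_{0,{\bf w}}/Z^{\mu/2}_{0,r}\big)\,\widehat{Z}_{{\bf w},(N,N)}$ with $\widehat{Z}_{{\bf w},(N,N)}$ (the partition function of paths from ${\bf w}$ to $(N,N)$ staying above $\mathcal{S}_r$) independent of $\mathcal{R}$, and $Z^{\mu/2}_{0,{\bf w}}/Z^{\mu/2}_{0,r}$ a telescoping product of the $e^{\pm R_i}$; hence $g$ increases in the increments on the down-right arm of $\mathcal{S}_r$ and decreases in those on the up-left arm. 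For $f$ I would use the duality: via the inverse dual recursion $Z^{\mu/2}_{0,{\bf v}}=\wc{Y}^{\mu/2}_{\bf v}\big(Z^{\mu/2}_{0,{\bf v}+{\bf e}_1}+Z^{\mu/2}_{0,{\bf v}+{\bf e}_2}\big)$ one represents $\log Z^{\mu/2}_{0,r}$ as a function of $\mathcal{R}$ and of the dual weights $\mathcal{D}=\{\wc{Y}^{\mu/2}_{\bf v}\}_{{\bf v}\text{ below }\mathcal{S}_r}$, which by Theorem~\ref{stat} are i.i.d.\ $\textup{Ga}^{-1}(\mu)$ and independent of $\mathcal{R}$; integrating out $\mathcal{D}$, one checks that $f$ is monotone in the $R_i$ in the sense opposite to $g$ — reflecting the heuristic that a larger free-energy increment along $\mathcal{S}_r$ away from $(r,r)$ is ``paid for'' by a smaller free energy at $(r,r)$. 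Combining, $\Cov(f,g)\le0$ and the theorem follows. The main obstacle is precisely this last monotonicity of $f(\mathcal{R})=\mathbb{E}[\log Z^{\mu/2}_{0,r}\mid\mathcal{R}]$ with the correct signs: this is where the self-dual structure at $\rho=\mu/2$ (the i.i.d.\ dual weights) is indispensable, and it must be handled together with the bookkeeping of which neighbours of $\mathcal{S}_r$ lie above or below it and with a limiting argument passing from finite truncations to the bi-infinite staircase in the restart identity.
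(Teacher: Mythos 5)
Your proposal is correct and rests on exactly the same three ingredients the paper uses: the restart identity across a staircase through $(r,r)$, the self-duality of the $\rho=\mu/2$ boundary (i.i.d.\ dual weights below the staircase), and an FKG negative-correlation argument on the i.i.d.\ boundary ratios. The one genuine difference is where the FKG inequality gets invoked inside the law of total covariance. The paper conditions on the bulk weights above $\mathcal{S}'_r$ together with the dual weights below; this makes the covariance of conditional expectations vanish (each conditional expectation depends only on one of the two independent $\sigma$-algebras), and the expected conditional covariance is then a covariance over the i.i.d.\ boundary weight array $G$ with both free energies pointwise monotone in $G$ in opposite senses, so FKG gives nonpositivity. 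You instead condition on the boundary ratio sequence $\mathcal{R}$ itself; this makes the conditional covariance vanish (given $\mathcal{R}$, the two quantities are functions of independent residual randomness, dual below versus bulk above), and the leftover term $\Cov(f(\mathcal{R}),g(\mathcal{R}))$ is handled by FKG applied to the conditional expectations. Both routes require the same pointwise monotonicity of the two partition functions in the boundary weights, and the monotonicity of $f,g$ you need is inherited from the pointwise one by averaging over the independent variable being integrated out. So the content is the same, but the covariance decomposition is the complementary one, which is worth noting: your version makes the ``independence kills one term'' step the structurally trivial one, at the cost of having to track that conditional expectations preserve monotonicity; the paper's version makes FKG the inner step, applied at fixed environment. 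Either way, the crucial bookkeeping (which neighbours of the staircase lie above or below, and the sign of the boundary increment on each arm) is identical to the paper's and needs to be done with care — the paper also states this negative-correlation-in-boundary claim without writing out the sign analysis, so your proposal is at the same level of rigor as the published argument.
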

This is a stronger version of the upper bound in Theorem \ref{thm_r_small} since by Theorem \ref{var_s}, it holds that  $\Var\big(\log Z^{\mu/2}_{0, r}\big) \leq Cr^{2/3}$.

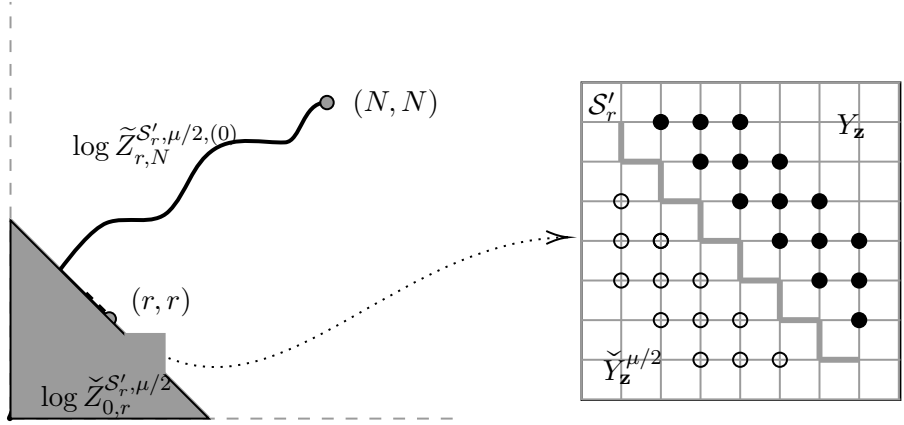
\begin{figure}[t]
\begin{center}

\tikzset{every picture/.style={line width=0.75pt}} 

\begin{tikzpicture}[x=0.75pt,y=0.75pt,yscale=-1,xscale=1]

\draw  [color={rgb, 255:red, 155; green, 155; blue, 155 }  ,draw opacity=0.5 ][dash pattern={on 4.5pt off 4.5pt}] (414,240.7) -- (191,240.7) -- (191,30.4) ;
\draw [color={rgb, 255:red, 155; green, 155; blue, 155 }  ,draw opacity=0.8 ][line width=1.5]    (190.8,140.4) -- (290.8,240.4) ;
\draw  [dash pattern={on 2.53pt off 3.02pt}][line width=2.25] [line join = round][line cap = round] (190.67,240.5) .. controls (194.02,233.8) and (194.17,225.9) .. (196.67,218.83) .. controls (198.21,214.47) and (207.34,220.39) .. (210,215.83) .. controls (212.22,212.03) and (211.9,198.51) .. (212,198.17) .. controls (212.56,196.34) and (214.85,195.44) .. (216.67,194.83) .. controls (218.35,194.27) and (221.23,196.44) .. (222,194.83) .. controls (224.51,189.58) and (221.53,181.56) .. (226,177.83) ;
\draw  [line width=1.5] [line join = round][line cap = round] (215.88,165.73) .. controls (221.19,158.65) and (231.81,144.86) .. (239.65,141.87) .. controls (249.68,138.05) and (262.81,144.02) .. (270.88,136.02) .. controls (280.01,126.95) and (282.21,112.14) .. (293.94,104.63) .. controls (304.62,97.79) and (318.49,101.75) .. (329.42,101.28) .. controls (337.44,100.94) and (338.26,81.19) .. (350,81.19) ;
\draw  [fill={rgb, 255:red, 155; green, 155; blue, 155 }  ,fill opacity=1 ] (347.24,81.18) .. controls (347.24,79.31) and (348.75,77.8) .. (350.62,77.8) .. controls (352.49,77.8) and (354,79.31) .. (354,81.18) .. controls (354,83.05) and (352.49,84.56) .. (350.62,84.56) .. controls (348.75,84.56) and (347.24,83.05) .. (347.24,81.18) -- cycle ;
\draw [line width=1.5]    (215.84,165.76) -- (240.8,190.4) ;
\draw  [fill={rgb, 255:red, 155; green, 155; blue, 155 }  ,fill opacity=1 ] (237.42,190.4) .. controls (237.42,188.53) and (238.93,187.02) .. (240.8,187.02) .. controls (242.67,187.02) and (244.18,188.53) .. (244.18,190.4) .. controls (244.18,192.27) and (242.67,193.78) .. (240.8,193.78) .. controls (238.93,193.78) and (237.42,192.27) .. (237.42,190.4) -- cycle ;
\draw [line width=3]  [dash pattern={on 3.38pt off 3.27pt}]  (228.32,178.08) -- (240.8,190.4) ;
\draw  [color={rgb, 255:red, 0; green, 0; blue, 0 }  ,draw opacity=0 ][fill={rgb, 255:red, 155; green, 155; blue, 155 }  ,fill opacity=0.33 ] (191,140.7) -- (291,240.7) -- (191,240.7) -- cycle ;
\draw  [draw opacity=0] (479,71) -- (639.8,71) -- (639.8,231.3) -- (479,231.3) -- cycle ; \draw  [color={rgb, 255:red, 155; green, 155; blue, 155 }  ,draw opacity=0.3 ] (479,71) -- (479,231.3)(499,71) -- (499,231.3)(519,71) -- (519,231.3)(539,71) -- (539,231.3)(559,71) -- (559,231.3)(579,71) -- (579,231.3)(599,71) -- (599,231.3)(619,71) -- (619,231.3)(639,71) -- (639,231.3) ; \draw  [color={rgb, 255:red, 155; green, 155; blue, 155 }  ,draw opacity=0.3 ] (479,71) -- (639.8,71)(479,91) -- (639.8,91)(479,111) -- (639.8,111)(479,131) -- (639.8,131)(479,151) -- (639.8,151)(479,171) -- (639.8,171)(479,191) -- (639.8,191)(479,211) -- (639.8,211)(479,231) -- (639.8,231) ; \draw  [color={rgb, 255:red, 155; green, 155; blue, 155 }  ,draw opacity=0.3 ]  ;
\draw [color={rgb, 255:red, 155; green, 155; blue, 155 }  ,draw opacity=1 ][line width=2.25]    (519,111) -- (519,131) ;
\draw [color={rgb, 255:red, 155; green, 155; blue, 155 }  ,draw opacity=1 ][line width=2.25]    (539,131) -- (539,151) ;
\draw [color={rgb, 255:red, 155; green, 155; blue, 155 }  ,draw opacity=1 ][line width=2.25]    (559,151) -- (559,171) ;
\draw [color={rgb, 255:red, 155; green, 155; blue, 155 }  ,draw opacity=1 ][line width=2.25]    (579,171) -- (579,191) ;
\draw [color={rgb, 255:red, 155; green, 155; blue, 155 }  ,draw opacity=1 ][line width=2.25]    (599,191) -- (599,211) ;
\draw [color={rgb, 255:red, 155; green, 155; blue, 155 }  ,draw opacity=1 ][line width=2.25]    (539,131) -- (519,131) ;
\draw [color={rgb, 255:red, 155; green, 155; blue, 155 }  ,draw opacity=1 ][line width=2.25]    (559,151) -- (539,151) ;
\draw [color={rgb, 255:red, 155; green, 155; blue, 155 }  ,draw opacity=1 ][line width=2.25]    (579,171) -- (559,171) ;
\draw [color={rgb, 255:red, 155; green, 155; blue, 155 }  ,draw opacity=1 ][line width=2.25]    (599,191) -- (579,191) ;
\draw [color={rgb, 255:red, 155; green, 155; blue, 155 }  ,draw opacity=1 ][line width=2.25]    (619,211) -- (599,211) ;
\draw [color={rgb, 255:red, 155; green, 155; blue, 155 }  ,draw opacity=1 ][line width=2.25]    (499,91) -- (499,111) ;
\draw [color={rgb, 255:red, 155; green, 155; blue, 155 }  ,draw opacity=1 ][line width=2.25]    (519,111) -- (499,111) ;
\draw   (502.58,131) .. controls (502.58,129.03) and (500.97,127.42) .. (499,127.42) .. controls (497.03,127.42) and (495.42,129.03) .. (495.42,131) .. controls (495.42,132.97) and (497.03,134.58) .. (499,134.58) .. controls (500.97,134.58) and (502.58,132.97) .. (502.58,131) -- cycle ;
\draw   (522.58,151) .. controls (522.58,149.03) and (520.97,147.42) .. (519,147.42) .. controls (517.03,147.42) and (515.42,149.03) .. (515.42,151) .. controls (515.42,152.97) and (517.03,154.58) .. (519,154.58) .. controls (520.97,154.58) and (522.58,152.97) .. (522.58,151) -- cycle ;
\draw   (522.58,151) .. controls (522.58,149.03) and (520.97,147.42) .. (519,147.42) .. controls (517.03,147.42) and (515.42,149.03) .. (515.42,151) .. controls (515.42,152.97) and (517.03,154.58) .. (519,154.58) .. controls (520.97,154.58) and (522.58,152.97) .. (522.58,151) -- cycle ;
\draw   (542.58,171) .. controls (542.58,169.03) and (540.97,167.42) .. (539,167.42) .. controls (537.03,167.42) and (535.42,169.03) .. (535.42,171) .. controls (535.42,172.97) and (537.03,174.58) .. (539,174.58) .. controls (540.97,174.58) and (542.58,172.97) .. (542.58,171) -- cycle ;
\draw   (562.58,191) .. controls (562.58,189.03) and (560.97,187.42) .. (559,187.42) .. controls (557.03,187.42) and (555.42,189.03) .. (555.42,191) .. controls (555.42,192.97) and (557.03,194.58) .. (559,194.58) .. controls (560.97,194.58) and (562.58,192.97) .. (562.58,191) -- cycle ;
\draw   (582.58,211) .. controls (582.58,209.03) and (580.97,207.42) .. (579,207.42) .. controls (577.03,207.42) and (575.42,209.03) .. (575.42,211) .. controls (575.42,212.97) and (577.03,214.58) .. (579,214.58) .. controls (580.97,214.58) and (582.58,212.97) .. (582.58,211) -- cycle ;
\draw   (502.58,151) .. controls (502.58,149.03) and (500.97,147.42) .. (499,147.42) .. controls (497.03,147.42) and (495.42,149.03) .. (495.42,151) .. controls (495.42,152.97) and (497.03,154.58) .. (499,154.58) .. controls (500.97,154.58) and (502.58,152.97) .. (502.58,151) -- cycle ;
\draw   (522.58,171) .. controls (522.58,169.03) and (520.97,167.42) .. (519,167.42) .. controls (517.03,167.42) and (515.42,169.03) .. (515.42,171) .. controls (515.42,172.97) and (517.03,174.58) .. (519,174.58) .. controls (520.97,174.58) and (522.58,172.97) .. (522.58,171) -- cycle ;
\draw   (542.58,191) .. controls (542.58,189.03) and (540.97,187.42) .. (539,187.42) .. controls (537.03,187.42) and (535.42,189.03) .. (535.42,191) .. controls (535.42,192.97) and (537.03,194.58) .. (539,194.58) .. controls (540.97,194.58) and (542.58,192.97) .. (542.58,191) -- cycle ;
\draw   (562.58,211) .. controls (562.58,209.03) and (560.97,207.42) .. (559,207.42) .. controls (557.03,207.42) and (555.42,209.03) .. (555.42,211) .. controls (555.42,212.97) and (557.03,214.58) .. (559,214.58) .. controls (560.97,214.58) and (562.58,212.97) .. (562.58,211) -- cycle ;
\draw   (502.58,171) .. controls (502.58,169.03) and (500.97,167.42) .. (499,167.42) .. controls (497.03,167.42) and (495.42,169.03) .. (495.42,171) .. controls (495.42,172.97) and (497.03,174.58) .. (499,174.58) .. controls (500.97,174.58) and (502.58,172.97) .. (502.58,171) -- cycle ;
\draw   (522.58,191) .. controls (522.58,189.03) and (520.97,187.42) .. (519,187.42) .. controls (517.03,187.42) and (515.42,189.03) .. (515.42,191) .. controls (515.42,192.97) and (517.03,194.58) .. (519,194.58) .. controls (520.97,194.58) and (522.58,192.97) .. (522.58,191) -- cycle ;
\draw   (542.58,211) .. controls (542.58,209.03) and (540.97,207.42) .. (539,207.42) .. controls (537.03,207.42) and (535.42,209.03) .. (535.42,211) .. controls (535.42,212.97) and (537.03,214.58) .. (539,214.58) .. controls (540.97,214.58) and (542.58,212.97) .. (542.58,211) -- cycle ;
\draw  [fill={rgb, 255:red, 0; green, 0; blue, 0 }  ,fill opacity=1 ] (522.58,91) .. controls (522.58,89.03) and (520.97,87.42) .. (519,87.42) .. controls (517.03,87.42) and (515.42,89.03) .. (515.42,91) .. controls (515.42,92.97) and (517.03,94.58) .. (519,94.58) .. controls (520.97,94.58) and (522.58,92.97) .. (522.58,91) -- cycle ;
\draw  [fill={rgb, 255:red, 0; green, 0; blue, 0 }  ,fill opacity=1 ] (542.58,111) .. controls (542.58,109.03) and (540.97,107.42) .. (539,107.42) .. controls (537.03,107.42) and (535.42,109.03) .. (535.42,111) .. controls (535.42,112.97) and (537.03,114.58) .. (539,114.58) .. controls (540.97,114.58) and (542.58,112.97) .. (542.58,111) -- cycle ;
\draw  [fill={rgb, 255:red, 0; green, 0; blue, 0 }  ,fill opacity=1 ] (562.58,131) .. controls (562.58,129.03) and (560.97,127.42) .. (559,127.42) .. controls (557.03,127.42) and (555.42,129.03) .. (555.42,131) .. controls (555.42,132.97) and (557.03,134.58) .. (559,134.58) .. controls (560.97,134.58) and (562.58,132.97) .. (562.58,131) -- cycle ;
\draw  [fill={rgb, 255:red, 0; green, 0; blue, 0 }  ,fill opacity=1 ] (582.58,151) .. controls (582.58,149.03) and (580.97,147.42) .. (579,147.42) .. controls (577.03,147.42) and (575.42,149.03) .. (575.42,151) .. controls (575.42,152.97) and (577.03,154.58) .. (579,154.58) .. controls (580.97,154.58) and (582.58,152.97) .. (582.58,151) -- cycle ;
\draw  [fill={rgb, 255:red, 0; green, 0; blue, 0 }  ,fill opacity=1 ] (602.58,171) .. controls (602.58,169.03) and (600.97,167.42) .. (599,167.42) .. controls (597.03,167.42) and (595.42,169.03) .. (595.42,171) .. controls (595.42,172.97) and (597.03,174.58) .. (599,174.58) .. controls (600.97,174.58) and (602.58,172.97) .. (602.58,171) -- cycle ;
\draw  [fill={rgb, 255:red, 0; green, 0; blue, 0 }  ,fill opacity=1 ] (622.58,191) .. controls (622.58,189.03) and (620.97,187.42) .. (619,187.42) .. controls (617.03,187.42) and (615.42,189.03) .. (615.42,191) .. controls (615.42,192.97) and (617.03,194.58) .. (619,194.58) .. controls (620.97,194.58) and (622.58,192.97) .. (622.58,191) -- cycle ;
\draw  [fill={rgb, 255:red, 0; green, 0; blue, 0 }  ,fill opacity=1 ] (542.58,91) .. controls (542.58,89.03) and (540.97,87.42) .. (539,87.42) .. controls (537.03,87.42) and (535.42,89.03) .. (535.42,91) .. controls (535.42,92.97) and (537.03,94.58) .. (539,94.58) .. controls (540.97,94.58) and (542.58,92.97) .. (542.58,91) -- cycle ;
\draw  [fill={rgb, 255:red, 0; green, 0; blue, 0 }  ,fill opacity=1 ] (562.58,111) .. controls (562.58,109.03) and (560.97,107.42) .. (559,107.42) .. controls (557.03,107.42) and (555.42,109.03) .. (555.42,111) .. controls (555.42,112.97) and (557.03,114.58) .. (559,114.58) .. controls (560.97,114.58) and (562.58,112.97) .. (562.58,111) -- cycle ;
\draw  [fill={rgb, 255:red, 0; green, 0; blue, 0 }  ,fill opacity=1 ] (582.58,131) .. controls (582.58,129.03) and (580.97,127.42) .. (579,127.42) .. controls (577.03,127.42) and (575.42,129.03) .. (575.42,131) .. controls (575.42,132.97) and (577.03,134.58) .. (579,134.58) .. controls (580.97,134.58) and (582.58,132.97) .. (582.58,131) -- cycle ;
\draw  [fill={rgb, 255:red, 0; green, 0; blue, 0 }  ,fill opacity=1 ] (602.58,151) .. controls (602.58,149.03) and (600.97,147.42) .. (599,147.42) .. controls (597.03,147.42) and (595.42,149.03) .. (595.42,151) .. controls (595.42,152.97) and (597.03,154.58) .. (599,154.58) .. controls (600.97,154.58) and (602.58,152.97) .. (602.58,151) -- cycle ;
\draw  [fill={rgb, 255:red, 0; green, 0; blue, 0 }  ,fill opacity=1 ] (622.58,171) .. controls (622.58,169.03) and (620.97,167.42) .. (619,167.42) .. controls (617.03,167.42) and (615.42,169.03) .. (615.42,171) .. controls (615.42,172.97) and (617.03,174.58) .. (619,174.58) .. controls (620.97,174.58) and (622.58,172.97) .. (622.58,171) -- cycle ;
\draw  [fill={rgb, 255:red, 0; green, 0; blue, 0 }  ,fill opacity=1 ] (562.58,91) .. controls (562.58,89.03) and (560.97,87.42) .. (559,87.42) .. controls (557.03,87.42) and (555.42,89.03) .. (555.42,91) .. controls (555.42,92.97) and (557.03,94.58) .. (559,94.58) .. controls (560.97,94.58) and (562.58,92.97) .. (562.58,91) -- cycle ;
\draw  [fill={rgb, 255:red, 0; green, 0; blue, 0 }  ,fill opacity=1 ] (582.58,111) .. controls (582.58,109.03) and (580.97,107.42) .. (579,107.42) .. controls (577.03,107.42) and (575.42,109.03) .. (575.42,111) .. controls (575.42,112.97) and (577.03,114.58) .. (579,114.58) .. controls (580.97,114.58) and (582.58,112.97) .. (582.58,111) -- cycle ;
\draw  [fill={rgb, 255:red, 0; green, 0; blue, 0 }  ,fill opacity=1 ] (602.58,131) .. controls (602.58,129.03) and (600.97,127.42) .. (599,127.42) .. controls (597.03,127.42) and (595.42,129.03) .. (595.42,131) .. controls (595.42,132.97) and (597.03,134.58) .. (599,134.58) .. controls (600.97,134.58) and (602.58,132.97) .. (602.58,131) -- cycle ;
\draw  [fill={rgb, 255:red, 0; green, 0; blue, 0 }  ,fill opacity=1 ] (622.58,151) .. controls (622.58,149.03) and (620.97,147.42) .. (619,147.42) .. controls (617.03,147.42) and (615.42,149.03) .. (615.42,151) .. controls (615.42,152.97) and (617.03,154.58) .. (619,154.58) .. controls (620.97,154.58) and (622.58,152.97) .. (622.58,151) -- cycle ;
\draw  [color={rgb, 255:red, 155; green, 155; blue, 155 }  ,draw opacity=1 ][fill={rgb, 255:red, 155; green, 155; blue, 155 }  ,fill opacity=0.2 ] (248.87,197.97) -- (268.73,197.97) -- (268.73,217.83) -- (248.87,217.83) -- cycle ;
\draw  [dash pattern={on 0.84pt off 2.51pt}]  (270.7,210.15) .. controls (332.46,234.89) and (388.9,150.72) .. (470.9,149.22) ;
\draw [shift={(472.13,149.2)}, rotate = 179.54] [color={rgb, 255:red, 0; green, 0; blue, 0 }  ][line width=0.75]    (10.93,-3.29) .. controls (6.95,-1.4) and (3.31,-0.3) .. (0,0) .. controls (3.31,0.3) and (6.95,1.4) .. (10.93,3.29)   ;

\draw (250,172.96) node [anchor=north west][inner sep=0.75pt]    {$( r,r)$};
\draw (362,72.76) node [anchor=north west][inner sep=0.75pt]    {$( N,N)$};
\draw (220.8,90.36) node [anchor=north west][inner sep=0.75pt]    {$\fg_{r,N}^{\mathcal{S}_r', \mu/2, (0)}$};
\draw (204.8,216.76) node [anchor=north west][inner sep=0.75pt]    {$\log \widecheck{Z}_{0,r}^{\mathcal{S}_r', \mu/2}$};
\draw (488.2,203.9) node [anchor=north west][inner sep=0.75pt]    {$\wc{Y}^{\mu/2}_{\bf z}$};
\draw (606.2,85.4) node [anchor=north west][inner sep=0.75pt]    {$Y_{\bf z}$};
\draw (481,74.4) node [anchor=north west][inner sep=0.75pt]    {$\mathcal{S}_{r}'$};

\end{tikzpicture}

\captionsetup{width=0.8\textwidth}
\caption{An illustration of the two partition functions defined with the original and dual weights  } \label{two_lpp}

\end{center}
\end{figure}

\begin{proof}

Fix $0\leq r \leq N/2$. First, we will define a nested free energy.
Fix the staircase $\mathcal{S}_{r}'= \{\bf s'_i\}_{i\in \mathbb{Z}}$ with ${\bf s}_0 = (r,r)$. Let us define the boundary weight $G$ on $\mathcal{S}_r'$ to be  
\begin{alignat*}{2}
G_{{\bf s}'_{i}, {\bf s}'_{i-1}} &={Z^{\mu/2}_{0, {\bf s}'_{i}}}/{Z^{\mu/2}_{0, {\bf s}'_{i-1}}} &&\qquad \text{if }{\bf s}'_{i} - {\bf s}'_{i-1} = {\bf e}_1\\
G_{{\bf s}'_{i}, {\bf s}'_{i-1}} &= {Z^{\mu/2}_{0, {\bf s}'_{i-1}}}/{Z^{\mu/2}_{0, {\bf s}'_{i}}} &&\qquad \text{if }{\bf s}'_{i} - {\bf s}'_{i-1} = -{\bf e}_2,
\end{alignat*}

Theorem \ref{stat} states that the weights $G$ are independent $\text{Ga}^{-1}(\mu/2)$ random variables, attached to the $e_1$ and $e_2$ edges in $\mathcal{S}_r'$. In addition, $G$, $\{Y_{\bf z}\}_{z\in {\mathcal{S}_r'}^>}$ and $\{\wc{Y}^{\mu/2}_{\bf z}\}_{z\in {\mathcal{S}_r'}^<}$ are all independent.

With $G$ and bulk weights $\{Y_{\bf z}\}_{z\in {\mathcal{S}_r'}^>}$, we can define a stationary polymer between $(r,r)$ and $(N,N)$, whose partition function is denoted by $Z^{\mathcal{S}'_r, \mu/2, (0)}_{r, N}$.
Then, Lemma A.6 from \cite{ras-sep-she-} precisely states that 
\begin{equation}\label{nest_eq}
\log Z^{\mu/2}_{0, N} = \log Z^{\mu/2}_{0, r} + \log Z^{\mathcal{S}'_r, \mu/2, (0)}_{r, N}.
\end{equation}

With the same boundary $G$ and the dual weights $\{\wc{Y}^{\mu/2}_{\bf z}\}_{z\in {\mathcal{S}_r'}^<}$, we can define another stationary polymer between $(0,0)$ and $(r,r)$, but the boundary and the bulk are rotated by $180^\circ$. We denote its partition function as $\wc{Z}^{\mathcal{S}'_r, \mu/2}_{0, r}$. Lemma 4.3, originally presented in \cite{Geo-etal-15} for a stationary polymer with a southwest boundary, can be directly applied in this context with an identical proof, yielding the following result: \begin{equation}\label{dual_eq}
\log Z^{\mu/2}_{0, r} = \log Z^{\mathcal{S}_{0}, \mu/2}_{0, r}  = \log \wc{Z}^{\mathcal{S}'_r, \mu/2}_{0, r}.
\end{equation}

With \eqref{nest_eq} and \eqref{dual_eq}, we see that 
\begin{equation}\label{erwrite_cov}
\begin{aligned}
\Cov(\log Z^{\mu/2}_{0,r}, \log Z^{\mu/2}_{0, N}) &=  \Cov(\log Z^{\mu/2}_{0,r}, \log Z^{\mu/2}_{0, r} + \log Z^{\mathcal{S}_r', \mu/2, (0)}_{r, N}) \\
& = \Var(\log Z^{\mu/2}_{0,r}) + \Cov({\log  {Z}}^{\mu/2}_{0,r},\log Z^{\mu/2, (0)}_{r, N})\\
& = \Var(\log Z^{\mu/2}_{0,r}) + \Cov(\log \wc{Z}^{\mathcal{S}'_r, \mu/2}_{0, r},\log Z^{\mathcal{S}'_r, \mu/2, (0)}_{r, N}).
\end{aligned}
\end{equation}
The two partition functions from the last covariance term are illustrated in Figure \ref{two_lpp}. To finish the proof, we will show that 
\begin{equation}\label{negative_fkg}
\Cov(\log \wc{Z}^{\mathcal{S}_{r}', \stat}_{0,r},\log Z^{\mathcal{S}_{r}', \stat, (0)}_{r, N}) \leq 0.
\end{equation}
Note once we have this, we obtain our desired result from \eqref{erwrite_cov},
$$\Cov(\log Z^\stat_{0,r}, \log Z^\stat_{0, N}) \leq \Var(\log Z^\stat_{0,r}).$$

So now it remains to show \eqref{negative_fkg}, let $\mathcal{F}$ and $\wc\mathcal{F}$ denote the $\sigma$-algebras generated by $\{Y_{\bf z}\}_{{\bf z}\in \mathcal{S}_r^>}$ and $\{\wc{Y}^{\mu/2}_{\bf z}\}_{{\bf z}\in \mathcal{S}_r^<}$.
By the law of total covariance, 
\begin{align}
&\Cov(\log \wc{Z}^{\mathcal{S}_{r}',, \stat}_{0,r},\log Z^{\mathcal{S}_{r}',\stat, (0)}_{r, N}) \nonumber\\&= \mathbb{E}[\Cov(\log \wc{Z}^{\mathcal{S}_{r}', \stat}_{0,r},\log Z^{\mathcal{S}_{r}',\stat, (0)}_{r, N} |\mathcal{F}, \wc\mathcal{F})] + \Cov(\mathbb{E}[\log \wc{Z}^{\mathcal{S}_{r}', \stat}_{0,r}|\mathcal{F}, \wc\mathcal{F}], \mathbb{E}[\log Z^{\mathcal{S}_{r}',\stat, (0)}_{r, N}|\mathcal{F}, \wc\mathcal{F}]) \label{total_c}.
\end{align}
Since, $\log \wc{Z}^{\mathcal{S}_{r}',\stat}_{0,r}$ is independent of $\mathcal{F}$,
$$\mathbb{E}[\log \wc{Z}^{\mathcal{S}_{r}',\stat}_{0,r}|\mathcal{F}, \wc\mathcal{F}] = \mathbb{E}[\log \wc{Z}^{\mathcal{S}_{r}',\stat}_{0,r}| \wc\mathcal{F}],$$
and similarly $\log Z^{\mathcal{S}_{r}',\stat, (0)}_{r, N}$ is independent of $\wc\mathcal{F}$, so 
$$\mathbb{E}[\log Z^{\mathcal{S}_{r}',\stat, (0)}_{r, N}|\mathcal{F}, \wc\mathcal{F}] = \mathbb{E}[\log Z^{\mathcal{S}_{r}',\stat, (0)}_{r, N}|\mathcal{F}].$$
Then, by independence, the second covariance term from \eqref{total_c} is zero. And it remains to show that 
$$\mathbb{E}[\Cov(\log \wc{Z}^{\mathcal{S}_{r}',\stat}_{0,r},\log Z^{\mathcal{S}_{r}',\stat, (0)}_{r, N} |\mathcal{F}, \wc\mathcal{F})] \leq 0.$$
This holds because by definition, $\log \wc{Z}^{\mathcal{S}_{r}',\stat}_{0,r}$ and $\log Z^{\mathcal{S}_{r}',\stat, (0)}_{r, N}$ are negatively correlated in their boundary weights. This means if we increase the value of one of the edge weights along the boundary $S_r'$, one of the two free energies increases while the other one decreases. Thus, by the FKG inequality,
$$\Cov(\log \wc{Z}^{\mathcal{S}_{r}',\stat}_{0,r},\log Z^{\mathcal{S}_{r}',\stat, (0)}_{r, N} |\mathcal{F}, \wc\mathcal{F}) \leq 0.$$
We have finished showing \eqref{negative_fkg}, and this finishes the proof.

\end{proof}

\section{Proofs for the estimates in Section \ref{est_poly_bulk} and  Section \ref{est_poly_bdry}}
\label{s:est_proof}

In this section, we prove the various estimates from  Section \ref{est_poly_bulk} and  Section \ref{est_poly_bdry}. The proofs use techniques from the stationary polymer.
\addtocontents{toc}{\protect\setcounter{tocdepth}{-10}}

\subsection{Proof of Proposition \ref{min}} \label{a:min}

\begin{proof}
By symmetry and a union bound, it suffices to consider the case when $k$ is non-negative and upper bound 
\begin{equation}\label{k_est}
\mathbb{P}\Big(\min_{0\leq k \leq t^{1/20}r^{2/3}}\log \wt{Z}_{0,{N}+\olsi{k}} - \log \wt{Z}_{0, N} \leq -tr^{1/3}\Big).
\end{equation}

First, recall $\mu$ is the shape parameter of the i.i.d.~inverse-gamma environment.  Let us define $s = t^{1/20}$ and look at the case that $s\leq \tfrac{\mu}{100} N^{1/3}$. We will use the random walk comparison technique and turn this into an estimate for random walks. Recall Theorem 3.28 from \cite{bas-sep-she-23} which states that there exists an event $A$ with $\mathbb{P}(A) \geq 1- e^{-Cs^3}$, such that for each $0\leq k \leq sr^{2/3}$,
$$\log \wt{Z}_{0,{N}+\olsi{k}}\ - \log \wt{Z}_{0, N} \geq \log \tfrac{9}{10} + \sum_{i=1}^k H_i \qquad \text{ on the event } A,$$
where
$\{H_i\}$ are i.i.d. random variables whose distribution is given by the difference of two independent log-gamma random variables. More precisely, set $\eta = \mu/2 - sN^{-1/3}$, then 
$$H_i \stackrel{d}{=} U-V$$ where $U \sim  \log \textup{Ga}^{-1}(\mu-\eta), V \sim \log \textup{Ga}^{-1}(\eta) $ and $U, V$ are independent.

By an application of Taylor's theorem to the digamma functions, we obtain that $\mathbb{E}[H_i] = - \Psi_0(\mu-\rho)+ \Psi_0(\eta) \geq -C' sN^{-1/3}$ for some positive constant $C'$. Then, it holds that
\begin{align*}
\eqref{k_est} \leq e^{-t^{3/20}} +\mathbb{P}\Big(\min_{0\leq k \leq t^{1/20}r^{2/3}} \sum_{i=1}^k (H_i - \mathbb{E}[H_i]) \leq - \tfrac{1}{2}tr^{1/3} \Big) \leq e^{-t^{1/10}}
\end{align*}
where bounding the probability uses Theorem \ref{max_sub_exp} since  $\pm H_i$'s are sub-exponential random variables (this is verified in Appendix \ref{ver_weights}). 

On the other hand, if $t^{1/20} \geq \tfrac{\mu}{100}N^{1/3}$, then 
\begin{align*}
\eqref{k_est} &\leq \mathbb{P}\Big(\min_{0\leq k \leq t^{1/20}r^{2/3}}\log \wt{Z}_{0,{N}+\olsi{k}} - \log \wt{Z}_{0, N} \leq -\sqrt{t}N^{3}\Big)\\
&\leq  \mathbb{P}\Big(\min_{0\leq k \leq t^{1/20}r^{2/3}}\log \wt{Z}_{0,{N}+\olsi{k}} \leq -\tfrac{1}{2}\sqrt{t}N^{3}\Big) + \mathbb{P}\Big(\log \wt{Z}_{0, N} \geq \tfrac{1}{2}\sqrt{t}N^{3}\Big)\\
& \leq \sum_{k=0}^{t^{1/20}r^{2/3}}\mathbb{P}\Big(\log \wt{Z}_{0,{N}+\olsi{k}} \leq -\tfrac{1}{2}\sqrt{t}N^{3}\Big) + \mathbb{P}\Big(\log \wt{Z}_{0, N} \geq \tfrac{1}{2}\sqrt{t}N^{10}\Big)\\
 & \leq  t^{1/20}r^{2/3} e^{-t^{1/10}N} + e^{-t^{1/10}},
\end{align*}
where the last inequality comes from \cite[Proposition 3.8]{bas-sep-she-23} (a general version of Proposition \ref{low_ub} from this paper, which holds in all directions bounded away from the axes), and Proposition \ref{up_lb}. With this, we have finished the proof of this proposition.
\end{proof}

\subsection{Proof of Proposition \ref{max_all_t1}} \label{max_all_t1_proof}
\begin{proof}
It suffices for us to get the upper bound 
$$\mathbb{P}\Big(\max_{{\bf x} \in \mathcal{L}^a_{0}}\log \wt{Z}_{{\bf x}, N} - \log \wt{Z}_{0, N} \geq t\sqrt{a}\Big) \leq 
e^{-t^{1/10}}.
$$ 
Let $\log Z_{{\bf x}, N} = \log Y_{\bf x} + \log \wt{Z}_{{\bf x}, N}$, and applying a union bound, we have 
\begin{align*}
&\mathbb{P}\Big(\max_{{\bf x} \in \mathcal{L}^a_{0}}\log \wt{Z}_{{\bf x}, N} - \log \wt{Z}_{0, N} \geq t\sqrt{a}\Big)\\
& \leq \mathbb{P}\Big(\max_{{\bf x} \in \mathcal{L}^a_{0}}\log {Z}_{{\bf x}, N} - \min_{{\bf x} \in \mathcal{L}^a_{0}} \log Y_{\bf x} - \log {Z}_{0, N} + \log Y_{(0,0)}\geq t\sqrt{a}\Big)\\
& \leq \mathbb{P}\Big(\max_{{\bf x} \in \mathcal{L}^a_{0}}\log {Z}_{{\bf x}, N}  - \log {Z}_{0, N} \geq \tfrac{1}{2}t\sqrt{a}\Big) + \mathbb{P}\Big(- \min_{{\bf x} \in \mathcal{L}^a_{0}} \log Y_{\bf x} + \log Y_{(0,0)}\geq \tfrac{1}{2}t\sqrt{a}\Big).
\end{align*}
The first probability is bounded by $e^{-t^{1/10}}$ using Proposition \ref{max_all_t} after a $180^\circ$ rotation of the picture. The second probability is bounded by $e^{-t^{1/10}}$ as well, because as shown in Appendix \ref{ver_weights}, $\pm \log Y_{\bf x}$ are sub-exponential random variables.
\end{proof}
\subsection{Proof of Proposition \ref{exit_est}}
\begin{proof}
We may assume that $t \leq N^{1/3}/\epsilon$, since otherwise the collection of paths $\big\{|\tau_{ \mathcal{L}_N^{tN^{2/3}}}| > (1+\epsilon)tN^{2/3}\big\}$ is empty. Also, since $$\fgs_{ \mathcal{L}_N^{tN^{2/3}}}(|\tau| > (1+\epsilon)tN^{2/3})  \leq \max_{{\bf x} \in\mathcal{L}_N^{tN^{2/3}}} \fgs_{ {\bf x}}(|\tau| > (1+\epsilon)tN^{2/3}) + 10\log N,$$
we may prove the estimate for $\max_{{\bf x} \in\mathcal{L}_N^{tN^{2/3}}} \fgs_{ {\bf x}}(|\tau| > 2tN^{2/3})$.
Finally, by a union bound, we may replace $|\tau|$ with just $\tau$.

The rest of the proof follows from a union bound. First,
 we will show that there exist two positive constants $C, C'$ such that for each $(1+\epsilon)t+1 \leq s \leq (t+N^{1/3}/\epsilon)-1$, 
\begin{equation} \label{goal}
\mathbb{P}\Big(\max_{{\bf x} \in\mathcal{L}_N^{tN^{2/3}}} \fgs_{ {\bf x}}((s-1)N^{2/3} \leq \tau \leq (s+1)N^{2/3}) - \Lambda_N \geq -C's^2 N^{1/3}\Big) \leq e^{-Cs^{3/2}}.
\end{equation}
This holds because we have
\begin{align}
&\mathbb{P}\Big(\max_{{\bf x} \in\mathcal{L}_N^{tN^{2/3}}} \fgs_{ {\bf x}}((s-1)N^{2/3} \leq \tau \leq (s+1)N^{2/3}) - \Lambda_N \geq -C's^2 N^{1/3}\Big)\nonumber\\
&\leq \mathbb{P}\Big(\max_{{\bf x} \in\mathcal{L}_N^{tN^{2/3}}} \log \wt{Z}_{\mathcal{L}_{\olsi{sN^{2/3}}}^{N^{2/3}}, {\bf x}}- \Lambda_N \geq -2C's^2 N^{1/3}\Big) \label{est1}\\
& \qquad \qquad + \mathbb{P}\Big(\max_{1\leq k \leq (s+1)N^{2/3}}\fgs_{ \olsi{k}} \geq \tfrac{1}{2}C's^2 N^{1/3}\Big).\label{est2}
\end{align}
Since $s\geq (1+\epsilon)t+1$,  \eqref{est1} is bounded by $e^{-Cs^3}$ using Proposition \ref{trans_fluc_loss}, provided $C'$ in \eqref{est1} is fixed sufficiently small. And \eqref{est2} is bounded by $e^{-Cs^{3/2}}$ using assumption \eqref{up_bd}. Thus, have shown that $\eqref{goal}.$

Finally, by a union bound we complete the proof
\begin{align*}
&\mathbb{P}\Big(\log Z^{W}_{ \mathcal{L}_N^{tN^{2/3}}}(\tau > (1+\epsilon)tN^{2/3}) - \Lambda_N \geq -C't^2 N^{1/3}\Big)\\
& \leq \sum_{s = (1+\epsilon)t+1}^{(t+N^{1/3})-1}
\mathbb{P}\Big(\log Z^{W}_{ \mathcal{L}_N^{tN^{2/3}}}((s-1)N^{2/3} \leq \tau \leq (s+1)N^{2/3}) - \Lambda_N \geq -C's^2 N^{1/3}\Big)\\
& \leq  \sum_{s = (1+\epsilon)t+1}^{(t+N^{1/3})-1} e^{-Cs^{3/2}} \leq e^{-C t^{3/2}}.
\end{align*}
\end{proof}

\subsection{Proof of Proposition \ref{b_up_tail}}
\begin{proof} First, we prove $(i)$.
By Proposition \ref{exit_est}, we have
$$\mathbb{P}\Big(\fgs_{\mathcal{L}_{ N }^{tN^{2/3}}}(|\tau| > 2tN^{2/3}) - \Lambda_N \geq -C't^2 N^{1/3}\Big) \leq e^{-Ct^{3/2}}.$$
Then, it suffices for us to bound
\begin{equation}\label{in10tN}
\mathbb{P}\Big(\fgs_{ \mathcal{L}_{N }^{tN^{2/3}}}(|\tau|\leq 2tN^{2/3}) - \Lambda_N \geq \epsilon t^2 N^{1/3}\Big).
\end{equation}
By a union bound,  
\begin{align}
\eqref{in10tN} 
& \leq \mathbb{P}\Big(\fg_{\mathcal{L}_0, \mathcal{L}_N^{tN^{2/3}}} - \Lambda_N \geq \tfrac{1}{3}\epsilon t^2 N^{1/3}\Big)
\label{est01}\\
& \qquad \qquad +  \mathbb{P}\Big(\max_{1\leq |k| \leq 2tN^{2/3}}\fgs_{ \olsi{k}} \geq \tfrac{1}{3}\epsilon t^{3/2}(t^{1/2} N^{1/3})\Big).\label{est02}
\end{align}
Then, \eqref{est01} is bounded by $e^{-C\min\{t^3, t^2N^{1/3}\}}$ from Proposition \ref{ptl_upper}, and \eqref{est02} is bounded by $e^{-Ct^{3/2}}$ by assumption \eqref{up_bd}. With this, we have shown \eqref{in10tN} and finished proving the first estimate $(i)$ in our proposition. 

The second estimate follows from the exact same argument, the only changes are that the ``$t^2$" in \eqref{est01} becomes ``$t$", and ``$t^{3/2}$" from \eqref{est02} becomes ``$t^{1/2}$". So we omit the details.
\end{proof}

\subsection{Proof of Proposition \ref{b_low_tail}}
\begin{proof} 
This follows directly from the monotonicity that $\fgs_{ N} \geq \log \wt{Z}_{0, N}$ and the upper bound of the left tail for $\log \wt{Z}_{0, N}$ from  Proposition \ref{low_tail}.
\end{proof}

\subsection{Proof of Theorem \ref{var_s}}
By Proposition \ref{b_up_tail} and Proposition \ref{b_low_tail}, 
$$\big|\mathbb{E}[\log Z^W_N] - \Lambda_N\big| \leq CN^{1/3}.$$
Then, these propositions give the upper bound on the variance. 

The lower bound on the variance follows from the fact $\fgs_{ N} \geq \log \wt{Z}_{0, N}$ and Proposition \ref{up_lb}.

\subsection{Proof of Theorem \ref{exit_q}}

By a union bound, 
\begin{align*}
&\mathbb{P}\Big(\fgs_{ \mathcal{L}_N^{s}}(|\tau| > (1+\epsilon) tN^{2/3}) - \fgs_{ \mathcal{L}_N^{s}}\geq -C't^2 N^{1/3}\Big)\\
& \leq \mathbb{P}\Big(\fgs_{ \mathcal{L}_N^{tN^{2/3}}}(|\tau| > (1+\epsilon) tN^{2/3}) - \Lambda_N \geq -{2C'}t^2 N^{1/3}\Big) \\
& \qquad \qquad + \mathbb{P}\Big(\fgs_{ N} - \Lambda_N \leq -{C'}t^2 N^{1/3}\Big).
\end{align*}
By Proposition \ref{exit_est} and Proposition \ref{b_low_tail}, the two probabilities above are bounded by $e^{-Ct^{3/2}}$ provided $C'$ is sufficiently small. We have finished the proof of the theorem.

\subsection{Proof of Theorem \ref{nest}}

The proof of this Theorem  is similar to the proof of Theorem 4.8 from \cite{bas-sep-she-23}, except here we have to handle the addition of the boundary weights.
We start with a proposition that captures the local fluctuation of the free energy profile. 

\begin{proposition}\label{local_fluc}
Suppose the boundary condition $W$ satisfies assumption \eqref{up_bd}. There exist positive constants $C_1, C_2, c_0, N_0, t_0$ such that for each $N\geq N_0$, $t_0 \leq t \leq c_0 N^{1/2}$, and each integer $0\leq a \leq \tfrac{1}{2}t^{2/3}N^{2/3}$, we have
$$\mathbb{P}\Big(\fgs_{\mathcal{L}^a_{N}} - \fgs_{ N} \geq C_1t \sqrt{a}\Big) \leq 
e^{-C_2 t}.
$$ 
\end{proposition}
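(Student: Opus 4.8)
The plan is to reduce the statement about the segment $\mathcal{L}_N^a$ to one about a single endpoint, and then transport the problem, through a coupling, to a (ratio-)stationary polymer, for which the relevant free-energy profile is literally a random walk. \emph{Step 1 (reduction to one endpoint).} Since $Z^W_{\mathcal{L}_N^a}=\sum_{|k|\le a}Z^W_{N+\olsi k}$, we have $\log Z^W_{\mathcal{L}_N^a}\le\max_{|k|\le a}\log Z^W_{N+\olsi k}+\log(2a+1)$, and $\log(2a+1)\le\tfrac12 C_1 t\sqrt a$ once $C_1 t_0$ is large (for bounded $a$ the left side is $O(1)$, for large $a$ it is $\le\sqrt a\le t\sqrt a$). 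So it suffices to bound $\mathbb P\bigl(\max_{|k|\le a}\bigl(\log Z^W_{N+\olsi k}-\log Z^W_N\bigr)\ge\tfrac12 C_1 t\sqrt a\bigr)$. One cannot afford a union bound over $|k|\le a$ here, since $a$ can be of order $N^{2/3}$ while the target decay $e^{-C_2t}$ does not improve with $N$; a genuine maximal inequality for $k\mapsto\log Z^W_{N+\olsi k}$ is needed.

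\emph{Step 2 (coupling with a perturbed stationary polymer).} On the same bulk weights, introduce the ratio-stationary polymers $Z^{\eta_\pm}_{0,\bullet}$ with parameters $\eta_\pm=\mu/2\pm\delta$, for $\delta$ of order $t^{2/3}N^{-1/3}$ (so $\delta N^{1/3}\asymp t^{2/3}$; the whole range $|k|\le a$ then lies inside the regime $|k|\lesssim(\delta N^{1/3})N^{2/3}=t^{2/3}N^{2/3}$ in which such couplings are effective, and this is precisely where the hypothesis $a\le\tfrac12 t^{2/3}N^{2/3}$ is used). Using the monotone-coupling / corner-flipping arguments underlying the other proofs in Section \ref{s:est_proof} — the positive-temperature counterpart of the comparison used in the proof of Proposition \ref{min}, cf.\ \cite[Thm.\ 3.28]{bas-sep-she-23}, and where the hypothesis \eqref{up_bd} enters to control the exit point of the $W$-polymer as in Proposition \ref{exit_est} — one produces a single event $\mathcal G$ with $\mathbb P(\mathcal G^c)\le e^{-c(\delta N^{1/3})^3}\le e^{-c't}$, on which, for all $|k|\le a$,
\[
\bigl(\log Z^{\eta_-}_{0,N+\olsi k}-\log Z^{\eta_-}_{0,N}\bigr)-E\ \le\ \log Z^W_{N+\olsi k}-\log Z^W_N\ \le\ \bigl(\log Z^{\eta_+}_{0,N+\olsi k}-\log Z^{\eta_+}_{0,N}\bigr)+E,
\]
where $E$ is a deterministic error from the parameter shift, $E\lesssim\delta a\lesssim t^{2/3}N^{-1/3}a\le\tfrac14 C_1 t\sqrt a$, once more by $a\le\tfrac12 t^{2/3}N^{2/3}$.

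\emph{Step 3 (the stationary profile is a random walk).} By Theorem \ref{stat}, for $\rho\in\{\eta_\pm\}$ the nearest-neighbour log-partition ratios of $Z^\rho_{0,\bullet}$ along the staircase $\mathcal S_N$ through $(N,N)$ are independent; hence $k\mapsto\log Z^\rho_{0,N+\olsi k}-\log Z^\rho_{0,N}=:S^\rho_k$ is a two-sided random walk with i.i.d.\ increments, each a difference of two independent $\log\textup{Ga}^{-1}$ variables — hence sub-exponential (Appendix \ref{ver_weights}) — of mean $-\Psi_0(\mu-\rho)+\Psi_0(\rho)$, of absolute value $\lesssim|\rho-\mu/2|\le\delta$. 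Splitting $S^\rho_k$ into its (deterministic) mean part, whose total over $|k|\le a$ is again $\lesssim\delta a\le\tfrac14 C_1 t\sqrt a$, plus a centered walk $\wt S^\rho_k$, an Etemadi maximal inequality together with a Bernstein bound gives
\[
\mathbb P\Bigl(\max_{|k|\le a}\,|\wt S^\rho_k|\ \ge\ \tfrac14 C_1 t\sqrt a\Bigr)\ \le\ C\exp\bigl(-c\,\min\{(C_1 t)^2,\ C_1 t\sqrt a\}\bigr)\ \le\ C e^{-c C_1 t},
\]
using $\sqrt a\ge1$, $t\ge t_0\ge1$. Combining Steps 1--3 with $C_1$ large and $C_2<cC_1$ proves the proposition; note that when $W$ is the stationary boundary itself, Step 2 is vacuous and the result is immediate from Steps 1 and 3.

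The step I expect to be the main obstacle is Step 2: producing the sandwiching on a single high-probability event for a \emph{general} boundary $W$ — i.e.\ showing that the profile $k\mapsto\log Z^W_{N+\olsi k}$ near $(N,N)$, which sits deep in the bulk, is controlled above and below by perturbed-stationary profiles uniformly in $k$. This is what stops one from using only the softer moderate-deviation inputs (the exponent in Proposition \ref{max_all_t}, for instance, is too weak, and any route through a union over exit points loses an $N$-dependent factor) and forces one to invoke the exact independence of the stationary increments from Theorem \ref{stat} via the stationary-coupling machinery; it is also the only place where the hypothesis \eqref{up_bd} is used.
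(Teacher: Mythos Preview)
Your outline is essentially the paper's strategy: localize the exit, pass to a perturbed stationary polymer via monotonicity, and finish with a random-walk maximal inequality. Steps~1 and~3 are right, and the drift bound $\delta a\lesssim t\sqrt a$ (using $a\le\tfrac12 t^{2/3}N^{2/3}$) is the same computation the paper does. The gap is exactly where you place it, in Step~2.

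The reference you invoke, \cite[Thm.~3.28]{bas-sep-she-23}, sandwiches increments of the \emph{bulk} free energy $\wt Z_{0,\bullet}$; it does not apply to $Z^W_\bullet$ with a general boundary, and the paper does not in fact produce a two-sided $\eta_\pm$ sandwich of $Z^W$-increments. Instead it argues one-sidedly (handling $k\ge0$ only, by symmetry) as follows. After using Theorem~\ref{exit_q} (this is where \eqref{up_bd} enters) to restrict to $|\tau|\le t^{2/3}N^{2/3}$ at cost $e^{-Ct}$, it \emph{hybridizes} the boundary: keep $W$ on $\mathcal L_0^{t^{2/3}N^{2/3}}$ and place fresh independent $\lambda$-boundary weights outside, with $\lambda=\mu/2+q_0t^{2/3}N^{-1/3}$. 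On the restricted paths the hybrid partition function $Z^*$ coincides with $Z^W$. Then the deterministic corner-flipping lemmas (Propositions~\ref{ratio_mono2} and~\ref{ratio_mono1}) give, for each $k\ge0$,
\[
\frac{Z^*_{N+\olsi k}(|\tau|\le t^{2/3}N^{2/3})}{Z^*_N(|\tau|\le t^{2/3}N^{2/3})}
\ \le\
\frac{Z^*_{N+\olsi k}(\tau\ge t^{2/3}N^{2/3})}{Z^*_N(\tau\ge t^{2/3}N^{2/3})}
\ =\
\frac{Z^\lambda_{0,N+\olsi k}(\tau\ge t^{2/3}N^{2/3})}{Z^\lambda_{0,N}(\tau\ge t^{2/3}N^{2/3})}.
\]
Finally, choosing $q_0$ large enough pushes the $\lambda$-characteristic direction so that $\{\tau\ge t^{2/3}N^{2/3}\}$ carries most of the $Q^\lambda$-mass (Proposition~\ref{stat_exit_est}), which removes the restriction on the right at a further cost $e^{-Ct}$ and leaves the pure stationary ratio. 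That is the missing mechanism; once it is in place, your Step~3 is exactly the paper's conclusion via Theorem~\ref{stat} and Theorem~\ref{max_sub_exp}.
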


\begin{proof}
First, by Theorem  \ref{exit_q},
on an event with probability at least $1-e^{-Ct}$, we have
$$Q^W_{ \mathcal{L}^a_{N}}(|\tau|> t^{2/3}N^{2/3}) <1/10,$$
which implies
$$\fgs_{ \mathcal{L}^a_{N}}(|\tau|\leq t^{2/3}N^{2/3}) \geq \fgs_{ \mathcal{L}^a_{N}}(|\tau|> t^{2/3}N^{2/3}).$$
Hence, on the same event, it holds that
\begin{align*}
\fgs_{ \mathcal{L}^a_{N}} \leq \fgs_{ \mathcal{L}^a_{N}}(|\tau|\leq t^{2/3}N^{2/3}) + \log 2.
\end{align*}
Then, to prove the proposition, it suffices for us to bound the following probability
\begin{equation}\label{goal_lf}
\mathbb{P}\Big(\fgs_{\mathcal{L}^a_{N}}(|\tau| \leq t^{2/3}N^{2/3}) - \fgs_{ N}(|\tau| \leq t^{2/3}N^{2/3}) \geq C't \sqrt{a}\Big).
\end{equation}

Next, let 
\begin{equation}\label{fix_lam}
\lambda = \frac{\mu}{2} + q_0t^{2/3}N^{-1/3}
\end{equation}
where the constant $q_0$ will be chosen later below \eqref{lam_ineq}. Once $q_0$ is fixed, we may lower the value of $c_0$ from the assumption of our proposition so that $\lambda \in [\mu/2, 2\mu/3]$.  We enrich our probability space by introducing another set of independent $\lambda$-boundary weights along $\mathcal{L}_0$, which defines a (ratio) stationary polymer whose free energy is $\log Z^\lambda_{0, \bbullet}$.

Let us introduce another free energy $\log Z^*$ which uses these new $\lambda$-boundary weights on $\mathcal{L}_0 \setminus \mathcal{L}_0^{t^{2/3}N^{2/3}}$. Note that by definition,
\begin{align*}&\fgs_{\mathcal{L}^a_{N}}(|\tau| \leq t^{2/3}N^{2/3}) - \fgs_{ N}(|\tau| \leq t^{2/3}N^{2/3}) \\
& \qquad \qquad \qquad= \log Z^*_{\mathcal{L}^a_{N}}(|\tau| \leq t^{2/3}N^{2/3}) - \log Z^*_{N}(|\tau| \leq t^{2/3}N^{2/3}).
\end{align*}
Using a union bound and the fact that 
$$\log Z^*_{\mathcal{L}^a_{N}}(|\tau| \leq t^{2/3}N^{2/3}) \leq \max_{-a\leq k \leq a}\log Z^*_{N+\olsi{k}}(|\tau| \leq t^{2/3}N^{2/3}) + 10 \log a,$$
in order to show \eqref{goal_lf}, it suffices for us to bound 
\begin{equation}\label{star_est}
\mathbb{P}\Big(\max_{0\leq k \leq a}\log Z^*_{ N+\olsi{k}}(|\tau| \leq t^{2/3}N^{2/3}) - \log Z^*_{N}(|\tau| \leq t^{2/3}N^{2/3}) \geq C't \sqrt{a}\Big).
\end{equation}
To bound this, we will compare the probability above with the right tail of the running maximum of a random walk with i.i.d.~steps.

Recall $\log Z^\lambda$ is the free energy with the $\lambda$-boundary weights. In the calculation below, the inequality \eqref{prop_ineq} follows from Proposition \ref{ratio_mono2} and \eqref{prop_ineq1} follows from Proposition \ref{ratio_mono1}. For each integer $k$ with $0 \leq k \leq a$, it holds that
\begin{align}
&\log Z^*_{ N+\olsi{k}}(|\tau| \leq t^{2/3}N^{2/3}) - \log Z^*_{0, N}(|\tau| \leq t^{2/3}N^{2/3})\nonumber \\
& \leq  \log Z^*_{ N+\olsi{k}}(\tau \geq 0) - \log Z^*_{ N}(\tau\geq0) \label{prop_ineq} \\
& \leq  \log Z^*_{ N+\olsi{k}}(\tau \geq t^{2/3}N^{2/3}) - \log Z^*_{ N}(\tau\geq t^{2/3}N^{2/3}) \label{prop_ineq1} \\
& =\log Z^\lambda_{0, N+\olsi{k}}(\tau \geq t^{2/3}N^{2/3}) - \log Z^\lambda_{0, N}(\tau \geq t^{2/3}N^{2/3}).\label{lam_eq1}
\end{align}

Then, we shall show that there exists an event $A$ with probability at least $1-e^{-Ct}$ such that on the event $A$, for each $0\leq k \leq a$ it holds that
\begin{equation}\label{lam_eq2}
\eqref{lam_eq1} \leq  \log Z^\lambda_{0, N+\olsi{k}} - \log Z^\lambda_{0, N} + \log 2.\end{equation}
Before proving \eqref{lam_eq2}, we see that  \eqref{star_est} can then be bounded as
$$\eqref{star_est} \leq \mathbb{P}\Big(\max_{0\leq k \leq a}\log Z^\lambda_{0, N+\olsi{k}} - \log Z^\lambda_{0, N} \geq C't\sqrt{a}\Big) + \mathbb{P}(A^c).$$
By Theorem \ref{stat}, $\{\log Z^\lambda_{0, N+\olsi{k}} - \log Z^\lambda_{0, N} \}_{0\leq k\leq a}$ is a random walk with i.i.d.~steps, whose step distribution is $\log (\text{Ga}^{-1}(\mu-\lambda)) - \log (\text{Ga}^{-1} (\lambda))$. By Taylor's theorem, the expectation of the steps of this random walk is bounded inside a closed interval $[-c'q_0t^{2/3}N^{-1/3}, c'q_0t^{2/3}N^{-1/3}]$, where $c'$ is a constant depending only on $\mu$. We fix the  constant $C'$ in \eqref{star_est} sufficiently large (depending on $q_0$ and $\mu$) so that 
$$ac'q_0t^{2/3}N^{2/3} \leq \tfrac{1}{2}C't\sqrt{a}.$$
Then, in the calculation below, we normalize the random walk $\{\log Z^\lambda_{0, N+\olsi{k}} - \log Z^\lambda_{0, N} \}_{0\leq k\leq a}$ to be mean zero and apply Theorem \ref{max_sub_exp}, 
\begin{align*}
&\mathbb{P}\Big(\max_{0\leq k \leq a}\log Z^\lambda_{0, N+\olsi{k}} - \log Z^\lambda_{0, N} \geq C't\sqrt{a}\Big) \\
& \leq \mathbb{P}\Big(\max_{0\leq k \leq a}\Big\{\log Z^\lambda_{0, N+\olsi{k}} - \log Z^\lambda_{0, N} - \mathbb{E}[\log Z^\lambda_{0, N+\olsi{k}} - \log Z^\lambda_{0, N} ]\Big\}\geq \tfrac{1}{2}C't\sqrt{a}\Big) \\
&\leq e^{-C\min\{t^{2}\!,\, t\sqrt{a}\}}.
\end{align*}
This finishes the proof if we assume \eqref{lam_eq2} holds on a large probability event.

\begin{figure}
\begin{center}

\tikzset{every picture/.style={line width=0.75pt}} 

\begin{tikzpicture}[x=0.75pt,y=0.75pt,yscale=-1,xscale=1]

\draw    (185.67,118.17) -- (315.67,245.17) ;
\draw  [fill={rgb, 255:red, 0; green, 0; blue, 0 }  ,fill opacity=1 ] (326.83,72.58) .. controls (326.83,70.6) and (328.44,69) .. (330.42,69) .. controls (332.4,69) and (334,70.6) .. (334,72.58) .. controls (334,74.56) and (332.4,76.17) .. (330.42,76.17) .. controls (328.44,76.17) and (326.83,74.56) .. (326.83,72.58) -- cycle ;
\draw  [dash pattern={on 0.84pt off 2.51pt}]  (287.35,256.25) -- (330.42,72.58) ;
\draw [shift={(286.67,259.17)}, rotate = 283.2] [fill={rgb, 255:red, 0; green, 0; blue, 0 }  ][line width=0.08]  [draw opacity=0] (8.93,-4.29) -- (0,0) -- (8.93,4.29) -- cycle    ;
\draw  [fill={rgb, 255:red, 0; green, 0; blue, 0 }  ,fill opacity=1 ] (226.83,160.58) .. controls (226.83,158.6) and (228.44,157) .. (230.42,157) .. controls (232.4,157) and (234,158.6) .. (234,160.58) .. controls (234,162.56) and (232.4,164.17) .. (230.42,164.17) .. controls (228.44,164.17) and (226.83,162.56) .. (226.83,160.58) -- cycle ;
\draw  [fill={rgb, 255:red, 0; green, 0; blue, 0 }  ,fill opacity=1 ] (247.08,181.67) .. controls (247.08,179.69) and (248.69,178.08) .. (250.67,178.08) .. controls (252.65,178.08) and (254.25,179.69) .. (254.25,181.67) .. controls (254.25,183.65) and (252.65,185.25) .. (250.67,185.25) .. controls (248.69,185.25) and (247.08,183.65) .. (247.08,181.67) -- cycle ;
\draw  [fill={rgb, 255:red, 0; green, 0; blue, 0 }  ,fill opacity=1 ] (266.83,201.58) .. controls (266.83,199.6) and (268.44,198) .. (270.42,198) .. controls (272.4,198) and (274,199.6) .. (274,201.58) .. controls (274,203.56) and (272.4,205.17) .. (270.42,205.17) .. controls (268.44,205.17) and (266.83,203.56) .. (266.83,201.58) -- cycle ;

\draw  [fill={rgb, 255:red, 255; green, 255; blue, 255 }  ,fill opacity=1 ] (290.83,224.58) .. controls (290.83,222.6) and (292.44,221) .. (294.42,221) .. controls (296.4,221) and (298,222.6) .. (298,224.58) .. controls (298,226.56) and (296.4,228.17) .. (294.42,228.17) .. controls (292.44,228.17) and (290.83,226.56) .. (290.83,224.58) -- cycle ;
\draw (193,160.4) node [anchor=north west][inner sep=0.75pt]    {\small $( 0,0)$};
\draw (116,180.4) node [anchor=north west][inner sep=0.75pt]    {\small $( t^{2/3}N^{2/3} ,-t^{2/3}N^{2/3})$};
\draw (126,203.4) node [anchor=north west][inner sep=0.75pt]    {\small $( 2t^{2/3}N^{2/3} ,-2t^{2/3}N^{2/3})$};
\draw (310.04,168.78) node [anchor=north west][inner sep=0.75pt]    {$\boldsymbol\xi [ \lambda ]$};
\draw (336,60.4) node [anchor=north west][inner sep=0.75pt]    {\small $( N,\ N)$};

\draw (300,211.4) node [anchor=north west][inner sep=0.75pt]    {${\bf x}$};

\end{tikzpicture}

\captionsetup{width=0.8\textwidth}
\caption{The distance between $(0,0)$ and ${\bf x}$ is lower bounded by $c''q_0 t^{2/3}N^{2/3}$. By choosing $q_0$ sufficiently large, the ${\bf x}$ will be to the right and below the point $(2t^{2/3}N^{2/3} ,-2t^{2/3}N^{2/3})$. } \label{fig_fix_q}

\end{center}

\end{figure}
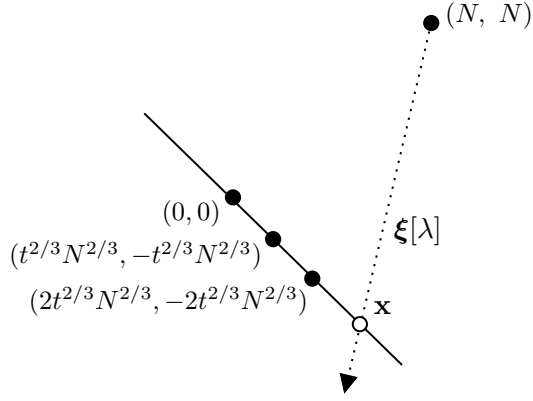

Finally, to complete the proof, we show that there is a large probability event $A$ with $\mathbb{P}(A) \geq 1- e^{-Ct}$ such that \eqref{lam_eq2} holds.
Note that $$\log Z^\lambda_{0, N+\olsi{k}}(\tau \geq t^{2/3}N^{2/3}) - \log Z^\lambda_{0, N}(\tau \geq t^{2/3}N^{2/3}) \leq  \log Z^\lambda_{0, N+\olsi{k}} - \log Z^\lambda_{0, N}(\tau \geq t^{2/3}N^{2/3}),$$
then it suffices to show that on a high probability event $A$, 
$$ \log Z^\lambda_{0, N} \leq  \log Z^\lambda_{0, N}(\tau \geq t^{2/3}N^{2/3}) + \log 2.$$
To obtain the inequality above, it suffices to show that 
\begin{equation}\label{lam_ineq}
\log Z^\lambda_{0, N}(\tau \geq t^{2/3}N^{2/3}) >\log Z^\lambda_{0, N}(\tau < t^{2/3}N^{2/3})
\end{equation}
with high probability. This follows from the standard exit estimate provided that the $q_0$ in \eqref{fix_lam} is fixed sufficiently large. So first, we will fix $q_0$. 

Let ${\bf x}$ denote the point where the $-\boldsymbol\xi[\lambda]$-directed ray from $(N,N)$ crosses $\mathcal{L}_0$, as shown in Figure \ref{fig_fix_q}. By applying Taylor's theorem to the slope of the vector $\boldsymbol\xi[\lambda]$ (defined through \eqref{char_dir}), the distance between $(0,0)$ and the point ${\bf x}$ (shown in Figure \ref{fig_fix_q}) is lower bounded by $c''q_0t^{2/3}N^{2/3}$ for some constant $c''$ depending on $\mu$. Then, we may fix $q_0$ such that ${\bf x}$ is to the right and below the point $(2t^{2/3}N^{2/3} ,-2t^{2/3}N^{2/3})$.

Next, we will define the event $A$. In the calculation below, Proposition \ref{nestedpoly} gives us the first equality. The next inequality is obtained by translating ${\bf x}$ and $(N,N)$ to ${\bf 0}$ and $N\boldsymbol{\xi}[\lambda]$, then observing that the shifted paths $-{\bf x} + \{\tau < t^{2/3}N^{2/3}\}$ are contained inside $\{\tau<-t^{2/3}N^{2/3}\}$. And the third inequality follows from Theorem \ref{stat_exit_est}. With these, we have
\begin{equation}
\begin{split}\label{def_Ac}
&\mathbb{P}(Q^\lambda_{0, N}\{\tau < t^{2/3}N^{2/3}\} \geq 1/10)
= \mathbb{P}(Q^\lambda_{{\bf x}, N}\{\tau < t^{2/3}N^{2/3}\} \geq 1/10)\\
& \leq \mathbb{P}(Q^\lambda_{0, N\boldsymbol\xi[\lambda]}\{\tau < -t^{2/3}N^{2/3}\} \geq 1/10)
\leq e^{-Ct}.
\end{split}
\end{equation}
Define 
$$A = \{Q^\lambda_{0, N}\{\tau \leq t^{2/3}N^{2/3}\} \leq 1/10\}.$$
Then \eqref{def_Ac} shows $\mathbb{P}(A) \geq 1-e^{-Ct}$, and on $A$, \eqref{lam_ineq} holds. With this, we have completed the proof of this proposition.
\end{proof}

Next, we extend the range of $t$ from the previous proposition. 
\begin{proposition}\label{all_t}
Suppose the boundary condition $W$ satisfies assumption \eqref{up_bd}. There exist positive constants $C_1, N_0, t_0$ such that for each $N\geq N_0$, $t\geq t_0$, and each positive integer $ a \leq \min\{\tfrac{1}{2}t^{2/3}N^{2/3}, N\} $, we have
$$\mathbb{P}\Big(\fgs_{\mathcal{L}^a_{N}} - \fgs_{ N} \geq C_1t \sqrt{a}\Big) \leq 
e^{-t^{1/10}}.
$$ 
\end{proposition}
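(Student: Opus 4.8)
The plan is to split on the size of $t$ relative to $\sqrt{N}$, using the constant $c_0$ from Proposition~\ref{local_fluc}. For $t_0 \le t \le c_0 N^{1/2}$ the hypothesis $a \le \tfrac12 t^{2/3}N^{2/3}$ of Proposition~\ref{local_fluc} is automatic, so that result already gives $\mathbb{P}\bigl(\fgs_{\mathcal{L}^a_{N}} - \fgs_{N} \ge C_1 t\sqrt{a}\bigr) \le e^{-C_2 t}$; enlarging $t_0$ so that $C_2 t \ge t^{1/10}$ for $t \ge t_0$, and taking the constant $C_1$ in Proposition~\ref{all_t} at least as large as the one from Proposition~\ref{local_fluc}, this is $\le e^{-t^{1/10}}$. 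Hence the only new content is the range $t > c_0 N^{1/2}$, where the constraint $a \le \min\{\tfrac12 t^{2/3}N^{2/3}, N\}$ forces $a \le N$.

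For such $t$, I would use the elementary inclusion
\[
\bigl\{\fgs_{\mathcal{L}^a_{N}} - \fgs_{N} \ge C_1 t\sqrt{a}\bigr\} \subseteq \bigl\{\fgs_{\mathcal{L}^a_{N}} - \Lambda_N \ge \tfrac12 C_1 t\sqrt{a}\bigr\} \cup \bigl\{\Lambda_N - \fgs_{N} \ge \tfrac12 C_1 t\sqrt{a}\bigr\}
\]
and bound the two events separately. The second is handled by Proposition~\ref{b_low_tail} applied with parameter $t'' := C_1 t\sqrt{a}/(2\epsilon N^{1/3})$ (for a suitably small fixed $\epsilon$): since $a \ge 1$ and $t > c_0 N^{1/2}$, one checks $t'' \ge t_0$ and that both $t'' N^{1/3}$ and $(t'')^{3/2}$ exceed $t^{1/10}$ once $N \ge N_0$, so this event has probability $\le e^{-t^{1/10}}$. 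For the first event I would exploit the monotonicity $\fgs_{\mathcal{L}^a_{N}} \le \fgs_{\mathcal{L}_N^{s'N^{2/3}}}$, valid for every $s' \ge a N^{-2/3}$ (enlarging the target segment only adds nonnegative terms), choose $s' := \max\{a N^{-2/3},\, C_3 t^{1/5}\}$ with $C_3$ large, and apply Proposition~\ref{b_up_tail}(ii). Its tail $e^{-C(s')^{1/2}}$ is at most $e^{-t^{1/10}}$ by the choice of $C_3$ (as $s' \ge C_3 t^{1/5}$), while its threshold $\epsilon s' N^{1/3}$ is at most $\tfrac12 C_1 t\sqrt{a}$ in each case of the maximum: when $s' = a N^{-2/3}$ this reads $\epsilon\sqrt{a} \le \tfrac12 C_1 t N^{1/3}$, which holds because $\sqrt{a} \le \sqrt{N}$ and $t > c_0 N^{1/2}$; when $s' = C_3 t^{1/5}$ it reduces to $a \ge (2\epsilon C_3/C_1)^2 t^{-8/5} N^{2/3}$, which holds since $a \ge 1$ and $t > c_0 N^{1/2} \ge C N^{5/12}$ for $N \ge N_0$. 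A union bound (with the numerical constants adjusted so each piece is $\le \tfrac12 e^{-t^{1/10}}$) then yields the claim.

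The main obstacle is precisely this last exponent bookkeeping in the large-$t$ regime: the ``scale condition'' $\epsilon s' N^{1/3} \le \tfrac12 C_1 t\sqrt{a}$ must be verified uniformly over the whole range $1 \le a \le N$ and all $t > c_0 N^{1/2}$, where $c_0$ is a small \emph{fixed} constant, so the available lower bound on $t$ is weak; the delicate corners are $a = O(1)$ with $t$ barely above $c_0 N^{1/2}$, and conversely extremely large $t$, where $s' \sim t^{1/5}$ can exceed $N^{1/3}$ and one must confirm that Proposition~\ref{b_up_tail} still applies to such a wide target segment. It does, since its proof proceeds through Proposition~\ref{ptl_upper} and assumption~\eqref{up_bd}, neither of which caps the width parameter relative to $N^{1/3}$; alternatively, for enormous $t$ one could bypass Proposition~\ref{b_up_tail} entirely and control $\{\fgs_{\mathcal{L}^a_{N}} - \Lambda_N \ge \tfrac12 C_1 t\sqrt{a}\}$ by the crude single-path and path-counting bounds used for \eqref{it3}, since then $t\sqrt{a} \gg N\log N$ and the relevant free energies have at most exponential right tails on the scale $N$.
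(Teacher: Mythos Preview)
Your proof is correct and follows essentially the same route as the paper: handle $t\le c_0N^{1/2}$ by Proposition~\ref{local_fluc}, and for $t>c_0N^{1/2}$ center at $\Lambda_N$ and appeal to Propositions~\ref{b_up_tail} and~\ref{b_low_tail}. The only cosmetic difference is that the paper disposes of the large-$t$ regime by an explicit dichotomy $a\le N^{2/3}$ versus $N^{2/3}\le a\le N$ (using $t\sqrt a\ge t$ and $t\sqrt a\ge tN^{1/3}$ respectively), whereas you merge the two cases via the adaptive width $s'=\max\{aN^{-2/3},\,C_3 t^{1/5}\}$; the paper's split makes the exponent bookkeeping you flag as the ``main obstacle'' essentially disappear.
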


\begin{proof}
First, note that when $t\leq c_0 N^{1/2}$ the result holds from the previous proposition. Now, suppose $t\geq c_0 N^{1/2}$, let $t= zN^{1/2}$ for some real positive number $z\geq c_0$. Then, if $a \leq N^{2/3}$,we have 
\begin{align*}
\mathbb{P}\Big(\fgs_{\mathcal{L}^a_{N}} - \fgs_{ N} \geq C_1t \sqrt{a}\Big) 
& \leq \mathbb{P}\Big(\fgs_{\mathcal{L}^a_{N}} - \fgs_{ N} \geq C_1t \Big)\\
& \leq \mathbb{P}\Big(\fgs_{\mathcal{L}^{N^{2/3}}_{N}} - \fgs_{ N} \geq C_1(zN^{1/6} )N^{1/3}\Big)\\
\text{(by Proposition \ref{b_up_tail}  and Proposition \ref{b_low_tail})\quad } & \leq e^{-CzN^{1/6}} \leq e^{-t^{1/10}}.
\end{align*}
Otherwise, if $N^{2/3} \leq a\leq \min\{\tfrac{1}{2}t^{2/3}N^{2/3}, N\} $, we have
\begin{align*}
\mathbb{P}\Big(\fgs_{\mathcal{L}^a_{N}} - \fgs_{ N} \geq C_1t \sqrt{a}\Big) 
& \leq \mathbb{P}\Big(\fgs_{\mathcal{L}^a_{N}} - \fgs_{ N} \geq C_1tN^{1/3}\Big)\\
\text{by Proposition \ref{b_up_tail} and  Proposition \ref{b_low_tail} \quad } &  \leq e^{-t^{1/10}}.
\end{align*}
\end{proof}

The next proposition captures the transversal fluctuation of the favorite point in the polymer.
Fix $N/2\leq r \leq N$.  Let ${\bf p}_*$ denote the random maximizer in   
$$\max_{{\bf p}\in \mathcal{L}_r} \big\{\fgs_{ {\bf p}}  + \fg_{{\bf p}, N} \big\} = \fgs_{ {\bf p}_*}  + \fg_{{\bf p}_*, N} .$$
The proposition below captures the KPZ transversal fluctuation which says that the maximizer ${\bf p}_*$ cannot be too far from the diagonal on the local scale $(N-r)^{2/3}$.

\begin{proposition}\label{fluc_bound_1}
Suppose the boundary condition $W$ satisfies assumption \eqref{up_bd}. There exist positive constants  $C_1, c_0, t_0, N_0 $ such that for each $N\geq N_0$, $N/2 \leq r \leq N-c_0$ and $t\geq t_0$, we have
$$\mathbb{P}(|{\bf p}_*-(r,r)|_\infty > t(N-r)^{2/3}) \leq e^{-C_1{t}^{3/2}}.$$
\end{proposition}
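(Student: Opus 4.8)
The plan is to localise the crossing point ${\bf p}_*$ by a union bound over dyadic transversal scales. Write $M:=N-r$, so that $c_0\le M\le N/2\le r$. First I would dispose of trivialities: any ${\bf p}\in\mathcal{L}_r$ with $|{\bf p}-(r,r)|_\infty>M$ has $\wt Z_{{\bf p},N}=0$ and so cannot be the maximiser, and the event in question is empty as soon as $tM^{2/3}\ge M$, i.e.\ $t\ge M^{1/3}$; hence one may assume $t^3<M$, in particular $M$ is large. Using $\fgs_{{\bf p}_*}+\fg_{{\bf p}_*,N}\ge\fgs_{(r,r)}+\fg_{(r,r),N}$, it is enough to show that, off an event of probability $\le e^{-C_1t^{3/2}}$,
\begin{equation}\label{e:fb1suff}
\fgs_{\bf p}+\fg_{{\bf p},N}<\fgs_{(r,r)}+\fg_{(r,r),N}\qquad\text{for every }{\bf p}=(r,r)+\olsi k\text{ with }tM^{2/3}<|k|\le M.
\end{equation}
I would split the range of $|k|$ into dyadic windows: for $j\in\{t,2t,4t,\dots\}$ with $j\le M^{1/3}$, handle all ${\bf p}=(r,r)+\olsi k$ with $jM^{2/3}<|k|\le 2jM^{2/3}$ simultaneously.

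For the bulk factor: $\max_{|k|>jM^{2/3}}\fg_{(r,r)+\olsi k,N}\le\fg_{\mathcal{L}_r\setminus\mathcal{L}_r^{jM^{2/3}},\,(N,N)}$, and by the path-reversal symmetry this quantity has the same law as $\fg_{\mathcal{L}_0^0,\,\mathcal{L}_M\setminus\mathcal{L}_M^{jM^{2/3}}}$, which Proposition~\ref{trans_fluc_loss} (with $s=0$, $N=M$, parameter $j$) bounds by $\Lambda_M-C_3j^2M^{1/3}$ off probability $\le e^{-Cj^3}$. Proposition~\ref{low_ub} applied to $\fg_{(r,r),N}$ (which has the law of $\fg_{0,M}$), together with $M^{1/3}>t$, gives $\fg_{(r,r),N}>\Lambda_M-\tfrac{C_3}{8}t^2M^{1/3}$ off probability $\le e^{-Ct^3}$. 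So, off probability $\le e^{-Cj^3}+e^{-Ct^3}$, every ${\bf p}$ in the $j$-th window satisfies $\fg_{(r,r),N}-\fg_{{\bf p},N}\ge\tfrac12C_3j^2M^{1/3}$.

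The crux is then to bound, for each dyadic $j\ge t$, the increment $\fgs_{\bf p}-\fgs_{(r,r)}\le\fgs_{\mathcal{L}_r^{2jM^{2/3}}}-\fgs_{(r,r)}$ by something strictly below $\tfrac12C_3j^2M^{1/3}$, off an event whose probability is summable over dyadic $j\ge t$ to $e^{-C_1t^{3/2}}$. The difficulty is that when $M\ll r$ the profile $\fgs_{\bbullet}$ along $\mathcal{L}_r$ fluctuates on scale $r^{1/3}\gg M^{1/3}$, so comparing $\fgs_{\bf p}$ with $\Lambda_r$ one point at a time is hopelessly lossy — one must control an increment. I would split into two regimes. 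If $M>c_4r$ (so $M^{1/3}\gtrsim r^{1/3}$), Proposition~\ref{b_up_tail}(i) with window $2jM^{2/3}$ (i.e.\ parameter $s_j:=\max\{2j(M/r)^{2/3},t\}\gtrsim j$) gives $\fgs_{\mathcal{L}_r^{2jM^{2/3}}}-\Lambda_r<\epsilon j^2M^{1/3}$ off probability $\le e^{-Cj^{3/2}}$, while $\fgs_{(r,r)}\ge\fg_{0,r}>\Lambda_r-\tfrac{C_3}{16}t^2M^{1/3}$ off probability $\le e^{-Ct^3}$ by Proposition~\ref{low_ub}; choosing $\epsilon$ small enough yields the desired bound. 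If $M\le c_4r$, I would instead invoke Proposition~\ref{local_fluc} with endpoint $(r,r)$, window $a_j:=2jM^{2/3}$ and parameter $\theta_j:=\tfrac{c'}{2}j^{3/2}$ (with $c'=c'(C_1,C_3)$ chosen so that $C_1\theta_j\sqrt{a_j}<\tfrac12C_3j^2M^{1/3}$): the hypothesis $a_j\le\tfrac12\theta_j^{2/3}r^{2/3}$ of that proposition holds precisely because $M/r\le c_4$ is small, and $t_0\le\theta_j\le c_0r^{1/2}$ holds for $c_4$ small and $t_0$ large (recall $t^3<M\le c_4r$), so $\fgs_{\mathcal{L}_r^{a_j}}-\fgs_{(r,r)}<C_1\theta_j\sqrt{a_j}<\tfrac12C_3j^2M^{1/3}$ off probability $\le e^{-C_2\theta_j}=e^{-Cj^{3/2}}$.

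To finish, take a union bound over the $O(\log M)$ dyadic scales $j\in\{t,2t,\dots\}$: the bad events contribute $\sum_j\bigl(e^{-Cj^3}+e^{-Ct^3}+e^{-Cj^{3/2}}\bigr)\le e^{-C_1t^{3/2}}$ (each sum is dominated by its first term, since the exponents grow geometrically, and $e^{-Ct^3}\le e^{-C_1t^{3/2}}$), and on the complement the two estimates above give \eqref{e:fb1suff}, hence $|{\bf p}_*-(r,r)|_\infty\le tM^{2/3}=t(N-r)^{2/3}$. The step I expect to be the main obstacle is the $\fgs$-increment bound in the regime $M\ll r$: it is what forces the two-scale estimate Proposition~\ref{local_fluc} (rather than a one-point tail bound) and the choice of a fluctuation parameter $\theta_j$ growing like $j^{3/2}$ with the dyadic offset scale $jM^{2/3}$, so that the per-scale error stays below the quadratically growing bulk loss $C_3j^2M^{1/3}$ while the per-scale failure probability remains summable to $e^{-C_1t^{3/2}}$.
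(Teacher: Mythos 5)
Your proposal takes essentially the same route as the paper's proof: decompose the transversal window into blocks on scale $(N-r)^{2/3}$, bound the bulk loss by Proposition~\ref{trans_fluc_loss}, and bound the boundary increment by Proposition~\ref{local_fluc} with fluctuation parameter $\asymp |h|^{3/2}$ when $N-r\ll r$ and by Propositions~\ref{b_up_tail}, \ref{b_low_tail} when $N-r\asymp r$; the only cosmetic difference is that the paper sums over consecutive integer offsets $h$ where you use dyadic $j$. One minor slip to fix in the final accounting: the event $\{\fg_{(r,r),N}<\Lambda_{N-r}-\tfrac{C_3}{8}t^2(N-r)^{1/3}\}$ is a single event and should be pulled out of the union over $j$, since as written $\sum_j e^{-Ct^3}=O(\log(N-r))\,e^{-Ct^3}$ need not remain below $e^{-C_1t^{3/2}}$ uniformly in $N-r$; counting it once, the estimate closes.
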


\begin{proof}
Abbreviate  $J^{h}=\mathcal{L}^{(N-r)^{2/3}}_{r+\olsi{2h(n-r)^{2/3}}}$.  
We bound the probability  as follows. 
\begingroup
\allowdisplaybreaks
\begin{align}
&\mathbb{P}(|{\bf p}_*-(r,r)|_\infty > t(N-r)^{2/3}) \nonumber\\
&\leq \mathbb{P}\Big(\max_{{\bf p}\in \mathcal{L}_r\setminus \mathcal{L}_r^{t(N-r)^{2/3}}}\Big\{\fgs_{{\bf p}}  + \fg_{{\bf p}, N} \Big \}  > \fgs_{ r}  + \fg_{r, N} \Big)\nonumber\\
&\leq \sum_{|h|= \floor{t/2}}^{(N-r)^{1/3}} \mathbb{P}\Big( \max_{{\bf x}\in J^h}\fgs_{ {\bf x}} + \max_{{\bf x}\in J^h} \fg_{{\bf x}, N}  > \fgs_{ r}  + \fg_{r, N}    \Big)\nonumber\\
&=  \sum_{|h|= \floor{t/2}}^{(N-r)^{1/3}}\mathbb{P}\Big(\Big[ \max_{{\bf x}\in J^h} \fgs_{ {\bf x}} - \fgs_{ r} \Big]   + \Big[\max_{{\bf x}\in J^h}\fg_{{\bf x}, N} - \fg_{r, N}  \Big]    > 0\Big)\nonumber\\
&=  \sum_{|h|= \floor{t/2}}^{(N-r)^{1/3}} \mathbb{P}\Big( \max_{{\bf x}\in J^h} \fgs_{ {\bf x} } - \fgs_{ r}  \geq Dh^2(N-r)^{1/3} \Big)\label{far_max}\\
&\qquad \qquad \qquad \qquad + \mathbb{P}\Big(\max_{{\bf x}\in J^h}\fg_{ {\bf x} , N} - \fg_{r, N}  \geq-Dh^2(N-r)^{1/3}\Big).\label{close_max}
\end{align}
\endgroup
where $D$ is a small positive constant that 
we will fix later.

For \eqref{close_max}, provided $t_0$ is fixed sufficiently large,
we may upper bound it using Proposition \ref{trans_fluc_loss} and Proposition \ref{low_ub} as below
\begin{align*}
\eqref{close_max} &= \mathbb{P}\Big([ \max_{{\bf x}\in J^h} \fg_{{\bf x}, N} -\Lambda_{N-r}] -[\fg_{r, N} - \Lambda_r] \geq -Dh^2(N-r)^{1/3} \Big)\\
& \leq \mathbb{P}\Big(\max_{{\bf x}\in J^h}\fg_{{\bf x}, N} -\Lambda_{N-r} \geq -2Dh^2(N-r)^{1/3} \Big) \\
& \qquad \qquad + \mathbb{P}\Big(\fg_{r, N} -\Lambda_{N-r} \leq - Dh^2(N-r)^{1/3} \Big) \\ 
&\leq e^{-C|h|^3}
\end{align*}
provided $D \leq \tfrac{1}{10}C^*$ where $C^* = C_1$ from Proposition \ref{trans_fluc_loss}.

For \eqref{far_max}, we will split the estimate into two cases depending on the value of $N-r$, whether $N-r\leq \epsilon_0 r$ or $N-r\geq \epsilon_0 r$, for $\epsilon_0$ which we will fix below between the math displays \eqref{prob_estt} and  \eqref{fix_D}.

When $N-r\leq \epsilon_0 r$, we upper bound \eqref{far_max} by 
\begin{equation}\label{bound_small_r}
\eqref{far_max} \leq \mathbb{P}\Big(\max_{{\bf x}\in \mathcal{L}_r^{4|h|(N-r)^{2/3}}}\fgs_{ {\bf x} } - \fgs_{ r}  \geq Dh^2(N-r)^{1/3}\Big).
\end{equation}
To apply Proposition \ref{local_fluc}, let us set $a = 4|h|(N-r)^{2/3}$ and $t = 8|h|^{3/2}\epsilon_0^{3/2}$, and we have
\begin{align}\eqref{bound_small_r} =  \mathbb{P}\Big(\max_{{\bf x}\in \mathcal{L}_r^{a}}\log Z_{{\bf x}, N} - \log Z_{r, N}  \geq \tfrac{D}{16\epsilon_0^{3/2}}t\sqrt{a}\Big). \label{prob_estt}
\end{align}
Next, we fix $\epsilon_0$ sufficiently small so that $t  \leq \wt c_0 r^{1/2} $, where $\wt c_0 = c_0$ from Proposition \ref{local_fluc}. Next, we lower the value of $D$ to get
\begin{equation}\label{fix_D}
\tfrac{\frac{1}{10}D}{8\epsilon_0^{3/2}} \leq C^*
\end{equation}
where $C^* = C_1$ appearing in Proposition \ref{local_fluc}. Finally, by Proposition \ref{local_fluc}, the above probability in \eqref{prob_estt} will always be bounded by $e^{-Ct} = e^{-C|h|^{3/2}}$.

On the other hand, let us look at the case when $N-r\geq \epsilon_0r$, and note that now $\epsilon_0$ has already been fixed. Since the value of $N-r$ will always be less than $r$ when $r \geq N/2$, let us set $N-r = zr$ for some positive constant $z$ with $\epsilon_0 \leq z \leq 1$.  We can obtain the following upper bound as
\begin{align*}
&\mathbb{P}\Big( \max_{{\bf x}\in J^h} \log Z^{W}_{ {\bf x}} - \fgs_{ r}  \geq Dh^2(N-r)^{1/3} \Big)\\
& \leq \mathbb{P}\Big( \max_{{\bf x}\in \mathcal{L}_r^{4|h|(N-r)^{2/3} }} \fgs_{ {\bf x}} - \fgs_{ r}  \geq Dh^2(N-r)^{1/3} \Big)\\
& = \mathbb{P}\Big( \max_{{\bf x}\in \mathcal{L}_r^{4|h|(N-r)^{2/3} }} \fgs_{ {\bf x}} - \fgs_{ r}  \geq Dh^2z^{1/3}r^{1/3} \Big)\\
& \leq \mathbb{P}\Big( \max_{{\bf x}\in \mathcal{L}_r^{4|h|(N-r)^{2/3} }} 
 \log Z^{W}_{ {\bf x}} - \Lambda_r  \geq \tfrac{1}{2} Dh^2z^{1/3}r^{1/3} \Big) + \mathbb{P}\Big(  \fgs_{ r}  - \Lambda_r \leq -\tfrac{1}{2}Dh^2z^{1/3}r^{1/3} \Big)\\
& \leq e^{-C|h|^{3/2}} \qquad \text{by Proposition \ref{b_up_tail} and Proposition \ref{b_low_tail}.}
\end{align*}

To summarize, the arguments above show that 
$$\sum_{|h| = \floor{t/2}}^{(N-r)^{1/3}} \eqref{close_max} + \eqref{far_max}\leq  \sum_{|h| = \floor{t/2}}^{\infty}e^{-C|h|^{3/2}} \leq e^{-Ct^{3/2}},$$
with this, we have finished the proof of this proposition.
\end{proof}

Finally, we prove Theorem \ref{nest}.
\begin{proof}[Proof of Theorem \ref{nest}]

Note because of 
$$\log Z^W_{ N} \leq \max_{{\bf p}\in \mathcal{L}_r}\{\log Z^W_{{\bf p}}  + \log \wt{Z}_{{\bf p} , N}\} + 2\log(N-r),$$
it suffices to show that 
\begin{align*}
&\mathbb{P} \Big( \max_{{\bf p}\in \mathcal{L}_r} \Big \{\log Z^W_{{\bf p}}  + \log \wt{Z}_{{\bf p} , N} \Big\} - \Big[\log  Z^W_{r}  + \log \wt{Z}_{r,N} \Big]\geq t(N-r)^{1/3}\Big) \leq e^{- t^{1/10}}.
\end{align*}
By a union bound, we split the above maximum over ${\bf p}\in \mathcal{L}_r$ to ${\bf p}\not\in \mathcal{L}_r^{t(N-r)^{2/3}}$ and ${\bf p}\in \mathcal{L}_r^{t(N-r)^{2/3}}$.  

In the case when ${\bf p}\in \mathcal{L}_r^{t(N-r)^{2/3}}$, we have
\begin{align}
&\mathbb{P} \Big( \max_{{\bf p}\in \mathcal{L}_r^{t(N-r)^{2/3}}}\Big\{\log Z^W_{{\bf p}}  + \log \wt{Z}_{{\bf p} , N}\Big\}  - \Big[\log  Z^W_{r}  + \log \wt{Z}_{r,N}  \Big]\geq t(N-r)^{1/3}\Big)\nonumber\\
& \leq   \mathbb{P} \Big(\max_{{\bf p}\in \mathcal{L}_r^{t(N-r)^{2/3}}} \log Z^W_{{\bf p}}  - \log  \wt{Z}_{0, r}  \geq \tfrac{1}{2}t(N-r)^{1/3}\Big) \label{far_term} \\
&\qquad \qquad + \mathbb{P} \Big(\max_{{\bf p}\in \mathcal{L}_r^{t(N-r)^{2/3}}}\log \wt{Z}_{{\bf p}, N}  - \log \wt{Z}_{r,N} \geq \tfrac{1}{2}t(N-r)^{1/3}\Big). \label{close_term}
\end{align}
Both probabilities \eqref{far_term} and \eqref{close_term} can be upper bounded by $e^{-Ct^{1/10}}$ using Proposition \ref{all_t}.
In the case when ${\bf p}\not\in \mathcal{L}_r^{t(N-r)^{2/3}}$, this follows from Proposition \ref{fluc_bound_1}.
\end{proof}
\addtocontents{toc}{\protect\setcounter{tocdepth}{2}}

\section{Parallel results in exponential LPP}
\label{s:lpp}
As already mentioned in the introduction, the results of this paper all have analogues in the setting of exponential LPP on $\Z^2$. Our proofs carry over almost verbatim to this set-up upon replacing the free energy by the last passage time, the restricted free energy over a class of paths by the maximum passage time among the paths in that class, and the quenched polymer measure by the measure that puts mass one on the geodesic in the LPP setting. We shall not be repeating the arguments, but will formulate proper statements to this effect. 

Let $\{Y_{v}\}_{v\in \mathcal{L}_0^{>}}$ denote a collection of i.i.d.\ rate one exponential random variables. Let $H_{k}$ be a sequence of random variables with $H_0=0$ such that $\{\wt{X}_i:=H_{i}-H_{i-1}\}$ is a collection of independent mean $0$ random variables satisfying the following conditions: 
\begin{enumerate}
    \item[(i)] There exists $\lambda_1,K_1>0$ such that for all $i$,
    $\log \mathbb{E}(\exp(\lambda |\wt{X}_{i}|))\le K_1\lambda^2$ for $\lambda\in [-\lambda_1,\lambda_1]$. 
    \item[(ii)] $\inf_{i} \Var \wt{X}_i>0$. 
\end{enumerate}

The last passage time between ${\bf u}$ and ${\bf v}$ (without any boundary condition), denoted $\wt{T}_{{\bf u},{\bf v}}$ is defined by 
$$\wt T_{{\bf u},{\bf v}}:=\max_{{\bf x}_{\bbullet} \in\mathbb{X}_{{\bf a}, {\bf b}}}\sum_{i=1}^{|{\bf v} - {\bf u}|_1} Y_{{\bf x}_i}.$$
For $\mathbf{v}\in \mathcal{L}^{>}_{0}$, let us define the last passage time to $v$ with initial condition $H$ by 
$$T^H_{\bf v}=\max_{k\in \Z} \left( H_{k}+T_{(k,-k),{\bf v}}\right).$$
We then have the following results corresponding to Theorems \ref{thm_r_large} and \ref{thm_r_small}. 

\begin{theorem}\label{thm_r_large_lpp}
Consider exponential LPP with initial condition $H$ on $\mathcal{L}_{\bf 0}$ satisfying the above hypotheses.
There exist positive constants $C_1, C_2, c_0, N_0$ such that, whenever $N\geq N_0$ and $N/2 \le r \leq N-c_0$,
we have 
$$ 1-C_1\Big(\frac{N-r}{N}\Big)^{2/3} \leq \textup{$\mathbb{C}$orr}\Big(T^{H}_{(r,r)}, T^{H}_{(N,N)}\Big) \leq 1-C_2\Big(\frac{N-r}{N}\Big)^{2/3}.$$ 
\end{theorem}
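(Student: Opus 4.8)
\textbf{Proof plan for Theorem \ref{thm_r_large_lpp}.}
The plan is to mirror the proof of Theorem \ref{thm_r_large} for the inverse-gamma polymer, substituting last passage times for free energies throughout. The zero-temperature analogues of all the ingredients are available: Proposition \ref{var_i} (bulk variance bound) becomes the standard $\Theta(N^{2/3})$ variance for point-to-point exponential LPP; Theorem \ref{var_s} (variance of the free energy with boundary) becomes the corresponding $\Theta(N^{2/3})$ bound for $T^H_{(N,N)}$, proved exactly as in Section \ref{s:est_proof} by coupling with the stationary exponential LPP (Burke-type boundary) and using the LPP moderate deviation estimates of \cite{BFP12, cgm_low_up}; and Theorem \ref{nest} (the nesting/superadditivity estimate) becomes the statement that $T^H_{(N,N)}-[T^H_{(r,r)}+\wt T_{(r,r),(N,N)}]$ has a stretched-exponential right tail at scale $(N-r)^{1/3}$, which follows from the LPP versions of the transversal fluctuation and local fluctuation estimates in Section \ref{est_poly_bdry}.

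First I would record the elementary variance identity $\Var(U-V)\ge \inf_\lambda \Var(U-\lambda V)=(1-\mathrm{Corr}^2(U,V))\Var(U)$ with $U=T^H_{(N,N)}$, $V=T^H_{(r,r)}$, exactly as in \eqref{var_id}. Then I would prove the upper bound $\Var(T^H_{(N,N)}-T^H_{(r,r)})\le C(N-r)^{2/3}$ by the same triangle-inequality argument as Lemma \ref{l:vardiffub}: apply $\Var(A)\le 2(\Var(B)+\mathbb E[(A-B)^2])$ with $A=T^H_{(N,N)}-T^H_{(r,r)}$ and $B=\wt T_{(r,r),(N,N)}$, bounding $\Var(B)$ by the LPP bulk variance estimate and $\mathbb E[(A-B)^2]$ by the LPP nesting estimate (the analogue of Theorem \ref{nest}); note that in LPP superadditivity $T^H_{(N,N)}-T^H_{(r,r)}\ge \wt T_{(r,r),(N,N)}$ holds by concatenating geodesics, which is what makes the one-sided control sufficient.

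Next I would prove the matching lower bound $\Var(T^H_{(N,N)}-\lambda T^H_{(r,r)})\ge C(N-r)^{2/3}$ for all $\lambda\in\mathbb R$, following Lemma \ref{l:vardifflb} verbatim. Conditioning on the $\sigma$-algebra $\mathcal F$ of the weights in $\mathcal L_r^{\le}$ (so that $T^H_{(r,r)}$ is $\mathcal F$-measurable), one uses the law of total variance to reduce to a lower bound on $\mathbb E[\Var(T^H_{(N,N)}\mid\mathcal F)]$, which is independent of $\lambda$. The lower bound on this conditional variance is produced on a positive-probability $\mathcal F$-measurable event by combining: the LPP right-tail lower bound (analogue of Proposition \ref{up_lb}) to create an event $A_{r,N}$ (independent of $\mathcal F$) on which $\wt T_{(r,r),(N,N)}$ is atypically large at scale $(N-r)^{1/3}$; and a Chebyshev argument using the already-established variance upper bound to control the fluctuation of $\mathbb E[T^H_{(N,N)}-T^H_{(r,r)}\mid\mathcal F]$, yielding an event $B_{r,N}\in\mathcal F$; on $A_{r,N}\cap B_{r,N}$ the conditional deviation is $\gtrsim (N-r)^{1/3}$, which squared gives the desired $(N-r)^{2/3}$ lower bound. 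Finally, combining both variance bounds with $\Var(T^H_{(N,N)})=\Theta(N^{2/3})$ yields $1-\mathrm{Corr}^2\asymp((N-r)/N)^{2/3}$, and the upper bound in the theorem follows since $\mathrm{Corr}\le 1$, while the lower bound follows after first checking $\mathrm{Corr}\ge 0$ for $(N-r)/N$ small (from the variance upper bound and $\Var(T^H_{(N,N)})=\Theta(N^{2/3})$) and then adjusting constants.

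\textbf{Main obstacle.} The genuinely non-routine part is establishing the LPP analogues of the boundary estimates in Section \ref{est_poly_bdry} — in particular the nesting estimate Theorem \ref{nest} and the variance bound Theorem \ref{var_s} — for the broad class of diffusive initial conditions $H$ considered here, via coupling with stationary exponential LPP. The structure of the argument (Propositions \ref{local_fluc}, \ref{all_t}, \ref{fluc_bound_1} and their LPP counterparts) carries over, but one must verify that the required one-point moderate deviation inputs and the corner-flipping stationarity hold in the exponential LPP setting and interact correctly with the sub-exponential increments of $H$; since the authors state these translations are verbatim, I would treat them as granted and only flag where the replacement of $\log\wt Z$ by $\wt T$ changes constants but not the structure.
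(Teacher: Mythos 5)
Your proposal is correct and takes essentially the same route as the paper: the paper's stated proof of Theorem \ref{thm_r_large_lpp} is to repeat the proof of Theorem \ref{thm_r_large} verbatim, replacing free energies by passage times, quenched polymer measures by geodesics, and quoting the exponential LPP analogues of the auxiliary bulk/boundary estimates (curvature, moderate deviations, transversal fluctuation, and nesting), exactly as you describe. You have correctly identified the decomposition into the variance identity, the variance upper bound via $B=\wt T_{(r,r),(N,N)}$ and the nesting estimate, the conditional-variance lower bound via the events $A_{r,N}$ and $B_{r,N}$, and the role of geodesic concatenation in giving the superadditivity inequality.
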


\begin{theorem}\label{thm_r_small_lpp}  
Consider exponential LPP with initial condition $H$ on $\mathcal{L}_{\bf 0}$ satisfying the above hypotheses. There exist positive constants $C_3, C_4, c_0,  N_0$ such that, whenever $N\geq N_0$ and $c_0 \leq r \leq N/2$,  we have 
$$ C_3\Big(\frac{r}{N}\Big)^{1/3} \leq \textup{$\mathbb{C}$orr}\Big(T^{H}_{(r,r)}, T^{H}_{(N,N)}\Big) \leq C_4\Big(\frac{r}{N}\Big)^{1/3}.$$ 
\end{theorem}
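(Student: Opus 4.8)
The proof follows the argument for Theorem~\ref{thm_r_small} essentially verbatim, under the translation made precise in Section~\ref{s:lpp}: the free energy $\log Z$ is replaced by the last passage time $T$, a restricted partition function $Z(\mathfrak A)$ by the maximum path weight over $\mathfrak A$, the exit time $\tau$ of the quenched measure by the exit point of the (a.s.\ unique) geodesic from $\mathcal L_0$ to the chosen endpoint, and superadditivity of $\log Z$ by superadditivity of $T$. Since $H$ satisfies hypotheses (i)--(ii) --- the LPP analogue of \textbf{Assumption A} --- the LPP analogue of Proposition~\ref{aimpliesa1} shows it also satisfies the LPP analogues of \textbf{Assumptions B1--B3}, so both the upper and the lower bound are available; we prove them separately, as in Sections~\ref{r_small_up} and \ref{r_small_low}. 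As a first ingredient one records the exponential-LPP analogues of the estimates in Sections~\ref{est_poly_bulk}--\ref{est_poly_bdry}: the one-point right- and left-tail moderate deviation bounds and the $\Theta(N^{2/3})$ variance bound for point-to-point exponential LPP are classical (see the references in the introduction, e.g.\ \cite{BFP12, cgm_low_up, rtail1}); the transversal-fluctuation loss estimates, the local fluctuation estimates for the passage-time profile along an anti-diagonal line, and the estimates for LPP started from a diffusive initial condition (the analogues of Propositions~\ref{exit_est}--\ref{nest}) are obtained by the same coupling arguments as in Section~\ref{s:est_proof}, now using stationary exponential LPP and its Burke-type property in place of the stationary inverse-gamma polymer and Theorem~\ref{stat}.

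\textbf{Upper bound.} As in Section~\ref{r_small_up}: if $r\ge N/1000$ the bound is immediate from Cauchy--Schwarz and the variance estimate, so assume $r<N/1000$. Introduce an independent copy $H'$ of $H$ and the interpolating initial conditions $H^{*,j}$, and expand $\Cov(T^H_{(r,r)},T^H_{(N,N)})$ into the double sum $\sum_{j,k}\Cov(U_j,V_k)$ plus a remainder, exactly as in \eqref{cov_decomp}, where now $U_j$ and $V_k$ are the corresponding increments of last-passage quantities. The variance bounds on $U_j$, $V_k$ and on the remainder (the analogues of Propositions~\ref{r_proof}, \ref{N_proof}, \ref{p:3rd}) go through with the same proofs, the exit-point large-deviation input being the geodesic analogue of Theorem~\ref{exit_q}; then Cauchy--Schwarz together with the $\rho$-mixing hypothesis \eqref{mix} (trivial here since the increments $\wt X_i$ are independent) yields $\Cov(T^H_{(r,r)},T^H_{(N,N)})\le Cr^{2/3}$.

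\textbf{Lower bound.} As in Section~\ref{r_small_low}: condition on the $\sigma$-algebra $\mathcal F$ generated by the bulk weights and by $\{\wt X_k\}_{k\notin[1,r^{2/3}]}$, and define the good event $\mathcal B'$ as the intersection of the LPP analogues of the events $\mathcal B_1,\dots,\mathcal B_9$ (bulk passage-time fluctuations controlled, local fluctuations on scale $r$ controlled, passage times restricted to geodesics with non-positive exit point controlled, and the boundary increments over a suitable window large). Lemmas~\ref{lbB} and \ref{diff_small} carry over verbatim, giving $\mathbb P(\mathcal B')\gtrsim 1$, $\Var(H_{r^{2/3}}\mid\mathcal F)\gtrsim r^{2/3}$ on $\mathcal B'$, and that $T^H_{(N,N)}$ differs from its restriction to geodesics with exit point $\ge r^{2/3}$ by $o(r^{1/3})$ in conditional $L^2$ on $\mathcal B'$ (and likewise at $(r,r)$). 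On $\mathcal B'$ both restricted passage times equal $H_{r^{2/3}}+[\text{an }\mathcal F\text{-measurable r.v.}]$, so the conditional covariance is $\ge Cr^{2/3}$ (the analogue of Proposition~\ref{cond_lb}); two applications of the FKG inequality --- valid since the bulk weights are i.i.d.\ and the increments $\wt X_i$ are independent --- upgrade this to $\Cov(T^H_{(r,r)},T^H_{(N,N)})\ge Cr^{2/3}$. Combined with the LPP analogue of Theorem~\ref{var_s} (namely $\Var(T^H_{(r,r)})\asymp r^{2/3}$ and $\Var(T^H_{(N,N)})\asymp N^{2/3}$), this yields the claimed two-sided bound on the correlation coefficient.

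\textbf{Main obstacle.} There is no genuinely new difficulty here: the entire content is the verification that the stationary-model inputs of Section~\ref{s:est_proof} persist for exponential LPP. The most delicate of these is the local fluctuation estimate for the passage-time profile on the intermediate scale $r$ --- the analogue of Propositions~\ref{min}, \ref{min1} and of Proposition~\ref{local_fluc} --- which feeds both the variance bounds in the upper bound and the event $\mathcal B_5$ in the lower bound; in the LPP setting this is supplied by the Burke property of stationary exponential LPP together with a random-walk comparison, exactly as in Section~\ref{a:min}.
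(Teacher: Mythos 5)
Your proposal is correct and takes essentially the same approach as the paper, which itself deliberately does not re-derive Theorem~\ref{thm_r_small_lpp} but instead points to the term-by-term translation (free energy $\to$ passage time, restricted partition function $\to$ restricted maximum, quenched measure $\to$ geodesic, ratio-stationarity $\to$ Burke property) and to the zero-temperature literature supplying the one-point and transversal-fluctuation inputs. Your enumeration of the required substitutions and of the key ingredients (the analogues of Propositions~\ref{r_proof}--\ref{p:3rd}, \ref{exit_q}, \ref{local_fluc}, Lemmas~\ref{lbB}, \ref{diff_small} and Proposition~\ref{cond_lb}, plus the FKG step) matches what the paper intends.
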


As the reader might have noticed, our arguments for the inverse-gamma polymer essentially used only the curvature of the limit shape (Proposition \ref{reg_shape}), and the moderate deviation estimates for the free energy, see Section 3.3.1 of \cite{bas-sep-she-23} for a discussion about this. All the other estimates were then developed using these ingredients together with certain random walk comparison results. The curvature of limit shape for exponential LPP is known \cite{Joh-00}, as are the moderate deviation estimates \cite{led-rid-2021}. Many of the required auxiliary estimates have already been developed in the exponential LPP setting (see \cite{timecorriid, timecorrflat}). Random walk comparisons in the zero temperature setting first appeared in the seminal work \cite{Cat-Gro-05}, since then it has been employed in many places \cite{balzs2019nonexistence, cuberoot, seppcoal}. By quoting these auxiliary estimates as necessary, the proofs of Theorems \ref{thm_r_large_lpp} and \ref{thm_r_small_lpp} can be completed following the proofs of Theorems \ref{thm_r_large} and \ref{thm_r_small} almost verbatim. 

Before finishing this section, we point out that a variant of Theorem \ref{thm_r_large_lpp} (in the $n\to \infty$ limit with $\tau=r/n$ close to $1$) was established in \cite{ferocctimecorr} in the special case where $\wt X_{i}$ are i.i.d.\ having the same distribution as $\sigma(U_{i}-V_{i})$ where $U_{i},V_{i}$ are two independent sequences of i.i.d. random variables distributed as $\exp(1/2)$ and $\sigma\in [0,\infty)$. Theorem \ref{thm_r_large_lpp} extends this to more general initial conditions and to the pre-limiting set-up. Theorem \ref{thm_r_small_lpp} appears to be new except in the stationary case with density $1/2$ (i.e., when $H_{k}$ is a two sided random walk with i.i.d.\ increments with distribution $W-W'$ where $W,W'$ are independent copies of $\mbox{Exp}(1/2)$ random variables) which was also dealt with in \cite{ferocctimecorr}, again only in the $n\to \infty$ limit. 

We also emphasize that several extensions of Theorems \ref{thm_r_large} and \ref{thm_r_small} remain valid in this set-up as well. We can extend the hypothesis on the initial condition to an analogue of \textbf{Assumption B1, Assumption B2, Assumption B3} or relax even further as discussed in Remark \ref{ext}. It should also be possible cover stationary initial conditions with different densities for time correlations if the endpoint varies along the corresponding characteristic direction, but we shall not get into the details here. 

\appendix
\section{Appendix}
\subsection{Sub-exponential random variables.}\label{sub_exp_sec}
First, we state a general result for the running maximum of sub-exponential random variables.
Recall that a random variable $X_1$ is sub-exponential if there exist two positive constants $K_0$ and $\lambda_0$ such that 
\be\label{sub_exp}
\log(\mathbb{E}[e^{\lambda (X_1-\mathbb{E}[X_1])}]) \leq K_0 \lambda^2 \quad \textup{ for $\lambda \in [0, \lambda_0]$}.
\ee
Let $\{X_i\}$ be a sequence of i.i.d.~sub-exponential random variables with the parameters $K_0$ and $\lambda_0$. Define $S_0 = 0$ and $S_k = X_1 + \dots + X_k - k\mathbb{E}[X_1]$ for $k\geq 1$. The following theorem gives an upper bound for the right tail of the running maximum.  
\begin{theorem}[{\cite[Theorem D.1]{bas-sep-she-23}}]\label{max_sub_exp}
Let the random walk $S_k$ be defined as above. Then,
$$\mathbb{P} \Big(\max_{0\leq k \leq n} S_k \geq t\sqrt{n}\Big) \leq 
\begin{cases}
e^{-t^2/(4K_0)} \quad  & \textup{if $t \leq 2\lambda_0 K_0 \sqrt n$} \\
e^{-\frac{1}{2}\lambda_0 t\sqrt{n}} \quad  & \textup{if $t \geq 2\lambda_0 K_0 \sqrt n$}
\end{cases}.
$$ 
\end{theorem}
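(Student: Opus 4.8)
The statement to prove is Theorem \ref{max_sub_exp}, the running-maximum bound for sums of i.i.d.\ sub-exponential random variables. Here is my plan.

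\medskip

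\noindent\textbf{Plan of proof.} The plan is to combine the exponential moment bound \eqref{sub_exp} with Doob's submartingale (maximal) inequality applied to the exponentiated random walk. First I would fix $\lambda\in[0,\lambda_0]$ and set $M_k = e^{\lambda S_k}$, where $S_k = \sum_{i=1}^k(X_i - \mathbb E X_1)$. Since the increments are i.i.d.\ and mean-zero after centering, and since $\mathbb E[e^{\lambda(X_1-\mathbb E X_1)}]\ge 1$ by Jensen, the process $(M_k)$ is a nonnegative submartingale. Doob's maximal inequality then gives, for any threshold $a>0$,
\[
\mathbb P\Big(\max_{0\le k\le n} S_k \ge a\Big)
= \mathbb P\Big(\max_{0\le k\le n} M_k \ge e^{\lambda a}\Big)
\le e^{-\lambda a}\,\mathbb E[M_n]
= e^{-\lambda a}\big(\mathbb E[e^{\lambda(X_1-\mathbb E X_1)}]\big)^n
\le e^{-\lambda a + K_0\lambda^2 n},
\]
where the last step uses \eqref{sub_exp}. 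Here I take $a = t\sqrt n$.

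\medskip

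\noindent\textbf{Optimizing over $\lambda$.} The remaining step is elementary calculus: minimize $g(\lambda) := -\lambda t\sqrt n + K_0\lambda^2 n$ over $\lambda\in[0,\lambda_0]$. The unconstrained minimizer is $\lambda^\ast = t/(2K_0\sqrt n)$, giving value $-t^2/(4K_0)$. This choice is admissible precisely when $\lambda^\ast\le\lambda_0$, i.e.\ when $t\le 2\lambda_0 K_0\sqrt n$, which yields the first case $e^{-t^2/(4K_0)}$. In the complementary regime $t\ge 2\lambda_0 K_0\sqrt n$ the function $g$ is still decreasing on $[0,\lambda_0]$, so I would take $\lambda=\lambda_0$; then
\[
g(\lambda_0) = -\lambda_0 t\sqrt n + K_0\lambda_0^2 n
= -\tfrac12\lambda_0 t\sqrt n - \big(\tfrac12\lambda_0 t\sqrt n - K_0\lambda_0^2 n\big)
\le -\tfrac12\lambda_0 t\sqrt n,
\]
since $\tfrac12\lambda_0 t\sqrt n \ge K_0\lambda_0^2 n$ is exactly the case hypothesis. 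This gives the second case $e^{-\frac12\lambda_0 t\sqrt n}$, completing the argument.

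\medskip

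\noindent\textbf{Main obstacle.} Honestly there is no serious obstacle here; this is a standard Doob-plus-Chernoff computation and the only care needed is (a) checking that $(e^{\lambda S_k})_k$ is a submartingale rather than just a supermartingale—which holds because after centering the step $X_1-\mathbb E X_1$ has $\mathbb E[e^{\lambda(X_1-\mathbb E X_1)}]\ge 1$, so $\mathbb E[M_{k+1}\mid M_0,\dots,M_k]\ge M_k$—and (b) being slightly careful about the constrained optimization when $t$ is large, handled above by monotonicity of $g$ on $[0,\lambda_0]$. One could alternatively phrase the proof via a union bound over $k$ combined with the one-dimensional Chernoff bound, but that loses a factor of $n$ and does not give the clean stated constants, so the submartingale route is preferable. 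Since this theorem is quoted from \cite[Theorem D.1]{bas-sep-she-23}, in the paper itself one may simply cite that reference; the sketch above records the short self-contained argument.
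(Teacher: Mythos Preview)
Your proof is correct and is exactly the standard argument; the paper itself does not give a proof but simply cites \cite[Theorem D.1]{bas-sep-she-23}, so your self-contained sketch via Doob's submartingale maximal inequality plus Chernoff optimization is precisely what lies behind the cited result.
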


Next, we will state a lower bound for the right tail probability which was obtained from \cite{non_asy_rw} using Paley–Zygmund inequality. Let $\{Z_i\}$ be a sequence of i.i.d.~random variables such that there exist parameters $K_1$ and $\lambda_1$ with 
\begin{equation}\label{lower_sub}
\tfrac{1}{K_1} \lambda^2 \leq \log(\mathbb{E}[e^{\lambda (Z_1-\mathbb{E}[Z_1])}]) \leq K_1 \lambda^2 \quad \textup{ for $\lambda \in [0,\lambda_1]$}.
\end{equation}
Similar to before, define $H_0 = 0$ and $H_k = Z_1 + \dots + Z_k - k\mathbb{E}[Z_1]$ for $k\geq 1$, then the following holds. 
\begin{theorem}[{\cite[Theorem 4]{non_asy_rw}}]\label{ul_bd_exp}
Let the random walk $H_n$ be defined as above. Then, there exists positive constants $C_1, C_2, C_3, C_4$ depending on $K_1$ and $\lambda_1$ such that  
$$C_1e^{-C_2 t^2} \leq \mathbb{P}(H_n \geq t\sqrt{n}) \leq e^{-C_3 t^2} \qquad\text{ for each $0\leq t \leq C_4 \sqrt n$.}
$$
\end{theorem}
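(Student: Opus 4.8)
This statement is quoted from \cite[Theorem 4]{non_asy_rw}, so the plan is to recall how such a two-sided moderate deviation bound is proved. Write $\psi(\lambda)=\log\mathbb{E}[e^{\lambda(Z_1-\mathbb{E}Z_1)}]$, so that $\mathbb{E}[e^{\lambda H_n}]=e^{n\psi(\lambda)}$ and, by \eqref{lower_sub}, $K_1^{-1}\lambda^2\le\psi(\lambda)\le K_1\lambda^2$ for $\lambda\in[0,\lambda_1]$. For the upper bound I would use the Chernoff estimate: for $\lambda\in[0,\lambda_1]$, $\mathbb{P}(H_n\ge t\sqrt n)\le e^{-\lambda t\sqrt n}\mathbb{E}[e^{\lambda H_n}]\le e^{-\lambda t\sqrt n+nK_1\lambda^2}$, and optimizing with $\lambda=t/(2K_1\sqrt n)$ — which lies in $[0,\lambda_1]$ precisely when $t\le 2K_1\lambda_1\sqrt n$ — gives $\mathbb{P}(H_n\ge t\sqrt n)\le e^{-t^2/(4K_1)}$. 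This yields the upper bound with $C_2=1/(4K_1)$ and forces $C_3\le 2K_1\lambda_1$; this half uses only the sub-exponential upper bound already recorded in \eqref{sub_exp}.

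For the lower bound the plan is the second-moment (Paley--Zygmund) method, which is where the \emph{lower} bound on $\psi$ in \eqref{lower_sub} becomes essential. Fix $\lambda=2K_1t/\sqrt n$ and restrict to $t\le(\lambda_1/4K_1)\sqrt n$ so that $2\lambda\le\lambda_1$. Applying Paley--Zygmund to $Y=e^{\lambda H_n}\ge 0$,
\[
\mathbb{P}\big(e^{\lambda H_n}\ge\tfrac12 e^{n\psi(\lambda)}\big)\ge\tfrac14\,\frac{\big(\mathbb{E}[e^{\lambda H_n}]\big)^2}{\mathbb{E}[e^{2\lambda H_n}]}=\tfrac14\,e^{\,2n\psi(\lambda)-n\psi(2\lambda)}\ge\tfrac14\,e^{-4CK_1^2t^2},
\]
where $C=C(K_1)$ comes from $2\psi(\lambda)-\psi(2\lambda)\ge 2K_1^{-1}\lambda^2-4K_1\lambda^2\ge-C\lambda^2$. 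On the event $\{e^{\lambda H_n}\ge\tfrac12 e^{n\psi(\lambda)}\}$ one has $H_n\ge n\psi(\lambda)/\lambda-(\log2)/\lambda\ge n\lambda/K_1-(\log2)/\lambda=2t\sqrt n-\tfrac{\log2}{2K_1t}\sqrt n\ge t\sqrt n$ as soon as $t\ge t_0$ for a constant $t_0=t_0(K_1)$. Combining the two displays gives $\mathbb{P}(H_n\ge t\sqrt n)\ge e^{-C_1t^2}$ for all $t_0\le t\le C_3\sqrt n$, with $C_3=\min\{2K_1\lambda_1,\lambda_1/4K_1\}$ and $C_1$ enlarged to absorb the factor $\tfrac14$.

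The main obstacle is exactly this lower tail: Chernoff handles only one direction, and without the matching lower bound on the cumulant generating function the second-moment ratio $e^{2n\psi(\lambda)-n\psi(2\lambda)}$ and the deterministic bound $n\psi(\lambda)/\lambda\ge n\lambda/K_1$ both collapse, so Paley--Zygmund gives nothing. A minor point to dispatch is the full range $0\le t\le C_3\sqrt n$ in the statement: the lower bound is genuinely an estimate for $t$ bounded below, and on any interval $[\delta,t_0]$ one simply uses monotonicity of $t\mapsto\mathbb{P}(H_n\ge t\sqrt n)$ together with $\mathbb{P}(H_n\ge t_0\sqrt n)\ge\varepsilon>0$ and then enlarges $C_1$; this is harmless since in every application in this paper $t$ is taken large.
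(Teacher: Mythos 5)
The paper does not prove Theorem \ref{ul_bd_exp}; it imports it verbatim from \cite[Theorem 4]{non_asy_rw}, mentioning only that the cited argument uses Paley--Zygmund. Your sketch is a correct reconstruction of that argument. The arithmetic checks out: for the upper tail you optimize the Chernoff bound at $\lambda=t/(2K_1\sqrt n)\in[0,\lambda_1]$ when $t\le 2K_1\lambda_1\sqrt n$; for the lower tail you apply Paley--Zygmund to $e^{\lambda H_n}$ with $\lambda=2K_1 t/\sqrt n$ (so the constraint is $2\lambda\le\lambda_1$, i.e.\ $t\le\lambda_1\sqrt n/(4K_1)$), use the \emph{lower} bound in \eqref{lower_sub} both to control $2n\psi(\lambda)-n\psi(2\lambda)\ge -Cn\lambda^2$ and to pass from $\lambda H_n\ge n\psi(\lambda)-\log 2$ to $H_n\ge 2t\sqrt n -(\log 2)/\lambda\ge t\sqrt n$ once $t\ge t_0(K_1)$.

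The only point worth flagging, which you already flag yourself, is that the inequality as literally stated cannot hold at $t=0$: the claimed lower bound is $e^{-C_1\cdot 0}=1$, while $\mathbb{P}(H_n\ge 0)<1$ for a nondegenerate centered walk, so the conclusion is genuinely an estimate for $t$ bounded away from $0$. Enlarging $C_1$ and using monotonicity of $t\mapsto\mathbb{P}(H_n\ge t\sqrt n)$ salvages any interval $[\delta,t_0]$ with $\delta>0$, but not $t=0$ itself; since every use in the paper (e.g.\ in verifying \eqref{low_bd} in Proposition \ref{aimpliesa1}) takes $t$ a fixed large constant, this is harmless in context.
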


\subsubsection{Verifying \eqref{lower_sub} for stationary model weights} \label{ver_weights}
Since assumption \eqref{lower_sub} is strictly stronger than \eqref{sub_exp}, we will just verify  \eqref{lower_sub} for our applications in this paper. 
First, we have a proposition which shows that both $\log(\textup{Ga})$ and $\log(\textup{Ga}^{-1}) = -\log(\textup{Ga})$
satisfies \eqref{lower_sub}. 
\begin{proposition}\label{Ga_sub_exp}
Fix $\epsilon \in (0, \mu/2)$. There exists positive constants $K_1, \lambda_1$ depending on $\epsilon$ such that for each $\alpha \in [\epsilon, \mu-\epsilon]$, let $X\sim \textup{Ga}(\alpha)$ and we have
$$
\tfrac{1}{K_1} \lambda^2 \leq \log(\mathbb{E}[e^{\pm\lambda(\log(X) - \Psi_1(\alpha))}]) \leq K_1 \lambda^2 \qquad \textup{ for $\lambda \in [0, \lambda_1]$}.
$$
\end{proposition}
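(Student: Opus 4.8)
The statement is a quantitative two-sided sub-exponential bound for $\log X$ where $X \sim \textup{Ga}(\alpha)$, uniform over $\alpha$ in a compact subinterval of $(0,\mu)$. The plan is to compute the cumulant generating function of $\log X$ explicitly in terms of Gamma functions, and then Taylor-expand in $\lambda$ around $0$, using that the second derivative at $0$ is exactly $\Var(\log X) = \Psi_1(\alpha)$, which is bounded away from $0$ and $\infty$ on $[\epsilon,\mu-\epsilon]$. (I note the paper has written $\Psi_1$ where $\Psi_0$, the digamma function, is meant for the mean of $\log X$; I will just carry the computation through with the correct special functions.)

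\textbf{Step 1: Exact formula for the MGF of $\log X$.} For $X\sim\textup{Ga}(\alpha)$ with density $\frac{1}{\Gamma(\alpha)}x^{\alpha-1}e^{-x}$, a direct integration gives, for $\lambda > -\alpha$,
\[
\mathbb{E}[e^{\lambda \log X}] = \mathbb{E}[X^\lambda] = \frac{\Gamma(\alpha+\lambda)}{\Gamma(\alpha)}.
\]
Hence, writing $m(\alpha) = \mathbb{E}[\log X] = \Psi_0(\alpha)$ (digamma), the centered CGF is
\[
\varphi_\alpha(\lambda) := \log \mathbb{E}\big[e^{\lambda(\log X - m(\alpha))}\big] = \log\Gamma(\alpha+\lambda) - \log\Gamma(\alpha) - \lambda\,\Psi_0(\alpha),
\]
valid for $\lambda \in (-\alpha,\infty)$. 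The $-$ case (i.e.\ $\log(\textup{Ga}^{-1})$) is handled identically with $\lambda$ replaced by $-\lambda$, giving $\varphi_\alpha(-\lambda)$, valid for $\lambda < \alpha$; so it suffices to control $\varphi_\alpha(\lambda)$ for $|\lambda| \le \lambda_1$ with $\lambda_1 < \epsilon \le \alpha$.

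\textbf{Step 2: Taylor expansion with uniform control of the remainder.} We have $\varphi_\alpha(0) = 0$, $\varphi_\alpha'(0) = \Psi_0(\alpha) - \Psi_0(\alpha) = 0$, and $\varphi_\alpha''(\lambda) = \Psi_1(\alpha+\lambda)$ where $\Psi_1 = \Psi_0'$ is the trigamma function. By Taylor's theorem with the Lagrange remainder,
\[
\varphi_\alpha(\lambda) = \tfrac{1}{2}\lambda^2\,\Psi_1(\alpha + \theta\lambda) \quad\text{for some } \theta = \theta(\alpha,\lambda)\in(0,1).
\]
Now fix $\lambda_1 := \epsilon/2$. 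Then for all $\alpha\in[\epsilon,\mu-\epsilon]$ and all $\lambda\in[-\lambda_1,\lambda_1]$, the argument $\alpha+\theta\lambda$ ranges over the compact set $[\epsilon/2,\,\mu-\epsilon/2]\subset(0,\infty)$, on which $\Psi_1$ is continuous and strictly positive. Set
\[
K_1 := \max\Big\{\tfrac{1}{2}\max_{y\in[\epsilon/2,\mu-\epsilon/2]}\Psi_1(y),\ \ \Big(2\min_{y\in[\epsilon/2,\mu-\epsilon/2]}\Psi_1(y)\Big)^{-1}\Big\} < \infty.
\]
Then $\tfrac{1}{K_1}\lambda^2 \le \varphi_\alpha(\lambda) \le K_1\lambda^2$ for all such $\alpha,\lambda$; the same bound for $\varphi_\alpha(-\lambda)$ follows by symmetry since the argument set is unchanged. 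This is exactly the claimed inequality. Since $\textup{Ga}^{-1}(\alpha')$ with $\alpha'\in[\epsilon,\mu-\epsilon]$ has $\log(\textup{Ga}^{-1}(\alpha')) = -\log(\textup{Ga}(\alpha'))$, the proposition covers all the inverse-gamma weights used in the paper (bulk weights with $\mu$, boundary weights with $\rho$ and $\mu-\rho$ for $\rho$ bounded away from $0$ and $\mu$), and the difference $U - V$ of two independent such log-variables then also satisfies \eqref{lower_sub} after adjusting constants (upper bound by additivity of CGFs; lower bound since $\Var(U-V) = \Var(U)+\Var(V)$ is still bounded away from $0$).

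\textbf{Main obstacle.} There is no serious obstacle here — this is a calculus-with-special-functions argument. The only points requiring a little care are: (a) ensuring the domain of validity $\lambda > -\alpha$ of the MGF formula comfortably contains $[-\lambda_1,\lambda_1]$, which is why we take $\lambda_1 < \epsilon$; and (b) passing from the compactness of $[\epsilon,\mu-\epsilon]$ to a \emph{uniform} constant $K_1$ — this uses that the perturbed argument $\alpha+\theta\lambda$ still lies in a fixed compact subset of $(0,\infty)$ on which the continuous positive function $\Psi_1$ attains positive min and finite max. I would also remark, as an aside in the text, that the notational clash ($\Psi_1$ vs.\ $\Psi_0$ in the centering) does not affect the statement, since the centering $\Psi_1(\alpha)$ appearing in the proposition is immaterial: any fixed centering that makes the variable mean-zero works, and the correct one is $\Psi_0(\alpha)$.
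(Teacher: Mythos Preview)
Your proof is correct and follows essentially the same route as the paper: compute the MGF of $\log X$ as $\Gamma(\alpha+\lambda)/\Gamma(\alpha)$, Taylor-expand the centered CGF to second order, and invoke compactness of $[\epsilon,\mu-\epsilon]$ for uniformity. Your use of the Lagrange remainder $\varphi_\alpha(\lambda)=\tfrac12\lambda^2\Psi_1(\alpha+\theta\lambda)$ is a bit cleaner than the paper's $o(\lambda^2)$ expansion for extracting uniform constants, and your observation that the centering should be the digamma $\Psi_0(\alpha)$ rather than $\Psi_1(\alpha)$ is a genuine typo catch.
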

\begin{proof}
First, note that $\mathbb{E}[X^{\pm \lambda}] = \frac{\Gamma(\alpha\pm\lambda)}{\Gamma(\alpha)}$, provided that $\alpha \pm \lambda > 0$. 
Then, the proof essentially follows from Taylor's theorem,
\begin{align*}
\log(\mathbb{E}[e^{\pm\lambda(\log(X) - \Psi_1(\alpha))}]) & = \log(\mathbb{E} [X^{\pm\lambda}]e^{\mp\lambda \Psi_1(\alpha)})\\
 \qquad & = \log(\Gamma(\alpha \pm \lambda)) -[\log(\Gamma(\alpha)) \pm \lambda \Psi_1(\alpha)]\\
\textup{(recall $\log(\Gamma(\alpha))' = \Psi_1(\alpha)$) } \quad & =\frac{\Psi_1'(\alpha)}{2}\lambda^2 + o(\lambda^2).
\end{align*}
The last line is bounded between $\tfrac{1}{K_1} \lambda^2$ and $K_1 \lambda^2$, provided $\lambda_1$ is fixed sufficiently small. And the constant $K_1$ can be chosen uniformly for all $\alpha$ from the compact interval $[\epsilon, \mu-\epsilon]$ because $\Phi_1$ is a smooth function on $\mathbb{R}_{\geq 0}$.
\end{proof}



\subsection{Monotonicity results for the polymer model}
Recall the definition of a polymer with initial condition $W$ from Section \ref{def_poly_ad} and the exit time $\tau$ defined at the beginning of Section \ref{est_poly_bdry}.
We state two monotonicity results for the ratios of partition functions. In this section, the weights can take any arbitrary positive values.

The first proposition gives certain monotonicity between the exit times and ratios of the partition function. The proof follows from a ``corner flipping" induction. For a similar result in the setting of two different southwest boundaries (instead of for different exit times), see Lemma A.1 from \cite{Bus-Sep-22}.
\begin{proposition}\label{ratio_mono2}
Fix a positive initial condition $W$ with $W_0 = 1$. Let for each ${\bf a} \in \mathbb{Z}^2_{>0}$ and $k \geq 1$, 
$$\frac{Z^W_{{\bf a}}(|\tau| \leq k)}{Z^W_{{\bf a}-{\bf e_1}}(|\tau| \leq k)} \leq  \frac{Z^W_{{\bf a}}(\tau \geq 0)}{Z^W_{{\bf a}-{\bf e_1}}(\tau \geq 0)} \qquad \text{ and } \qquad \frac{Z^W_{{\bf a}}(|\tau| \leq k)}{Z^W_{{\bf a}-{\bf e_2}}(|\tau| \leq k)} \geq  \frac{Z^W_{{\bf a}}(\tau \geq 0)}{Z^W_{{\bf a}-{\bf e_2}}(\tau \geq 0)}.$$
\end{proposition}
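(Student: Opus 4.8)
The plan is to prove Proposition~\ref{ratio_mono2} by a ``corner flipping'' induction on the antidiagonal level of ${\bf a}$, following the same scheme as the proof of Theorem~\ref{stat} and of Lemma~A.1 in \cite{Bus-Sep-22}. First I would fix notation: for a vertex ${\bf a} \in \mathbb{Z}^2_{>0}$ write $G_{\bf a} = Z^W_{\bf a}(|\tau|\le k)$ and $E_{\bf a} = Z^W_{\bf a}(\tau \ge 0)$, so that the claim is the pair of inequalities $G_{\bf a}/G_{{\bf a}-{\bf e}_1} \le E_{\bf a}/E_{{\bf a}-{\bf e}_1}$ and $G_{\bf a}/G_{{\bf a}-{\bf e}_2} \ge E_{\bf a}/E_{{\bf a}-{\bf e}_2}$. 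Both $G$ and $E$ satisfy the bulk recursion $F_{\bf a} = Y_{\bf a}(F_{{\bf a}-{\bf e}_1} + F_{{\bf a}-{\bf e}_2})$ once ${\bf a}$ is strictly interior, i.e.\ once neither ${\bf a}-{\bf e}_1$ nor ${\bf a}-{\bf e}_2$ lies on $\mathcal{L}_0$; the two partition functions differ only through their boundary data along $\mathcal{L}_0$ (and, for $G$, the truncation $|\tau|\le k$, which only removes contributions from boundary vertices $\olsi{j}$ with $|j|>k$). I would therefore set up the induction so that the base case handles the antidiagonal line immediately above $\mathcal{L}_0$, where the ratios can be computed directly from the definition, and then propagate upward.

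The inductive step is the corner-flipping computation. Assume the two ratio inequalities hold for every vertex on antidiagonals strictly below the one containing ${\bf a}$; equivalently, introduce the ``horizontal'' and ``vertical'' ratio variables along a down-right path one level lower and express $G_{\bf a}, G_{{\bf a}-{\bf e}_1}, G_{{\bf a}-{\bf e}_2}$ (and likewise the $E$'s) via the bulk recursion in terms of those lower ratios. Writing $g = G_{{\bf a}-{\bf e}_1}/G_{{\bf a}-{\bf e}_1-{\bf e}_2}$ and the analogous $e$ for $E$, and using the recursion, the ratio $G_{\bf a}/G_{{\bf a}-{\bf e}_1}$ becomes an explicit increasing Möbius-type function of $g$ (of the form $1 + 1/(\text{ratio variables})$), and similarly for $E$; the induction hypothesis gives the needed ordering $g \le e$ (or $\ge$, depending on the edge), and monotonicity of the explicit function transfers it to the next level. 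The key point is that adding the truncation $|\tau|\le k$ is equivalent to replacing the boundary weights $W_j$ with $|j|>k$ by $0$, which only \emph{decreases} the weight put on paths exiting far from the origin; since $E$ imposes no such restriction, the $G$-ratios are squeezed toward the ``no left boundary'' configuration, which is exactly the direction dictated by the two inequalities. I would make this precise by an auxiliary monotonicity lemma: if one decreases any boundary weight $W_j$ with $j<0$, then $Z^W_{\bf a}/Z^W_{{\bf a}-{\bf e}_1}$ does not increase and $Z^W_{\bf a}/Z^W_{{\bf a}-{\bf e}_2}$ does not decrease — which itself follows from the same corner-flipping recursion and the monotonicity of the Möbius maps.

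The main obstacle I anticipate is bookkeeping rather than conceptual: carefully tracking which boundary vertices are ``switched off'' by the constraint $|\tau|\le k$ as ${\bf a}$ moves, and ensuring the induction is organized so that at each step only one corner is flipped and the relevant lower-level ratios are genuinely covered by the induction hypothesis. In particular one must be careful near the boundary $\mathcal{L}_0$ itself, where the bulk recursion does not yet apply and the base case must be verified by hand for the vertices $\olsi{j} + {\bf e}_1 + {\bf e}_2$ type configurations. A secondary subtlety is that the constraint $\tau \ge 0$ in $E_{\bf a}$ also removes boundary vertices (those with $\tau < 0$), so the comparison is really between ``keep $\{-k \le \tau \le k\}$'' and ``keep $\{\tau \ge 0\}$''; but these two sets are nested on the relevant side near the origin up to scale $k$, and the monotonicity lemma applied to switching boundary weights on/off on the negative side handles exactly this. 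Once the auxiliary monotonicity-in-boundary-weights lemma is in place, the proposition follows by the standard flipping induction with no further surprises, so I would state and prove that lemma first and then deduce Proposition~\ref{ratio_mono2} as a corollary.
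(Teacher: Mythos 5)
Your approach is essentially the one the paper uses: rewrite $Z^W_{\bf a}(|\tau|\le k)$ and $Z^W_{\bf a}(\tau\ge 0)$ as partition functions $Z^{W^{(1)}}_{\bf a}$, $Z^{W^{(2)}}_{\bf a}$ with modified boundary profiles $W^{(1)}_i = W_i\mathbf 1_{|i|\le k}$, $W^{(2)}_i = W_i\mathbf 1_{i\ge 0}$, check the ratio comparison on $\mathcal L_0$ (with the convention $0/0=0$), and propagate by the recursion $Z_{\bf x}=Y_{\bf x}(Z_{{\bf x}-{\bf e}_1}+Z_{{\bf x}-{\bf e}_2})$ and the monotonicity of the Möbius maps $r\mapsto(1+r)Y_{\bf x}$, $r\mapsto(1+r^{-1})Y_{\bf x}$. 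That said, there are two concrete flaws in how you phrase the route.

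First, your auxiliary monotonicity lemma has the signs reversed. If you \emph{decrease} a boundary weight $W_j$ with $j<0$, the horizontal ratio $Z^W_{\bf a}/Z^W_{{\bf a}-{\bf e}_1}$ \emph{increases} (weakly) and the vertical ratio $Z^W_{\bf a}/Z^W_{{\bf a}-{\bf e}_2}$ \emph{decreases}; this is already visible on the base level: take ${\bf a}=(1,1)$ with only $W_{-1},W_0,W_1$ nonzero, and compute
$Z^W_{(1,1)}/Z^W_{(0,1)} = Y_{(1,1)}\bigl(1 + (W_1+W_0)Y_{(1,0)}/((W_{-1}+W_0)Y_{(0,1)})\bigr)$,
which is decreasing in $W_{-1}$. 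With the lemma as you stated it, the conclusion of Proposition~\ref{ratio_mono2} would come out with the inequalities reversed.

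Second, the index sets $\{-k\le\tau\le k\}$ and $\{\tau\ge 0\}$ are not nested, contrary to what you assert: the first excludes $\tau>k$ while allowing $-k\le\tau<0$, the second does the opposite. Going from $W^{(2)}$ to $W^{(1)}$ you must \emph{both} turn on $W_j$ for $-k\le j\le -1$ and turn off $W_j$ for $j>k$, so a one-sided auxiliary lemma on the negative side alone does not cover the statement. It is a fortunate fact that both operations move the ${\bf e}_1$-ratio down and the ${\bf e}_2$-ratio up, which is why the proposition holds; but your write-up glosses over the positive-side change entirely. The paper sidesteps this bookkeeping by checking the base case on $\mathcal L_0$ directly for the two specific profiles $W^{(1)}$, $W^{(2)}$ (where, for $i\ge 1$, $W^{(1)}_i/W^{(1)}_{i-1}\le W^{(2)}_i/W^{(2)}_{i-1}$ and $W^{(1)}_{-i}/W^{(1)}_{-i+1}\ge W^{(2)}_{-i}/W^{(2)}_{-i+1}$, both sides of the origin handled at once) and then running the corner-flipping induction once, rather than going through a one-weight-at-a-time lemma. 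Both issues are local and correctable within your scheme, but they would surface immediately upon writing the induction out.
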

\begin{proof}
Fix $W$, let us define two more boundary weights $W^{(1)}$ and $W^{(2)}$, given by 
$$
W^{(1)}_{i} = \begin{cases}
W_{i} \quad & \text{for } |i| \leq k\\
0 \quad & \text{otherwise } 
\end{cases} \qquad \text{ and } \qquad
W^{(2)}_{i} = \begin{cases}
W_{i} \quad & \text{for }i \geq 0 \\
0 \quad & \text{for }i \leq -1
\end{cases}.
$$
Then, $Z^W_{{\bf x}}(|\tau| \leq k) = Z_{{\bf x}}^{W^{(1)}}$ and  $Z_{ {\bf x}}^W(\tau \geq 0) = Z_{{\bf x}}^{W^{(2)}}$. 
In the ratio calculation below, let us follow the convention $0/0 = 0$, then for each $i\in \mathbb{Z}_{>0}$, it holds that 
\begin{align}\frac{\log Z^{W^{(1)}}_{(i,-i)}}{\log Z^{W^{(1)}}_{(i-1,-i+1)}} = \frac{W^{(1)}_i}{W^{(1)}_{i-1}} &\leq \frac{W^{(2)}_i}{W^{(2)}_{i-1}} = \frac{\log Z^{W^{(2)}}_{(i,-i)}}{\log Z^{W^{(2)}}_{(i-1,-i+1)}}\label{base1}\\
\frac{\log Z^{W^{(1)}}_{(-i,i)}}{\log Z^{W^{(1)}}_{(-i+1,i-1)}} = \frac{W^{(1)}_{-i}}{W^{(1)}_{-i+1}} &\geq \frac{W^{(2)}_{-i}}{W^{(2)}_{-i+1}} = \frac{\log Z^{W^{(2)}}_{(-i,i)}}{\log Z^{W^{(2)}}_{(-i+1,i-1)}}\label{base2}.
\end{align}
These will be the base case of our induction. 

Next, using the inductive relation 
$
Z^{W}_{{\bf x}}=(Z^{W}_{{\bf x}-{\bf e}_1}+Z^{W}_{{\bf x}-{\bf e}_2})Y_{\bf x},
$
and the base case \eqref{base1}, \eqref{base2}, we have 
\begin{align*}\frac{Z^{W^{(1)}}_{{\bf x}}}{Z^{W^{(1)}}_{{\bf x}-{\bf e_1}}}=\Big(1+\frac{Z^{W^{(1)}}_{{\bf x}-{\bf e}_2}}{Z^{W^{(1)}}_{{\bf x}-{\bf e_1}}}\Big)Y_{\bf x} \leq \Big(1+\frac{Z^{W^{(2)}}_{{\bf x}-{\bf e}_2}}{Z^{W^{(2)}}_{{\bf x}-{\bf e_1}}}\Big)Y_{\bf x} = \frac{Z^{W^{(2)}}_{{\bf x}}}{Z^{W^{(2)}}_{{\bf x}-{\bf e_1}}}\\
\frac{Z^{W^{(1)}}_{{\bf x}}}{Z^{W^{(1)}}_{{\bf x}-{\bf e_2}}}=\Big(\frac{Z^{W^{(1)}}_{{\bf x}-{\bf e}_1}}{Z^{W^{(1)}}_{{\bf x}-{\bf e_2}}} + 1\Big)Y_{\bf x} \geq \Big(\frac{Z^{W^{(2)}}_{{\bf x}-{\bf e}_1}}{Z^{W^{(2)}}_{{\bf x}-{\bf e_2}}} + 1\Big)Y_{\bf x} = \frac{Z^{W^{(2)}}_{{\bf x}}}{Z^{W^{(2)}}_{{\bf x}-{\bf e_2}}}.
\end{align*}

\end{proof}

The second proposition is similar to the first one, and it has appeared as Lemma A.4 from \cite{ras-sep-she-} for the polymer model with south-west boundary. The same ``corner flipping" induction there also applies to the anti-diagonal boundary, so we omit the proof. 
\begin{proposition}\label{ratio_mono1}
Fux a positive initial condition $W$ with $W_0 = 1$. Let for each ${\bf a} \in \mathbb{Z}^2_{>}0$ and $k\geq 1$, 
$$\frac{Z^W_{{\bf a}}(\tau \geq 0)}{Z^W_{{\bf a}-{\bf e_1}}(\tau \geq 0)} \leq  \frac{Z^W_{{\bf a}}(\tau \geq k)}{Z^W_{{\bf a}-{\bf e_1}}(\tau \geq k)} \qquad \text{ and } \qquad \frac{Z^W_{{\bf a}}(\tau \geq 0)}{Z^W_{{\bf a}-{\bf e_2}}(\tau \geq 0)} \geq  \frac{Z^W_{{\bf a}}(\tau \geq k)}{Z^W_{{\bf a}-{\bf e_2}}(\tau \geq k)}.$$
\end{proposition}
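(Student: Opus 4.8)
The statement to prove is Proposition \ref{ratio_mono1}, the companion monotonicity result comparing the partition function ratios with exit constraint $\tau \geq 0$ against those with the stronger constraint $\tau \geq k$. Since the paper explicitly says ``the same corner-flipping induction applies and we omit the proof,'' I will write a proposal that mirrors the proof of Proposition \ref{ratio_mono2}, which is given in full, adapting it to the new pair of boundary conditions.

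\medskip

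\textbf{Proof proposal.} The plan is to imitate the proof of Proposition \ref{ratio_mono2} verbatim, replacing the two truncated boundary weights by the ones adapted to the constraints $\tau\geq 0$ and $\tau\geq k$. Given the positive initial condition $W$ with $W_0=1$, I would introduce
\[
W^{(0)}_i = \begin{cases} W_i & i\geq 0\\ 0 & i\leq -1\end{cases}
\qquad\text{and}\qquad
W^{(k)}_i = \begin{cases} W_i & i\geq k\\ 0 & i\leq k-1,\end{cases}
\]
so that $Z^W_{\bf x}(\tau\geq 0) = Z^{W^{(0)}}_{\bf x}$ and $Z^W_{\bf x}(\tau\geq k)=Z^{W^{(k)}}_{\bf x}$. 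The first step is the base case: along the anti-diagonal line $\mathcal{L}_0$, compute the nearest-neighbour ratios directly. For points $(i,-i)$ with $i \geq 1$ on the $\bf e_1$ side, $Z^{W^{(0)}}_{(i,-i)}/Z^{W^{(0)}}_{(i-1,-i+1)} = W^{(0)}_i/W^{(0)}_{i-1}$ and similarly for $W^{(k)}$; one checks the inequality $W^{(0)}_i/W^{(0)}_{i-1}\leq W^{(k)}_i/W^{(k)}_{i-1}$ with the convention $0/0=0$ (the only nontrivial cases being $i$ near $k$, where the right side can be $0$ or $\infty$; since $W^{(k)}$ vanishes for $i\leq k-1$, the ratio $W^{(k)}_i/W^{(k)}_{i-1}$ is $+\infty$ for $i=k$ and equals $W^{(0)}_i/W^{(0)}_{i-1}$ for $i>k$, and equals $0=0$ on the vanishing range). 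The analogous reversed inequality holds on the $-\bf e_2$ side.

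\medskip

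The second step is the induction on $|{\bf x}|_1$ using the recursion $Z^W_{\bf x} = (Z^W_{{\bf x}-{\bf e}_1}+Z^W_{{\bf x}-{\bf e}_2})Y_{\bf x}$. Exactly as in the proof of Proposition \ref{ratio_mono2}, one writes
\[
\frac{Z^{W^{(0)}}_{\bf x}}{Z^{W^{(0)}}_{{\bf x}-{\bf e}_1}} = \Big(1 + \frac{Z^{W^{(0)}}_{{\bf x}-{\bf e}_2}}{Z^{W^{(0)}}_{{\bf x}-{\bf e}_1}}\Big)Y_{\bf x},
\]
and observes that $Z^{W^{(0)}}_{{\bf x}-{\bf e}_2}/Z^{W^{(0)}}_{{\bf x}-{\bf e}_1} \leq Z^{W^{(k)}}_{{\bf x}-{\bf e}_2}/Z^{W^{(k)}}_{{\bf x}-{\bf e}_1}$ by the inductive hypothesis (this is the ``$\tau\geq 0$ vs $\tau\geq k$'' cross ratio at the previous level), which gives the $\bf e_1$ inequality; the $\bf e_2$ inequality follows by the symmetric manipulation, where the reversed direction of the inequality is built into how the two ratios enter the recursion. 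The monotonicity propagates because adding $1$ and multiplying by the common positive factor $Y_{\bf x}$ both preserve the order.

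\medskip

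The main subtlety — and the only place requiring care beyond transcription — is the handling of the degenerate ratios $0/0$ and division by a partition function that is identically $0$ near the boundary strip $\{k-1 \geq i\}$ or, for $W^{(0)}$, near $\{i\leq -1\}$. I would address this by first noting that for ${\bf x}\in \mathcal{L}_0^{\geq}$ sufficiently deep into the quadrant (specifically ${\bf x}$ with both coordinates exceeding $k$), all the partition functions appearing are strictly positive, so the induction is genuinely about finite positive ratios; the boundary cases are checked separately and directly from the definitions of $W^{(0)}, W^{(k)}$ using the $0/0=0$ and ``$x/0=+\infty$ for $x>0$'' conventions, at which point both claimed inequalities become tautologies (one side is $0$ or $+\infty$ in the favourable direction). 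This is identical to the bookkeeping already implicit in the proof of Proposition \ref{ratio_mono2}, so no genuinely new idea is needed; since the paper chooses to omit it, a one-line pointer to \cite{ras-sep-she-} Lemma A.4 together with the remark that the anti-diagonal boundary case is handled by the same corner-flipping argument suffices.
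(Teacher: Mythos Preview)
Your approach---the corner-flipping induction with the two truncated boundary conditions $W^{(0)}$ and $W^{(k)}$---is exactly what the paper intends (it omits the proof entirely, deferring to Lemma~A.4 of \cite{ras-sep-she-} and remarking that the same induction works for the anti-diagonal boundary).

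One small slip: your claim that the degenerate base cases are ``tautologies in the favourable direction'' is not correct on the anti-diagonal for $1\leq i\leq k-1$, where $W^{(0)}_i/W^{(0)}_{i-1}=W_i/W_{i-1}>0$ while $W^{(k)}_i/W^{(k)}_{i-1}=0/0=0$, so the $\leq$ actually fails under that convention. This is harmless, though, because the statement is only meaningful when $Z^W_{{\bf a}-{\bf e}_1}(\tau\geq k)>0$, i.e.\ $a_1\geq k$ (at $a_1=k$ the right-hand ratio is $+\infty$). The clean fix is to restrict the induction to the region $\{x_1\geq k\}$: along its vertical boundary $\{x_1=k\}$ the $W^{(k)}$-side ${\bf e}_1$-ratio is $+\infty$ and the ${\bf e}_2$-ratio equals $Y_{\bf x}$ (since $Z^{W^{(k)}}_{{\bf x}-{\bf e}_1}=0$), so both inequalities hold there and the induction fills in the rest.
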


The last result is a specific instance of Lemma A.7 from \cite{ras-sep-she-}. While this result applies to positive weights of any value, we will present it within the context of the stationary inverse-gamma polymer. Because its statement is better suited to the definition of the partition function as outlined in Section \ref{stat_poly} (instead of Section \ref{def_poly_ad}).

\begin{proposition} \label{nestedpoly}
Fix ${\bf a} \in \mathbb{Z}^2$. 
Then, for each $\rho \in (0, \mu)$, $k \in \mathbb{Z}$, ${\bf z}\in \mathcal{L}_{\bf a}^{\geq}$, ${\bf w} \in \mathcal{L}_{\bf a}^>$ and $i=1,2$
$$Q^{\rho}_{{\bf a}, {\bf w}}\{\text{paths go through $[\![{\bf z}, {\bf z}+e_i]\!]$}\} = Q^{\rho}_{{\bf a} + \olsi{k}, {\bf w}}\{\text{paths go through $[\![{\bf z}, {\bf z}+e_{i}]\!]$}\}.$$
\end{proposition}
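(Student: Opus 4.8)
The plan is to establish the identity by a ``corner flipping'' / bijective resampling argument of the kind used throughout Section \ref{stat_poly}, and more precisely by invoking the stationarity structure encoded in Theorem \ref{stat}. Fix ${\bf a}\in\Z^2$, $\rho\in(0,\mu)$, $k\in\Z$, ${\bf z}\in\mathcal{L}_{\bf a}^{\geq}$, ${\bf w}\in\mathcal{L}_{\bf a}^{>}$, and $i\in\{1,2\}$. The quenched probability on the left is, by definition, the ratio
$$Q^{\rho}_{{\bf a},{\bf w}}\{\text{paths through }[\![{\bf z},{\bf z}+e_i]\!]\}=\frac{Z^{\rho}_{{\bf a},{\bf z}}\cdot Y_{{\bf z}+e_i}^{(\mathrm{edge})}\cdot \wt Z_{{\bf z}+e_i,{\bf w}}}{Z^{\rho}_{{\bf a},{\bf w}}},$$
and the right side is the analogous ratio with base point ${\bf a}+\olsi{k}$. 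So what must be shown is that replacing the starting corner of the $\rho$-stationary staircase boundary $\mathcal{S}_{\bf a}$ by $\mathcal{S}_{{\bf a}+\olsi{k}}$ (a shift along the anti-diagonal) does not change this ratio.

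First I would reduce to the case $k=\pm 1$ by iterating, so it suffices to compare $\mathcal{S}_{\bf a}$ with $\mathcal{S}_{{\bf a}+{\bf e}_1-{\bf e}_2}$ (and its reflection). The key point, which is exactly Theorem \ref{stat}, is that the ratios of stationary partition functions along any down-right path together with the dual weights below it have a product form: moving the corner of the staircase from $\mathcal{S}_{\bf a}$ to $\mathcal{S}_{{\bf a}+\olsi{1}}$ amounts to a single ``flip'' of one unit cell, under which the three independent inputs attached to that cell (one Ga$^{-1}(\mu-\rho)$ weight, one Ga$^{-1}(\rho)$ weight, and the bulk weight $Y$) get resampled into a statistically identical triple (a dual weight $\wc Y^\rho\sim$ Ga$^{-1}(\mu)$ together with the two boundary increments), in a measure-preserving way. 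Crucially, the quantity $Q^{\rho}_{{\bf a},{\bf w}}\{\cdots\}$ as a function of the weight configuration is \emph{invariant} under this local flip: for any path from the staircase to ${\bf w}$, the flip does not change the set of admissible up-right paths reaching ${\bf w}$ nor the weight any fixed such path collects, once one accounts for the relabeling of the corner. Hence the ratio defining the quenched probability is literally the same random variable before and after the flip, and in particular its distribution — and here, since we want an almost sure identity of the random variables $Q^{\rho}_{{\bf a},{\bf w}}$ and $Q^{\rho}_{{\bf a}+\olsi{k},{\bf w}}$ built on the \emph{same} underlying weights — is unchanged. This is precisely the content of Lemma A.7 of \cite{ras-sep-she-} specialized to the anti-diagonal (staircase) boundary $\mathcal{S}$; the proof there is written for the southwest boundary, but the flipping induction is local and goes through verbatim, which is why the statement is quoted rather than reproved.

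The cleanest way to organize this in the paper is therefore: (1) recall that $Z^{\rho}_{{\bf a},\bbullet}=Z^{\mathcal{S}_{\bf a},\rho}_{{\bf a},\bbullet}$ and that Theorem \ref{stat} identifies the joint law of the nearest-neighbor ratios along $\mathcal{S}_{\bf a}$ with that along $\mathcal{S}_{{\bf a}+\olsi{k}}$ (both are i.i.d.\ strings of Ga$^{-1}(\mu-\rho)$ and Ga$^{-1}(\rho)$ ratios, consistent on their overlap); (2) observe that for fixed bulk weights in $\mathcal{S}_{\bf a}^{>}\cap\mathcal{S}_{{\bf a}+\olsi{k}}^{>}$ and a fixed realization of the common boundary ratios, the event ``geodesic-path-weighted polymer measure to ${\bf w}$ goes through $[\![{\bf z},{\bf z}+e_i]\!]$'' and its probability are determined by data that the corner shift does not touch; and (3) invoke the corner-flipping induction (Lemma A.7 of \cite{ras-sep-she-}) to conclude the equality of the two quenched probabilities as functions of the weights.

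\textbf{Main obstacle.} The only genuinely delicate point is bookkeeping: making precise that the flip which transports $\mathcal{S}_{\bf a}$ to $\mathcal{S}_{{\bf a}+\olsi{k}}$ acts on the partition functions $Z^{\rho}_{{\bf a},{\bf z}}$ and $Z^{\rho}_{{\bf a},{\bf w}}$ in a way that cancels in the ratio $Q^{\rho}_{{\bf a},{\bf w}}\{\cdots\}$ — i.e.\ that the numerator and denominator transform by the \emph{same} multiplicative factor under the local resampling, so that the quotient is pointwise unchanged. Since ${\bf z}\in\mathcal{L}_{\bf a}^{\geq}$ lies on (or above) the staircase, one must check that the portion of the path below ${\bf z}$ and the portion above interact with the flipped cell consistently; this is exactly where the hypothesis ${\bf z}\in\mathcal{L}_{\bf a}^{\geq}$, ${\bf w}\in\mathcal{L}_{\bf a}^{>}$ is used. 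Given that Lemma A.7 of \cite{ras-sep-she-} already packages this induction, I expect the write-up to be short: state the specialization, note the staircase boundary $\mathcal{S}$ is the relevant down-right path, and cite.
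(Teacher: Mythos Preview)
Your proposal lands on the same citation the paper uses --- this proposition is stated in the paper simply as ``a specific instance of Lemma A.7 from \cite{ras-sep-she-}'' with no further proof --- so in that sense you reach the right endpoint.

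However, your heuristic explanation of the mechanism is off, and would mislead a reader. You repeatedly invoke Theorem~\ref{stat} (the Burke/stationarity structure), corner-flipping as a \emph{measure-preserving resampling}, and phrases like ``statistically identical triple'' and ``its distribution\dots is unchanged''. None of that is needed or relevant: the paper emphasizes that ``this result applies to positive weights of any value'', i.e.\ the identity is \emph{pointwise/deterministic}, not distributional. The actual reason is much simpler. The staircases $\mathcal{S}_{\bf a}$ and $\mathcal{S}_{{\bf a}+\olsi{k}}$ are the \emph{same} bi-infinite down-right path carrying the \emph{same} edge weights $G$; shifting the base point from ${\bf a}$ to ${\bf a}+\olsi{k}$ only changes the normalization $H^{\mathcal{S}_{\bf a},\rho}_{{\bf a},\cdot}$ by the fixed multiplicative constant $H^{\mathcal{S}_{\bf a},\rho}_{{\bf a},{\bf a}+\olsi{k}}$. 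That constant multiplies both $Z^\rho_{{\bf a},{\bf z}}$ and $Z^\rho_{{\bf a},{\bf w}}$ identically and therefore cancels in the ratio defining $Q^\rho$. There is no probabilistic input whatsoever.

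So: keep the citation, but replace your discussion of Burke-type resampling with the one-line algebraic observation above. Your ``main obstacle'' paragraph about the numerator and denominator transforming by the same factor is the right intuition --- just recognize that this is a trivial consequence of the shared boundary path, not of any stochastic invariance.
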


\subsection{Assumption A implies Assumption B}
\begin{proposition}
    \label{aimpliesa1}
    Let $X_i$ be a sequence of independent mean $0$ random variables satisfying the hypotheses of {\rm \textbf{Assumption A}}. Then $\{W_{k}\}_{k\in \Z}$ defined by $W_0=1$ and $X_{i}=\log W_{i}-\log W_{i-1}$ satisfy {\rm\textbf{Assumption B1}, \textbf{Assumption B2}} and {\rm\textbf{Assumption B3}}. 
\end{proposition}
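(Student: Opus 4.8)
\textbf{Proof plan for Proposition \ref{aimpliesa1}.}

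The strategy is to verify each of the hypotheses comprising \textbf{Assumption B1}, \textbf{B2}, and \textbf{B3} in turn, exploiting crucially the fact that the $X_i$ are \emph{independent}. The independence makes all of the conditional statements (\eqref{cond_up_bd}, \eqref{cond_low_tail}, \eqref{as_var}) reduce to their unconditional counterparts: conditioning on $\mathcal{F}_r = \sigma(\{X_i\}_{i\notin[1,r]})$ leaves the conditional law of $\{X_i\}_{i\in[1,r]}$ equal to its unconditional product law, so the inner probability/variance in each display is actually a deterministic quantity, and the outer probability is either $0$ or $1$. Thus it suffices to establish the unconditional versions with the $\frac{1}{\log r}$ slack trivially satisfied. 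Similarly, the mixing hypothesis \eqref{mix} and the FKG hypothesis \eqref{FKG} (both the unconditional and conditional-on-a-subinterval versions) are immediate for independent sequences: disjoint index sets carry independent $\sigma$-algebras, so the correlation in \eqref{mix} is exactly $0 \le \ell^{-100}$, and independent coordinates satisfy FKG by the classical fact (see \cite{Kem77}) that a product of FKG measures is FKG, where each single-coordinate marginal is trivially FKG; conditioning on coordinates outside $\{1,\dots,k\}$ does not change this since the conditional law is again a product.

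The substance is then in the tail bounds \eqref{up_bd}, \eqref{low_tail}, \eqref{low_bd}, and the variance bound \eqref{as_var}, all of which should follow from \textbf{Assumption A}(i) (uniform sub-exponentiality) and (ii) ($\inf_i \Var X_i > 0$). For \eqref{low_bd}, the right-tail lower bound, I would apply Theorem \ref{ul_bd_exp}: the partial sum $\log W_{N+i} - \log W_i = \sum_{j=i+1}^{N+i} X_j$ is a sum of $|N|$ independent, uniformly sub-exponential, mean-zero variables with variance bounded below, so after normalizing, $\mathbb{P}(\sum X_j \ge t_0\sqrt{|N|}) \ge e^{-C_1 t_0^2} =: \epsilon_0 > 0$ for $|N| \ge N_0$ depending on $t_0$, provided one checks the two-sided bound \eqref{lower_sub} holds for the relevant normalized sums (the lower bound on the log-MGF comes from $\inf \Var X_i > 0$, the upper bound from sub-exponentiality). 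For \eqref{up_bd} and \eqref{low_tail}, the right- and left-tail upper bounds, I would invoke Theorem \ref{max_sub_exp}: $\max_{-N \le k \le N} \log W_k = \max$ of two running maxima of random walks with uniformly sub-exponential increments, so $\mathbb{P}(\max_{|k|\le N}\log W_k \ge t\sqrt N) \le 2e^{-\min\{t^2/(4K_0), \lambda_0 t \sqrt N/2\}} \le e^{-C_1 t}$ once $N_0, t_0$ are fixed so the linear regime of the bound dominates for $t \ge t_0$; the left-tail \eqref{low_tail} (only a one-point, not a running-max, statement) is even simpler via the same theorem applied to $-\log W_N$. For \eqref{as_var}, $\Var(\log W_r) = \sum_{i=1}^r \Var X_i \ge r \inf_i \Var X_i \ge C_1 r$, which is deterministic, so the probability in \eqref{as_var} is exactly $1$.

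The main obstacle—though a mild one—is the bookkeeping needed to ensure that the sub-exponentiality parameters $(K_1, \lambda_1)$ from \textbf{Assumption A}(i) translate into parameters $(K_0, \lambda_0)$ usable in Theorem \ref{max_sub_exp} and $(K_1, \lambda_1, C_i)$ in Theorem \ref{ul_bd_exp}, \emph{uniformly in $i$}, and that the two-sided condition \eqref{lower_sub} genuinely holds: \textbf{Assumption A} only gives a \emph{one-sided} log-MGF upper bound, so the lower bound in \eqref{lower_sub} must be extracted separately. This is done by a Taylor expansion of $\log \mathbb{E}[e^{\lambda X_i}] = \frac{\lambda^2}{2}\Var X_i + o(\lambda^2)$ for small $\lambda$, where the error is controlled uniformly in $i$ by the uniform sub-exponential bound (which bounds third and higher moments uniformly), and the leading term is $\ge \frac{\lambda^2}{2}\inf_i \Var X_i > 0$; hence for $\lambda$ in a fixed small interval the log-MGF is pinched between $c\lambda^2$ and $C\lambda^2$ uniformly. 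Once this uniform two-sided control is in hand, every remaining step is a routine application of the cited random-walk theorems, and assembling the pieces completes the proof.
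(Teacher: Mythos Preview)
Your proposal is correct and, for all items except \eqref{low_bd}, follows exactly the paper's argument: independence trivializes \eqref{mix}, \eqref{FKG}, and the conditioning in \eqref{cond_up_bd}, \eqref{cond_low_tail}, \eqref{as_var}; the variance lower bound \eqref{as_var} comes directly from $\inf_i\Var X_i>0$; and \eqref{up_bd}, \eqref{low_tail} come from Theorem~\ref{max_sub_exp}.

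The one genuine difference is in \eqref{low_bd}. The paper invokes a Berry--Esseen theorem for sums of independent (non-identically distributed) variables, using that $\sup_i\mathbb{E}|X_i|^3<\infty$ and $\Var(\log W_N)\asymp N$ to compare $\mathbb{P}(\log W_N\ge t_0\sqrt N)$ directly with a Gaussian tail. You instead aim for Theorem~\ref{ul_bd_exp} via a two-sided log-MGF bound extracted by Taylor expansion. Your route is perfectly sound, with one caveat: Theorem~\ref{ul_bd_exp} as stated in the appendix is for \emph{i.i.d.}\ sequences, whereas the $X_i$ here are merely independent. The underlying Paley--Zygmund argument extends without change once you have the uniform two-sided bounds $c\lambda^2\le \log\mathbb{E}[e^{\lambda X_i}]\le C\lambda^2$ for $\lambda\in[0,\lambda_1]$, which you have correctly argued follow from \textbf{Assumption A}(i)--(ii); you should just say explicitly that you are applying the proof of Theorem~\ref{ul_bd_exp} rather than its statement, or cite a non-i.i.d.\ version. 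The Berry--Esseen approach has the advantage of being quotable off the shelf for independent non-identical summands, while your MGF approach is more self-contained within the paper's appendix and avoids importing an external CLT-type result.
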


\begin{proof}
Since $X_{i}$ are independent, many of the hypotheses in \textbf{Assumption B2} and \textbf{Assumption B3} are redundant. Indeed, \eqref{cond_up_bd} and \eqref{cond_low_tail} follow from \eqref{up_bd} and \eqref{low_tail} respectively. The mixing hypothesis \eqref{mix} is also a trivial consequence of independence. The FKG inequality hypothesis also follows from
the standard FKG inequality for product measures. In \eqref{as_var} the conditioning is redundant and $\Var (\log W_{N})\ge C_1N$ follows from the hypothesis that $\inf_{i} \Var X_{i}>0$. It therefore only remains to verify \eqref{up_bd}, \eqref{low_tail} and \eqref{low_bd}.

For \eqref{up_bd} and \eqref{low_tail}, notice that since $\mathbb{E} X_{i}=0$, the first hypothesis of \textbf{Assumption A} implies that $|X_i|$ are uniformly sub-exponential. Using Theorem \ref{max_sub_exp}, which is a consequence of Doob maximal inequality, we get \eqref{up_bd} and \eqref{low_tail}. For \eqref{low_bd}, we appeal to a Berry-Essen theorem for sums of non i.i.d. variables (see e.g.\ \cite{BMN77}). Since $\mathbb{E} X_i=0$ and  $\sup_{i}\mathbb{E} |X_{i}|^{3}<\infty$ (by the subexponentiality hypothesis), it follows that for $Z\sim N(0,1)$,
$$\sup_{t\in \R} \mathbb{P}(\log W_{n}\ge t\sqrt{\Var (\log W_{n})})-\mathbb{P}(Z\ge t)=O(n^{-1/2}).$$
Since $\Var \log W_{n}\in [C_1N,C_2N]$ for some $C_1>0$ (by the second hypothesis in \textbf{Assumption A}) and $C_2<\infty$ (by the first hypothesis in \textbf{Assumption A}), \eqref{low_bd} follows. 
\end{proof}

\bibliographystyle{amsplain}
\bibliography{time}

\end{document}